\documentclass[reqno, 11pt]{amsart}
\usepackage{amsmath,amsthm,amsfonts,amscd,amssymb,comment,eucal,latexsym,mathrsfs}
\usepackage{stmaryrd}
\usepackage[all]{xy}


\setlength{\textwidth}{15cm}
\setlength{\oddsidemargin}{4mm}
\setlength{\evensidemargin}{4mm}

\newtheorem{theorem}{Theorem}[section]

\theoremstyle{definition}

\newtheorem{remark}[theorem]{Remark}
\newtheorem{notation}[theorem]{Notation}
\newtheorem{example}[theorem]{Example}
\newtheorem{exercise}[theorem]{Exercise}

\theoremstyle{plain}

\newtheorem{thm}{Theorem}[section]
\newtheorem{prop}[thm]{Proposition}
\newtheorem{lem}[thm]{Lemma}
\newtheorem{cor}[thm]{Corollary}

\theoremstyle{definition}
\newtheorem{defn}[thm]{Definition}
\theoremstyle{remark}

\newcommand{\Aut}{{\rm Aut}}

\newcommand{\cB}{{\mathcal B}}

\newcommand{\cF}{{\mathcal F}}

\newcommand{\cC}{{\mathcal C}}

\newcommand{\cT}{{\mathcal T}}
\newcommand{\cQ}{{\mathcal Q}}
\newcommand{\cP}{{\mathcal P}}
\newcommand{\cR}{{\mathcal R}}
\newcommand{\cS}{{\mathcal S}}

\newcommand{\Nb}{{\mathbb N}}

\newcommand{\diam}{{\rm diam}}

\newcommand{\tr}{{\rm tr}}

\newcommand{\sF}{{\mathscr F}}
\newcommand{\sG}{{\mathscr G}}
\newcommand{\sH}{{\mathscr H}}

\newcommand{\sK}{{\mathscr K}}

\newcommand{\EE}{{\mathbb{E}}}

\newcommand{\Hom}{{\rm Hom}}

\newcommand{\sym}{{\rm Sym}}

\newcommand{\so}{\mathfrak{s}}
\newcommand{\ra}{\mathfrak{r}}

\def\cc{{\curvearrowright}}

\def\P{{\mathbb{P}}}
\newcommand{\C}{\mathbb{C}}
\newcommand{\N}{\mathbb{N}}
\newcommand{\R}{\mathbb{R}}

\newcommand{\Map}{\textrm{Map}}

\newcommand{\Z}{\mathbb{Z}}

\newcommand{\lb}{\llbracket}
\newcommand{\rb}{\rrbracket}

\newcommand{\tdelta}{\tilde{\delta}}
\allowdisplaybreaks

\linespread{1.4}

\begin{document}
\title[Entropy theory for sofic groupoids I: the foundations]{Entropy theory for sofic groupoids I:\\ the foundations}
\author{Lewis Bowen }
\address{Department of Mathematics\\
University of Texas, Austin\\
} 
\email{lpbowen@math.utexas.edu}
\thanks{$\dagger$ Supported in part by NSF grant DMS-0968762 and NSF CAREER Award DMS-0954606.}
\begin{abstract}
This is the first part in a series in which sofic entropy theory is generalized to class-bijective extensions of sofic groupoids. Here we define topological and measure entropy and prove invariance. We also establish the variational principle, compute the entropy of Bernoulli shift actions and answer a question of Benjy Weiss pertaining to the isomorphism problem for non-free Bernoulli shifts. The proofs are independent of previous literature.
\end{abstract}
\maketitle
\noindent
{\bf Keywords}: groupoids, entropy, measured equivalence relations, sofic groups\\
{\bf MSC}:37A35, 37A20, 20Lxx\\

\maketitle

\tableofcontents

\section{Introduction}

A major motivation for this work comes from Rudolph-Weiss' discovery \cite{RW00} that, while entropy is not an orbit-equivalence invariant, relative entropy with respect to the orbit-change sigma-algebra is invariant. From this fact it is possible to generalize classical entropy theory to extensions of measured equivalence relations \cite{Da01, DP02}. These new ideas have led to the solution of a number of open problems \cite{RW00, Da01, DP02, DG02,Av05,Av10,Bo12} as well as providing shorter proofs to known results. Recently, classical entropy theory has been extended to actions of sofic groups \cite{Bo10b,KL11}. It is therefore natural to extend sofic entropy theory to equivalence relations and more generally, groupoids, which is the goal of the present paper. We begin with a short introduction to sofic groups, entropy theory (classical and sofic) and the Rudolph-Weiss result.


A countable discrete group $G$ is {\em sofic} if there exists a sequence $\Sigma=\{\sigma_i\}_{i=1}^\infty$ of set maps $\sigma_i: G \to \sym(d_i)$ (the symmetric group on $\{1,\ldots, d_i\}$) such that
\begin{eqnarray*}
0 &=& \lim_{i\to\infty} d_i^{-1} | \{1 \le p \le d_i:~ \sigma_i(g)p = p \}| \quad \forall g \in G \setminus \{e\}\\
1 &=& \lim_{i\to\infty} d_i^{-1} | \{ 1\le p \le d_i:~ \sigma_i(g)\sigma_i(h)p = \sigma_i(gh)p \} | \quad \forall g,h \in G\\
+\infty &=& \lim_{i\to\infty} d_i.
\end{eqnarray*}
Such a sequence is called a {\em sofic approximation} to $G$. This class of groups was defined implicitly by M. Gromov \cite{Gr99}, explicitly by B. Weiss \cite{We00} and proven to satisfy a number of important conjectures such as Gottshalk's surjunctivity conjecture \cite{Gr99, We00}, Connes' embedding conjecture \cite{ES05}, the determinant conjecture \cite{ES05} and Kaplansky's direct finiteness conjecture \cite{ES04}. All amenable groups and residually finite groups are sofic. By Mal'cev's Theorem \cite{Ma40}, all finitely generated linear groups are residually finite. Since a group is sofic if and only if all of its finitely generated subgroups are sofic, this implies all countable linear groups are sofic. It is unknown whether all countable groups are sofic. The concept of soficity was generalized from groups to unimodular random rooted networks and measured equivalence relations in \cite{AL07} (see also \cite{EL10}) and to groupoids in \cite{DKP11}. V. Pestov has written an illuminating survey article \cite{Pe08}.

Entropy is an important invariant for classifying dynamical systems. To explain, let $G$ be a countable group, $(X,\mu)$ a standard Borel probability space and $T:G \to \Aut(X,\mu)$ a homomorphism into the group of pmp (probability-measure-preserving) transformations of $(X,\mu)$. Suppose that $\cP$ is a finite Borel partition of $X$. Define
\begin{eqnarray*}
H_\mu(\cP) &:=& - \sum_{P \in \cP} \mu(P) \log \mu(P)\\
h_\mu(T,\cP) &:=& \inf_{F \subset G} |F|^{-1} H_\mu\left( \bigvee_{f\in F} f\cP \right)\\
h_\mu(T) &:=& \sup_\cP h_\mu(T,\cP)
\end{eqnarray*}
where $\bigvee_{f\in F} f\cP$ denotes the common refinement of the partitions $f\cP$ (for $f\in F$), the infimum in the second line is over all nonempty finite subsets of $G$ and the supremum in the last line is over all finite Borel partitions $\cP$ of $X$. 

The quantity $h_\mu(T)$ is the {\em entropy rate} of $T$ and is clearly an invariant of the action. While this definition makes sense for actions of any group $G$, it has only proven useful for actions of amenable groups. Recall that $G$ is {\em amenable} if there exists a sequence $\{F_i\}_{i=1}^\infty$ of finite nonempty subsets $F_i \subset G$ such that for any nonempty finite $K \subset G$,
$$\lim_{i\to\infty} \frac{ |F_i \cap F_i K| }{ |F_i| } = 1.$$
Such a sequence is called a F\o lner sequence. 

A partition $\cP$ is {\em generating} for $T$ if the smallest $T(G)$-invariant sigma-algebra containing $\cP$ is the sigma-algebra of all measurable sets (up to measure zero). The Kolmogorov-Sinai Theorem (initially proven for $G=\Z$ and extended to amenable groups by other authors) states that  if $\cP$ is a finite generating partition for $T$ then 
$$h_\mu(T,\cP)=h_\mu(T)=\lim_{i\to\infty} |F_i|^{-1} H_\mu\left( \bigvee_{f\in F_i} f\cP \right)$$
where $\{F_i\}_{i=1}^\infty$ is any F\o lner sequence (for example, see \cite{Ol85}). This result is of fundamental importance because it makes the computation of entropy possible.

 The Kolmogorov-Sinai Theorem does not hold for non-amenable groups, but in this case $h_\mu(T)$ as defined above is really not the appropriate definition. For example, let $G$ be a countable group, $K$ be a finite set and $\kappa$ be a probability measure on $K$. Let $G$ act on the product space $(K,\kappa)^G$ by $(g\cdot x)(f)=x(g^{-1}f)$ for $x \in K^G, g,f \in G$ (where we interpret an element of $x\in K^G$ to be a function $x:G \to K$). This is called the {\em Bernoulli shift} action over $G$ with base space $(K,\kappa)$. Let $\cP_K=\{P_k:~k\in K\}$ be the canonical partition where $P_k=\{x\in K^G:~x(e)=k\}$. Then $\cP_K$ is generating. However if $G$ is non-amenable and $F \subset G$ is a large enough finite set, then $h_{\kappa^G}(T,\cP_K) < h_{\kappa^G}(T, \bigvee_{f\in F} f\cP_K)$. In fact, it can be shown that $h_{\kappa^G}(T) = +\infty$ (unless $(K,\kappa)$ is trivial). By contrast, if $G$ is amenable then $h_{\kappa^G}(T) = H_{\kappa^G}(\cP_K)<\infty$.

Sofic entropy theory is a generalization of the classical Kolmogorov-Sinai entropy theory initiated by the author \cite{Bo10b} in the measure-theoretic setting. It was extended to the topological setting and developed further by D. Kerr and H. Li \cite{KL11, KL2}. The main idea is to replace the F\o lner sequence from the amenable case with a sofic approximation. Thus sofic entropy quantifies the exponential growth rate  of the number of ``finite approximations'' to the system. There are many different interpretations of the phrase ``finite approximation'' which lead to many different but equivalent definitions of sofic entropy. Indeed, we now have definitions based on partitions \cite{Bo10b, Ke12}, on topological models and pseudo-metrics \cite{KL2}, on sequences of $L^\infty$ functions or continuous functions \cite{KL11}, on homomorphisms from $C(X)$ to $\C^d$ \cite{KL11}, and on open covers \cite{Zh11}. 

In general, the sofic entropy of an action depends on a choice of sofic approximation, which naturally leads to the question ``what is the best choice?'' One approach to this question is to say that a good choice should satisfy various identities. For example, it should be additive under direct product, satisfy a subgroup formula and behave appropriately with respect to ergodic decompositions. In general, such identities do not hold. However, when the group $G$ is a finitely generated free group then there is a {\em random} sofic approximation for which the corresponding sofic entropy, known as the $f$-invariant, satisfies these identities. This invariant was introduced in \cite{Bo10a} via an explicit formula from which it is easily seen to be additive under direct products. In \cite{Bo10c} it is shown to satisfy the analogue of Rohlin's formula, in \cite{BG12} Yuzvinskii's addition formula, in \cite{Se12a} a subgroup formula and in \cite{Se12b} an ergodic decomposition formula. Moreover, in \cite{Bo10d} it is shown that it is sofic entropy with respect to a random sofic approximation. Because of the importance of the $f$-invariant, we work with random sofic approximations in this paper.



In \cite{RW00}, Rudolph and Weiss proved the following. If, for $i=1,2$, $G_i$ are countable amenable groups, $T_i:G_i \to \Aut(X,\mu)$ (for $i=1,2$) are free ergodic probability-measure-preserving actions with the same orbits and the cocycle $\alpha: G_1 \times X \to G_2$ defined by $\alpha(g,x) = h$ if $T_1(g)x=T_2(h)x$ is measurable with respect to the counting measures on $G_1,G_2$ and a sub-sigma algebra $\cF$ on $X$ which is both $T_1(G_1)$ and $T_2(G_2)$-invariant then
$$h_\mu(T_1,\cP|\cF ) = h_\mu(T_2,\cP|\cF)$$
for any finite partition $\cP$. This implies that the relative entropy of a class-bijective extension of two discrete amenable measured equivalence relations is well-defined. This fact has proven to be very useful in extending classical results about $\Z$-actions to actions of arbitrary amenable groups \cite{RW00, Da01, DP02, DG02, Av05, Av10}. Our main results expand on this work by defining relative entropy for extensions of sofic measured groupoids. Because groups and measured equivalence relations are special cases of measured groupoids, the results here extend many previous results.


The various definitions of entropy all depend (apriori) on the choice of an auxiliary object. In the classical case, the auxiliary object is the partition $\cP$. This paper uses pseudo-metrics to define topology entropy. We also present two definitions of measure entropy: one based on a choice of sigma-algebra and the other based on a choice of pseudo-metric. The main results of \S \ref{sec:top}-\ref{sec:meas2} and of the paper are that these choices are irrelevant and  the first and second definitions of measure entropy coincide (Theorems \ref{thm:K-top}, \ref{thm:K}, \ref{thm:1=2}). Like the Kolmogorov-Sinai Theorem, these results are of fundamental importance to the theory because they show that one can compute or estimate entropy using whatever auxiliary object is most convenient.

Our definition of topological entropy is modeled after \cite[Definition 2.3]{KL2}, our first definition of measure entropy is modeled after \cite{Ke12} and our second definition of measure entropy is modeled after \cite[Definition 3.3]{KL2}. The proofs are independent of previous literature. While some aspects of the proofs follow \cite{KL11,Ke12}, other parts are new. In particular, we avoid the operator-theoretic point of view of \cite{KL11}. The main advantage of our first definition of measure entropy is that it does not depend on a choice of topological model while our second definition is much more closely associated with the definition of topological entropy and, in particular, is useful in establishing the variational principle (Theorem \ref{thm:var-principle}). 

Kolmogorov introduced entropy to dynamical systems theory in order to classify Bernoulli shift over the group $\Z$. In \S \ref{sec:bernoulli} we define Bernoulli shifts over an arbitrary discrete probability-measure-preserving groupoid and compute their entropy (Theorem \ref{thm:bernoulli}), which as expected coincides with the Shannon entropy of the base space. This enables us to answer a question of Benjy Weiss on the isomorphism problem of non-free Bernoulli shifts (Theorem \ref{thm:nonfree}). The reader who is only interested in these two results need only read \S 2,3,4,7 for background.

\subsection{Organization}

We begin by defining groupoids in \S \ref{sec:groupoids}, sofic approximations in \S \ref{sec:sofic-approximations}, extensions of groupoids in \S \ref{sec:actions} and spanning and separating sets in \S \ref{sec:sss}. Sections \S\ref{sec:top} - \ref{sec:meas2} introduce the definitions of topological and measure sofic entropy and show that they do not depend on the choice of generating pseudo-metric or sigma-algebra. In \S \ref{sec:variational} the variational principle is established. In \S \ref{sec:measure-zero} we show how to define entropy for extensions that are class-bijective almost everywhere but not necessarily class-bijective. In  \S \ref{sec:bernoulli} we compute the entropy of a Bernoulli shift. In \S \ref{sec:non-free} we show that two isomorphic non-free Bernoulli shifts with sofic stabilizer distribution must have the same base space entropy. The last two sections are independent of \S\ref{sec:sss}, \ref{sec:top}, \ref{sec:semi}, \ref{sec:meas2}, \ref{sec:variational}.

{\bf Acknowledgements}. This paper owes a debt to David Kerr and Dykema-Kerr-Pichot for sharing early versions of \cite{DKP11, Ke12} from which I learned a lot about sofic groupoids and the partition approach to sofic entropy. Also a big thanks to Hanfeng Li for finding errors in previous versions.

\section{Discrete groupoids}\label{sec:groupoids}

A {\em groupoid}  is a small category in which every morphism is invertible. More precisely, a groupoid is a set of morphisms, denoted by $\sH^1$ together with a set of objects $\sH^0$, source and range maps $\so, \ra:\sH^1 \to \sH^0$, an injective inclusion map $i:\sH^0 \to \sH^1$, a set of composable pairs $\sH^2 \subset \sH^1\times \sH^1$ and a composition map $c:\sH^2 \to \sH^1$ satisfying
\begin{enumerate}
\item  $\so(i(x))=\ra(i(x))=x$ for all $x\in \sH^0$;
\item $\sH^2 = \{ (f,g):~ \so(f)=\ra(g)\}$;
\item $\so(c(f,g))=\so(g), \ra(c(f,g))=\ra(f)$ $\forall (f,g) \in \sH^2$,
\item for every $f\in \sH^1$ there is a unique element, denoted $f^{-1} \in \sH^1$ such that $c(f^{-1},f)  = i(\so(f))$ and $c(f,f^{-1})=i(\ra(f))$.
\end{enumerate}
To simplify notation, we let $\sH$ denote $\sH^1$ and identify $\sH^0$ as a subset of $\sH$ via the inclusion map. If $(f,g) \in \sH^2$ then we will write $fg :=c(f,g)$. For example, the last item above can be expressed by $f^{-1}f=\so(f)$ and $ff^{-1}=\ra(f)$.



\subsection{Measurable groupoids and pmp groupoids}

A {\it measurable groupoid} is a groupoid $\sH$ with the structure of a standard Borel space
such that $\sH^0$ is a Borel set, $\sH^2$ is a Borel subset of $\sH^1\times \sH^1$ and the source, range, composition, and inversion maps are all Borel.

Let $\lb\sH\rb$ denote the set of all Borel subsets $f \subset \sH$ such that the restrictions of the source and range maps to $f$ are Borel isomorphisms onto their respective images. For $f \in \lb \sH \rb$, define $f^{-1}:=\{h^{-1}:~ h \in f\}$. The composition of $f,g \in \lb\sH\rb$ is defined by $fg:=\{h \in \sH:~ h=f'g' \textrm{ for some }f'\in f, g' \in g\}$. This makes $\lb\sH\rb$ an inverse semi-group called the {\em semi-group of partial automorphisms}. Observe that $\sH \subset \lb\sH\rb$ and every Borel subset $P \subset \sH^0$ is an element of $\lb\sH\rb$. In particular, if $f \in \lb \sH\rb$ and $x\in \so(f)$ then $fx$ is well-defined. Moreover, $fx = \so^{-1}(x) \cap f$. Note that $fx$ need not be in $\sH^0$. To remedy this, we define $f \cdot x := \ra(fx) \in \sH^0$. Similarly, if $P \subset \sH^0$ then we define $f \cdot P := \ra(f P)$.


We let $[\sH] \subset \lb\sH\rb$ denote the space of all Borel subsets $f \subset \sH$ such that the source and range maps restricted to $f$ are each Borel isomorphisms onto $\sH^0$. The set $[\sH]$ is a group under composition. We call it the {\em full group} of $\sH$.



A groupoid $\sH$ is {\em discrete} if $\so^{-1}(x)$ and $\ra^{-1}(x)$ are countable for every $x\in \sH^0$. A {\it discrete probability measured groupoid} is a discrete measurable groupoid $\sH$ paired with a Borel
probability measure $\nu$ on $\sH^0$ such that if $\nu_\so, \nu_\ra$ are the measures on $\sH$ given by
\begin{eqnarray*}
\nu_\so(B)&=&\int_{\sH^0} | \so^{-1} (x)\cap B | \, d\nu (x)\\
\nu_\ra(B)&=&\int_{\sH^0} | \ra^{-1} (x)\cap B | \, d\nu (x)
\end{eqnarray*}
for every Borel set $B\subseteq\sH$ then $\nu_\so$ is equivalent to $\nu_\ra$. If, in addition, $\nu_\so=\nu_\ra$ then we say $(\sH,\nu)$ is {\em pmp (probability-measure-preserving)}. In this article, we work exclusively with pmp groupoids. So we let $\nu$ denote $\nu_\so=\nu_\ra$ and note that $\nu$ restricted to $\sH^0$ is $\nu$, so no confusion should arise.

Given $f \in \lb\sH\rb$, the {\em trace} of $f$ is defined by $\tr_\sH(f):=\nu(\sH^0 \cap f)$. Also we define $|f|_\sH = \nu(f)$.

\subsection{Discrete topological groupoids}

A {\em discrete topological groupoid} is a discrete groupoid $\sH$ so that $\sH$ is equipped with a topology in which the structure maps (source, range, inverse and composition) are continuous. 

A {\em bisection} is an open subset $f \subset \sH$ such that the source and range maps restricted to $f$ are homeomorphisms onto their images which are open subsets of $\sH^0$. We say that $\sH$ is {\em \'etale} if every $g \in \sH$ is contained in a bisection. For most of the paper, the discrete topological groupoids $\sH$ that we study are \'etale and $\sH^0$ is compact and metrizable.

Let $[\sH]_{top}$ denote the set of all bisections $U \subset \sH$ such that the source and range maps restricted to $U$ are homeomorphisms onto $\sH^0$. Using Lemma \ref{lem:composition} below it can be checked that $[\sH]_{top}$ is a subgroup of $[\sH]$.

Let us suppose now that $(\sH,\nu)$ is a discrete topological pmp groupoid. Given a Borel set $A \subset \sH^0$, let $\partial A = \overline{A} \cap \overline{\sH^0\setminus A}$. Let $\cB_\partial(\sH^0,\nu)$ be the collection of all Borel subsets $A \subset \sH^0$ with $\nu(\partial A) = 0$.  Let $\lb \sH \rb_{top}$ be the set of all  elements of $\lb \sH \rb$ of the form $f = \cup_{i=1}^n f_i$ where 
\begin{itemize}
\item for each $i$ there exists a bisection $U_i$ with $\overline{f_i} \subset U_i$, $\overline{\so(f_i)} \subset \so(U_i)$ and $\overline{\ra(f_i)} \subset \ra(U_i)$;
\item $\{\so(f_i)\}_{i=1}^n \subset \cB_\partial(\sH^0,\nu)$ are pairwise disjoint;
\item $\{\ra(f_i)\}_{i=1}^n \subset \cB_\partial(\sH^0,\nu)$ are pairwise disjoint.
\end{itemize}
This definition is designed in order to make our two different definitions of measure entropy agree (in \S \ref{sec:meas1} and \ref{sec:meas2}); which is crucial to the proof of the variational principle. 

We would like to show that $\lb\sH\rb_{top}$ is closed under composition and inverses. First we need to show that $\cB_\partial(\sH^0,\nu)$ is an algebra:
\begin{lem}\label{lem:algebra}
Let $X$ be a topological space and $\lambda$ a Borel measure on $X$. Let $\cB_\partial(X,\lambda)$ be the collections of all Borel subsets $Y \subset X$ such that $\lambda(\partial Y)=0$ where $\partial Y = \overline{Y} \cap \overline{X \setminus Y}$. Then $\cB_\partial(X,\lambda)$ is closed under complementation, finite unions and finite intersections.
\end{lem}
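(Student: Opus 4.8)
The plan is to handle complementation first (where the boundary is literally unchanged), then unions (via the standard inclusion $\partial(A\cup B)\subseteq \partial A\cup\partial B$ together with subadditivity of $\lambda$), and finally intersections by reducing to the previous two cases through De Morgan's laws.

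First I would observe that for any $Y\subseteq X$ one has
\[
\partial(X\setminus Y)=\overline{X\setminus Y}\cap\overline{X\setminus(X\setminus Y)}=\overline{X\setminus Y}\cap\overline{Y}=\partial Y,
\]
so $\lambda(\partial(X\setminus Y))=\lambda(\partial Y)$ and $\cB_\partial(X,\lambda)$ is closed under complementation.

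Next, for the union of $A,B\in\cB_\partial(X,\lambda)$, I would use the elementary topological facts $\overline{A\cup B}=\overline{A}\cup\overline{B}$ and $\overline{X\setminus(A\cup B)}=\overline{(X\setminus A)\cap(X\setminus B)}\subseteq\overline{X\setminus A}\cap\overline{X\setminus B}$. Intersecting and distributing,
\[
\partial(A\cup B)\subseteq(\overline{A}\cup\overline{B})\cap\overline{X\setminus A}\cap\overline{X\setminus B}\subseteq\bigl(\overline{A}\cap\overline{X\setminus A}\bigr)\cup\bigl(\overline{B}\cap\overline{X\setminus B}\bigr)=\partial A\cup\partial B.
\]
Hence $\lambda(\partial(A\cup B))\le\lambda(\partial A)+\lambda(\partial B)=0$, so $A\cup B\in\cB_\partial(X,\lambda)$. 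Finally, intersections are handled by writing $A\cap B=X\setminus\bigl((X\setminus A)\cup(X\setminus B)\bigr)$ and applying the two cases already established.

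I do not expect a genuine obstacle here; the lemma is pure point-set topology plus subadditivity of $\lambda$ (note $\lambda$ need not be finite, but monotonicity and countable — hence finite — subadditivity of an arbitrary Borel measure suffice). The only mildly delicate point worth stating carefully is that $\overline{A\cap B}$ can be strictly smaller than $\overline{A}\cap\overline{B}$, which is precisely why the intersection case is routed through complementation and union rather than attempted directly.
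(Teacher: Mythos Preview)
Your proof is correct and follows essentially the same approach as the paper: both argue $\partial(X\setminus Y)=\partial Y$ for complementation and $\partial(A\cup B)\subseteq\partial A\cup\partial B$ for unions. The only difference is in the intersection step: the paper proves the inclusion $\partial(A\cap B)\subseteq\partial A\cup\partial B$ directly via a sequence argument, whereas you route through De Morgan and the already-established cases. Your route is arguably cleaner, since the paper's sequence argument tacitly uses that limits of sequences detect closure, which strictly speaking requires first countability; the De Morgan reduction works in an arbitrary topological space without that assumption.
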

\begin{proof}
To simply notation, for any $Y \subset X$, let $Y^c = X\setminus Y$. Because $\partial Y^c = \partial Y$, it is clear that $\cB_\partial(X,\lambda)$ is closed under complementation.

Let $A,B \in \cB_\partial(X,\lambda)$. Observe that $\partial(A \cup B) \subset \partial A \cup \partial B$. Hence $\lambda(\partial(A\cup B))=0$. Also, $\partial(A\cap B) \subset \partial A \cup \partial B$ which implies $\lambda(\partial(A\cap B))=0$. To see this, let $x \in \partial(A \cap B)$. Then there exist elements $\{y_n\}_{n=1}^\infty\subset (A \cap B)^c$ with $\lim_{n\to\infty} y_n =x$. Because $\{y_n\}_{n=1}^\infty \subset A^c \cup B^c$, either $\{y_n\}_{n=1}^\infty \cap A^c$ is infinite or $\{y_n\}_{n=1}^\infty \cap B^c$ is infinite. In the first case, $x \in \partial A$ and in the second $x \in \partial B$ which proves the claim. So $\cB_\partial(X,\lambda)$ is closed under finite unions and intersections.

\end{proof}


\begin{lem}\label{lem:composition}
If $U,V \subset \sH$ are bisections, then $UV$ is a bisection. If $f,g \in \lb \sH \rb_{top}$ then $f^{-1},fg \in \lb \sH \rb_{top}$. 
\end{lem}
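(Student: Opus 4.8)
The plan is to establish the two assertions in order, since the first is the main input to the second. For the product of bisections, I would realize $UV$ as the preimage of $V$ under an explicit continuous map. Writing $\phi_U:=(\ra|_U)\circ(\so|_U)^{-1}\colon\so(U)\to\ra(U)$ for the partial homeomorphism attached to a bisection $U$, I define on the open set $\ra^{-1}(\ra(U))\subset\sH$ the map $\psi(h):=(\ra|_U)^{-1}(\ra(h))^{-1}\,h$; the two factors are composable and $\psi$ is continuous because the structure maps and $(\ra|_U)^{-1}$ are continuous. A direct check with the groupoid axioms shows that for $h\in\ra^{-1}(\ra(U))$ one has $h\in UV$ iff $\psi(h)\in V$, while $UV\subset\ra^{-1}(\ra(U))$ automatically; hence $UV=\ra^{-1}(\ra(U))\cap\psi^{-1}(V)$ is open. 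Injectivity of $\so|_U,\ra|_U,\so|_V,\ra|_V$ then gives that $\so|_{UV}$ and $\ra|_{UV}$ are injective with open images ($\ra(UV)=\phi_U(\so(U)\cap\ra(V))$ and $\so(UV)=\phi_V^{-1}(\so(U)\cap\ra(V))$), and writing the inverse bijections as composites of the structure maps and the $\phi$'s shows they are continuous; so $UV$ is a bisection. (For $\so|_{UV}$ one may instead invoke the $\ra$-statement for $(UV)^{-1}=V^{-1}U^{-1}$, the inverse of a bisection being a bisection.)

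For $\lb\sH\rb_{top}$, note first that $f^{-1},fg\in\lb\sH\rb$ since $\lb\sH\rb$ is an inverse semigroup. Write $f=\bigcup_{i=1}^n f_i$ with witnessing bisections $U_i$ and put $S_i:=\so(f_i)$, $R_i:=\ra(f_i)$. Since inversion is a homeomorphism of $\sH$ exchanging $\so$ and $\ra$, the decomposition $f^{-1}=\bigcup_i f_i^{-1}$ with bisections $U_i^{-1}$ satisfies all the defining conditions verbatim, so $f^{-1}\in\lb\sH\rb_{top}$. For the product, write also $g=\bigcup_{j=1}^m g_j$ with bisections $V_j$, $T_j:=\so(g_j)$, $Q_j:=\ra(g_j)$. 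Using $f_i=(\so|_{U_i})^{-1}(S_i)$ and $g_j=(\so|_{V_j})^{-1}(T_j)$, a short computation identifies $f_ig_j=(U_iV_j)\cap\so^{-1}(S_{ij})$ with $S_{ij}:=\phi_{V_j}^{-1}(S_i\cap Q_j)$ and $\ra(f_ig_j)=R_{ij}:=\phi_{U_i}(S_i\cap Q_j)$; by the first part $U_iV_j$ is a bisection. I then want to show that $fg=\bigcup_{i,j}f_ig_j$, reindexed as $\bigcup_{k=1}^{nm}$, is a valid decomposition. Pairwise disjointness of $\{S_{ij}\}$ and of $\{R_{ij}\}$ follows from the inclusions $S_{ij}\subset T_j$, $R_{ij}\subset R_i$, injectivity of $\phi_{V_j}$ and $\phi_{U_i}$, and the given disjointness of $\{S_i\},\{T_j\},\{R_i\},\{Q_j\}$.

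It remains to check the measure and closure conditions for the pieces $f_ig_j$, and this is where the hypotheses enter. Since $(\sH,\nu)$ is pmp, every element of $\lb\sH\rb$ — in particular every bisection — induces a measure-preserving partial transformation, so $\phi_{V_j}^{-1}$ and $\phi_{U_i}$ preserve $\nu$ on the relevant domains. The hypotheses $\overline{S_i}\subset\so(U_i)$ and $\overline{Q_j}\subset\ra(V_j)$ force $\overline{S_i\cap Q_j}$ to sit inside $\ra(V_j)$, hence $\phi_{V_j}^{-1}(S_i\cap Q_j)$ has closure (and boundary) inside the open set $\so(V_j)$, where $\phi_{V_j}$ is a homeomorphism; combining this with Lemma~\ref{lem:algebra} (so that $S_i\cap Q_j\in\cB_\partial(\sH^0,\nu)$) and measure-preservation gives $S_{ij}\in\cB_\partial(\sH^0,\nu)$ and $\overline{S_{ij}}\subset\phi_{V_j}^{-1}(\so(U_i)\cap\ra(V_j))=\so(U_iV_j)$; symmetrically $R_{ij}\in\cB_\partial(\sH^0,\nu)$ and $\overline{R_{ij}}\subset\ra(U_iV_j)$. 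The last and most delicate point is $\overline{f_ig_j}\subset U_iV_j$: I would use that $\sH^0$ is compact, so that $\overline{S_{ij}}$ is compact, whence $(\so|_{U_iV_j})^{-1}(\overline{S_{ij}})$ is a compact — hence closed — subset of $\sH$ which contains $f_ig_j=(\so|_{U_iV_j})^{-1}(S_{ij})$ and lies inside $U_iV_j$; therefore $\overline{f_ig_j}\subset(\so|_{U_iV_j})^{-1}(\overline{S_{ij}})\subset U_iV_j$. Taking $W_{ij}:=U_iV_j$ as the witnessing bisection for the $(i,j)$-piece then yields $fg\in\lb\sH\rb_{top}$. The compactness/closedness bookkeeping in this last step — and, implicitly, the Hausdorffness needed to pass from ``compact'' to ``closed in $\sH$'' — is the part I expect to require the most care.
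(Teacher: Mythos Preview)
Your proof is correct and follows the same overall architecture as the paper (show $UV$ is a bisection, then use the decomposition $fg=\bigcup_{i,j}f_ig_j$ with witnessing bisections $U_iV_j$ and verify the three defining conditions via Lemma~\ref{lem:algebra} and measure-preservation), but two of the topological sub-arguments differ in an interesting way.

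For openness of $UV$, the paper argues sequentially: given $k_i\to uv$, it locates $u_i\in U$ with $\ra(u_i)=\ra(k_i)$, shows $u_i\to u$ and $u_i^{-1}k_i\to v$, and concludes $k_i\in UV$ for large $i$. Your preimage description $UV=\ra^{-1}(\ra(U))\cap\psi^{-1}(V)$ with $\psi(h)=\big((\ra|_U)^{-1}(\ra h)\big)^{-1}h$ is cleaner and avoids any first-countability assumption.

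For the closure condition $\overline{f_ig_j}\subset U_iV_j$, the paper again works sequentially: if $u_nv_n\to w$ then $\so(v_n)\to\so(w)\in\overline{\so(g_j)}\subset\so(V_j)$, whence (using that $V_j$ is a bisection) $v_n\to v_\infty\in V_j$, and similarly $u_n\to u_\infty\in U_i$, so $w=u_\infty v_\infty$. Your route via compactness of $\overline{S_{ij}}\subset\sH^0$ and the continuous section $(\so|_{U_iV_j})^{-1}$ is different; your worry about Hausdorffness of $\sH$ (needed to pass from ``compact'' to ``closed'') is legitimate, but note that the paper's argument also implicitly uses uniqueness of limits to conclude $w=u_\infty v_\infty$, as well as a sequential characterization of closure. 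In the paper's working context ($\sH^0$ compact metrizable, $\sH$ \'etale) both arguments go through; neither is strictly more general at the level of the bare lemma statement.
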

\begin{proof}
Let $U,V \subset \sH$ be bisections. It is straightforward to check that the source and range of $UV$ are open sets and the source and range maps restricted to $UV$ are continuous bijections onto their images. We must show that the inverses of these restricted maps are continuous. For $x \in \so(UV)$, let $\phi(x)$ be the unique element in $\so^{-1}(x)\cap V$. For $v$ in the image of $\phi$, let $\psi(v) = (u,v)$ where $u\in U$ is the unique element with $(u,v) \in \sH^2$. Then $c\circ\psi\circ\phi:\so(UV) \to UV$ is the inverse of the source map (restricted to $UV$) (where $c:\sH^2 \to \sH$ is the composition map $(u,v) \mapsto uv$). Observe that $\phi$ is continuous because it agrees with $(\so|_V)^{-1}$ on its domain, where $(\so|_V)^{-1}$ is the inverse of the source map restricted to $V$. Also $c$ is continuous by definition of topological group. If we let $\Psi(v)=u$ if $\psi(v)=(u,v)$ then $\psi$ is continuous if and only if $\Psi$ is continuous. However $\Psi(v) = (\so|_U)^{-1} \circ \ra(v)$ is continuous. This shows that $\psi$ and therefore $(\so|_{UV})^{-1}$ is continuous. Similarly, $(\ra|_{UV})^{-1}$ is continuous. 

It remain to show that $UV$ is open. Let $u\in U, v\in V$ be composable and suppose $\{k_i\}_{i=1}^\infty$ is a sequence in $\sH$ with $\lim_{i\to\infty} k_i = uv$. Note $\ra(uv) \in \ra(U)$. Since $\ra(U)$ is open, if $i$ is sufficiently large then $\ra(k_i) \in \ra(U)$. Let $u_i \in U$ be the unique element such that $\ra(u_i) = \ra(k_i)$ (for $i \gg 0$). Because $\ra(k_i) \to \ra(uv)=\ra(u)$ it follows that $\ra(u_i) \to \ra(u)$ and therefore $u_i \to u$ as $i\to\infty$. So $\lim_{i\to\infty} u_i^{-1}k_i = v$. Because $V$ is open, $u_i^{-1}k_i \in V$ for $i\gg 0$. Thus $k_i = u_i (u_i^{-1}k_i) \in UV$ for $i\gg 0$ which shows that $UV$ is open. We have now verified that $UV$ is a bisection.

Let $f,g \in \lb \sH\rb_{top}$. By definition there are Borel sets $f_1,\ldots, f_n, g_1,\ldots, g_m \subset \sH$ and bisections $U_1,\ldots, U_n, V_1,\ldots, V_m$ such that
\begin{itemize}
\item $f=\cup_{i=1}^n f_i$ and $g=\cup_{j=1}^m g_j$;
\item $\overline{f_i} \subset U_i$, $\overline{g_j} \subset V_j$ for each $i,j$;
\item $\{\so(f_i)\}_{i=1}^n, \{\ra(f_i)\}_{i=1}^n, \{\so(g_j)\}_{j=1}^m, \{\ra(g_j)\}_{j=1}^m \subset \cB_\partial(\sH^0,\nu)$ are each pairwise disjoint.
\end{itemize}
Using that $f^{-1} = \cup_{i=1}^n f_i^{-1}$, it is straightforward to check that $f^{-1} \in \lb \sH \rb_{top}$.  Note that $fg = \cup_{i=1}^n \cup_{j=1}^m f_ig_j$. It is straightforward to check that the sources and ranges of $\{f_ig_j:~1\le i\le n, 1\le j\le m\}$ are pairwise disjoint. We claim that $\overline{f_ig_j} \subset U_iV_j$. Indeed, let $u_n \in f_i, v_n \in g_j$ and suppose $\lim_{n\to\infty} u_nv_n = w \in \sH$ exists. Observe that $\so(u_nv_n)=\so(v_n)$ so $\lim_{n\to\infty} \so(v_n)=\so(w)$. Because  $\so(\overline{g_j}) \subset \so(V_j)$, $\so(w) \in \so(V_j)$. In particular, there exists a unique $v_\infty \in V_j$ such that $\so(w)=\so(v_\infty)$. Because $\lim_{n\to\infty} \so(v_n) = \so(v_\infty)$ and $V_j$ is a bisection, $\lim_{n\to\infty} v_n = v_\infty$. Similarly, there is a unique element $u_\infty \in U_i$ with $\ra(u_\infty) = \ra(w)$ such that $\lim_{n\to\infty} u_n  = u_\infty$. It follows that $\lim_{n\to\infty} u_nv_n = u_\infty v_\infty =w \in U_iV_j$. Because $w$ is arbitrary, $\overline{f_ig_j} \subset U_iV_j$ as claimed.

It remains to show that $\so(f_ig_j), \ra(f_ig_j) \in \cB_\partial(\sH^0,\nu)$. Observe that 
\begin{eqnarray}\label{eqn:figj}
\so(f_ig_j) = \so(g_j) \cap g_j^{-1}\cdot \so(f_i) = \so(g_j) \cap V_j^{-1}\cdot (\so(f_i) \cap \ra(g_j)).
\end{eqnarray}
By Lemma  \ref{lem:algebra}, $\so(f_i) \cap \ra(g_j) \in \cB_\partial(\sH^0,\nu)$. Note
$$\partial (\so(f_i) \cap \ra(g_j)) \subset \overline{\ra(g_j)} \subset \ra(V_j).$$
Since the source and range maps restricted to $V_j^{-1}$ are measure-preserving homeomorphisms onto their images,
$$\nu( \partial V_j^{-1}\cdot (\so(f_i) \cap \ra(g_j) ) = \nu( V_j^{-1}\cdot (\partial (\so(f_i) \cap \ra(g_j) )) =  \nu( \partial (\so(f_i) \cap \ra(g_j)) = 0.$$
So $V_j^{-1}\cdot (\so(f_i) \cap \ra(g_j) \in \cB_\partial(\sH^0,\nu)$. Lemma  \ref{lem:algebra} and (\ref{eqn:figj}) now implies $\so(f_ig_j) \in \cB_\partial(\sH^0,\nu)$. The proof that $\ra(f_ig_j) \in \cB_\partial(\sH^0,\nu)$ is similar.
\end{proof}

\subsection{Examples}

\begin{example}\label{example:group}
A countable group $G$ can be thought of as a discrete pmp groupoid in which the set of objects $G^0=\{e\}$. The measure $\nu$ is simply counting measure.
\end{example}

\begin{example}\label{example:group-action}
Let $G$ be a countable group with a probability measure-preserving action $G \cc (X,\lambda)$. The groupoid associated to this action is $\sH=\{ (g,x):~x\in X,g \in G\}$ where $\sH^0=\{ (e,x):~x\in X\}$. The measure $\nu$ on $\sH$ is defined to be $c\times \lambda$ where $c$ is counting measure on $G$. The structure maps are defined by $\so(g,x)= (e,x), \ra(g,x)=(e,gx)$, $(g,x)^{-1}= (g^{-1},gx)$ and  $(h,gx)(g,x)=(hg,x)$.
\end{example}

\begin{example}
Let $G$ be a countable discrete group acting by homeomorphisms on a compact metric space $X$. The topological groupoid associated to this action is $\sH = G \times X$ with the product topology. The structure maps are defined as in the previous example. Note that $\sH$ is \'etale.
\end{example}

\begin{example}
Recall that a discrete pmp (probability measure-preserving) equivalence relation consists of a standard probability space $(X,\lambda)$ together with Borel equivalence relation $E \subset X \times X$ such that every $E$-class is at most countable and, if $c$ denotes the counting measure on $X$ then $\lambda \times c|_E = c\times \lambda|_E$. This can be represented as a discrete pmp groupoid by setting $\sH=E$, $\sH^0 = \{(x,x) \in E:~ x \in X\}$ and $\nu=\lambda \times c|_E$. The structure maps are defined by $\so(x,y)=(y,y), \ra(x,y)=(x,x), (x,y)^{-1}=(y,x)$ and $(x,y)(y,z)=(x,z)$.
\end{example}

\begin{example}\label{example:d}
Let $d \in \N$. The {\em full groupoid} on $\{1,\ldots,d\}$ is $\Delta_d:=\{1,\ldots,d\}^2$. The unit space is $\Delta^0_d:=\{ (i,i):~ 1 \le i \le d\}$. The structure maps are defined by $\so(i,j)=(j,j), \ra(i,j)=(i,i), (i,j)^{-1}=(j,i)$ and $(i,j)(j,k)=(i,k)$. Let $\zeta_d(E) = |E|/d$ for every set $E \subset \Delta_d$. Thus $(\Delta_d,\zeta_d)$ is a pmp groupoid. Note that $[\Delta_d]$ is isomorphic with the symmetric group on $\{1,\ldots,d\}$ while $\lb \Delta_d\rb$ is the collection of all subsets $f \subset \Delta_d$ such that the two projection maps $\so:f \to \so(f), \ra:f \to \ra(f)$ are bijections. To make the notation simpler, we set $[d]:=[\Delta_d]$, $\lb d \rb:=\lb \Delta_d \rb$, $\tr_d:=\tr_{\Delta_d}, | \cdot |_d := |\cdot|_{\Delta_d}$. So $\tr_d(f)=|f \cap \Delta^0_d|/d$ and $|f|_d = |f|/d$. 
\end{example}

\section{Sofic approximations}\label{sec:sofic-approximations}

Let $(\sH,\nu)$ be a pmp discrete groupoid. We use notation as in Example \ref{example:d}. For $d>0$, let $\Map(\lb\sH\rb,\lb d \rb)$ be the set of all functions from $\lb\sH\rb$ to $\lb d \rb$. This set carries a natural Borel structure as follows. Given a finite set $F \subset \lb \sH \rb$ and $\sigma:\lb \sH \rb \to \lb d \rb$, let $N(\sigma,F)=\{\sigma' \in \Map(\lb\sH\rb,\lb d \rb):~ \sigma'(f)=\sigma(f)~\forall f \in F\}$. We consider $\Map(\lb \sH \rb, \lb d \rb)$ with the Borel structure generated by all such $N(\sigma,F)$. 



\begin{notation}
We write $X \subset_f Y$ to mean ``$X$ is a finite subset of $Y$''.
\end{notation}
Let $F \subset_f \lb \sH \rb,  \delta>0$. We say that a map $\sigma : \llbracket\sH\rrbracket \to \llbracket d \rrbracket$ is 
{\it $(F ,\delta )$-multiplicative} if 
\[
|  \sigma (s  t ) \vartriangle \sigma (s )\sigma (t ) |_d < \delta 
\]
for all $s, t \in F$ and $(F,\delta)$\emph{-trace-preserving} if
\[
| \tr_{d}(\sigma (s)) - \tr_{\sH}(s) | < \delta
\]
for all $s\in F$.

\begin{defn}[Sofic approximation]\label{defn:sofic-approximation}
Let $J$ be a directed set. For each $j \in J$, let $d_j \in \Nb$ and $\P_j$ be a Borel probability measure on $\Map(\lb\sH\rb,\lb d_j \rb)$. We say that the family $\P=\{\P_j\}_{j\in J}$ is a {\em sofic approximation} to $(\sH,\nu)$ if
\begin{enumerate}
\item for every $F \subset_f \lb\sH\rb$ and $\delta>0$,
$$\lim_{j\to J}  \P_j(\{\sigma \in \Map(\lb \sH \rb,\lb d_j \rb):~ \sigma \textrm{ is } (F,\delta)\textrm{-trace-preserving} \}) = 1.$$
\item for every $F \subset_f \lb \sH \rb,  \delta>0$, there exists $j \in J$ such that $j' \ge j$ implies $\P_{j'}$-almost every $\sigma$ is $(F,\delta)$-multiplicative.
\item $\lim_{j\to J} d_j = +\infty$,
\item for every $f, f' \in \lb \sH \rb$ with $\nu(f \vartriangle f')=0$, $\sigma(f)=\sigma(f')$ for $\P_j$-a.e. $\sigma$. 
\end{enumerate}

\end{defn}

The groupoid $(\sH,\nu)$ is {\em sofic} if it admits a sofic approximation. The next two lemmas are of basic general use.

\begin{lem}\label{lem:basic-formulas1}
Let $(\sH,\nu)$ be a pmp groupoid. For any $s,t,t',u \in \lb\sH\rb$, 
$$| s t u \vartriangle s t' u |_\sH \le |t \vartriangle t'|_\sH.$$
\end{lem}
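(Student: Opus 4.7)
The plan is to prove the inequality by decomposing $t$ and $t'$ into their symmetric-difference pieces, using injectivity of composition in $\lb\sH\rb$ to get a disjoint decomposition of $stu$ and $st'u$, and then invoking the pmp property to compare measures of composed sets.

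First I would write $t=(t\cap t')\sqcup(t\setminus t')$ and $t'=(t\cap t')\sqcup(t'\setminus t)$. Composition distributes over unions, so
\[
stu=s(t\cap t')u\cup s(t\setminus t')u,\qquad st'u=s(t\cap t')u\cup s(t'\setminus t)u.
\]
I claim that these two unions are disjoint. The reason is uniqueness of the factorization $h=s'v'u'$ for $h\in sAu$ when $s,u\in\lb\sH\rb$: given $h$, the element $s'\in s$ is forced by $\ra(s')=\ra(h)$ (using that $\ra|_s$ is injective), and similarly $u'\in u$ is forced by $\so(u')=\so(h)$, after which $v'=s'^{-1}h u'^{-1}$ is determined. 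Hence if $h$ belonged to both $s(t\cap t')u$ and $s(t\setminus t')u$, the corresponding middle factors would have to coincide, contradicting the disjointness of $t\cap t'$ and $t\setminus t'$. The same reasoning rules out overlaps between $s(t\setminus t')u$ and $s(t'\setminus t)u$. Thus
\[
stu\vartriangle st'u=s(t\setminus t')u\;\sqcup\;s(t'\setminus t)u.
\]

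Next I would prove the auxiliary fact that $\nu(svu)\le\nu(v)$ for every $v\in\lb\sH\rb$. The composed set $svu$ is an element of $\lb\sH\rb$, so by the pmp identity $\nu_\so=\nu_\ra$ we have $\nu(svu)=\nu(\ra(svu))$. Letting $v|_{s,u}:=\{v'\in v:\ra(v')\in\so(s),\ \so(v')\in\ra(u)\}\subset v$, the map $v'\mapsto s'v'u'$ (with $s'\in s,\ u'\in u$ uniquely determined) is a Borel bijection from $v|_{s,u}$ onto $svu$, and $\ra(svu)=s\cdot\ra(v|_{s,u})$. Because $(\sH,\nu)$ is pmp, left-translation by $s$ is measure-preserving on Borel subsets of $\so(s)$, so
\[
\nu(svu)=\nu(\ra(svu))=\nu\bigl(s\cdot\ra(v|_{s,u})\bigr)=\nu(\ra(v|_{s,u}))=\nu(v|_{s,u})\le\nu(v).
\]

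Finally, applying this bound to $v=t\setminus t'$ and $v=t'\setminus t$ (both subsets of elements of $\lb\sH\rb$, hence in $\lb\sH\rb$) and combining with the disjoint decomposition above yields
\[
|stu\vartriangle st'u|_\sH=\nu(s(t\setminus t')u)+\nu(s(t'\setminus t)u)\le\nu(t\setminus t')+\nu(t'\setminus t)=|t\vartriangle t'|_\sH.
\]
The one point requiring real care is the disjointness of the decompositions, where I must invoke the injectivity of source and range on elements of $\lb\sH\rb$; everything else is a bookkeeping consequence of the pmp hypothesis.
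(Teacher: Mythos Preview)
Your proof is correct: the uniqueness-of-factorization argument (forced by injectivity of $\ra|_s$ and $\so|_u$) cleanly yields the disjoint decomposition $stu\vartriangle st'u=s(t\setminus t')u\sqcup s(t'\setminus t)u$, and the pmp bound $\nu(svu)\le\nu(v)$ finishes it. The paper itself gives no proof---it records the lemma as ``an exercise''---so there is nothing to compare against; your write-up is exactly the kind of verification the reader is expected to supply.
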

\begin{proof}
This is an exercise.
\end{proof}

\begin{lem}\label{lem:basic-formulas2}
Let $F \subset_f \lb \sH\rb$ and $\delta>0$. Suppose $\sigma:\lb\sH\rb\to\lb d \rb$ is $(F,\delta)$-multiplicative. Then for any Borel set $P \subset \sH^0$ with $P \in F$ and any $f$ with $f,f^{-1},\so(f),\ra(f) \in F,$ 
\begin{enumerate}
\item $|\sigma(P) \vartriangle  (\sigma(P) \cap \Delta^0_d)|_d \le \delta$;
\item  $|\so(\sigma(f)) \vartriangle \sigma(\so(f))|_d \le 10\delta$;
\item $|\ra(\sigma(f)) \vartriangle \sigma(\ra(f))|_d \le 10\delta$;
\item $|\sigma(f^{-1}) \vartriangle \sigma(f)^{-1} |_d \le 15\delta$.
\end{enumerate}

\end{lem}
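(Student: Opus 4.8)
\emph{Proof strategy.} Throughout write $a\approx_\eta b$ as shorthand for $|a\vartriangle b|_d\le\eta$, where $a,b\in\lb d\rb$; note that applying Lemma~\ref{lem:basic-formulas1} in the pmp groupoid $\Delta_d$, with the third (resp.\ first) factor taken to be the identity $\Delta^0_d$ of $\lb d\rb$, gives $ac\approx_\eta bc$ and $ca\approx_\eta cb$ whenever $a\approx_\eta b$. I also use the elementary identities valid in any semigroup of partial automorphisms (in particular in $\lb\sH\rb$ and $\lb d\rb$): $P=PP$ for a subset $P$ of the unit space; $f^{-1}f=\so(f)$, $ff^{-1}=\ra(f)$, $f\so(f)=\ra(f)f=f$, and (taking inverses) $f^{-1}\ra(f)=\so(f)f^{-1}=f^{-1}$. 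The four estimates are proved in the order (1), then (2)--(3), then (4). For (1): since $P=PP$, multiplicativity gives $\sigma(P)\approx_\delta\sigma(P)\sigma(P)$, so it suffices to check the combinatorial inclusion $Q\setminus\Delta^0_d\subseteq Q\setminus QQ$ for every $Q\in\lb d\rb$. Viewing $Q$ as the graph of a partial bijection $\pi$ of $\{1,\dots,d\}$, if $\pi(k)\ne k$ then the unique pair of $Q$ with second coordinate $k$ is $(\pi(k),k)$, whereas the only possible pair of $QQ$ with second coordinate $k$ is $(\pi(\pi(k)),k)$ (present only when $\pi(k)\in\dom\pi$), and it differs from $(\pi(k),k)$ by injectivity of $\pi$; hence $(\pi(k),k)\in Q\setminus QQ$. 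Taking $Q=\sigma(P)$ yields $|\sigma(P)\vartriangle(\sigma(P)\cap\Delta^0_d)|_d=|\sigma(P)\setminus\Delta^0_d|_d\le|\sigma(P)\vartriangle\sigma(P)\sigma(P)|_d<\delta$.

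For (2) and (3), which are symmetric (replace $\so$ by $\ra$, $f^{-1}f$ by $ff^{-1}$, and $f\so(f)=f$ by $\ra(f)f=f$), I treat (2). Put $E=\sigma(\so f)\cap\Delta^0_d$, so $\sigma(\so f)\approx_\delta E$ by part (1) applied to $\so f\in F$. It suffices to prove the sharper statement $\so(\sigma(f))\approx_{4\delta}E$, since then $|\so(\sigma f)\vartriangle\sigma(\so f)|_d\le 5\delta\le 10\delta$. On one hand $\sigma(f)=\sigma(f\so f)\approx_\delta\sigma(f)\sigma(\so f)\approx_\delta\sigma(f)E$, and $\sigma(f)E$ is $\sigma(f)$ with all pairs $(\pi(k),k)$ discarded for which $(k,k)\notin E$; comparing domains gives $|\so(\sigma f)\setminus E|_d\le 2\delta$. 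On the other hand $E\approx_\delta\sigma(\so f)=\sigma(f^{-1}f)\approx_\delta\sigma(f^{-1})\sigma(f)$, and the domain of $\sigma(f^{-1})\sigma(f)$ is contained in that of $\sigma(f)$, so every unit of $E$ outside the domain of $\sigma(f)$ lies in $E\setminus\sigma(f^{-1})\sigma(f)$, giving $|E\setminus\so(\sigma f)|_d\le 2\delta$. Together these give the sharper statement.

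For (4), abbreviate $\pi=\sigma(f)$, $\rho=\sigma(f^{-1})$, $D=\sigma(\ra f)\cap\Delta^0_d$, $E=\sigma(\so f)\cap\Delta^0_d$. Applying the sharpened (2)--(3) to $f$ and to $f^{-1}$ (using $\so(f^{-1})=\ra f$ and $\ra(f^{-1})=\so f$, all four of $f,f^{-1},\so f,\ra f$ being in $F$) yields $\so(\pi)\approx_{4\delta}E$, $\ra(\pi)\approx_{4\delta}D$, and $\ra(\rho)\approx_{4\delta}E$. Now $\pi^{-1}=\pi^{-1}\ra(\pi)$, while $\ra(\pi)\approx_{4\delta}D\approx_\delta\sigma(\ra f)=\sigma(ff^{-1})\approx_\delta\pi\rho$; applying Lemma~\ref{lem:basic-formulas1} (left multiplication by $\pi^{-1}$) gives $\pi^{-1}=\pi^{-1}\ra(\pi)\approx_{6\delta}\pi^{-1}\pi\rho=\so(\pi)\rho$. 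Finally $\so(\pi)\rho$ is obtained from $\rho$ by discarding those pairs $(j',j)\in\rho$ with $(j',j')\notin\so(\pi)$, so $|\rho\vartriangle\so(\pi)\rho|_d=|\ra(\rho)\setminus\so(\pi)|_d\le|\ra(\rho)\vartriangle\so(\pi)|_d\le 8\delta$. Combining, $|\sigma(f^{-1})\vartriangle\sigma(f)^{-1}|_d=|\rho\vartriangle\pi^{-1}|_d\le 6\delta+8\delta\le 15\delta$.

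The one genuinely delicate step is (4). A sofic approximation is a priori just a set map, so nothing directly prevents $\sigma(f)$ and $\sigma(f^{-1})$ from being partial permutations with essentially unrelated supports; this must be extracted from multiplicativity, and that is exactly what the domain/range estimates (2)--(3) accomplish. Hence (2)--(3) must precede (4); and it is crucial that (2)--(3) be obtained from (1) together with the absorption identities $f\so(f)=f$, $\ra(f)f=f$, rather than from (4) --- the tempting shortcut of comparing $\sigma(f^{-1}f)\approx_\delta\sigma(f^{-1})\sigma(f)$ with $\sigma(f)^{-1}\sigma(f)=\so(\sigma f)$ would be circular. Keeping careful track of the constants (and using that all the multiplicativity estimates are strict) is the only other point requiring attention, and yields the stated bounds.
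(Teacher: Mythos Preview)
Your proof is correct. Part (1) is essentially identical to the paper's. For parts (2)--(3) you take a genuinely different route: the paper uses the ``sandwich'' relations $ff^{-1}f=f$ and $f^{-1}ff^{-1}=f^{-1}$ to first compare $\so(\sigma(f^{-1}))$ with $\ra(\sigma(f))$ (getting a $4\delta$ gap), and only then connects these to $\sigma(\so f),\sigma(\ra f)$ via multiplicativity, arriving at $10\delta$. You instead exploit the absorption identities $f\,\so(f)=f$ and $f^{-1}f=\so(f)$ directly, extracting one inclusion from each and obtaining the sharper bound $5\delta$. For part (4) both arguments pass through (2)--(3) and a triangle inequality; the paper's intermediate point is $\sigma(f)^{-1}\sigma(f)\sigma(f^{-1})$, yours is $\so(\sigma(f))\,\sigma(f^{-1})$, and your sharper input from (2)--(3) yields $14\delta$ rather than the paper's tight $15\delta$. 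Your approach is a bit more streamlined and gives better constants; the paper's has the minor advantage that the sandwich step in (2)--(3) simultaneously relates $\sigma(f)$ and $\sigma(f^{-1})$, which some readers may find conceptually appealing.
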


\begin{proof}
Let $P \subset \sH^0$ with $P \in F$. Note
$$\sigma(P) \setminus (\sigma(P) \cap \Delta^0_d)  \subset \sigma(P) \setminus \sigma(P) \sigma(P).$$
To see this, observe that if $(j,i) \in \sigma(P) \setminus (\sigma(P) \cap \Delta^0_d)$ then $i \ne j$. If $(j,i) \in \sigma(P)\sigma(P)$ then there exists $k$ such that $(j,k),(k,i) \in \sigma(P)$. Because $(k,i),(j,i) \in \sigma(P) \in \lb d \rb$, we must have $j=k$. So $(j,j) \in \sigma(P)$ which implies (because $(j,i) \in \sigma(P)$) that $i=j$, a contradiction. 

Because $\sigma$ is $(F,\delta)$-multiplicative,
\begin{eqnarray*}
|\sigma(P) \vartriangle (\sigma(P) \cap \Delta^0_d)|_d \le | \sigma(P) \vartriangle \sigma(P) \sigma(P)|_d =| \sigma(PP) \vartriangle \sigma(P) \sigma(P)|_d \le \delta.
\end{eqnarray*}
This proves the first item.

Now let $f\in F$ be such that $f^{-1},\so(f),\ra(f)\in F$. Because $\sigma$ is $(F,\delta)$-multiplicative,
\begin{eqnarray*}
2\delta &\ge& |\sigma(f)\sigma(f^{-1})\sigma(f) \vartriangle \sigma(f)|_d\\
2\delta &\ge& |\sigma(f^{-1})\sigma(f)\sigma(f^{-1}) \vartriangle \sigma(f^{-1})|_d.
\end{eqnarray*}
The first inequality above implies the range of $\sigma(f)$ is contained in the source of $\sigma(f^{-1})$ up to a $2\delta$-measure subset. The second inequality implies the range of $\sigma(f)$ contains the source of $\sigma(f^{-1})$ up to a $2\delta$-measure subset. Therefore,
$$|\so(\sigma(f^{-1})) \vartriangle \ra(\sigma(f))  |_d \le 4\delta.$$
Similar considerations imply $|\so(\sigma(f)) \vartriangle \ra(\sigma(f^{-1}))  |_d \le 4\delta$.

Because of $(F,\delta)$-multiplicativity, $|\sigma(f)\sigma(f^{-1}) \vartriangle \sigma(\ra(f))|_d \le \delta$. Therefore, we have 
$$|\so(\sigma(f)\sigma(f^{-1})) \vartriangle \so(\sigma(\ra(f)))|_d \le \delta.$$
Because $\ra(\sigma(f^{-1}))$ is $4\delta$-close to $\so(\sigma(f))$, it follows that $\so(\sigma(f)\sigma(f^{-1}))$ is $4\delta$-close to $\so(\sigma(f^{-1}))$ which is $4\delta$-close to $\ra(\sigma(f))$. Thus
$$| \ra(\sigma(f)) \vartriangle \so(\sigma(\ra(f)))|_d \le 9\delta.$$
From item (1) it follows that $|\so(\sigma(\ra(f))) \vartriangle \sigma(\ra(f))|_d \le \delta$. So we obtain $|\ra(\sigma(f)) \vartriangle \sigma(\ra(f))|_d \le 10\delta.$ This proves item (3). Item (2) is similar.


We now have
\begin{eqnarray*}
| \sigma(f^{-1}) \vartriangle \sigma(f)^{-1}|_d &\le&  |\sigma(f^{-1})  \vartriangle \sigma(f)^{-1}\sigma(f)\sigma(f^{-1})|_d+ |\sigma(f)^{-1}\sigma(f)\sigma(f^{-1}) \vartriangle \sigma(f)^{-1}|_d\\
&\le&\delta + |\sigma(f^{-1})  \vartriangle \so(\sigma(f))\sigma(f^{-1})|_d+ |\sigma(f)^{-1}\sigma(ff^{-1}) \vartriangle \sigma(f)^{-1}|_d\\
&\le&5\delta + |\sigma(f^{-1})  \vartriangle \ra(\sigma(f^{-1}))\sigma(f^{-1})|_d+ |\sigma(f)^{-1}\sigma(\ra(f)) \vartriangle \sigma(f)^{-1}|_d\\
&\le&15\delta + |\sigma(f)^{-1}\ra(\sigma(f)) \vartriangle \sigma(f)^{-1}|_d =15\delta.
\end{eqnarray*}
\end{proof}


\section{Actions, extensions and factors}\label{sec:actions}

Let $\sG, \sH$ be measurable groupoids. A map $\pi:\sG \to \sH$ is a {\em groupoid morphism} if $\pi(fg)=\pi(f)\pi(g)$ for every $(f,g)\in \sG^2$, $\pi(f)^{-1}=\pi(f^{-1})$ for every $f\in \sG$ and $\pi(\sG^0) \subset \sH^0$. It is {\em class-bijective} if for every $a \in \sG^0$, the restriction of $\pi$ to $\so^{-1}(a)$ is a bijection onto $\so^{-1}(\pi(a))$ and the restriction of $\pi$ to $\ra^{-1}(a)$ is also a bijection onto $\ra^{-1}(\pi(a))$. If $\pi$ is also surjective then we say $\sG$ is a {\em class-bijective extension} of $\sH$ or, equivalently, $\sH$ is a {\em class-bijective factor} of $\sG$. 

We say that $\pi$ is {\em pmp (probability-measure-preserving)} if $\pi_*\mu=\nu$ and $(\sG,\mu), (\sH,\nu)$ are pmp groupoids. If $\pi: (\sG,\mu) \to (\sH,\nu)$ is class-bijective then $\pi^{-1}: \lb\sH\rb \to \lb\sG\rb$ is a homomorphism. In the topological category we have the following similar result: 
\begin{lem}\label{lem:top-inclusion}
Let $\sG$ and $\sH$ be topological groupoids and $\pi:\sG\to \sH$ be continuous and class-bijective. Assume $\sG$ is \'etale. Then $\pi^{-1}([\sH]_{top}) \subset [\sG]_{top}$. Moreover, if $(\sH,\nu)$, $(\sG,\mu)$ are pmp groupoids and $\pi$ is measure-preserving then $\pi^{-1}(\lb \sH \rb_{top}) \subset \lb \sG \rb_{top}$.
\end{lem}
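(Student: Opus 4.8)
The plan is to reduce both assertions to one claim: \emph{for every bisection $U\subset\sH$, the set $\pi^{-1}(U):=\{g\in\sG:\pi(g)\in U\}$ is a bisection of $\sG$, and moreover $\so(\pi^{-1}(U))=(\pi|_{\sG^0})^{-1}(\so(U))$, $\ra(\pi^{-1}(U))=(\pi|_{\sG^0})^{-1}(\ra(U))$.} Granting this, the first statement is immediate: for $U\in[\sH]_{top}$ we have $\so(U)=\ra(U)=\sH^0$, hence $\so(\pi^{-1}(U))=\ra(\pi^{-1}(U))=(\pi|_{\sG^0})^{-1}(\sH^0)=\sG^0$, so $\pi^{-1}(U)\in[\sG]_{top}$. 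Along the way one uses that class-bijectivity forces $\pi^{-1}(\sH^0)=\sG^0$: if $\pi(g)\in\sH^0$ then $\pi(g)=\so(\pi(g))=\pi(\so(g))$, and since $\pi$ is injective on $\so^{-1}(\so(g))$ (which contains both $g$ and $\so(g)$), this gives $g=\so(g)\in\sG^0$.

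To prove the claim I would first note that $\pi^{-1}(U)$ is open by continuity of $\pi$, and that $\so$ is injective on $\pi^{-1}(U)$: if $g,g'\in\pi^{-1}(U)$ have the same source $a$, then $\pi(g),\pi(g')\in U$ have the common source $\pi(a)$, so $\pi(g)=\pi(g')$ since $\so|_U$ is injective, and then $g=g'$ since $\pi|_{\so^{-1}(a)}$ is injective. The image formula comes from a lifting argument: given $a\in\sG^0$ with $\pi(a)\in\so(U)$, there is a unique $h\in U$ with $\so(h)=\pi(a)$ and then, by class-bijectivity, a unique $g\in\so^{-1}(a)$ with $\pi(g)=h$; this $g$ lies in $\pi^{-1}(U)$ with $\so(g)=a$, and the reverse inclusion $\so(\pi^{-1}(U))\subset(\pi|_{\sG^0})^{-1}(\so(U))$ is trivial. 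Since $\pi|_{\sG^0}$ is continuous and $\so(U)$ is open in $\sH^0$, the image $\so(\pi^{-1}(U))$ is open in $\sG^0$; the same applies to $\ra$.

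The only genuinely nontrivial step — and where the hypothesis that $\sG$ is \'etale is needed — is the continuity of $(\so|_{\pi^{-1}(U)})^{-1}$ and $(\ra|_{\pi^{-1}(U)})^{-1}$. Given $g\in\pi^{-1}(U)$, choose (by \'etaleness) a bisection $W\subset\sG$ with $g\in W$. On the open neighbourhood $\so(W\cap\pi^{-1}(U))$ of $\so(g)$ the maps $(\so|_W)^{-1}$ and $(\so|_{\pi^{-1}(U)})^{-1}$ coincide, because for $b$ in this set the unique element of $W$ over $b$ already lies in $\pi^{-1}(U)$. As $(\so|_W)^{-1}$ is continuous, so is $(\so|_{\pi^{-1}(U)})^{-1}$ near $\so(g)$, and $g$ was arbitrary; the argument for $\ra$ is identical. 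This finishes the claim, hence the first assertion.

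For the ``moreover'' part, write $f=\bigcup_{i=1}^n f_i\in\lb\sH\rb_{top}$ with bisections $U_i\subset\sH$ witnessing the three bulleted conditions, and set $W_i:=\pi^{-1}(U_i)$, which are bisections by the claim. I would check that $\pi^{-1}(f)=\bigcup_{i=1}^n\pi^{-1}(f_i)$ realises $\pi^{-1}(f)$ as an element of $\lb\sG\rb_{top}$ with the $W_i$ as witnesses (note $\pi^{-1}(f)\in\lb\sG\rb$ since $\pi^{-1}:\lb\sH\rb\to\lb\sG\rb$ is a homomorphism). The lifting argument above, applied with $f_i\subset U_i$ in place of $U$, gives $\so(\pi^{-1}(f_i))=(\pi|_{\sG^0})^{-1}(\so(f_i))$ and $\ra(\pi^{-1}(f_i))=(\pi|_{\sG^0})^{-1}(\ra(f_i))$; hence the pairwise disjointness of $\{\so(f_i)\}_i$ and of $\{\ra(f_i)\}_i$ transfers to their preimages. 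Continuity of $\pi$ yields $\overline{\pi^{-1}(f_i)}\subset\pi^{-1}(\overline{f_i})\subset\pi^{-1}(U_i)=W_i$, and likewise $\overline{\so(\pi^{-1}(f_i))}\subset(\pi|_{\sG^0})^{-1}(\overline{\so(f_i)})\subset(\pi|_{\sG^0})^{-1}(\so(U_i))=\so(W_i)$, and similarly for $\ra$. Finally, for any Borel $A\subset\sH^0$ one has $\partial\,(\pi|_{\sG^0})^{-1}(A)\subset(\pi|_{\sG^0})^{-1}(\partial A)$, since $\pi|_{\sG^0}$ is continuous and preimages commute with complements; because $\pi$ is measure-preserving, $\mu\big(\partial\,(\pi|_{\sG^0})^{-1}(\so(f_i))\big)\le\mu\big((\pi|_{\sG^0})^{-1}(\partial\so(f_i))\big)=\nu(\partial\so(f_i))=0$, and the same for $\ra(f_i)$, so $\so(\pi^{-1}(f_i)),\ra(\pi^{-1}(f_i))\in\cB_\partial(\sG^0,\mu)$. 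Thus all defining conditions of $\lb\sG\rb_{top}$ hold and $\pi^{-1}(f)\in\lb\sG\rb_{top}$. The main obstacle throughout is the single \'etaleness-driven continuity argument of the third paragraph; once it is in place the rest is bookkeeping with preimages.
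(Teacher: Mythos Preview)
Your proof is correct and follows essentially the same approach as the paper. The paper also reduces everything to the claim that $\pi^{-1}(U)$ is a bisection whenever $U$ is, identifies the continuity of $(\so|_{\pi^{-1}(U)})^{-1}$ as the only nontrivial point, and proves it via the same \'etaleness trick of passing to a bisection $W$ through a given point and restricting to $W\cap\pi^{-1}(U)$; the paper then dismisses the ``moreover'' part as ``straightforward to verify,'' whereas you have carefully written out the verification (closure inclusions, disjointness, and the boundary estimate $\partial\,(\pi|_{\sG^0})^{-1}(A)\subset(\pi|_{\sG^0})^{-1}(\partial A)$), all of which is fine.
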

\begin{proof}

We claim that if $U \subset \sH$ is a bisection then $\pi^{-1}(U) \subset \sG$ is also a bisection. The only nontrivial part of this statement is showing that the map $x \in \so(\pi^{-1}(U)) \mapsto \pi^{-1}(U)x \in \pi^{-1}(U)$ is continuous (and similarly with the range map replacing the source map). Let $x \in \so(\pi^{-1}(U)) \subset \sG^0$. Because $\sG$ is \'etale, there exists a bisection $O \subset \sG$ with $\pi^{-1}(U)x \in O$. After replacing $O$ with $O \cap \pi^{-1}(U)$ if necessary, we may assume $O \subset \pi^{-1}(U)$.

Let $N$ be an open neighborhood of $x$ in $\sG^0$. Then $N \cap \so(O)$ is an open neighborhood of $x$ in $\sG^0$ and the map $y \in N \cap \so(O) \mapsto Oy$ is continuous since $O$ is a bisection. Moreover, $Oy = \pi^{-1}(U)y$ since $O \subset \pi^{-1}(U)$. This shows that $y \mapsto \pi^{-1}(U)y$ is continuous in a neighborhood of $x$. Since $x$ is arbitrary, this map is continuous as required. The range map is similar. So $\pi^{-1}(U) \subset \sG$ is a bisection.

It is now straightforward to verify the claims of the lemma.
\end{proof}

If $\pi$ is understood then given $x \in \sG$ and $f \in \lb\sH\rb$ we let $f x$ denote $\pi^{-1}(f)x$. If $x \in \sG^0$ then we let $f\cdot x$ denote  $\ra(\pi^{-1}(f)x)$ ($=\ra(fx)$).





\begin{example}\label{E-action2}
Let $G$ be a countable discrete group with pmp actions $G \cc (X,\mu)$ and $G \cc (Y,\nu)$. Suppose $\pi:X \to Y$ is a $G$-equivariant factor map (so $\pi_*\mu=\nu$). 
 Then the groupoid associated to $G \cc (X,\mu)$ (as in Example \ref{example:group-action}) is a pmp class-bijective extension of the groupoid associated to $G \cc (Y,\nu)$. The case when $Y$ is a single point is especially interesting because then the groupoid associated to $G \cc (Y,\nu)$ is identified with $G$ itself. Therefore, class-bijective extensions of groupoids generalize group actions. 
 \end{example}
 
 \begin{example}
The previous example can be generalized as follows. Let $(\sH,\nu)$ be a discrete pmp groupoid. Let $\alpha:\sH \to \textrm{Aut}(X,\lambda)$ be a measurable cocycle into the group of automorphisms of a standard probability space. This means that $\alpha(f)\alpha(g)=\alpha(fg)$ for any $(f,g) \in \sH^2$. Associated to such a cocycle is a pmp class-bijective extension $\pi:(\sG,\mu) \to (\sH,\nu)$ defined as follows. Let $\sG=\sH \times X, \sG^0 = \sH^0\times X, \mu = \nu \times \lambda$ with the structure maps defined by $\so(h,x)=(\so(h),x), \ra(h,x)=(\ra(h),\alpha(h)x), (h,x)^{-1}=(h^{-1},\alpha(h)x), (g,\alpha(h)x)(h,x)=(gh,x)$ for $h,g \in \sH, x\in X$. Let $\pi:\sG\to \sH$ be projection onto the first coordinate. An exercise shows this is a pmp class-bijective extension.
\end{example}

 We say two class-bijective extensions $\pi_i:(\sG_i,\mu_i) \to (\sH_i,\nu_i)$ ($i=1,2$) are {\em measure-isomorphic} if there exist measure-preserving isomorphisms $\Phi:(\sG_1,\mu_1) \to (\sG_2,\mu_2)$ and $\Psi:(\sH_1,\nu_1) \to (\sH_2,\nu_2)$ such that $\pi_2 \Phi = \Psi \pi_1$ almost everywhere. Similarly, two continuous class-bijective extensions $\pi_i:\sG_i \to \sH_i$ ($i=1,2$) are {\em isomorphic} if there exist continuous isomorphisms $\Phi:\sG_1 \to \sG_2$ and $\Psi:\sH_1 \to \sH_2$ such that $\pi_2 \Phi = \Psi \pi_1$.

\section{Spanning and separated sets}\label{sec:sss}

We use spanning and separated sets as a tool to define the topological entropy of a continuous class-bijective groupoid extension in the next section. Here we set notation and obtain a well-known result. Recall that a pseudo-metric $\rho$ on a set $X$ possesses all the properties of a metric except nondegeneracy: it can happen that $x\ne y \in X$ but $\rho(x,y)=0$. 

\begin{defn}
Given a pseudo-metric space $(Z,\rho)$ and $\epsilon>0$, a subset $Y \subset Z$ is {\em $(\rho,\epsilon)$-separated} if for every $y_1\ne y_2 \in Y$ $\rho(y_1,y_2) > \epsilon$. For $X \subset Z$, let $N_\epsilon(X,\rho)$ denote the maximum cardinality of a $(\rho,\epsilon)$-separated subset $Y \subset X$.

Let $X,Y \subset Z$. We say $Y$ {\em $(\rho,\epsilon)$-spans} $X$ if for every $x \in X$ there exists $y \in Y$ with $\rho(x,y) < \epsilon$. Let $N'_\epsilon(X,\rho)$ denote the minimum cardinality of a set $Y \subset Z$ which $(\rho,\epsilon)$-spans $X$. This number implicitly depends on $Z$.
\end{defn}

\begin{lem}\label{lem:span-sep}
For any pseudo-metric space $(Z,\rho)$, $X \subset Z$, $\epsilon>0$,
$$N'_{2\epsilon}(X,\rho) \le N_\epsilon(X,\rho) \le N_{\epsilon/2}'(X,\rho).$$
\end{lem}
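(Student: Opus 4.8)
The plan is to prove the two inequalities separately, each by an elementary comparison between a near-optimal separated set and a near-optimal spanning set, after first disposing of the trivial infinite cases.

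For the left-hand inequality $N'_{2\epsilon}(X,\rho) \le N_\epsilon(X,\rho)$, I would first note that if $N_\epsilon(X,\rho) = \infty$ there is nothing to prove, so assume it is finite and let $Y \subset X$ be a $(\rho,\epsilon)$-separated subset of maximum cardinality $N_\epsilon(X,\rho)$. Maximality of $|Y|$ forces $Y$ to $(\rho,2\epsilon)$-span $X$: given $x \in X$, either $x \in Y$, in which case $\rho(x,x) = 0 < 2\epsilon$, or $x \notin Y$ and $Y \cup \{x\}$ cannot be $(\rho,\epsilon)$-separated (else it would be a larger separated set), so there is $y \in Y$ with $\rho(x,y) \le \epsilon < 2\epsilon$. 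Hence some set of size $|Y|$ $(\rho,2\epsilon)$-spans $X$, giving $N'_{2\epsilon}(X,\rho) \le |Y| = N_\epsilon(X,\rho)$.

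For the right-hand inequality $N_\epsilon(X,\rho) \le N'_{\epsilon/2}(X,\rho)$, again I would discard the case $N'_{\epsilon/2}(X,\rho) = \infty$, and otherwise fix a $(\rho,\epsilon/2)$-spanning set $Y \subset Z$ with $|Y| = N'_{\epsilon/2}(X,\rho)$. Let $S \subset X$ be any $(\rho,\epsilon)$-separated set; for each $s \in S$ choose $\phi(s) \in Y$ with $\rho(s,\phi(s)) < \epsilon/2$. The triangle inequality for pseudo-metrics shows $\phi$ is injective: if $\phi(s_1) = \phi(s_2)$ then $\rho(s_1,s_2) \le \rho(s_1,\phi(s_1)) + \rho(\phi(s_2),s_2) < \epsilon$, forcing $s_1 = s_2$ since $S$ is $(\rho,\epsilon)$-separated. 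Thus $|S| \le |Y|$, and taking the supremum over all such $S$ yields $N_\epsilon(X,\rho) \le N'_{\epsilon/2}(X,\rho)$.

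The argument is entirely routine. The only points that need care are (i) separating out the infinite cases so that the ``maximum cardinality'' set in the definition of $N_\epsilon$ actually exists and is genuinely maximal, and (ii) tracking the strict versus non-strict inequalities in the definitions of ``$(\rho,\epsilon)$-separated'' ($\rho > \epsilon$) and ``$(\rho,\epsilon)$-spans'' ($\rho < \epsilon$), which is precisely why the factor of $2$ (rather than an equality) is needed in each estimate. I do not anticipate any real obstacle.
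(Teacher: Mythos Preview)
Your proposal is correct and follows essentially the same approach as the paper's proof: a maximal $(\rho,\epsilon)$-separated set $(\rho,2\epsilon)$-spans $X$, and an injection from any $(\rho,\epsilon)$-separated set into a $(\rho,\epsilon/2)$-spanning set via the nearest-point map. Your version is slightly more careful in separating out the infinite cases and tracking strict inequalities, but the substance is identical.
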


\begin{proof}
Let $Y_1 \subset X$ be a maximal $(\rho,\epsilon)$-separated subset. Then $Y_1$ $(\rho,2\epsilon)$-spans $X$. Therefore $N_{2\epsilon}'(X,\rho) \le N_\epsilon(X,\rho)$.

Let $Y_2 \subset Z$ be a minimal $(\rho,\epsilon/2)$-spanning subset for $X$. Then the $\epsilon/2$-neighborhood of any point $y \in Y_2$ contains at most $1$ point of $Y_1$. Moreover, every point of $Y_1$ is contained in the $\epsilon/2$-neighborhood of some point of $Y_2$. Therefore $|Y_2| \ge |Y_1|$ which implies $N_\epsilon(X,\rho) \le N'_{\epsilon/2}(X,\rho)$.

\end{proof}

\section{Topological entropy}\label{sec:top}

We assume as given: two discrete separable topological groupoids $\sG,\sH$ such that $\sG^0$ and $\sH^0$ are compact metrizable spaces, a continuous class-bijective factor map $\pi:\sG \to \sH$, a Borel probability measure $\nu$ on $\sH$ making $(\sH,\nu)$ a pmp groupoid, a sofic approximation $\P=\{\P_j\}_{j\in J}$ to $(\sH,\nu)$, a bias $\beta$ (defined below) and a number $p\in [1,\infty]$. From this and a choice of generating pseudo-metric, we will define the {\em sofic topological entropy of $\pi$ with respect to $(\P,p,\beta)$}.  


Given an integer $d>0$, we will write $x \in (\sG^0)^d$ as $x=(x_1,\ldots,x_d)$. Given $f \in \lb\sH\rb$, we let $f\cdot x:=(f\cdot x_1,\ldots,f \cdot x_d)$. If $\pi(x_i) \notin \so(f)$ then $f\cdot x_i$ is not defined. In this case, we set $f\cdot x_i:=*$ where $*$ is a special symbol. Thus $f\cdot x \in (\sG^0 \cup \{*\})^d$. 

For ease of notation, we will identify $\Delta^0_d$ with $\{1,\ldots, d\}$ and, for $\sigma:\lb \sH\rb \to \lb d\rb$, $f \in \lb\sH\rb$ and $i\in \{1,\ldots, d\}$ we will write $\sigma(f)i \in \{1,\ldots, d\}$ instead of $\sigma(f)\cdot i$. With $x$ as above, we define $x \circ \sigma(f) := (x_{\sigma(f)1}, \ldots, x_{\sigma(f)d} )$. If $i \notin \so(\sigma(f))$ then $x_{\sigma(f)i}$ is not well-defined. In this case, we set $x_{\sigma(f)i}:=*$. So $x \circ \sigma(f) \in (\sG^0 \cup \{*\})^d$.

Let $\rho$ be a continuous pseudo-metric on $\sG^0$. We extend $\rho$ to $\sG^0\cup\{*\}$ by setting $\rho(*,*) = 0$ and $\rho(*,x)=\max \{\rho(y,z):~y,z\in \sG^0\}$ for any $x\in \sG^0$. This induces pseudo-metrics on the $d$-fold Cartesian product of $\sG^0 \cup \{*\}$ by 
$$\rho_2(x,x') : = \left( \frac{1}{d} \sum_{i=1}^d \rho(x_i,x_i')^2 \right)^{1/2}, \quad \rho_\infty(x,x'):= \max_{1\le i \le d} \rho(x_i,x_i').$$

\begin{defn}[Approximate partial orbits]
Let $C(\sH^0)$ denote the space of continuous complex-valued functions on $\sH^0$. Given a map $\sigma:\lb\sH\rb \to \lb d \rb$, finite sets $F \subset \lb\sH\rb$, $K \subset C(\sH^0)$ and $\delta>0$, we let $Orb_\nu(\pi,\sigma,F,K,\delta,\rho)$ be the set of all $d$-tuples $(x_1,\ldots, x_d)$ (with $x_i \in \sG^0$) such that
\begin{eqnarray*}
\delta &>& \rho_2( f\cdot x,  x \circ \sigma(f) ) \quad \forall f\in F,\\
\delta &>& \left| d^{-1}\sum_{i=1}^d k(\pi(x_i)) - \int k~d\nu \right|  \quad \forall k \in K.
\end{eqnarray*}
\end{defn}

\begin{defn}
A {\em bias} $\beta$ for $J$ is either an element of $\{-,+\}$ or an ultrafilter on $J$ with the property that for every $j\in J$, the set $\{j' \in J:~j' \ge j\} \in \beta$. Given a function $\Phi: J \to \R$, if $\beta$ is an ultrafilter then the ultralimit $\lim_{j\to \beta} \Phi(j)$ is well-defined. Otherwise, define
\begin{displaymath}
\lim_{j \to \beta} \Phi(j) := \left\{ \begin{array}{cc}
\liminf_{j\to J} \Phi(j) & \textrm{ if } \beta = - \\
\limsup_{j\to J} \Phi(j) & \textrm{ if } \beta = +
\end{array}\right.
\end{displaymath}
\end{defn}

\begin{notation}
Given a function $\phi$ on $\Map(\lb\sH\rb, \lb d_j \rb)$, we let $\|\phi \|_{p,\P_j}$ denote the $L^p$ norm of $\phi$ with respect to $\P_j$. For example, if $1\le p <\infty$ then
$$\| N_\epsilon( Orb_\nu(\pi,\cdot, F,K,\delta,\rho), \rho_2)\|_{p, \P_j} = \left( \int  N_\epsilon( Orb_\nu(\pi,\sigma,F,K,\delta,\rho), \rho_2)^p ~d\P_j(\sigma)\right)^{1/p}.$$
\end{notation}
\begin{defn}
Recall that we write $X \subset_f Y$ to mean that $X$ is a finite subset of $Y$. Define
$$h^\beta_{\P,p}(\pi,\rho,2) := \sup_{\epsilon>0} \inf_{\delta>0} \inf_{F \subset_f \lb\sH\rb_{top} }\inf_{K \subset_f C(\sH^0)} \lim_{j \to \beta} \frac{1}{d_j} \log\| N_\epsilon( Orb_\nu(\pi,\cdot,F,K,\delta,\rho), \rho_2)\|_{p, \P_j};$$
$$h^\beta_{\P,p}(\pi,\rho,\infty) := \sup_{\epsilon>0} \inf_{\delta>0} \inf_{F \subset_f \lb\sH\rb_{top} }\inf_{K \subset_f C(\sH^0)} \lim_{j \to \beta} \frac{1}{d_j} \log\| N_\epsilon( Orb_\nu(\pi,\cdot,F,K,\delta,\rho), \rho_\infty)\|_{p, \P_j}.$$
\end{defn}

\begin{remark} For most of the paper, the choices of $p$ and $\beta$ are irrelevant. Therefore, we will write $h_{\P}(\pi,\rho,2)$ instead of $h^\beta_{\P,p}(\pi,\rho,2)$ and $h_{\P}(\pi,\rho,\infty)$ instead of $h^\beta_{\P,p}(\pi,\rho,\infty)$, leaving $p$ and $\beta$ implicit. The order of the supremums, infimums and limits above is important with the exception that one can permute the three infimums without affecting the definition. 
\end{remark}

\begin{remark} There is a certain useful monotonicity phenomenon in the formulas above: the quantity
 $$\frac{1}{d_j} \log\| N_\epsilon( Orb_\nu(\pi,\cdot,F,K,\delta,\rho), \rho_2)\|_{p, \P_j}$$
 is monotone increasing in $\delta$ and monotone decreasing in $\epsilon,F,K$ (subsets are ordered by inclusion). Therefore, the infimums and the supremum can be replaced by the appropriate (directed) limits. In the sequel, we will use these facts without explicit reference. Similar statements hold true if $\rho_2$ is replaced with $\rho_\infty$ or $N_\epsilon$ is replaced with $N'_\epsilon$.
\end{remark}



\begin{lem}\label{lem:span}
If we replace $N_\epsilon(\cdot)$ in the definitions above with $N'_\epsilon(\cdot)$ then we obtain equivalent definitions. More precisely,
$$h_{\P}(\pi,\rho,2) = \sup_{\epsilon>0} \inf_{\delta>0} \inf_{F \subset_f \lb\sH\rb_{top} }\inf_{K \subset_f C(\sH^0)} \lim_{j \to \beta} \frac{1}{d_j} \log\| N'_\epsilon( Orb_\nu(\pi,\cdot,F,K,\delta,\rho), \rho_2)\|_{p, \P_j};$$
$$h_{\P}(\pi,\rho,\infty) = \sup_{\epsilon>0} \inf_{\delta>0} \inf_{F \subset_f \lb\sH\rb_{top} }\inf_{K \subset_f C(\sH^0)} \lim_{j \to \beta} \frac{1}{d_j} \log\| N'_\epsilon( Orb_\nu(\pi,\cdot,F,K,\delta,\rho), \rho_\infty)\|_{p, \P_j}.$$
\end{lem}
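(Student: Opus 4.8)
The plan is to derive the two claimed equalities directly from Lemma~\ref{lem:span-sep}, which compares the spanning and separated cardinalities for an arbitrary pseudo-metric space. I would apply it with $Z$ the $d_j$-fold product $(\sG^0 \cup \{*\})^{d_j}$ equipped with either $\rho_2$ or $\rho_\infty$, and with $X = Orb_\nu(\pi,\sigma,F,K,\delta,\rho)$. Lemma~\ref{lem:span-sep} gives, for every $\sigma$ and every $\eps>0$,
\begin{eqnarray*}
N'_{2\eps}(Orb_\nu(\pi,\sigma,F,K,\delta,\rho),\rho_2) &\le& N_\eps(Orb_\nu(\pi,\sigma,F,K,\delta,\rho),\rho_2)\\
&\le& N'_{\eps/2}(Orb_\nu(\pi,\sigma,F,K,\delta,\rho),\rho_2),
\end{eqnarray*}
and the same with $\rho_\infty$ in place of $\rho_2$.

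Next I would take $L^p$ norms with respect to $\P_j$ of these pointwise inequalities; since $t \mapsto t^p$ is monotone (and $\sigma \mapsto \sup$ over finite sets is measurable, so these quantities are measurable functions of $\sigma$), the inequalities are preserved:
$$\| N'_{2\eps}(Orb_\nu(\pi,\cdot,F,K,\delta,\rho),\rho_2)\|_{p,\P_j} \le \| N_\eps(Orb_\nu(\pi,\cdot,F,K,\delta,\rho),\rho_2)\|_{p,\P_j} \le \| N'_{\eps/2}(Orb_\nu(\pi,\cdot,F,K,\delta,\rho),\rho_2)\|_{p,\P_j}.$$
Then I apply $\frac{1}{d_j}\log(\cdot)$ (monotone), take $\lim_{j\to\beta}$ (monotone by the same reasoning used implicitly throughout the section), and finally the infima over $\delta$, $F$, $K$ and the supremum over $\eps$. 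The point is that the outermost operation is $\sup_{\eps>0}$: since the bounds only shift $\eps$ by a factor of $2$, and a supremum over all $\eps>0$ is insensitive to rescaling the variable, the three expressions
$$\sup_\eps \inf_\delta \inf_F \inf_K \lim_{j\to\beta} \tfrac{1}{d_j}\log\| N'_{2\eps}(\cdots,\rho_2)\|_{p,\P_j},\quad \sup_\eps(\cdots N_\eps \cdots),\quad \sup_\eps(\cdots N'_{\eps/2}\cdots)$$
all coincide, and the first and last are visibly equal to the $N'$-version of $h_\P(\pi,\rho,2)$ after the substitutions $\eps \mapsto \eps/2$ and $\eps \mapsto 2\eps$ respectively. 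This proves $h_\P(\pi,\rho,2)$ equals its $N'$-analogue, and the identical argument with $\rho_\infty$ handles the second equality.

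The main (and only mild) obstacle is a bookkeeping point rather than a conceptual one: one must make sure the chain of monotone operations $\|\cdot\|_{p,\P_j}$, $\frac{1}{d_j}\log$, $\lim_{j\to\beta}$, and the nested $\inf$'s/$\sup$ genuinely preserve the two-sided sandwich, and in particular that $\sup_{\eps>0}$ absorbs the factor-of-$2$ discrepancy in $\eps$. Since $\sup_{\eps>0}\Phi(\eps) = \sup_{\eps>0}\Phi(2\eps) = \sup_{\eps>0}\Phi(\eps/2)$ for any function $\Phi$ of $\eps>0$ (the reparametrization $\eps \mapsto c\eps$ is a bijection of $(0,\infty)$ for $c>0$), this works cleanly. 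The case $p=\infty$ is handled identically, using that $\|\cdot\|_{\infty,\P_j}$ is also monotone. I would also note in passing that, as remarked after the definition, the infima over $F$, $K$ can be taken over directed limits, but that plays no role here beyond justifying that the quantities are well-behaved.
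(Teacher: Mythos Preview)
Your proposal is correct and follows exactly the paper's approach: the paper's proof consists of the single sentence ``This is immediate from Lemma~\ref{lem:span-sep},'' and you have simply spelled out the routine details of how that lemma, combined with monotonicity of the intermediate operations and the invariance of $\sup_{\eps>0}$ under rescaling, yields the result.
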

\begin{proof}
This is immediate from Lemma \ref{lem:span-sep}.
\end{proof}

\begin{lem}\label{lem:2=infinity}
In general, $h_{\P}(\pi,\rho,2)= h_{\P}(\pi,\rho,\infty)$. 
\end{lem}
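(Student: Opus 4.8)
The plan is to show that the two pseudometrics $\rho_2$ and $\rho_\infty$ give the same entropy by exploiting a standard amplification trick: $\rho_\infty$-separated sets are automatically $\rho_2$-separated (since $\rho_2 \le \rho_\infty$ always), which gives one inequality for free; for the reverse, one enlarges the orbit set by taking Cartesian powers indexed by an auxiliary integer $m$, uses the fact that a $\rho_2$-ball of radius $\epsilon$ in $(\sG^0)^{dm}$ projects onto a $\rho_\infty$-type constraint in all but a small fraction of the $m$ coordinate blocks, and then extracts a $\rho_\infty$-separated set whose exponential growth rate is essentially that of the $\rho_2$-separated set. The key point is that passing to the $m$-fold product changes $d_j$ to $md_j$ but multiplies the log-cardinality by roughly $m$ as well, so the normalized quantity is preserved in the limit $m \to \infty$.

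First I would record the trivial inequality. Since $\rho_2(x,x') \le \rho_\infty(x,x')$ for all $x,x' \in (\sG^0\cup\{*\})^d$, any $(\rho_\infty,\epsilon)$-separated set is $(\rho_2,\epsilon)$-separated, so $N_\epsilon(Orb_\nu(\pi,\sigma,F,K,\delta,\rho),\rho_\infty) \le N_\epsilon(Orb_\nu(\pi,\sigma,F,K,\delta,\rho),\rho_2)$ pointwise in $\sigma$; taking $L^p$ norms, normalized logs, and the relevant infima and supremum gives $h_{\P}(\pi,\rho,\infty) \le h_{\P}(\pi,\rho,2)$.

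For the reverse inequality I would argue at the level of a fixed $\epsilon > 0$. Fix $\epsilon$, and let $\epsilon' = \epsilon/2$ (say). Given $\delta, F, K$, I want to produce a $\rho_\infty$-separated set inside (a product version of) $Orb_\nu$ of comparable size to a $\rho_2$-separated set inside $Orb_\nu$. The device: for $m \in \N$, work in $(\sG^0)^{dm}$, which we view as $m$ blocks of length $d$, and note that for an element of $(\sG^0)^d$ repeated diagonally, or more usefully for an element $(y^{(1)},\dots,y^{(m)})$ with each $y^{(k)} \in Orb_\nu(\pi,\sigma,F,K,\delta,\rho)$, one still has each block in the orbit set. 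If $S \subset Orb_\nu(\pi,\sigma,F,K,\delta,\rho)$ is $(\rho_2,\epsilon)$-separated of maximal size $N := N_\epsilon(\cdot,\rho_2)$, then $S^m \subset (\sG^0)^{dm}$ has size $N^m$; and by a counting/pigeonhole argument, within $S^m$ one can find a subset of size at least $N^m/C(m,\epsilon)$ — with $C(m,\epsilon)$ subexponential in $m$ — that is $(\rho_\infty,\epsilon')$-separated, because two distinct $m$-tuples from $S^m$ differ in some block, and in that block the two members of $S$ are $\rho_2$-far, hence $\rho_\infty$-far in at least one of the $d$ coordinates of that block. One must be slightly careful: $\rho_2$-far does not immediately give $\rho_\infty$-far, only that the coordinate-averaged square distance exceeds $\epsilon^2$, so at least one coordinate has $\rho$-distance $> \epsilon$ — and $\epsilon > \epsilon'$ — which is exactly what is needed. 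Then $\frac{1}{dm}\log(N^m/C(m,\epsilon)) = \frac{1}{d}\log N - \frac{1}{dm}\log C(m,\epsilon) \to \frac{1}{d}\log N$ as $m\to\infty$; combined with the fact that $\{\P_j\}$, viewed on $md_j$ points via the block structure, is still governed by the same sofic approximation constraints (the orbit conditions are the same up to adjusting $\delta$), taking $L^p$ norms and the appropriate limits yields $h_{\P}(\pi,\rho,2) \le h_{\P}(\pi,\rho,\infty)$.

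The main obstacle I anticipate is making the amplification compatible with the probabilistic ($L^p$, random sofic approximation) framework: I need that if $\P = \{\P_j\}_{j \in J}$ is a sofic approximation to $(\sH,\nu)$ with dimensions $d_j$, then there is a naturally associated sofic approximation with dimensions $m d_j$ — concretely by ``blowing up'' each $\sigma:\lb\sH\rb\to\lb d_j\rb$ to act diagonally on $m$ copies of $\{1,\dots,d_j\}$ — and that the entropy defined via this blown-up approximation agrees with $h_{\P}(\pi,\rho,2)$ (and likewise for $\rho_\infty$). This requires checking that multiplicativity, trace-preservation, and the orbit-approximation inequalities all pass to the blow-up with at most a controlled change in parameters, and that the $L^p$ norm of the blown-up separation number relates to that of the original by $N \mapsto N^m$ up to subexponential error uniformly in $\sigma$. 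Alternatively, one may avoid the blow-up of the approximation entirely by instead amplifying only within the target copy $\{1,\dots,d_j\}^m$ using a single $\sigma$ and a product argument as above, absorbing the combinatorial factor $C(m,\epsilon)$ into the $\frac{1}{d_j}\log$ and then letting $m \to \infty$ after the limit in $j$ — this is the cleaner route and the one I would pursue, with the bookkeeping of quantifier order (the $m$-limit must be taken outside the $j$-limit but can be handled since the bound holds for every $m$) being the delicate point.
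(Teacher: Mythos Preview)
Your argument has the directions reversed, and as a result you never address the nontrivial inequality at all.

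From $\rho_2 \le \rho_\infty$ the correct implication is: if a set is $(\rho_2,\epsilon)$-separated then it is $(\rho_\infty,\epsilon)$-separated (since $\rho_\infty(y_1,y_2) \ge \rho_2(y_1,y_2) > \epsilon$). Hence $N_\epsilon(\cdot,\rho_2) \le N_\epsilon(\cdot,\rho_\infty)$ and the trivial inequality is $h_{\P}(\pi,\rho,2) \le h_{\P}(\pi,\rho,\infty)$, the opposite of what you wrote. Your ``trivial'' paragraph is therefore a non-sequitur: from $\rho_2 \le \rho_\infty$ one cannot conclude that a $(\rho_\infty,\epsilon)$-separated set is $(\rho_2,\epsilon)$-separated.

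Consequently, your amplification argument is aimed at the direction $h_{\P}(\pi,\rho,2) \le h_{\P}(\pi,\rho,\infty)$, which is already free --- indeed your own observation that two distinct elements of $S^m$ differ in some block where they are $\rho_2$-far, hence $\rho_\infty$-far, is just a restatement of $\rho_2 \le \rho_\infty$, and shows that $S$ itself (no product needed) is already $(\rho_\infty,\epsilon)$-separated. The blow-up machinery, with all its attendant worries about the sofic approximation, is doing nothing.

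The genuinely nontrivial direction is $h_{\P}(\pi,\rho,\infty) \le h_{\P}(\pi,\rho,2)$, and your amplification idea does not help here: if $T$ is $(\rho_\infty,\kappa)$-separated, two distinct elements of $T^m$ may differ in only a single one of the $dm$ coordinates, giving $\rho_2$-distance of order $\kappa/\sqrt{dm}$, which is useless. The paper's approach is instead a covering argument: take a minimal $(\rho_2,\epsilon)$-spanning set $Y$ for $Orb_\nu$, and for each $y \in Y$ modify at most $\lceil \epsilon^2 d/\kappa^2 \rceil$ coordinates (replacing them by points of a fixed finite $(\rho,\kappa)$-spanning set $M$ of $\sG^0$) to produce a family $Y'$ that $(\rho_\infty,\kappa)$-spans $Orb_\nu$. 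The combinatorial cost $\binom{d}{\lceil \epsilon^2 d/\kappa^2\rceil}|M|^{\lceil \epsilon^2 d/\kappa^2\rceil}$ is subexponential in $d$ once $\epsilon \ll \kappa$, so after normalizing and sending $\epsilon \to 0$ one obtains the missing inequality via Lemma~\ref{lem:span}.
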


\begin{proof}
First note that $\rho_2 \le \rho_\infty$. Therefore, any $(\rho_2,\epsilon)$-separated subset is $(\rho_\infty,\epsilon)$-separated which implies
$$N_\epsilon( Orb_\nu(\pi,\sigma,F,K,\delta,\rho), \rho_2) \le N_\epsilon( Orb_\nu(\pi,\sigma,F,K,\delta,\rho), \rho_\infty)$$
for any $\sigma,F,K,\delta$. Thus $h_{\P}(\pi,\rho,2)\le h_{\P}(\pi,\rho,\infty)$. 

To prove the other direction, let $\epsilon, \kappa>0$ be such that $\epsilon/\kappa<1/10$ and let $M$ be a $(\rho,\kappa)$-spanning subset of $\sG^0$ of minimum cardinality. Let  $\delta>0, F \subset_f \lb\sH\rb$, $K \subset_f C(\sH^0)$, $\sigma:\lb\sH\rb \to \lb d \rb$ and $Y \subset (\sG^0)^d$ be a $(\rho_2,\epsilon)$-spanning set for $Orb_\nu(\pi,\sigma,F,K,\delta,\rho)$ of minimum cardinality. For ease of notation, let $\eta = \lceil \frac{\epsilon^2 d}{\kappa^2} \rceil$. Define $Y' \subset (\sG^0)^d$ as follows. For $y \in Y$, every set $\Lambda \subset [d]$ of cardinality $\eta$ and every map $\phi:\Lambda \to M$ define $y^\phi \in (\sG^0)^d$ by
\begin{displaymath}
y^\phi_i = \left\{ \begin{array}{cc}
y_i & \textrm{ if } i \notin \Lambda\\
\phi(i) & \textrm{ if } i \in \Lambda.
\end{array}\right.
\end{displaymath}
Let $Y'$ be the set of all $y^\phi$ over all such $y \in Y$ and $\phi:\Lambda \to M$. Observe that
$$|Y'| \le |Y| {d \choose \eta} |M|^\eta.$$
We claim that $Y'$ is $(\rho_\infty, \kappa)$-spanning for $Orb_\nu(\pi,\sigma,F,K,\delta,\rho)$. To see this, let $z \in Orb_\nu(\pi,\sigma,F,K,\delta,\rho)$. Because $Y$ is $(\rho_2,\epsilon)$-spanning for $Orb_\nu(\pi,\sigma,F,K,\delta,\rho)$, there is a $y \in Y$ such that $\rho_2(y,z) < \epsilon$. I.e., 
$$\epsilon^2 > \frac{1}{d} \sum_{i=1}^d \rho(y_i,z_i)^2.$$
If we let $\Lambda' = \{i:~ \rho(y_i,z_i) \ge \kappa\}$ then $\epsilon^2 > \frac{1}{d} \sum_{i\in \Lambda'} \kappa^2$
implies $|\Lambda'| \le \eta$. Therefore, there exists a set $\Lambda \subset [d]$ such that for $i \notin \Lambda$, $\rho(y_i,z_i) <\kappa$ and $|\Lambda|=\eta$. By definition of $M$, for every $i \in \Lambda$ there is a point $\phi(i) \in M$ such that $\rho(\phi(i),z_i) < \kappa$. Therefore, $\rho_\infty(y^\phi,z) < \kappa$. This proves the claim: $Y'$ is $(\rho_\infty, \kappa)$-spanning for $Orb_\nu(\pi,\sigma,F,K,\delta,\rho)$.

It follows that
\begin{eqnarray*}
N'_\kappa( Orb_\nu(\pi,\sigma,F,K,\delta,\rho), \rho_\infty) &\le& |Y'| \le  |Y| {d \choose \eta} |M|^\eta\\
&=& N'_\epsilon( Orb_\nu(\pi,\sigma,F,K,\delta,\rho), \rho_2)  {d \choose \lceil \epsilon^2 \kappa^{-2} d \rceil } |M|^{\lceil \epsilon^2\kappa^{-2} d \rceil}.
\end{eqnarray*}
It follows from Stirling's approximation that
\begin{eqnarray*}
&&\lim_{j\to\beta} \frac{1}{d_j} \log \| N'_\kappa( Orb_\nu(\pi,\cdot,F,K,\delta,\rho), \rho_\infty)\|_{p,\P_j}\\
&\le& \lim_{j\to\beta} \frac{1}{d_j} \log \|  N'_\epsilon( Orb_\nu(\pi,\cdot,F,K,\delta,\rho), \rho_2) \|_{p,\P_j}\\
&&+ 2\epsilon^2\kappa^{-2} \log(|M|) - 2\epsilon^2\kappa^{-2}\log(2\epsilon^2\kappa^{-2}) -(1-2\epsilon^2\kappa^{-2})\log(1-2\epsilon^2\kappa^{-2}).
\end{eqnarray*}
Next we take the infimum over $F,K,\delta$ and then the limit supremum as $\epsilon \searrow 0$ to obtain
\begin{eqnarray*}
&& \inf_{\delta>0} \inf_{F \subset_f \lb\sH\rb_{top} }\inf_{K \subset_f C(\sH^0)} \lim_{j\to\beta} \frac{1}{d_j} \log \| N'_\kappa( Orb_\nu(\pi,\cdot,F,K,\delta,\rho), \rho_\infty)\|_{p,\P_j}\\
&\le& \limsup_{\epsilon \searrow 0} \inf_{\delta>0} \inf_{F \subset_f \lb\sH\rb_{top} }\inf_{K \subset_f C(\sH^0)} \lim_{j\to\beta} \frac{1}{d_j} \log \|  N'_\epsilon( Orb_\nu(\pi,\cdot,F,K,\delta,\rho), \rho_2) \|_{p,\P_j}.
\end{eqnarray*}
Because  $|N'_\epsilon( Orb_\nu(\pi,\sigma,F,K,\delta,\rho), \rho_2)|$ is monotone decreasing in $\epsilon$ we can replace $\limsup_{\epsilon \searrow 0}$ above with $\sup_{\epsilon>0}$. It now follows from Lemma \ref{lem:span} that $h_{\P}(\pi,\rho,2)\ge h_{\P}(\pi,\rho,\infty)$. 
\end{proof}

Because of the lemma above, we will write $h_{\P}(\pi,\rho)$ to denote either $h_{\P}(\pi,\rho,2)$ or $h_{\P}(\pi,\rho,\infty)$. If we need to specify $\beta$ and $p$ then we denote this quantity by $h^\beta_{\P,p}(\pi,\rho)$.


\begin{defn}
A pseudo-metric $\rho$ on $\sG^0$ is {\em dynamically generating} for $\pi:\sG \to \sH$ if for every distinct $x,y \in \sG^0$ there exists $f \in [\sH]_{top}$ such that $\rho(f\cdot x,f\cdot y)>0$. Note that we are using $[\sH]_{top}$ instead of $\lb \sH\rb_{top}$ to define this property. 
\end{defn}

The main result of this section is:
\begin{thm}\label{thm:K-top}
If $\rho_1,\rho_2$ are dynamically generating continuous pseudo-metrics on $\sG^0$ and $\sG$ is \`etale then $h_\P(\pi,\rho_1) = h_\P(\pi,\rho_2)$.
\end{thm}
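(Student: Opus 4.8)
The plan is to prove a two-sided inequality, say $h_\P(\pi,\rho_1)\le h_\P(\pi,\rho_2)$; the reverse follows by symmetry. The standard strategy (following the Kolmogorov–Sinai-style arguments of \cite{KL11,Ke12}) is: since $\rho_2$ is dynamically generating, for every pair of distinct points of $\sG^0$ there is some $f\in[\sH]_{top}$ separating them after the action; by compactness of $\sG^0$ and continuity, a finite collection $f_1,\dots,f_m\in[\sH]_{top}$ together with the single pseudo-metric $\rho_2$ ``dominates'' $\rho_1$ in the following sense: for every $\varepsilon>0$ there are $\delta_0>0$ and a finite $F_0\subset[\sH]_{top}$ (containing the $f_i$, their inverses, sources, ranges, and products) and a finite $K_0\subset C(\sH^0)$ such that whenever $x,y\in(\sG^0)^d$ lie in $Orb_\nu(\pi,\sigma,F_0,K_0,\delta_0,\rho_2)$ for a sufficiently multiplicative and trace-preserving $\sigma$, one has
\[
\rho_{1,2}(x,y)\ \le\ C\max_{1\le i\le m}\rho_{2,2}(f_i\cdot x,f_i\cdot y)\ +\ \varepsilon,
\]
up to an error controlled by the orbit-approximation conditions. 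This is the geometric heart of the matter.

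The first step I would carry out is to make this domination precise. Fix $\varepsilon>0$. The set $\{(x,y)\in\sG^0\times\sG^0:\rho_1(x,y)\ge\varepsilon\}$ is compact, and for each such pair some $f\in[\sH]_{top}$ gives $\rho_2(f\cdot x,f\cdot y)>0$; by continuity of the structure maps and of $\rho_2$, finitely many $f_1,\dots,f_m$ and a constant $c>0$ suffice so that $\rho_1(x,y)\ge\varepsilon$ implies $\max_i\rho_2(f_i\cdot x, f_i\cdot y)>c$ for all $x,y\in\sG^0$. (Here one uses that elements of $[\sH]_{top}$ act everywhere on $\sG^0$, via Lemma \ref{lem:top-inclusion} lifting them to $[\sG]_{top}$, so $f_i\cdot x$ is genuinely defined and $*$ never occurs.) Next I would transfer this pointwise estimate to the ``microstate'' level: if $x\in Orb_\nu(\pi,\sigma,F,K,\delta,\rho_2)$ with $F\supset\{f_i,f_i^{-1},\so(f_i),\ra(f_i),f_if_j\}$, $\delta$ small, and $\sigma$ sufficiently multiplicative (using Lemma \ref{lem:basic-formulas1}, Lemma \ref{lem:basic-formulas2}), then $f_i\cdot x$ is $\rho_{2,2}$-close to $x\circ\sigma(f_i)$, so that a $(\rho_{2,2},\epsilon')$-spanning set $Y$ for $Orb_\nu(\pi,\sigma,\widetilde F,K,\delta,\rho_2)$ that is closed under applying the maps $x\mapsto x\circ\sigma(f_i)$ (or rather, enlarged by all images $\{y\circ\sigma(f_i):y\in Y,i\}$, of size $\le m|Y|$) will be $(\rho_{1,2},\varepsilon)$-spanning for $Orb_\nu(\pi,\sigma,F_1,K_1,\delta_1,\rho_1)$ for appropriate $F_1\supset\widetilde F$, $K_1\supset K$, $\delta_1\le\delta$. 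The extra factor $m$ contributes nothing to the exponential rate.

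From that comparison I get, for every $\varepsilon>0$, after choosing $\epsilon'$ small in terms of $\varepsilon$ and $c$,
\[
N'_{\varepsilon}\big(Orb_\nu(\pi,\sigma,F_1,K_1,\delta_1,\rho_1),\rho_{1,2}\big)\ \le\ m\cdot N'_{\epsilon'}\big(Orb_\nu(\pi,\sigma,\widetilde F,K,\delta,\rho_2),\rho_{2,2}\big),
\]
valid for all sufficiently multiplicative $\sigma$ (an event of $\P_j$-measure $\to 1$ by Definition \ref{defn:sofic-approximation}); on the small-measure complement I bound the spanning number crudely by a constant raised to $d$ times that measure, which vanishes in the $L^p$ norm after taking $\frac1{d_j}\log$ and $\lim_{j\to\beta}$ — this is where the $L^p$/ultrafilter bookkeeping lives and where one must be a little careful, but it is routine given clause (1) of the sofic approximation definition. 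Taking $\|\cdot\|_{p,\P_j}$, then $\frac1{d_j}\log$, then $\lim_{j\to\beta}$, then $\inf$ over $\delta,F,K$ on the right (which only helps), and finally $\sup_{\varepsilon>0}$, and invoking Lemma \ref{lem:span} to pass between $N$ and $N'$, yields $h_\P(\pi,\rho_1)\le h_\P(\pi,\rho_2)$. By symmetry the two entropies agree.

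The main obstacle, I expect, is the compactness-and-continuity argument producing the \emph{finite} list $f_1,\dots,f_m\in[\sH]_{top}$ and the uniform constant $c$: one must be sure that the maps $x\mapsto f\cdot x$ are continuous on all of $\sG^0$ (not merely on a subset), which is exactly what \'etaleness of $\sG$ and Lemma \ref{lem:top-inclusion} provide, and one must handle the separated-pairs set being compact while the separating $f$ varies — a standard open-cover extraction, but it has to be set up so that the resulting $f_i$ and the enlarged finite sets $F_1,K_1$ do not depend on $\sigma$ or $j$. A secondary nuisance is keeping the $*$ symbol out of the estimates and confirming that passing from $f_i\cdot x$ to $x\circ\sigma(f_i)$ only costs $o(1)$ in $\rho_{2,2}$; this is handled by Lemmas \ref{lem:basic-formulas1} and \ref{lem:basic-formulas2} exactly as in the invariance proofs of ordinary sofic entropy.
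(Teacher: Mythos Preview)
Your approach differs from the paper's. The paper avoids direct comparison: for each dynamically generating continuous pseudo-metric $\rho$ it builds $\rho^\phi(x,y)=\big(\sum_i 2^{-i}\rho(\phi_i\cdot x,\phi_i\cdot y)^2\big)^{1/2}$ from a countable $\{\phi_i\}\subset[\sH]_{top}$ with dense image in the homeomorphism group of $\sG^0$, shows (Lemma~\ref{lem:K-top2}) that $\rho^\phi$ is a genuine metric, proves (Lemma~\ref{lem:K-top3}) that $h_\P(\pi,\rho)=h_\P(\pi,\rho^\phi)$, and finally (Lemma~\ref{lem:K-top1}) that any two continuous metrics give the same entropy --- the last step being easy since metrics on a compact space are uniformly equivalent. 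Your finite compactness extraction $f_1,\dots,f_m$ is closer to Kerr--Li's original argument for group actions and is a legitimate alternative.

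There is, however, a real gap. You invoke only that $\rho_2$ is dynamically generating, obtaining ``$\rho_1(a,b)\ge\varepsilon\Rightarrow\max_i\rho_2(f_i\cdot a,f_i\cdot b)>c$''. This converts small $\rho_{2,2}$-distance into small $\rho_{1,2}$-distance \emph{provided you can replace $f_i\cdot x$ by $x\circ\sigma(f_i)$}, and that substitution costs only $\delta$ in $\rho_{2,2}$ \emph{when $x$ satisfies the $\rho_2$-orbit condition}. But the $x$ you are trying to span lies in $Orb_\nu(\pi,\sigma,F_1,K_1,\delta_1,\rho_1)$, which is a $\rho_1$-condition; taking $F_1\supset\widetilde F$ and $\delta_1\le\delta$ does not make it a $\rho_2$-condition, since the pseudo-metric $\rho_1$ may vanish on pairs where $\rho_2$ does not. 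You need the inclusion $Orb_\nu(\dots,\rho_1)\subset Orb_\nu(\dots,\rho_2)$, and this requires the \emph{symmetric} compactness argument using that $\rho_1$ is also generating: find $g_1,\dots,g_n\in[\sH]_{top}$ and $c'>0$ with $\rho_2(a,b)\ge\delta'\Rightarrow\max_j\rho_1(g_j\cdot a,g_j\cdot b)>c'$, then take $F_1\supset\{g_jf:1\le j\le n,\ f\in\widetilde F\}$ and $\delta_1$ small so that the $\rho_1$-orbit bounds force $\rho_{2,2}(f\cdot x,x\circ\sigma(f))<\delta$ for each $f\in\widetilde F$. With this in place your argument goes through (and the enlargement of $Y$ by $\sigma(f_i)$-shifts becomes unnecessary). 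The paper's detour through an honest metric is precisely a device to avoid this two-sided bookkeeping, since for metrics the $Orb_\nu$ inclusion follows from uniform continuity alone.
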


\begin{remark}
The proof of Theorem \ref{thm:K-top} uses only properties (2) and (3) of the definition of sofic approximation (Definition \ref{defn:sofic-approximation}). Moreover, it does not use the full definition of $\lb \sH \rb_{top}$. We only need to use the fact that $\lb \sH \rb_{top}$ is closed under composition (by Lemma \ref{lem:composition}) and $[\sH]_{top} \subset \lb \sH \rb_{top}$. However, the proof of the variational principle (Theorem \ref{thm:var-principle}) makes use of the full definition of $\lb \sH \rb_{top}$.
\end{remark}

\begin{defn}
Given Theorem \ref{thm:K-top}, we define the {\em sofic topological entropy of $\pi$ with respect to $(\P,p,\beta)$} by $h_\P(\pi):=h_\P(\pi,\rho)=h_{\P,p}^\beta(\pi,\rho)$ where $\rho$ is any dynamically generating continuous pseudo-metric on $\sG^0$. Intuitively, this is the relative entropy with respect to the measure $\nu$. Because the sofic approximation $\P$ determines $\nu$, $\nu$ is implicitly referenced in the notation. Indeed, for any Borel subset $P \subset \sH^0$,
$$\nu(P)=\tr_{\sH}(P) = \lim_{j\to\beta} \int \tr_d(\sigma(P))~d\P_j(\sigma).$$
This is implied by the asymptotic trace-preserving property of $\P$. 
\end{defn}

\begin{lem}\label{lem:K-top1}
If $\rho, \rho'$ are continuous metrics on $\sG^0$ then $h_\P(\pi, \rho) = h_\P(\pi, \rho')$.
\end{lem}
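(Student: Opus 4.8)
The plan is to deduce the equality from the elementary fact that any two continuous metrics on the compact space $\sG^0$ are uniformly equivalent, and then to transport that equivalence through the $\ell^2$-averaged metrics $\rho_2,\rho'_2$ on the $d$-fold products. First I would note that a continuous metric $\rho$ on $\sG^0$ induces the original topology: each $\rho$-ball is open in the original topology because $x\mapsto\rho(x,y)$ is continuous, so the identity map from $\sG^0$ (with its original topology) to $(\sG^0,\rho)$ is a continuous bijection from a compact space onto a Hausdorff space, hence a homeomorphism. Consequently the identity $(\sG^0,\rho)\to(\sG^0,\rho')$ is a homeomorphism of compact metric spaces and therefore uniformly continuous in both directions: for every $\eps>0$ there is $\tau\in(0,\diam(\sG^0,\rho))$ with $\rho(x,y)<\tau\Rightarrow\rho'(x,y)<\eps$ for all $x,y\in\sG^0$, and symmetrically. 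Extending $\rho,\rho'$ to $\sG^0\cup\{*\}$ as in the text and keeping $\tau$ below the $\rho$-diameter, this implication persists for all $a,b\in\sG^0\cup\{*\}$, since the only new case, one of $a,b$ equal to $*$ and the other not, has $\rho(a,b)=\diam(\sG^0,\rho)\ge\tau$. (If $\sG^0$ has at most one point all continuous metrics are the zero metric and the lemma is trivial, so assume $\diam(\sG^0,\rho)>0$ and $\diam(\sG^0,\rho')>0$.)

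Next I would prove two comparison statements, uniform in $d$. \emph{(i) Orbit sets:} for every $\delta'>0$ there is $\delta>0$ with $Orb_\nu(\pi,\sigma,F,K,\delta,\rho)\subseteq Orb_\nu(\pi,\sigma,F,K,\delta',\rho')$ for all $d,\sigma,F,K$. Given $x$ in the left-hand set and $f\in F$, set $y=f\cdot x$ and $z=x\circ\sigma(f)$ in $(\sG^0\cup\{*\})^d$; the coordinates with $\rho(y_i,z_i)\ge\tau$ number at most $\delta^2 d/\tau^2$ (pigeonhole applied to $\frac1d\sum_i\rho(y_i,z_i)^2<\delta^2$), and on the remaining coordinates $\rho'(y_i,z_i)<\eps$, so $\rho'_2(y,z)^2<\eps^2+(\delta^2/\tau^2)\diam(\sG^0,\rho')^2$, which is $<\delta'^2$ once $\eps$ and then $\delta$ are chosen small enough; the $K$-condition in the definition of $Orb_\nu$ does not involve $\rho$, so it is handled by also asking $\delta\le\delta'$. \emph{(ii) Separated numbers:} for every $\eps>0$ there is $\eps'>0$ with $N_\eps(X,\rho_2)\le N_{\eps'}(X,\rho'_2)$ for every $d$ and every $X\subseteq(\sG^0)^d$; indeed a maximum-cardinality $(\rho_2,\eps)$-separated set $Y\subseteq X$ stays $(\rho'_2,\eps')$-separated — the reverse uniform continuity and the same splitting argument (now on $(\sG^0)^d$, no $*$) give $\rho'_2(y_1,y_2)\le\eps'\Rightarrow\rho_2(y_1,y_2)\le\eps$ — so $N_{\eps'}(X,\rho'_2)\ge|Y|=N_\eps(X,\rho_2)$.

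Combining these: fix $\eps>0$, let $\eps'$ be as in (ii), and for each $\delta'>0$ let $\delta=\delta(\delta')$ be as in (i). Then pointwise in $\sigma$, $N_\eps(Orb_\nu(\pi,\sigma,F,K,\delta,\rho),\rho_2)\le N_\eps(Orb_\nu(\pi,\sigma,F,K,\delta',\rho'),\rho_2)\le N_{\eps'}(Orb_\nu(\pi,\sigma,F,K,\delta',\rho'),\rho'_2)$, the first inequality by (i) and monotonicity of $N_\eps$ in the set, the second by (ii). Taking $\|\cdot\|_{p,\P_j}$ (monotone under pointwise domination), then $\frac1{d_j}\log(\cdot)$, then $\lim_{j\to\beta}$, then $\inf_{F,K}$, then $\inf_{\delta'}$ (valid because $\delta(\delta')$ is one admissible value in the left-hand $\inf_\delta$), and finally $\sup_\eps$ on the left while bounding the right by $\sup_{\eps'}$, we obtain $h_\P(\pi,\rho,2)\le h_\P(\pi,\rho',2)$, i.e. $h_\P(\pi,\rho)\le h_\P(\pi,\rho')$. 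Exchanging $\rho$ and $\rho'$ gives the reverse inequality, hence equality.

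The only real obstacle is the one visible in (i) and (ii): since the orbit condition and the separation condition are phrased with the $\ell^2$-average $\rho_2$ rather than $\rho_\infty$, uniform equivalence of $\rho$ and $\rho'$ on $\sG^0$ does not transfer verbatim to the products — a $\rho_2$-small pair may disagree a lot on a few coordinates — and one must absorb those $O(\delta^2 d)$, resp.\ $O(\eps'^2 d)$, coordinates into the error, which is what the splitting arguments do while keeping everything uniform in $d$. Everything else is bookkeeping with the monotonicities recorded in the remarks after the definition of $h_\P(\pi,\rho,2)$ and with the stability of $\lim_{j\to\beta}$ and $\|\cdot\|_{p,\P_j}$ under pointwise inequalities.
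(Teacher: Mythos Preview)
Your proposal is correct and follows essentially the same strategy as the paper: exploit the uniform equivalence of the two continuous metrics on the compact space $\sG^0$, and use a bad-coordinate splitting to transfer the $\rho_2$-conditions between the two metrics. The only minor difference is that the paper, for the separation-count step (your (ii)), switches to the $\rho_\infty$ form of the entropy (equal to the $\rho_2$ form by Lemma~\ref{lem:2=infinity}), so that uniform continuity transfers the $(\rho'_\infty,\epsilon_n)$-separation to $(\rho_\infty,1/n)$-separation directly without a second splitting argument; your choice to stay with $\rho_2$ throughout and run two splittings works equally well.
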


\begin{proof}
Because $\rho$ and $\rho'$ are continuous metrics and $\sG^0$ is compact, for every $\delta>0$ and sufficiently large integer $n\gg 0$ there exist  $\delta_0, \epsilon_n>0$ with $\delta_0<1$ such that 
\begin{enumerate}
\item $\rho'(x,y) \le \sqrt{\delta_0} \Rightarrow \rho(x,y) < \delta$,
\item $\delta_0 \le \delta^2$,
\item $\rho'(x,y)> \epsilon_n \Rightarrow \rho(x,y) > 1/n$,
\item $\lim_{n\to\infty} \epsilon_n = 0$.
\end{enumerate}
Let $M= \max \{\rho(x,y):~ x,y \in \sG^0\}$ be the diameter of $\rho$. 

\noindent {\bf Claim 1}. For any $\sigma:\lb \sH \rb \to \lb d \rb$, $F \subset_f \lb \sH \rb_{top}$ and $K \subset_f C(\sH^0)$,
$$Orb_\nu(\pi,\sigma,F,K,\delta_0,\rho')  \subset Orb_\nu(\pi,\sigma,F,K, \delta(M^2 +1)^{1/2}, \rho).$$ 

\begin{proof}[Proof of Claim 1]
Let $z \in Orb_\nu(\pi,\sigma,F,K,\delta_0,\rho')$ and $f\in F$. By definition,
$$\rho'_2( f\cdot z, z \circ \sigma(f) ) = \left(\frac{1}{d} \sum_{i=1}^d \rho'(f\cdot z_i, z_{\sigma(f) i})^2\right)^{1/2}< \delta_0.$$ 
So there exists a set $\Lambda=\Lambda(z,f,\sigma) \subset \{1,\ldots, d\}$ such that
\begin{enumerate}
\item $|\Lambda| \le \delta_0 d$;
\item for every $i \notin \Lambda$, $\rho'(f\cdot z_i, z_{\sigma(f) i}) < \sqrt{\delta_0}$, which implies $\rho(f\cdot z_i, z_{\sigma(f) i})  < \delta$. 
\end{enumerate}
Therefore,
\begin{eqnarray*}
\rho_2(f\cdot z , z \circ \sigma_f)^2 &=&  \frac{1}{d}\sum_{i=1}^d \rho( f\cdot z_i, z_{\sigma(f)i})^2  \le \frac{|\Lambda|}{d}M^2 + \frac{1}{d}\sum_{i \notin \Lambda}  \rho(f\cdot z_i, z_{\sigma(f)i})^2  \\
&<& \delta_0 M^2 + \frac{d-|\Lambda|}{d}\delta^2 \le \delta^2(M^2+1).
\end{eqnarray*}
Because $f\in F$ is arbitrary, $z \in Orb_\nu(\pi,\sigma,F, K,\delta(M^2 +1)^{1/2}, \rho)$. Because $z$ is arbitrary, this implies the claim.
\end{proof}
By choice of $\epsilon_n$,
$$N_{\epsilon_n}(Orb_\nu(\pi,\sigma,F,K,\delta_0,\rho'), \rho'_\infty) \le N_{1/n}(Orb_\nu(\pi,\sigma,F,K,\delta(M^2+1)^{1/2},\rho), \rho_\infty).$$
Thus we obtain
\begin{eqnarray*}
&&\lim_{j\to\beta} \frac{1}{d_j} \log \| N_{\epsilon_n}(Orb_\nu(\pi,\cdot,F,K,\delta_0,\rho'), \rho'_\infty)\|_{p,\P_j}\\
 &\le& \lim_{j\to\beta}\frac{1}{d_j} \log  \| N_{1/n}(Orb_\nu(\pi,\cdot,F,K,\delta(M^2+1)^{1/2},\rho), \rho_\infty)\|_{p,\P_j}.
 \end{eqnarray*}

Taking the infimum over $\delta_0>0$, then over all $\delta>0$ then over all $F \subset_f \lb \sH \rb_{top}$ and $K \subset_f C(\sH^0)$, then the supremum over all $n$  (and using that $\epsilon_n \to 0$ as $n\to \infty$) we obtain $h_\P(\pi,\rho') \le h_\P(\pi,\rho).$ Because $\rho'$ and $\rho$ are arbitrary, this implies the lemma.
\end{proof}

\begin{defn}
Given a continuous pseudo-metric $\rho$ on $\sG^0$ and a sequence $\{\phi_i\}_{i=1}^\infty$ with $\phi_i\in \lb \sH \rb_{top}$, define a pseudo-metric $\rho^\phi$ on $\sG^0$ by
$$\rho^\phi(x,y) = \left(\sum_{i=1}^\infty 2^{-i} \rho(\phi_i \cdot x, \phi_i \cdot y)^2\right)^{1/2}.$$
\end{defn}


\begin{lem}\label{lem:K-top2}
Assume $\sG$ is \'etale. Consider the homeomorphism group of $\sG^0$, $Homeo(\sG^0)$, with the topology of pointwise convergence. Consider the homomorphism $\varpi:[\sH]_{top} \to Homeo(\sG^0)$ given by $\varpi(\phi)(x) = \pi^{-1}(\phi) \cdot x$.  If $\rho$ is a continuous dynamically generating pseudo-metric and $\{\phi_i\}_{i=1}^\infty \subset [\sH]_{top}$ is such that $\varpi(\{\phi_i\}_{i=1}^\infty)$ is dense in $\varpi([\sH]_{top})$  then $\rho^\phi$ is a continuous metric.
\end{lem}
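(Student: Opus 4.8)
The plan is to show that $\rho^\phi$ is both continuous and nondegenerate; continuity is routine, so the real content is nondegeneracy, which we extract from the hypothesis that $\varpi(\{\phi_i\})$ is dense in $\varpi([\sH]_{top})$ together with the assumption that $\rho$ is dynamically generating.

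\textbf{Continuity.} First I would observe that each summand $x \mapsto \rho(\phi_i\cdot x, \phi_i\cdot y)^2$ is continuous: since $\sG$ is \'etale and $\phi_i \in [\sH]_{top}$, the map $x \mapsto \pi^{-1}(\phi_i)\cdot x$ is a homeomorphism of $\sG^0$ (this is exactly the content of $\varpi$ landing in $Homeo(\sG^0)$, using Lemma \ref{lem:top-inclusion} to see $\pi^{-1}(\phi_i) \in [\sG]_{top}$), and $\rho$ is a continuous pseudo-metric. The series $\sum_i 2^{-i}\rho(\phi_i\cdot x,\phi_i\cdot y)^2$ converges uniformly because each term is bounded by $2^{-i}M^2$ with $M = \diam_\rho(\sG^0) < \infty$ (here $\sG^0$ is compact). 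Hence $\rho^\phi$ is continuous, and it is immediate that it is a pseudo-metric (nonnegativity, symmetry, and the triangle inequality pass to the $\ell^2$-type combination of pseudo-metrics).

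\textbf{Nondegeneracy.} Suppose $x \neq y \in \sG^0$ with $\rho^\phi(x,y) = 0$; I want a contradiction. From $\rho^\phi(x,y)=0$ we get $\rho(\phi_i\cdot x, \phi_i\cdot y) = 0$ for every $i$. Since $\rho$ is dynamically generating, there exists $\psi \in [\sH]_{top}$ with $\rho(\psi\cdot x, \psi\cdot y) > 0$; set $\eta := \rho(\psi\cdot x,\psi\cdot y)$. Now I use density of $\varpi(\{\phi_i\})$ in $\varpi([\sH]_{top})$ in the topology of pointwise convergence: there is some $\phi_i$ with $\varpi(\phi_i)$ as close to $\varpi(\psi)$ as we like at the two points $x$ and $y$, i.e. $\rho(\phi_i\cdot x, \psi\cdot x) < \eta/4$ and $\rho(\phi_i\cdot y, \psi\cdot y) < \eta/4$. (To phrase pointwise convergence in terms of the pseudo-metric $\rho$ one should fix a compatible metric on $\sG^0$; since $\rho$ is merely a continuous pseudo-metric, density should instead be applied with respect to an honest metric $d_0$ on $\sG^0$ and then translated to $\rho$ using uniform continuity of $\rho$ against $d_0$ on the compact space $\sG^0$ — so strictly speaking I choose $\phi_i$ with $d_0(\phi_i\cdot x,\psi\cdot x)$ and $d_0(\phi_i\cdot y,\psi\cdot y)$ small enough that the $\rho$-distances are $< \eta/4$.) Then by the triangle inequality
\[
\eta = \rho(\psi\cdot x,\psi\cdot y) \le \rho(\psi\cdot x, \phi_i\cdot x) + \rho(\phi_i\cdot x, \phi_i\cdot y) + \rho(\phi_i\cdot y, \psi\cdot y) < \eta/4 + 0 + \eta/4 = \eta/2,
\]
a contradiction. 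Hence $\rho^\phi(x,y) > 0$ whenever $x \neq y$, so $\rho^\phi$ is a metric.

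\textbf{Main obstacle.} The only delicate point is the interface between "pointwise convergence" in $Homeo(\sG^0)$ and the pseudo-metric $\rho$: because $\rho$ need not separate points, one cannot directly say that $\varpi(\phi_i) \to \varpi(\psi)$ pointwise implies $\rho(\phi_i\cdot x,\psi\cdot x) \to 0$ by "definition" — one must route through a genuine compatible metric $d_0$ on the compact metrizable space $\sG^0$ and invoke uniform continuity of $\rho$ with respect to $d_0$. Once that bookkeeping is set up, the argument is a two-line triangle inequality. Everything else (convergence of the series, continuity of each $\phi_i$-action, the pseudo-metric axioms for $\rho^\phi$) is routine.
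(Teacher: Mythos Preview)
Your proof is correct and follows essentially the same route as the paper's: both reduce nondegeneracy to finding some $\phi_i$ with $\rho(\phi_i\cdot x,\phi_i\cdot y)>0$ via density of $\varpi(\{\phi_i\})$ and a triangle-inequality estimate against the $\psi\in[\sH]_{top}$ supplied by dynamical generation. Your treatment is slightly more explicit about continuity of $\rho^\phi$ (uniform convergence) and about the fact that pointwise convergence in $Homeo(\sG^0)$ is defined via the topology rather than the pseudo-metric $\rho$, which you correctly handle by passing through a compatible metric and using continuity of $\rho$; the paper simply invokes ``because $\rho$ is continuous'' for the same step.
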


\begin{remark}
The fact that $\varpi$ is a homomorphism uses Lemma \ref{lem:top-inclusion}. In particular, this lemma uses the hypothesis that $\sG$ is \`etale.\end{remark}

\begin{proof}
It is clear that $\rho^\phi$ is a continuous pseudo-metric. So it suffices to show that for any $x,y \in \sG^0$ with $x\ne y$, $\rho^\phi(x,y)>0$. Because $\rho$ is dynamically generating, there is an $f \in [\sH]_{top}$ such that $\rho(f\cdot x,f\cdot y)>0$. Because $\varpi(\{\phi_i\}_{i=1}^\infty)$ is dense in $\varpi([\sH]_{top})$ and $\rho$ is continuous, there exists an $i$ such that 
$$\max(\rho(\phi_i\cdot x, f\cdot x), \rho(\phi_i\cdot y,f\cdot y)) <\rho(f\cdot x,f\cdot y)/3.$$
Therefore,
$$\rho(\phi_i\cdot x,\phi_i\cdot y) \ge \rho(f\cdot x,f\cdot y) - \rho(\phi_i\cdot x, f\cdot x) - \rho(\phi_i\cdot y,f\cdot y) \ge \rho(f\cdot x,f\cdot y)/3 >0.$$
Thus $\rho^\phi(x,y) \ge 2^{-i/2}\rho(f\cdot x,f\cdot y)/3>0$. Because $x,y$ are arbitrary, this establishes that $\rho^\phi$ is a metric as claimed.
\end{proof}

\begin{remark}
The reason our definition of generating pseudo-metric uses $[\sH]_{top}$ instead of $\lb \sH \rb_{top}$ (or other possible choices) is that, if $\{\phi_i\}_{i=1}^\infty \subset \lb \sH \rb_{top}$ then $\rho^\phi$ is not necessarily continuous, but if $\{\phi_i\}_{i=1}^\infty \subset [\sH]_{top}$ then $\rho^\phi$ is continuous.
\end{remark}


\begin{lem}\label{lem:K-top3}
Let $\rho$ be a continuous dynamically generating pseudo-metric and let $\{\phi_i\}_{i=1}^\infty$ be a subset of $[\sH]_{top}$ with $\phi_1=\sH^0$. Then
$$h_\P(\pi,\rho) = h_\P(\pi, \rho^\phi).$$
\end{lem}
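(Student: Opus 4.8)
The plan is to show the two inequalities $h_\P(\pi,\rho^\phi) \le h_\P(\pi,\rho)$ and $h_\P(\pi,\rho) \le h_\P(\pi,\rho^\phi)$ separately, using that $\rho^\phi$ packages together finitely many ``shifted copies'' of $\rho$. For the easy direction, note that $\rho(x,y) = \rho(\phi_1 \cdot x, \phi_1 \cdot y) \le 2^{1/2}\rho^\phi(x,y)$ since $\phi_1 = \sH^0$ acts as the identity, so any $(\rho^\phi_\infty,\epsilon)$-separated set in an orbit space is $(\rho_\infty, 2^{1/2}\epsilon)$-separated; combined with the fact that enlarging the pseudo-metric only shrinks orbit spaces (here $\rho^\phi \ge 2^{-1/2}\rho$ gives the needed comparison of $Orb_\nu$ sets after rescaling $\delta$), one gets $h_\P(\pi,\rho)\le h_\P(\pi,\rho^\phi)$ directly from monotonicity and Lemma \ref{lem:2=infinity}.

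For the reverse inequality, the key point is that the tail of the series defining $\rho^\phi$ is uniformly small: given $\epsilon>0$, choose $N$ with $\sum_{i>N} 2^{-i}\,\mathrm{diam}(\rho)^2 < \epsilon^2/4$, so that $\rho^\phi(x,y)^2 \le \sum_{i=1}^N 2^{-i}\rho(\phi_i\cdot x,\phi_i\cdot y)^2 + \epsilon^2/4$. Thus it suffices to control the finitely many quantities $\rho(\phi_i\cdot x,\phi_i\cdot y)$ for $i\le N$. Here I would use the $(F,\delta)$-multiplicativity built into the orbit-space definition together with Lemma \ref{lem:basic-formulas2}: if $z \in Orb_\nu(\pi,\sigma,F,K,\delta,\rho)$ with $F$ chosen to contain $\phi_i$, $\phi_i^{-1}$, $\so(\phi_i)$, $\ra(\phi_i)$ and suitable products, then $\phi_i \cdot z$ is $\rho_2$-close to $z \circ \sigma(\phi_i)$, and moreover $\sigma(\phi_i)$ is an almost-everywhere-defined near-bijection of $[d]$. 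The upshot is that if $y$ is $(\rho_2,\epsilon')$-close to $z$ (for a small $\epsilon'$ depending on $\epsilon$, $N$), then $\phi_i \cdot y$ is close to $\phi_i\cdot z$ in $\rho_2$ after composing with $\sigma(\phi_i)$ — formally, a $(\rho_2,\epsilon')$-spanning set for $Orb_\nu(\pi,\sigma,F,K,\delta,\rho)$ yields, after averaging the $N$ coordinate-permuted copies, a $(\rho^\phi_2, \epsilon)$-spanning set of comparable cardinality. Taking $\log$, dividing by $d_j$, applying $\|\cdot\|_{p,\P_j}$ and then the appropriate infima and limit (and invoking Lemma \ref{lem:span} to pass between $N_\epsilon$ and $N'_\epsilon$, and Lemma \ref{lem:2=infinity} to pass between $\rho_2$ and $\rho_\infty$), one obtains $h_\P(\pi,\rho^\phi) \le h_\P(\pi,\rho)$.

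The main obstacle I anticipate is the bookkeeping in the second direction: one must carefully track how a $(\rho_2,\epsilon')$-approximation of a point $z$ in the orbit space propagates, under the ``multiplication by $\sigma(\phi_i)$'' operation, to a $\rho_2$-approximation of $\phi_i \cdot z$, controlling the error both from the sets where $\sigma(\phi_i)$ fails to be defined or bijective (handled by property (2) of the sofic approximation and Lemma \ref{lem:basic-formulas2}, which are only satisfied asymptotically along $j$) and from the defect $\rho_2(\phi_i\cdot z, z\circ\sigma(\phi_i))<\delta$ (handled by shrinking $\delta$). Because $\delta$, $F$, and $K$ all appear under infima taken before the limit over $j$, the order of quantifiers works in our favor: we may first fix $\epsilon$, then $N$, then demand $F \supset \{\phi_i, \phi_i^{-1}, \so(\phi_i), \ra(\phi_i), \phi_i\phi_i^{-1}, \dots : i \le N\}$ and $\delta$ small, and only afterwards take $j \to \beta$. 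I would also note, as the remark after Lemma \ref{lem:K-top2} emphasizes, that it is essential here that each $\phi_i \in [\sH]_{top}$ (so $\sigma(\phi_i)\in [d]$ asymptotically, i.e. a genuine permutation up to small error), which is what keeps $x \mapsto x\circ\sigma(\phi_i)$ a measure-preserving near-bijection of $(\sG^0)^d$ and hence makes the spanning-number comparison lossless.
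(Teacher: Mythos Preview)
Your ``reverse'' direction ($h_\P(\pi,\rho^\phi)\le h_\P(\pi,\rho)$) is essentially the paper's argument for that inequality: with $\{\phi_1,\dots,\phi_n\}\subset F$, take a $(\rho_2,2\epsilon)$-spanning set $Y$ \emph{inside} $Orb_\nu(\pi,\sigma,F,K,2\delta,\rho)$, and for $x,y$ in this orbit space with $\rho_2(x,y)<2\epsilon$ use $\rho_2(\phi_i\cdot x,\phi_i\cdot y)\le\rho_2(x\circ\sigma(\phi_i),y\circ\sigma(\phi_i))+\rho_2(\phi_i\cdot x,x\circ\sigma(\phi_i))+\rho_2(\phi_i\cdot y,y\circ\sigma(\phi_i))<2\epsilon+4\delta$, then sum to get $\rho^\phi_2(x,y)^2<(2\epsilon+4\delta)^2+2^{-n}M^2$. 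The only point you gloss over is that $Y$ must lie in the orbit space so that the orbit estimate applies to \emph{both} endpoints; this is a one-line fix (the paper's Claim~2). You do not actually need $\phi_i^{-1},\so(\phi_i),\ra(\phi_i)\in F$ or Lemma~\ref{lem:basic-formulas2} here---the orbit condition for $\phi_i$ alone suffices, and $\rho_2(x\circ\sigma(\phi_i),y\circ\sigma(\phi_i))\le\rho_2(x,y)$ is trivial.

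Your ``easy'' direction, however, has a genuine gap. From $\rho\le\sqrt{2}\,\rho^\phi$ one gets (i) $Orb_\nu(\cdot,\delta,\rho^\phi)\subset Orb_\nu(\cdot,\sqrt{2}\delta,\rho)$ and (ii) $(\rho_\infty,\epsilon)$-separated $\Rightarrow(\rho^\phi_\infty,\epsilon/\sqrt{2})$-separated (your stated implication is backwards). To prove $h_\P(\pi,\rho)\le h_\P(\pi,\rho^\phi)$ you need to bound $N_\epsilon(Orb(\rho),\rho_\infty)$ by some $N_{\epsilon'}(Orb(\rho^\phi),\rho^\phi_\infty)$; (ii) converts $\rho$-separation to $\rho^\phi$-separation, but (i) gives the orbit-space containment in the \emph{wrong} direction, and no amount of monotonicity rescues this. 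Neither inequality is a pure consequence of $\rho\le\sqrt{2}\rho^\phi$: each direction requires one nontrivial use of the orbit condition. What the paper does here (its Claim~3) is: given $F\ni\sH^0$ and $\delta$, set $F'=\{\phi_j f:f\in F,\,1\le j\le n\}$; for $(F',\delta^2/M^2)$-multiplicative $\sigma$ and $x\in Orb_\nu(\pi,\sigma,F',K,\delta,\rho)$, estimate for each $f\in F$, $j\le n$,
\[
\rho_2(\phi_j f\cdot x,\,\phi_j(x\circ\sigma(f)))\le \rho_2(\phi_j f\cdot x,\,x\circ\sigma(\phi_j f))+\rho_2(x\circ\sigma(\phi_j f),\,x\circ\sigma(\phi_j)\sigma(f))+\rho_2(x\circ\sigma(\phi_j)\sigma(f),\,(\phi_j\cdot x)\circ\sigma(f))\le 3\delta,
\]
so $\rho^\phi_2(f\cdot x,x\circ\sigma(f))^2\le(3\delta)^2+2^{-n}M^2<(2\delta)^2$ for $n$ large, giving $Orb_\nu(\pi,\sigma,F',K,\delta,\rho)\subset Orb_\nu(\pi,\sigma,F,K,2\delta,\rho^\phi)$. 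Combined with the now correctly-oriented spanning comparison from $\rho\le 2\rho^\phi$, this yields $h_\P(\pi,\rho^\phi)\ge h_\P(\pi,\rho)$. You should supply this argument.
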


\begin{proof}
{\bf Claim 1}. For any $x,y \in (\sG^0)^d$,
$$\rho_2^\phi(x,y)^2 = \sum_{j=1}^\infty 2^{-j} \rho_2(\phi_j \cdot x, \phi_j \cdot y)^2.$$
\begin{proof}[Proof of Claim 1]
This is a straightforward computation:
\begin{eqnarray*}
\rho_2^\phi(x,y)^2 &=& \frac{1}{d}\sum_{i=1}^d \rho^\phi(x_i,y_i)^2 = \frac{1}{d} \sum_{i=1}^d \sum_{j=1}^\infty 2^{-j} \rho(\phi_j \cdot x_i, \phi_j \cdot y_i)^2\\
&=&  \sum_{j=1}^\infty 2^{-j}  \frac{1}{d} \sum_{i=1}^d \rho(\phi_j \cdot x_i, \phi_j \cdot y_i)^2 = \sum_{j=1}^\infty 2^{-j} \rho_2(\phi_j \cdot x, \phi_j \cdot y)^2.
\end{eqnarray*}
\end{proof}

Because $\phi_1=\sH^0$, $\rho \le 2 \rho^\phi$. So
$$Orb_\nu(\pi,\sigma,F,K,\delta,\rho^\phi) \subset Orb_\nu(\pi,\sigma,F,K,2\delta,\rho)$$
for any $\sigma, F,K,\delta$. 

We are going to use spanning sets, but there is one technical issue. A spanning set for a given set $Y$ is not required to be contained in $Y$. To remedy this we show:

{\bf Claim 2}. There exists a $(\rho_2,2\epsilon)$-spanning set $Y$ for $Orb_\nu(\pi,\sigma,F,K,2\delta,\rho)$ which is contained in $Orb_\nu(\pi,\sigma,F,K,2\delta,\rho)$ and satisfies $|Y| \le N'_{\epsilon}(Orb_\nu(\pi,\sigma,F,K,2\delta,\rho))$.

\begin{proof}[Proof of Claim 2]
Let $Y'$ be a minimal $(\rho_2,\epsilon)$-spanning set for $Orb_\nu(\pi,\sigma,F,K,2\delta,\rho)$. Because $Y'$ is minimal, for each $y' \in Y'$ there exists an element $y \in Orb_\nu(\pi,\sigma,F,K,2\delta,\rho)$ with $\rho_2(y,y')<\epsilon$. The collection $Y$ of all of these elements satisfies the claim.
\end{proof}


Let $M$ be the diameter of $(\sG^0,\rho)$. Let $F_n$ be any finite subset of $\lb \sH \rb_{top}$ containing $\{\phi_1,\ldots, \phi_n\}$. If $1\le i \le n$ and $x,y \in Orb_\nu(\pi,\sigma,F_n,K,2\delta,\rho)$ satisfy $\rho_2(x,y)<2\epsilon$ then
\begin{eqnarray*}
\rho_2(\phi_i \cdot x, \phi_i \cdot y) &\le & \rho_2(x \circ \sigma(\phi_i), y \circ \sigma(\phi_i)) + \rho_2(x \circ \sigma(\phi_i), \phi_i \cdot x) + \rho_2(y \circ \sigma(\phi_i), \phi_i \cdot y)<2\epsilon + 4\delta.
\end{eqnarray*}
We have used there that $\rho_2(x \circ \sigma(\phi_i), y \circ \sigma(\phi_i)) \le \rho_2(x,y)$. So,
\begin{eqnarray*}
\rho_2^\phi(x,y)^2 &=& \sum_{j=1}^\infty 2^{-j}  \rho_2(\phi_j x, \phi_j y)^2 < 2^{-n}M^2 + (2\epsilon + 4\delta)^2.
\end{eqnarray*}
By Claim 2, there exists a $(\rho_2,2\epsilon)$-spanning set $Y$ for $Orb_\nu(\pi,\sigma,F_n,K,2\delta,\rho)$ which is contained in $Orb_\nu(\pi,\sigma,F_n,K,2\delta,\rho)$ and satisfies $|Y| \le N'_{\epsilon}(Orb_\nu(\pi,\sigma,F_n,K,2\delta,\rho))$. So for any $x \in Orb_\nu(\pi,\sigma,F_n,K,2\delta,\rho)$ there exists $y \in Y$ with $\rho_2(x,y)<2\epsilon$ which implies $\rho_2^\phi(x,y)^2 < 2^{-n}M^2 + (2\epsilon + 4\delta)^2$. Thus $Y$ is $(\rho_2^\phi, \sqrt{2^{-n}M^2 + (2\epsilon + 4\delta)^2})$-spanning. Letting $\eta =\sqrt{2^{-n}M^2 + (2\epsilon + 4\delta)^2}$, we have
\begin{eqnarray*}
&&N'_{\epsilon}(Orb_\nu(\pi,\sigma,F_n,K,2\delta,\rho),\rho_2) \ge N'_\eta((Orb_\nu(\pi,\sigma,F_n,K,2\delta,\rho), \rho^\phi_2)\\
 &\ge& N'_\eta((Orb_\nu(\pi,\sigma,F_n,K,\delta,\rho^\phi), \rho^\phi_2)
 \end{eqnarray*}
where the last inequality follows from the inclusion $Orb_\nu(\pi,\sigma,F_n,K,\delta,\rho^\phi) \subset Orb_\nu(\pi,\sigma,F_n,K,2\delta,\rho)$. By monotonicity, if $n$ is large enough and $\delta$ is small enough then $3\epsilon>\eta$ which implies
$$N'_{\epsilon}(Orb_\nu(\pi,\sigma,F_n,K,2\delta,\rho),\rho_2) \ge N'_{3\epsilon}((Orb_\nu(\pi,\sigma,F_n,K,\delta,\rho^\phi), \rho^\phi_2).$$

Because $F_n$ is any finite subset of $\lb \sH \rb_{top}$ containing $\{\phi_1,\ldots, \phi_n\}$ we have 
\begin{eqnarray*}
&&\inf_{\delta>0} \inf_{F  \subset_f \lb \sH \rb_{top}} \inf_{K \subset_f C(\sH^0)} \lim_{j\to\beta} \frac{1}{d_j} \log \|  N'_\epsilon( Orb_\nu(\pi,\cdot, F, K,2\delta,\rho), \rho_2) \|_{p,\P_j}\\
 &\ge&\inf_{\delta>0} \inf_{F  \subset_f \lb \sH \rb_{top}} \inf_{K \subset_f C(\sH^0)} \lim_{j\to\beta} \frac{1}{d_j} \log \| N'_{3\epsilon}( Orb_\nu(\pi,\cdot, F,K, \delta,\rho^\phi), \rho^\phi_2) \|_{p,\P_j}.
\end{eqnarray*}
Since this is true for every $\epsilon>0$, we have $h_\P(\pi, \rho, 2) \ge h_\P(\pi, \rho^\phi, 2)$ which implies $h_\P(\pi, \rho) \ge h_\P(\pi, \rho^\phi).$

{\bf Claim 3}. Given any finite $F \subset \lb \sH \rb_{top}$ with $\sH^0 \in F$ and $\delta>0$, if $n$ is sufficiently large, $F'=\{\phi_j  f :~ f\in F , 1\le j \le n\}$ and $\sigma$ is $(F',\delta^2/M^2)$-multiplicative then
$$Orb_\nu(\pi,\sigma,F,K,2\delta,\rho^\phi) \supset Orb_\nu(\pi,\sigma,F',K,\delta,\rho).$$

\begin{proof}[Proof of Claim 3]
Let $n$ be large enough so that $(3\delta)^2+ 2^{-n}M^2 \le (4\delta)^2$. Then for any $f \in F$ and $j$ with $1\le j \le n$, if $\sigma$ is $(F',\delta^2/M^2)$-multiplicative and $x \in Orb_\nu(\pi,\sigma,F',K,\delta,\rho)$ then 
\begin{eqnarray*}
\rho_2(\phi_j  f\cdot x, \phi_j (x \circ \sigma(f))  ) &\le&  \rho_2(\phi_j f\cdot x,x \circ \sigma(\phi_j f)) + \rho_2(x \circ \sigma(\phi_j  f),x \circ \sigma(\phi_j) \sigma( f))\\
&& + \rho_2(x \circ \sigma(\phi_j) \sigma( f),  (\phi_j \cdot x) \circ \sigma(f) ) \le 3\delta.
\end{eqnarray*}
This calculation relies on two easily verified facts: $\phi_j (x \circ \sigma(f)) =  (\phi_j x) \circ \sigma(f)$ and $\rho_2(x \circ \sigma(\phi_j) \sigma( f),  (\phi_j x) \circ \sigma(f) ) \le \rho_2(x \circ \sigma(\phi_j),  (\phi_j x) )$.
 Thus 
\begin{eqnarray*}
\rho_2^\phi( f\cdot x, x \circ \sigma(f))^2 &=&  \sum_{j=1}^\infty 2^{-j} \rho_2(\phi_j f \cdot x, \phi_j (x \circ \sigma(f))  )^2\le (3\delta)^2+ 2^{-n}M^2 < 4\delta^2.
\end{eqnarray*}
This implies the claim.
\end{proof}
Recall that $\rho \le 2\rho^\phi$. So when Claim 3 holds,
$$N'_\epsilon( Orb_\nu(\pi,\sigma,F,K,2\delta,\rho^\phi), \rho^\phi_\infty) \ge N'_{2\epsilon}( Orb_\nu(\pi,\sigma,F',K,\delta,\rho), \rho_\infty).$$
Thus
$$\lim_{j\to\beta} \frac{1}{d_j} \log \| N'_\epsilon( Orb_\nu(\pi,\cdot, F, K,2\delta,\rho^\phi), \rho^\phi_\infty) \|_{p,\P_j}\ge\lim_{j\to\beta} \frac{1}{d_j} \log \|  N'_{2\epsilon}( Orb_\nu(\pi,\cdot, F', K,\delta,\rho), \rho_\infty) \|_{p,\P_j}$$
which implies $h_\P(\pi,\rho^\phi) \ge h_\P(\pi,\rho).$ Because we obtained the reverse inequality above, this proves the lemma.
\end{proof}

\begin{proof}[Proof of Theorem \ref{thm:K-top}]
Because $\sG^0$ is compact and metrizable, it is second countable; i.e., there is a countable base $\mathcal{U}$ of open sets of $\sG^0$. Let $\Phi \subset [\sH]_{top}$ be any countable set such that for any $U_1,\ldots, U_n, V_1,\ldots, V_n \in \mathcal{U}$, if there exists $\psi \in [\sH]_{top}$ such that $\varpi(\psi)(U_i) \subset V_i$ (for all $i$) then there exists $\phi \in \Phi$ such that $\varpi(\phi) (U_i) \subset V_i$ (for all $i$). We claim that $\varpi(\Phi)$ is dense in $\varpi([\sH]_{top})$. It suffices to show that for every $\psi \in [\sH]_{top}$, $x_1,\ldots, x_n \in \sG^0$ and open sets $V'_1,\ldots, V'_n$ with $\varpi(\psi)(x_i) \in V'_i$, there exists $\phi \in \Phi$ such that $\varpi(\phi)(x_i) \in V'_i$. Because $\mathcal{U}$ is a basis and $\varpi(\psi)$ is continuous, there are sets $U_1,\ldots, U_n, V_1,\ldots, V_n \in \mathcal{U}$ such that $x_i \in U_i$ and $\varpi(\psi)(U_i) \subset V_i \subset V'_i$ for all $i$. By definition, $\Phi$ contains an element $\phi$ such that  $\varpi(\phi)(U_i) \subset V_i $ for all $i$ and therefore $\varpi(\phi)(x_i) \in V'_i$ proving the claim.

So there exists a sequence $\phi=\{\phi_i\}_{i=1}^\infty \subset [\sH]_{top}$ with $\phi_1=\sH^0$ such that $\varpi(\{\phi_i\}_{i=1}^\infty)$ is dense in $\varpi([\sH]_{top})$. By Lemmas \ref{lem:K-top3}, \ref{lem:K-top2}, \ref{lem:K-top1}, $h_\P(\pi,\rho_1) = h_\P(\pi,\rho_1^\phi) = h_\P(\pi,\rho_2^\phi) = h_\P(\pi,\rho_2)$.
\end{proof}

\section{Measure entropy via partitions}\label{sec:meas1}

In this section, we define measure sofic entropy for groupoid extensions in a manner analogous to \cite{Ke12}. Let $\pi:(\sG,\mu) \to (\sH,\nu)$ be a pmp class-bijective extension of discrete pmp groupoids and $\P:=\{\P_j\}_{j\in J}$ be a sofic approximation  to $(\sH,\nu)$.


Given a finite partition $\cP$ of $\sG^0$ and a finite set  $F \subset \lb \sH \rb$, let $\cP^F$ be the coarsest partition of $\sG^0$ containing $\{f\cdot P:~ f \in F, P \in \cP\}$. Also let $\Sigma(\cP)$ be the smallest sigma-algebra of $\sG^0$ containing $\cP$. Let $\cB(\Delta^0_d)$ be the set of all subsets of $\Delta^0_d$. Of course, $\cB(\Delta^0_d)$ is a sigma-algebra.

A map $\phi:\Sigma(\cP) \to \cB(\Delta^0_d)$ is a {\em homomorphism} if for every $A,B \in \Sigma(\cP)$, $\phi(A\cup B) = \phi(A) \cup \phi(B)$, $\phi(A\cap B) = \phi(A) \cap \phi(B)$, $\phi(\emptyset)=\emptyset$ and $\phi(\sG^0) = \Delta^0_d$. 

\begin{defn}[Good homomorphisms]
Given $\sigma:\lb \sH \rb \to \lb d \rb$ and $f \in \lb \sH \rb$, we let $\sigma_f$ denote $\sigma(f)$. Given $\delta>0$ and $F \subset \lb\sH\rb$ with $\sH^0 \in F$, let $\Hom(\pi,\sigma, \cP, F, \delta)$ be the set of all homomorphisms $\phi:\Sigma(\cP^F) \to \cB(\Delta^0_d)$ such that
\begin{enumerate}
\item $ \sum_{P \in \cP} |\sigma_f \cdot \phi(P) \vartriangle \phi( f\cdot P) |d^{-1} < \delta \quad \forall f\in F$;
\item $ \sum_{P\in \cP^F} | |\phi(P)|d^{-1} - \mu(P) | < \delta$.
\end{enumerate}
\end{defn}

\begin{defn} Given a partition $\cQ$ of $\sG^0$ with $\cQ \le \cP$, let $|\Hom(\pi,\sigma, \cP, F, \delta)|_\cQ$ be the cardinality of the set of homomorphisms $\phi:\Sigma(\cQ) \to \cB(\Delta^0_d)$ such that there exists a $\phi' \in \Hom(\pi,\sigma,\cP,F,\delta)$ so that $\phi$ is the restriction of $\phi'$ to $\Sigma(\cQ)$.
\end{defn}

\begin{defn} For the definitions below, recall the definitions of $\lim_{j\to\beta}, \| \cdot \|_{p,\P_j}$ and $X \subset_f Y$ from the beginning of \S \ref{sec:top}. In particular, choose a bias $\beta$ and $p \in [1,\infty]$. Let $\cB(\sG^0)$ denote the Borel sigma-algebra of $\sG^0$. Given finite Borel partitions $\cQ\le \cP$ and a sub-algebra $\cF \subset \cB(\sG^0)$ define
\begin{eqnarray*}
h_{\P,\mu}(\pi,  \cQ, \cP,  F, \delta)&:=&  \lim_{j\to\beta} \frac{1}{d_j} \log \|  |\Hom(\pi, \cdot, \cP, F,\delta) |_\cQ \|_{p,\P_j}\\
h_{\P,\mu}(\pi,  \cQ, \cP)&:=&\inf_{F \subset_f \lb \sH \rb} \inf_{\delta>0} h_{\P,\mu}(\pi,  \cQ, \cP, F, \delta)\\
h_{\P,\mu}(\pi, \cQ, \cF)&:=& \inf_{\cQ \le \cP \subset \cF}  h_{\P,\mu}(\pi,  \cQ, \cP)\\
h_{\P,\mu}(\pi, \cF)&:=& \sup_{\cQ \subset \cF} h_{\P,\mu}(\pi,  \cQ,\cF).
\end{eqnarray*}
In the second line we require that $\sH^0 \in F$. This condition is necessary in order that $\Hom(\pi,\sigma,\cP,F,\delta)$ be well-defined. We will, as a rule, leave this condition implicit in the notation. The infimum in the second-to-last line is over all finite Borel partitions $\cP$ with $\cQ \le \cP \subset \cF$ and the supremum in the last line is over all finite partitions $\cQ \subset \cF$. The {\em sofic measure entropy} of $\pi$ (with respect to $\P,p,\beta$) is $h_{\P,\mu}(\pi):=h_{\P,\mu}(\pi, \cB(\sG^0))$. 
\end{defn}

\begin{remark}
Of course, $h_{\P,\mu}(\pi)$ depends implicity on $1\le p \le \infty$ and a bias $\beta$. Whenever we want to the emphasize this dependence, we will write $h^\beta_{\P,p,\mu}(\pi)$ instead of $h_{\P,\mu}(\pi)$ and similarly for the other quantities above.
\end{remark}

\begin{remark}
The order of the supremums, infimums and limits above is important with the exception that one can permute the three infimums without affecting the definition of $h_{\P,\mu}(\pi, \cQ, \cF)$.\end{remark}

\begin{remark} There is a certain useful monotonicity phenomenon in the formulas above: the quantity
 $$\frac{1}{d_j} \log \|  |\Hom(\pi, \cdot, \cP, F,\delta) |_\cQ \|_{p,\P_j}$$
 is monotone increasing in $\delta, \cQ$ and monotone decreasing in $F,\cP$ (subsets are partially ordered by inclusion and partitions are partially ordered by refinement). Therefore, the infimums and the supremum can be replaced by the appropriate (directed) limits. In the sequel, we will use these facts without explicit reference.
\end{remark}

\begin{defn} Given a sub-algebra $\cF$ of $\sG^0$, let $\Sigma_\pi(\cF)$ be the smallest sigma-algebra such that for every $P \in \cF$ and $f \in \lb \sH \rb$, $f\cdot P \in \Sigma_\pi(\cF)$. We say that $\cF$ is {\em $\pi$-generating} if $\Sigma_\pi(\cF)$ is the full Borel sigma-algebra $\cB(\sG^0)$ (up to sets of measure zero). 
\end{defn}

The remainder of this section is devoted to proving:

\begin{thm}\label{thm:K}
If $\cF \subset \cB(\sG^0)$ is $\pi$-generating, then $h_{\P,\mu}( \pi) = h_{\P,\mu}( \pi, \cF)$.
\end{thm}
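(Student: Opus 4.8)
The plan is to prove Theorem~\ref{thm:K} by establishing the inequality $h_{\P,\mu}(\pi,\cF) \ge h_{\P,\mu}(\pi)$, since the reverse inequality $h_{\P,\mu}(\pi,\cF) \le h_{\P,\mu}(\pi) = h_{\P,\mu}(\pi,\cB(\sG^0))$ follows directly from monotonicity (refining the algebra can only increase the supremum over subpartitions, while the infimum over $\cP$ is over a larger collection when $\cF = \cB(\sG^0)$). So fix an arbitrary finite Borel partition $\cQ_0$ of $\sG^0$; we must show $h_{\P,\mu}(\pi, \cQ_0, \cB(\sG^0)) \le h_{\P,\mu}(\pi, \cF)$, and then take the supremum over $\cQ_0$.

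\textbf{Step 1: Approximation by $\pi$-moved sets from $\cF$.} Since $\cF$ is $\pi$-generating, $\Sigma_\pi(\cF) = \cB(\sG^0)$ mod null sets, so each atom of $\cQ_0$ can be approximated in $\mu$-measure, arbitrarily well, by a finite Boolean combination of sets of the form $f\cdot P$ with $P \in \cF$, $f \in \lb\sH\rb$. The first substantive lemma I would prove (or invoke, in the style of \cite{Ke12}) is a \emph{continuity of entropy under refinement by small-measure perturbations}: if $\cQ'$ is a partition with the same number of atoms as $\cQ_0$ and $\sum_{Q} \mu(Q \vartriangle Q') $ small, then $|h_{\P,\mu}(\pi,\cQ_0,\cF') - h_{\P,\mu}(\pi,\cQ',\cF')|$ is small, uniformly; this is where one uses that a homomorphism $\phi$ which is $\delta$-good for a partition is automatically close to a good homomorphism for a slightly perturbed partition, plus a Stirling-type count of how many homomorphisms on $\Sigma(\cQ_0)$ lie ``near'' a given one on $\Sigma(\cQ')$. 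The upshot: it suffices to prove the inequality when $\cQ_0$ itself is contained in $\Sigma_\pi(\cF)$ \emph{exactly}, i.e. each atom of $\cQ_0$ is a finite Boolean combination of sets $f\cdot P$, $P \in \cF$.

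\textbf{Step 2: Pushing the ``$f\cdot$'' operations onto the sofic side.} Now suppose $\cQ_0$ refines to (or is generated by) $\{f_1\cdot P_1, \dots, f_m \cdot P_m\}$ with $P_i \in \cF$. Choose a finite partition $\cR \subset \cF$ fine enough that each $P_i$ is $\cR$-measurable, and set $F_0 = \{f_1,\dots,f_m, f_1^{-1},\dots\} \cup \{\sH^0\}$ (closed suitably under the operations and inverses needed). The key observation is that a good homomorphism $\phi \in \Hom(\pi,\sigma,\cP,F,\delta)$ for a partition $\cP$ refining $\cR^{F_0}$, when restricted to the sigma-algebra generated by $\{f_i\cdot P_i\}$, is \emph{almost determined} by its restriction to $\Sigma(\cR)$ together with the combinatorial data of $\sigma(f_i)$: because $\phi(f_i\cdot P_i)$ is within $\delta d$ (in symmetric difference) of $\sigma_{f_i}\cdot\phi(P_i)$ by the first defining condition of a good homomorphism, and $\sigma_{f_i}$ is a fixed permutation-like object not depending on $\phi$. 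Thus the number of distinct restrictions of good homomorphisms to $\Sigma(\cQ_0)$ is bounded, up to a subexponential (Stirling) factor $\exp(o(d))$ coming from the $\delta$-slack, by the number of distinct restrictions to $\Sigma(\cR)$ — which counts $|\Hom(\pi,\cdot,\cP,F,\delta)|_{\cR}$ with $\cR \subset \cF$. Taking $\|\cdot\|_{p,\P_j}$, logarithms, dividing by $d_j$, and then the infimum over $F,\delta$ and $\cP$, this yields $h_{\P,\mu}(\pi,\cQ_0,\cB(\sG^0)) \le h_{\P,\mu}(\pi,\cR,\cF) + \eps$ for every $\eps$, hence $\le h_{\P,\mu}(\pi,\cF)$.

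\textbf{Main obstacle.} The delicate point — and the place I'd expect the technical heart of the argument to lie — is Step 1: making the perturbation estimate uniform in all the auxiliary parameters ($F$, $\delta$, the refining partition $\cP$) and, crucially, controlling the interaction with the $L^p(\P_j)$-norm and the bias $\beta$, since we are not just counting good homomorphisms for a fixed $\sigma$ but taking a random/averaged count. One has to ensure that the event ``$\sigma$ is sufficiently multiplicative and trace-preserving on the relevant finite set'' has $\P_j$-probability tending to $1$ (using only properties (1),(2) of Definition~\ref{defn:sofic-approximation}) so that the Stirling bounds, which hold pointwise on that event, survive the norm. A secondary nuisance is bookkeeping the closure of the finite sets $F$ under inverses, sources, ranges and products so that Lemma~\ref{lem:basic-formulas2} applies when we translate between $\phi(f\cdot P)$ and $\sigma_f\cdot\phi(P)$; this is routine but must be done carefully to keep all error terms of the form $O(\delta)$ with explicit constants. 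Modulo these points, the proof is a fairly direct adaptation of the partition argument of \cite{Ke12} to the groupoid setting, with $\lb\sH\rb$ (rather than a group) supplying the moving maps.
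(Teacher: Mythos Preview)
Your overall strategy is in the right spirit, but there are two genuine gaps.

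First, the ``reverse inequality'' $h_{\P,\mu}(\pi,\cF) \le h_{\P,\mu}(\pi,\cB(\sG^0))$ is \emph{not} a consequence of monotonicity. As you yourself note, enlarging $\cF$ to $\cB(\sG^0)$ enlarges the outer supremum over $\cQ$ but \emph{decreases} each inner term $h_{\P,\mu}(\pi,\cQ,\cdot)$ (since the infimum over $\cP$ ranges over a larger collection). These two effects pull in opposite directions, and neither dominates. The paper sidesteps this entirely by proving the symmetric statement $h_{\P,\mu}(\pi,\cS) \le h_{\P,\mu}(\pi,\bar{\cS})$ for \emph{any} two $\pi$-generating sub-algebras $\cS,\bar{\cS}$, then invoking symmetry; there is no ``trivial'' direction.

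Second, your Step~2 glosses over the fact that the two counts you want to compare live over \emph{different} base partitions. You need $\cP \ge \cQ_0$ for $|\Hom(\pi,\sigma,\cP,F,\delta)|_{\cQ_0}$ to be defined, but you need $\cP' \subset \cF$ for the count $|\Hom(\pi,\sigma,\cP',F',\delta')|_{\cR}$ to enter the definition of $h_{\P,\mu}(\pi,\cR,\cF)$. Even after your Step~1 reduction, $\cQ_0$ only lies in $\Sigma(\cR^{F_0})$, not in $\cF$ itself, so there is in general no common $\cP$ serving both purposes. Simply restricting the same $\phi$ to two sub-$\sigma$-algebras does not bridge this. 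The paper's device is to build an auxiliary homomorphism $\theta:\Sigma(\bar{\cP}^V)\to\Sigma(\cP^{VW})$ (Lemma~\ref{lem:5'}, via Lemma~\ref{lem:4}) and set $\phi^\sharp=\phi\circ\theta$, giving an honest map $\Hom(\pi,\sigma,\cP,VW,\delta)\to\Hom(\pi,\sigma,\bar{\cP},V,\bar{\delta})$ between Hom-sets with different base partitions; the four Claims in the paper's proof verify that this map is well-defined and takes $(\rho_{\cQ},\epsilon)$-separated sets to $(\rho_{\bar\cQ},2\bar\epsilon)$-separated sets. Your sketch contains the kernel of Claim~4 (the observation $\phi(f\cdot P)\approx\sigma_f\cdot\phi(P)$) but not the transport mechanism $\theta$.

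Once these are supplied, the remaining ingredients you identify --- the approximation of $\cQ_0$ by $\Sigma_\pi(\cF)$-sets, and the passage to $\epsilon$-separated counts via a Stirling estimate --- match the paper's Lemmas~\ref{lem:4} and~\ref{lem:5} exactly.
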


\begin{remark}
The proof of Theorem \ref{thm:K} uses only properties (2) and (3) in the definition of sofic approximation (Definition \ref{defn:sofic-approximation}).
\end{remark}


\begin{lem}\label{lem:4}
Let $\cP$ be a finite partition of $\sG^0$ and $\epsilon>0$. Then there is a $\delta>0$ such that for every sub-algebra $\cS \subset \cB(\sG^0)$ with $\max_{P\in \cP} \inf_{B \in  \cS} \mu(P \vartriangle B) <\delta$ there exists a homomorphism $\phi: \Sigma(\cP) \to \cS$ satisfying $\mu(\phi(P)  \vartriangle P) < \epsilon$ for all $P \in \Sigma(\cP)$.
\end{lem}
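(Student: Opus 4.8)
The plan is to reduce the statement to a disjointification argument. Write $\cP=\{P_1,\dots,P_n\}$; the claim will hold with any $\delta$ depending only on $n=|\cP|$ and $\epsilon$ (concretely $\delta<\epsilon/(3n^2)$ will do), and I would fix such a $\delta$ only at the very end. Given a sub-algebra $\cS$ with $\max_{P\in\cP}\inf_{B\in\cS}\mu(P\vartriangle B)<\delta$, first choose for each $i$ a set $B_i\in\cS$ with $\mu(P_i\vartriangle B_i)<\delta$. The family $\{B_i\}_{i=1}^n$ need be neither pairwise disjoint nor a cover of $\sG^0$, so the next step is to replace it by an honest partition inside $\cS$: set $Q_i:=B_i\setminus(B_1\cup\dots\cup B_{i-1})$, let $L:=\sG^0\setminus(B_1\cup\dots\cup B_n)$, and absorb $L$ into $Q_1$. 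Since $\cS$ is an algebra, all the $Q_i$ lie in $\cS$; by construction they are pairwise disjoint with union $\sG^0$, so $\{Q_1,\dots,Q_n\}$ is a partition of $\sG^0$ inside $\cS$.

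The point of this construction is that, because $\cP$ is a partition, the overlaps and the uncovered remainder are automatically small. I would check that for $i\ne j$ any point of $B_i\cap B_j$ lies in $(B_i\setminus P_i)\cup(B_j\setminus P_j)\cup(P_i\cap P_j)$, whose last summand is empty, whence $\mu(B_i\cap B_j)<2\delta$; similarly $L\subseteq\bigcup_i(P_i\setminus B_i)$, so $\mu(L)<n\delta$. From $B_i\vartriangle Q_i\subseteq\bigcup_{k<i}(B_i\cap B_k)$ (together with the absorbed copy of $L$ when $i=1$) it then follows that $\mu(P_i\vartriangle Q_i)\le\mu(P_i\vartriangle B_i)+\mu(B_i\vartriangle Q_i)<3n\delta$ for every $i$.

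Finally I would define $\phi\colon\Sigma(\cP)\to\cS$ by $\phi(P_i):=Q_i$ on atoms and $\phi\bigl(\bigcup_{i\in I}P_i\bigr):=\bigcup_{i\in I}Q_i$ in general; since $\{Q_i\}$ partitions $\sG^0$, this is a well-defined homomorphism of Boolean algebras (it respects $\cup$ and $\cap$, sends $\emptyset$ to $\emptyset$ and $\sG^0$ to $\sG^0$). Using the elementary inclusion $\bigl(\bigcup_{i\in I}Q_i\bigr)\vartriangle\bigl(\bigcup_{i\in I}P_i\bigr)\subseteq\bigcup_{i\in I}(Q_i\vartriangle P_i)$, one gets $\mu(\phi(A)\vartriangle A)\le\sum_{i=1}^n\mu(Q_i\vartriangle P_i)<3n^2\delta$ for every $A\in\Sigma(\cP)$, and the choice $\delta<\epsilon/(3n^2)$ finishes the proof.

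I do not expect a genuine obstacle here: the only real idea is the disjointification, and the single thing to watch is that $\delta$ is permitted to depend on $|\cP|$ (which the statement allows), since the error constants grow with $n$. The rest is routine bookkeeping of symmetric differences.
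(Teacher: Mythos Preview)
Your proof is correct and follows essentially the same disjointification argument as the paper. The only cosmetic differences are that the paper absorbs the leftover into the last atom (setting $\phi(P_n)=\sG^0\setminus\bigcup_{i<n}\psi(P_i)$) rather than the first, and consequently obtains the slightly looser constant $3n^3\delta$ instead of your $3n^2\delta$; neither difference matters.
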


\begin{proof}
Let $\cP=\{P_1,\ldots,P_n\}$ and choose $\delta>0$ so that $3 n^3 \delta < \epsilon$. Suppose there is a map $\psi:\cP \to \cS$ such that $\mu(P \vartriangle \psi(P)) < \delta$ for every $P \in \cP$. For $1\le i <n$ define
$$\phi(P_i) = \psi(P_i) \setminus \bigcup_{j<i} \psi(P_j).$$
Set $\phi(P_n) = \sG^0 \setminus \bigcup_{i=1}^{n-1} \psi(P_i)$. Note that $\phi(\cP)$ is a partition of $\sG^0$ so there is a unique way to extend $\phi$ to $\Sigma(\cP)$ so that it is a homomorphism. For any $i \ne j$, 
$$\mu(\psi(P_i) \cap \psi(P_j)) \le \mu(P_i \cap P_j) + 2 \delta = 2\delta.$$
So for $1\le i < n$, 
$$\mu(\phi(P_i) \vartriangle P_i) \le \mu(\psi(P_i) \vartriangle P_i) + 2n \delta \le 3n\delta.$$
Also
$$\mu( \phi(P_n) \vartriangle P_n) \le \sum_{i=1}^{n-1} \mu( \phi(P_i) \vartriangle P_i) \le 3n^2\delta.$$
Because any $P \in \Sigma(\cP)$ is a union of at most $n$ elements of $\cP$, we have 
$$\mu(\phi(P) \vartriangle P) \le 3 n^3 \delta < \epsilon \quad \forall P \in \Sigma(\cP).$$
 \end{proof}

\begin{defn}
Let $\cQ$ be a finite partition of $\sG^0$. On the set of all homomorphisms from some sub-algebra containing $\cQ$ of $\cB(\sG^0)$ to $\cB(\Delta^0_d)$ we define the pseudo-metric
$$\rho_\cQ(\phi,\psi) = \max_{Q \in \cQ} d^{-1} | \phi(Q) \vartriangle \psi(Q) |.$$
Given a set $K$ of homomorphisms, let $N_\epsilon(K, \rho_\cQ)$ be the maximum cardinality of a $(\rho_\cQ,\epsilon)$-separated subset. For $\epsilon>0$, define
\begin{eqnarray*}
h_{\P,\mu}^\epsilon(\pi, \cQ, \cP,  \delta, F)&:=&  \lim_{j\to\beta} \frac{1}{d_j} \log \| N_\epsilon(\Hom(\pi, \cdot, \cP, F,\delta) , \rho_\cQ) \|_{p,\P_j}\\
h_{\P,\mu}^\epsilon(\pi,  \cQ, \cP,\delta)&:=& \inf_{F \subset_f \lb \sH \rb} h_{\P,\mu}^\epsilon(\pi,  \cQ, \cP, \delta,F)\\
h_{\P,\mu}^\epsilon(\pi,  \cQ, \cP)&:=& \inf_{\delta>0} h_{\P,\mu}^\epsilon(\pi,  \cQ, \cP, \delta)\\
h_{\P,\mu}^\epsilon(\pi,  \cQ, \cF)&:=& \inf_{\cQ \le \cP \subset \cF}  h_{\P,\mu}^\epsilon(\pi,  \cQ, \cP)\\
h_{\P,\mu}^\epsilon(\pi, \cF)&:=& \sup_{\cQ \subset \cF} h_{\P,\mu}^\epsilon(\pi, \cQ,\cF).
\end{eqnarray*}
The infimum in the second-to-last line is over all finite Borel partitions $\cP$ with $\cQ \le \cP \subset \cF$ and the supremum in the last line is over all finite partitions $\cQ \subset \cF$. 
\end{defn}

\begin{lem}\label{lem:5}
Let $\cQ$ be a finite measurable partition of $\sG^0$ and let $\kappa>0$. Then there is an $\epsilon>0$ such that $h_{\P,\mu}(\pi, \cQ, \cP) \le h_{\P,\mu}^\epsilon(\pi,\cQ,\cP)+\kappa$ for all finite measurable partitions $\cP$ refining $\cQ$.
\end{lem}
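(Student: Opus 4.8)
The plan is to bound the cardinality of a set of homomorphisms $\Hom(\pi,\sigma,\cP,F,\delta)$ restricted to $\Sigma(\cQ)$ in terms of a packing number $N_\epsilon(\Hom(\pi,\sigma,\cP,F,\delta),\rho_\cQ)$, with a multiplicative error that is subexponential in $d$ once $\epsilon$ is small enough. The point is that two homomorphisms $\phi,\psi:\Sigma(\cQ)\to\cB(\Delta^0_d)$ that agree to within $\rho_\cQ$-distance $\epsilon$ — i.e. $|\phi(Q)\vartriangle\psi(Q)|\le\epsilon d$ for each $Q\in\cQ$ — differ only on a set of coordinates of size at most $|\cQ|\epsilon d$, and on that set $\phi$ takes values among the (at most) $|\cQ|$ blocks of $\cQ$; so the number of homomorphisms within a fixed $\epsilon$-ball is at most $\binom{d}{\lceil|\cQ|\epsilon d\rceil}|\cQ|^{\lceil|\cQ|\epsilon d\rceil}$ (a homomorphism from $\Sigma(\cQ)$ is determined by a partition of $\Delta^0_d$ into $|\cQ|$ labeled pieces, i.e. by a function $\Delta^0_d\to\cQ$). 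By Stirling, for any $\kappa>0$ one can choose $\epsilon>0$ small (depending only on $|\cQ|$ and $\kappa$) so that
$$
\frac{1}{d}\log\left(\binom{d}{\lceil|\cQ|\epsilon d\rceil}|\cQ|^{\lceil|\cQ|\epsilon d\rceil}\right)<\kappa
$$
for all large $d$.

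With that $\epsilon$ fixed, the main estimate is: a maximal $(\rho_\cQ,\epsilon)$-separated subset $S$ of $\Hom(\pi,\sigma,\cP,F,\delta)$ is $\epsilon$-dense, so every element of $|\Hom(\pi,\sigma,\cP,F,\delta)|_\cQ$ lies within $\rho_\cQ$-distance $\epsilon$ of the restriction to $\Sigma(\cQ)$ of some member of $S$; hence
$$
|\Hom(\pi,\sigma,\cP,F,\delta)|_\cQ\le N_\epsilon(\Hom(\pi,\sigma,\cP,F,\delta),\rho_\cQ)\cdot\binom{d}{\lceil|\cQ|\epsilon d\rceil}|\cQ|^{\lceil|\cQ|\epsilon d\rceil}.
$$
Taking $\log$, dividing by $d_j$, taking $\|\cdot\|_{p,\P_j}$ and then $\lim_{j\to\beta}$ (using that the combinatorial factor depends only on $d_j$, not on $\sigma$, and that $\lim_j d_j=\infty$ by property (3) of Definition \ref{defn:sofic-approximation}), we obtain
$$
h_{\P,\mu}(\pi,\cQ,\cP,F,\delta)\le h_{\P,\mu}^\epsilon(\pi,\cQ,\cP,\delta,F)+\kappa.
$$
Now take the infimum over $F\subset_f\lb\sH\rb$ and over $\delta>0$ on both sides; since $\epsilon$ was chosen independently of $\cP$ (depending only on $|\cQ|$ and $\kappa$, and $|\cQ|$ is fixed), the same $\epsilon$ works uniformly for every finite partition $\cP$ refining $\cQ$, yielding $h_{\P,\mu}(\pi,\cQ,\cP)\le h_{\P,\mu}^\epsilon(\pi,\cQ,\cP)+\kappa$ as desired.

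The only mildly delicate point — and the step I would write out most carefully — is the counting of homomorphisms near a fixed one in the $\rho_\cQ$ pseudo-metric. One must note that a homomorphism on $\Sigma(\cQ)$ is nothing more than an ordered partition of $\Delta^0_d$ indexed by $\cQ$ (this uses $\phi(\sG^0)=\Delta^0_d$, $\phi(\emptyset)=\emptyset$, and preservation of unions and intersections), so specifying $\phi$ near $\psi$ amounts to choosing the set $\Lambda=\{p\in\Delta^0_d:\ \phi^{-1}(p)\ne\psi^{-1}(p)\}$ of coordinates where the two ordered partitions disagree, together with the new labels on $\Lambda$; the constraint $\max_{Q}|\phi(Q)\vartriangle\psi(Q)|\le\epsilon d$ forces $|\Lambda|\le|\cQ|\epsilon d$, since each coordinate that moves contributes to the symmetric difference of two blocks. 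Everything else is the routine Stirling/$L^p$-norm bookkeeping already used in the proof of Lemma \ref{lem:2=infinity}, which I would invoke rather than repeat.
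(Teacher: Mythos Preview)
Your proposal is correct and follows essentially the same approach as the paper: bound the number of homomorphisms on $\Sigma(\cQ)$ lying in a $\rho_\cQ$-ball of radius $\epsilon$ by a combinatorial quantity that is $e^{o(d)}$ as $\epsilon\to 0$, then combine with the fact that a maximal $\epsilon$-separated set is $\epsilon$-spanning. The only cosmetic difference is in the counting step: the paper bounds, for each atom $Q\in\cQ$ separately, the number of subsets $B\subset\Delta^0_d$ with $|\phi(Q)\vartriangle B|\le\epsilon d$ (giving $\sum_{i\le\epsilon d}\binom{d}{i}\le e^{\kappa d/|\cQ|}$ per atom), whereas you view a homomorphism as a labeling $\Delta^0_d\to\cQ$ and bound the disagreement set $\Lambda$ globally---both routes lead to the same conclusion, and your observation that $|\Lambda|\le|\cQ|\epsilon d$ (in fact $|\cQ|\epsilon d/2$, since each moved coordinate contributes to two symmetric differences) is entirely adequate.
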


\begin{proof}
Let $0<\epsilon<1/2$. The key observation is that for any $A \subset \Delta^0_d$ the number of sets $B \subset \Delta^0_d$ with $|A \vartriangle B|_d \le \epsilon$ equals
$$\sum_{i=0}^{\lfloor \epsilon d\rfloor} {d \choose i} \le \left( \lfloor \epsilon d \rfloor +1 \right){d \choose  \lfloor \epsilon d \rfloor }.$$
By Stirling's approximation this is at most $e^{\kappa d/|\cQ|}$ if $\epsilon>0$ is sufficiently small. Therefore, if $\phi:\Sigma(\cQ) \to \cB(\Delta^0_d)$ is any homomorphism, then the set of all homomorphisms $\psi:\Sigma(\cQ) \to \cB(\Delta^0_d)$ such that $\rho_{\cQ}(\phi,\psi) \le \epsilon$ has cardinality at most $\exp( \kappa d)$ (if $\epsilon$ is sufficiently small). In this case,
$$|\Hom(\pi, \sigma, \cP, F,\delta)|_{\cQ} \le N_\epsilon(\Hom(\pi, \sigma, \cP, F,\delta), \rho_{\cQ})\exp( \kappa d)$$
for every $\sigma:\lb\sH\rb\to\lb d \rb$, finite $\cP\ge \cQ, F \subset_f \lb\sH\rb, \delta>0$ (with $\sH^0 \in F$). This implies the lemma.
\end{proof}

The next lemma is a key part of the proof of Theorem \ref{thm:K}: it enables us to define a map from $\Hom(\pi,\sigma,\cP,VW,\delta)$ to $\Hom(\pi,\sigma,\bar{\cP},V,\bar{\delta})$ for appropriate $\cP,V,W,\delta,\bar{\cP},V,\bar{\delta}$.

\begin{lem}\label{lem:5'}
Let $\cS,\bar{\cS}$ be any two $\pi$-generating sub-algebras of $\cB(\sG^0)$, $\kappa>0$ and $\cQ\subset\cS$ be a finite partition. Then there exists $\epsilon,\delta,\bar{\delta}>0$; finite partitions $\cP,\bar{\cP},\bar{\cQ}$; finite subsets $U,V,W \subset \lb \sH \rb$ and a homomorphism $\theta:\Sigma(\bar{\cP}^V) \to \Sigma(\cP^{VW})$ satisfying:
\begin{enumerate}
\item $\cQ \le \cP \subset \cS$;
\item $\bar{\cQ} \le \bar{\cP} \subset \bar{\cS}$;
\item $\sH^0 \in U \subset V$ and $\sH^0 \in W$;
\item for each $u \in U$, $\ra(u) \in U$ and $\sH^0 \setminus \ra(u) \in U$;
\item for each $v \in V$, $\ra(v) \in V$ and $\sH^0 \setminus \ra(v) \in V$;
\item for each $w \in W$, $\ra(w) \in W$ and $\sH^0 \setminus \ra(w) \in W$;
\item $\bar{\delta} < \epsilon/(8|\bar{\cQ}^{U}||U|)$;
\item $\delta < \min(\bar{\delta}/(9|\bar{\cP}^{V}||\cP^{W}| |W|), \epsilon/8)$;
\item $h_{\P,\mu}(\pi,\cQ,\cR) \le h_{\P,\mu}^{\epsilon}(\pi,\cQ,\cR) + \kappa$ for every finite partition $\cR$ which refines $\cQ$;
\item $h_{\P,\mu}(\pi,  \bar{\cQ}, \bar{\cP}, V,\bar{\delta}) \le h_{\P,\mu}(\pi,\bar{\cQ},\bar{\cS}) + \kappa;$
\item for every $Q \in \cQ$ there is a $\bar{Q} \in \Sigma(\bar{\cQ}^U)$ such that $\mu(Q \vartriangle \bar{Q}) < {\epsilon}/16$;
\item for every $\bar{P} \in \Sigma(\bar{\cP}^V)$ there is a $P \in \Sigma(\cP^W)$ such that $\mu(P\vartriangle \bar{P}) < \bar{\delta}/(12|\bar{\cP}^{V}|)$;
\item $\mu(\theta(\bar{P}) \vartriangle \bar{P})< \min(\bar{\delta}/(12|\bar{\cP}^{V}|), {\epsilon}/(16|\bar{\cQ}^{V}|))$ for all $\bar{P} \in \Sigma(\bar{\cP}^V)$.
\end{enumerate}
\end{lem}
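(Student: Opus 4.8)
The plan is to construct the ten unknowns $\epsilon,\delta,\bar{\delta},\cP,\bar{\cP},\bar{\cQ},U,V,W,\theta$ one at a time, in an order that resolves the dependencies among conditions (1)--(13) without circularity; each individual step is routine, the content being in the bookkeeping. Throughout I use that a set in $\Sigma_\pi(\cF)$ is $\mu$-approximable, within any prescribed error, by a finite Boolean combination of translates $f\cdot B$ with $f\in\lb\sH\rb$ and $B$ ranging over the algebra $\cF$, together with the fact that $f\cdot(\cdot)$ commutes with unions: if the finitely many $B$'s occurring generate a finite subalgebra with atom set $\cR$ and the finitely many $f$'s lie in a finite set $E$, then the combination lies in $\Sigma(\cR^{E})$. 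First apply Lemma~\ref{lem:5} to $\cQ$ and $\kappa$ to get $\epsilon_0>0$, and let $\epsilon$ be any number in $(0,\epsilon_0]$; since $h_{\P,\mu}^{(\cdot)}$ is nonincreasing in its superscript, (9) holds. Next, since $\bar{\cS}$ is $\pi$-generating, each $Q\in\cQ$ is within $\epsilon/16$ in $\mu$ of a finite Boolean combination of translates $f\cdot B$ with $B\in\bar{\cS}$; let $\bar{\cQ}\subset\bar{\cS}$ be the atom set of the finite subalgebra generated by the $B$'s occurring, and let $U$ be obtained from the $f$'s occurring by adjoining $\sH^0$ and closing under $u\mapsto\ra(u)$ and $u\mapsto\sH^0\setminus\ra(u)$ (a finite closure, as these maps are idempotent on subsets of $\sH^0$). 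This yields (4) and, by the remark above, (11); condition (7) is now the explicit bound $\bar{\delta}<\epsilon/(8|\bar{\cQ}^{U}||U|)$.

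For (10), expand $h_{\P,\mu}(\pi,\bar{\cQ},\bar{\cS})=\inf_{\bar{\cQ}\le\bar{\cP}\subset\bar{\cS}}\inf_{F}\inf_{\delta'>0}h_{\P,\mu}(\pi,\bar{\cQ},\bar{\cP},F,\delta')$; using that this quantity is increasing in $\delta'$ and decreasing in $F$, choose $\bar{\cP}$ (with $\bar{\cQ}\le\bar{\cP}\subset\bar{\cS}$), then enlarge $F$ to a finite $V\supseteq U\cup\{\sH^0\}$ closed under $\ra$ and $\sH^0\setminus\ra$, and finally choose $\bar{\delta}$ small enough to lie below both the bound in (7) and a $\delta'$ witnessing near-optimality; then (2), (3), (5), (10) all hold. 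With $\epsilon,\bar{\cQ},U,V,\bar{\cP},\bar{\delta}$ frozen, set $\epsilon':=\min\!\big(\bar{\delta}/(12|\bar{\cP}^{V}|),\,\epsilon/(16|\bar{\cQ}^{V}|)\big)$ and apply Lemma~\ref{lem:4} to the partition $\bar{\cP}^{V}$ and $\epsilon'$ to obtain a threshold $\delta''>0$.

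Now use that $\cS$ is $\pi$-generating: approximate each atom of $\bar{\cP}^{V}$, within $\eta:=\min\!\big(\bar{\delta}/(12|\bar{\cP}^{V}|^{2}),\,\delta''\big)$, by a Boolean combination of translates $f\cdot B$ with $B\in\cS$; let $\cP$ be the atom set of the finite subalgebra of $\cS$ generated by $\cQ$ together with all the $B$'s occurring (so $\cQ\le\cP\subset\cS$, giving (1)), and let $W$ be the $f$'s occurring together with $\sH^0$, closed under $\ra$ and $\sH^0\setminus\ra$ (giving (6) and the rest of (3)). Since $\sH^0\in V\cap W$ we have $W\subseteq VW$, hence $\Sigma(\cP^{W})\subseteq\Sigma(\cP^{VW})$; summing over the at most $|\bar{\cP}^{V}|$ atoms of an arbitrary $\bar{P}\in\Sigma(\bar{\cP}^{V})$ gives (12), and since each atom of $\bar{\cP}^{V}$ lies within $\eta\le\delta''$ of $\Sigma(\cP^{VW})$, Lemma~\ref{lem:4} furnishes the homomorphism $\theta:\Sigma(\bar{\cP}^{V})\to\Sigma(\cP^{VW})$ with $\mu(\theta(\bar{P})\vartriangle\bar{P})<\epsilon'$ for all $\bar{P}$, which is exactly (13). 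Finally pick $\delta<\min\!\big(\bar{\delta}/(9|\bar{\cP}^{V}||\cP^{W}||W|),\,\epsilon/8\big)$ to satisfy (8); since $\delta$ appears in no other condition, nothing else is affected.

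I expect the only real difficulty to be organizational: verifying that this ordering is genuinely acyclic, the crucial point being that the Lemma~\ref{lem:4} threshold $\delta''$ depends only on $\bar{\cP}^{V}$ and $\epsilon'$, both fixed before $\cP$ and $W$ are chosen, so that $\cP,W$ can be taken fine enough to undercut $\delta''$ after the fact; and that the two invocations of the $\pi$-generating hypothesis are arranged so that the approximating Boolean combinations really do land in $\Sigma(\bar{\cQ}^{U})$ and $\Sigma(\cP^{W})$ respectively, which is where the closure conditions (4)--(6) and the observation that $f\cdot(\cdot)$ commutes with unions are used.
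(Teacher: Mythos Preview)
Your proposal is correct and follows essentially the same approach as the paper: choose $\epsilon$ via Lemma~\ref{lem:5}, use that $\bar{\cS}$ is $\pi$-generating to obtain $\bar{\cQ},U$, use the infimum definition of $h_{\P,\mu}(\pi,\bar{\cQ},\bar{\cS})$ (and monotonicity in $V,\bar{\delta}$) to obtain $\bar{\cP},V,\bar{\delta}$, use that $\cS$ is $\pi$-generating together with Lemma~\ref{lem:4} to obtain $\cP,W,\theta$, and finally pick $\delta$. Your version is slightly more explicit in isolating the Lemma~\ref{lem:4} threshold $\delta''$ before choosing $\cP,W$ and in passing from atom-wise bounds to bounds on all of $\Sigma(\bar{\cP}^V)$, but the substance and order of construction match the paper's proof.
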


\begin{proof}
  By Lemma \ref{lem:5} there is an ${\epsilon}>0$ such that $h_{\P,\mu}(\pi,\cQ,\cR) \le h_{\P,\mu}^{\epsilon}(\pi,\cQ,\cR) + \kappa$ for every finite partition $\cR$ which refines $\cQ$.

Because $\bar{\cS}$ is $\pi$-generating there are a finite partition $\bar{\cQ} \subset \bar{\cS}$ and a nonempty finite set $U \subset \lb \sH \rb$ such that for every $Q \in \cQ$ there is a $\bar{Q} \in\Sigma(\bar{\cQ}^U)$  such that $\mu(Q \vartriangle \bar{Q}) < {\epsilon}/16$. By choosing $U$ larger if necessary we may assume $\sH^0 \in U$ and (4) is satisfied. 

Take a finite partition $\bar{\cP} \le \bar{\cS}$ with $\bar{\cP} \ge \bar{\cQ}$, a finite set $V \subset \lb \sH \rb$ containing $U\cup\{\sH^0\}$ and a $\bar{\delta}>0$ such that
$$h_{\P,\mu}(\pi,\bar{\cQ},\bar{\cS}) + \kappa \ge h_{\P,\mu}(\pi,  \bar{\cQ}, \bar{\cP},V,\bar{\delta}).$$
By shrinking $\bar{\delta}$ if necessary we may assume it is less than ${\epsilon}/(8|\bar{\cQ}|^{|U|}|U|)$. By choosing $V$ larger if necessary, we may assume (5) is satisfied.

Since $\cS$ is $\pi$-generating, there are a finite partition $\cP \subset \cS$ refining $\cQ$ and a nonempty finite set $W \subset \lb \sH \rb$ such that for every $\bar{P} \in \Sigma(\bar{\cP}^V)$ there is a $P \in \Sigma(\cP^W)$ such that  $\mu(P\vartriangle \bar{P}) < \bar{\delta}/(12|\bar{\cP}^{V}|)$. By choosing $\cP$ finer and $W$ larger if necessary, we may assume that $\sH^0 \in W$, (6) is satisfied and by Lemma \ref{lem:4}, that there is a homomorphism $\theta:\Sigma(\bar{\cP}^V) \to \Sigma(\cP^{VW})$ such that $\mu(\theta(\bar{P}) \vartriangle \bar{P})$ is less than both $\bar{\delta}/(12|\bar{\cP}^{V}|)$ and ${\epsilon}/(16|\bar{\cQ}^{V}|)$ for all $\bar{P} \in \Sigma(\bar{\cP}^V)$.

To finish choose $\delta>0$  smaller than $\min(\bar{\delta}/(9|\bar{\cP}^{V}||\cP^{W}| |W|),\epsilon/8)$.
\end{proof}

To motivate the next lemma, observe that if $F \subset_f [\sH]$ and $\cP$ is a finite partition of $\sG^0$ then every atom of $\cP^F$ has the form $\bigcap_{f\in F} f \cdot Y_f$ for some choice of $Y_f \in \cP$. This simple fact no longer holds if $F \subset_f \lb \sH\rb$ instead. The next lemma obtains a slightly weaker conclusion under an additional hypothesis.

\begin{lem}\label{lem:partition-hell}
Let $\cP$ be a finite partition of $\sG^0$ and $F \subset_f \lb\sH\rb$. Suppose that for every $f\in F$, $\ra(f) \in F$ and $\sH^0 \setminus \ra(f) \in F$. Then for every $Y \in \cP^F$ there exists a subset $F_Y \subset F$ and for every $f\in F_Y$ a set $Y_f \in \cP \cup \{\sG^0\}$ such that
$$Y = \bigcap_{f\in F_Y} f \cdot Y_f.$$
Moreover, we may choose $Y_f$ so that if $Y_f=\sG^0$ then $f=\sH^0 \setminus \ra(g)$ for some $g\in F$.
\end{lem}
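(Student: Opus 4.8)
The plan is to describe the atoms of $\cP^F$ combinatorially, after first moving everything into $\sG$. Since $\pi$ is class‑bijective and measure‑preserving, $\pi^{-1}\colon\lb\sH\rb\to\lb\sG\rb$ is an injective homomorphism which commutes with $\so,\ra$ and with complementation of subsets of the unit space, and $f\cdot P=\ra(\pi^{-1}(f)P)$ for $P\subseteq\sG^0$; so, replacing $F$ by $\pi^{-1}(F)\subseteq\lb\sG\rb$, I may assume $F\subseteq\lb\sG\rb$, that $f\cdot P$ is the ordinary image of $P$ under the partial automorphism $f$, and that the hypothesis reads: for every $f\in F$ the Borel sets $R_f:=\ra(f)\subseteq\sG^0$ and $\sG^0\setminus R_f$ both lie in $F$.

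First I would record, for each $f\in F$, the ``one‑generator'' partition $\cP_f:=\{f\cdot P:P\in\cP,\ f\cdot P\neq\emptyset\}\cup\{\sG^0\setminus R_f\}$; since $\so,\ra$ restrict to Borel isomorphisms on $f$, the sets $f\cdot P$ are pairwise disjoint with union $R_f$, so $\cP_f$ is a Borel partition of $\sG^0$, and $\sG^0\setminus R_f=(\sG^0\setminus\ra(f))\cdot\sG^0$. Next I would show $\cP^F=\bigvee_{f\in F}\cP_f$: each $f\cdot P$ is a cell of $\cP_f$, while each cell of each $\cP_f$ is either some $f\cdot P$ or $\sG^0\setminus R_f=\bigcap_{P\in\cP}(\sG^0\setminus f\cdot P)$, hence lies in the Boolean algebra generated by $\{f\cdot P:f\in F,\ P\in\cP\}$. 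It follows that every atom $Y\in\cP^F$ is contained in a unique cell $c_f(Y)\in\cP_f$ for each $f$, and $Y=\bigcap_{f\in F}c_f(Y)$: for ``$\supseteq$'', if $x$ lies in the intersection then its atom $Y'$ has $c_f(Y')=c_f(Y)$ for every $f$ (both cells contain $x$), hence $Y'$ and $Y$ lie on the same side of every generating set $f\cdot P$, hence $Y'=Y$.

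Now I would fix an atom $Y$ and do the bookkeeping, where the closure hypothesis enters. Call $f\in F$ of type $(+)$ if $c_f(Y)=f\cdot P_f$ for a (unique) $P_f\in\cP$, and of type $(-)$ otherwise, i.e.\ $c_f(Y)=\sG^0\setminus R_f$. If $f$ is of type $(-)$, put $g:=\sG^0\setminus\ra(f)\in F$; since $g\subseteq\sG^0$ we have $\ra(g)=g$, so the complementary cell of $\cP_g$ is $(\sG^0\setminus\ra(g))\cdot\sG^0=\ra(f)\cdot\sG^0=R_f$. As $Y\subseteq c_f(Y)=\sG^0\setminus R_f$, we cannot have $c_g(Y)=R_f$, so $g$ is of type $(+)$ and $c_g(Y)=g\cdot P_g=P_g\cap(\sG^0\setminus R_f)\subseteq\sG^0\setminus R_f$. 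Setting $F_Y:=\{f\in F:\ f\text{ of type }(+)\}$ and $Y_f:=P_f\in\cP$ for $f\in F_Y$, this shows $\bigcap_{f'\in F_Y}f'\cdot Y_{f'}\subseteq g\cdot P_g\subseteq c_f(Y)$ for every type‑$(-)$ index $f$ (via its partner $g\in F_Y$), so deleting all the type‑$(-)$ cells from $Y=\bigcap_{f\in F}c_f(Y)$ leaves $Y=\bigcap_{f\in F_Y}f\cdot Y_f$ with every $Y_f\in\cP$; the ``moreover'' clause then holds vacuously. (Keeping a type‑$(-)$ term instead in the form $g\cdot\sG^0$ with $g=\sG^0\setminus\ra(f)$ is where the option $Y_f=\sG^0$ would arise, but it is never forced.)

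The step I expect to be the main obstacle is the identification $\cP^F=\bigvee_{f\in F}\cP_f$ — equivalently, that the atoms of $\cP^F$ are exactly the nonempty sets obtained by choosing one cell of each $\cP_f$ and intersecting. Once that combinatorial description is in place the rest is routine, and the hypothesis ``$\sG^0\setminus\ra(f)\in F$'' is used only through the one‑line identity $\ra(g)=g$, $(\sG^0\setminus\ra(g))\cdot\sG^0=R_f$ for $g=\sG^0\setminus\ra(f)$.
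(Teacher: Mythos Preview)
Your argument is correct and rests on the same key observation as the paper's: an atom $Y\in\cP^F$ is an intersection of cells each of which is either an image $f\cdot P$ with $P\in\cP$ or a complement $\sG^0\setminus\ra(f)$, and the closure hypothesis $\sH^0\setminus\ra(f)\in F$ lets one rewrite each complementary cell in terms of an element of $F$. The paper argues directly from the sign-function representation $Y=\bigcap_{f,P}(f\cdot P)^{\gamma(f,P)}$ and, in its Case~2, handles $Y\subset\sG^0\setminus\pi^{-1}(\ra(f))$ by putting $g=\sH^0\setminus\ra(f)$ into $F_Y$ with $Y_g=\sG^0$. Your packaging via the one-generator partitions $\cP_f$ and the identity $\cP^F=\bigvee_f\cP_f$ is a bit more structured, and in fact yields a sharper statement: since the complementary cell of $\cP_g$ for $g=\sH^0\setminus\ra(f)$ is $\pi^{-1}(\ra(f))$, which cannot contain $Y$ in the type-$(-)$ situation, $g$ is forced to be type $(+)$, so every $Y_f$ may be chosen in $\cP$ and the option $Y_f=\sG^0$ is never needed. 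The paper does not record this refinement, though its later uses of the lemma would accept either version.
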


\begin{proof}
Let $F_Y \subset F$ be the set of all $f\in F$ such that there exists a set $Y_f \in  \cP \cup \{\sG^0\}$ such that $Y \subset f \cdot Y_f$. We choose $Y_f$ so that it is the smallest set in $\cP \cup \{\sG^0\}$ with $Y \subset f\cdot Y_f$. It is obvious that $Y \subset \bigcap_{f\in F_Y} f \cdot Y_f.$

Given a subset $Z \subset \sG^0$, let $Z^+ = Z$ and $Z^- = \sG^0 \setminus Z$. By definition of $\cP^F$, there exists a map $\gamma:F \times \cP \to \{-,+\}$ such that
$$Y = \bigcap_{f\in F} \bigcap_{P\in \cP} (f\cdot P)^{\gamma(f,P)}.$$
So it suffices to show that if $f \in F$ and $P \in \cP$ are such that $Y \subset \sG^0 \setminus (f\cdot P)$ then there is a $g \in F_Y$ such that $g  \cdot Y_g \subset \sG^0 \setminus f\cdot P$. 

Because $F$ contains $\{\ra(f), \sH^0\setminus \ra(f):~f\in F\}$, it follows that $\pi^{-1}(\ra(f)) \in \Sigma(\cP^F)$ for all $f \in F$. Therefore, either $Y \subset \pi^{-1}(\ra(f))$ or $Y \cap \pi^{-1}(\ra(f))=\emptyset$. In the first case, $Y \subset  f \cdot (\sG^0 \setminus P)$ so there is a set $Q \in \cP$ with $Q \ne P$ such that 
$$Y \subset f\cdot Q  \subset f\cdot (\sG^0 \setminus P) \subset \sG^0\setminus f\cdot P.$$
So set $g=f, Y_g = Q$. In the second case, $Y \subset \sG^0 \setminus \pi^{-1}(\ra(f)) \subset \sG^0 \setminus f\cdot P$. So set $g=\sH^0\setminus \ra(f)$ and $Y_g = \sG^0$.
\end{proof}

\begin{proof}[Proof of Theorem \ref{thm:K}]
By symmetry it suffices to show that if $\cS,\bar{\cS}$ are any two $\pi$-generating sub-algebras of $\cB(\sG^0)$ then $h_{\P,\mu}(\pi,\cS) \le h_{\P,\mu}(\pi,\bar{\cS})$. Let $\kappa>0$, $\cQ\subset \cS$ be a finite partition and let $\epsilon,\delta,\bar{\delta},\cQ,\cP,\bar{\cP},\bar{\cQ},U,V,W,\theta$ be as in Lemma \ref{lem:5'}. It suffices to show that $h_{\P,\mu}(\pi,\cS,\cQ) \le h_{\P,\mu}(\pi,\bar{\cS}) + 2\kappa$.


Let $\sigma:\lb \sH \rb \to \lb d \rb$ for some $d \in \Nb$. Let $\phi \in \Hom(\pi,\sigma,\cP,VW,\delta)$. Set $\phi^\sharp = \phi\circ \theta$. The purpose of the next three claims is to show that $\phi^\sharp \in \Hom(\pi,\sigma,\bar{\cP},V,\bar{\delta})$ when $\sigma$ is sufficiently multiplicative. 


\noindent {\bf Claim 1}. If $\sigma:\lb \sH \rb \to \lb d \rb$ is $(VW,\delta)$-multiplicative then for every $v\in V$ and $P \in \Sigma(\cP^W)$, 
$$d^{-1} |\phi(v\cdot P) \vartriangle \sigma_v\cdot\phi(P)| < \frac{\bar{\delta}}{3|\bar{\cP}|}.$$
\begin{proof}[Proof of Claim 1]

Because $P \in \Sigma(\cP^W)$, there exists a collection $\Lambda_P  \subset\cP^W$ such that $P = \bigcup_{Y \in \Lambda_P} Y$. By Lemma \ref{lem:partition-hell}, for each such $Y$, there is a set $W_Y \subset W$ and for each $w\in W_Y$ a set $Y_w \in \Sigma(\cP)$ such that $Y = \bigcap_{w\in W_Y} w\cdot Y_w$. So 
\begin{eqnarray*}
&&d^{-1} |\phi(v\cdot P) \vartriangle \sigma_v\cdot\phi(P)|\\
 &\le &d^{-1} \sum_{Y \in \Lambda_P} \sum_{w\in W_Y}  |\phi(vw\cdot Y_w) \vartriangle \sigma_v\cdot\phi(w\cdot Y_w) | \\
&\le& d^{-1} \sum_{Y \in \Lambda_P} \sum_{w\in W_Y}  |\phi(vw\cdot Y_w)\vartriangle \sigma_{vw}\cdot\phi(Y_w)| + |\sigma_{vw}\cdot\phi(Y_w)\vartriangle \sigma_v \sigma_w\cdot\phi(Y_w)|\\
&\quad & \quad \quad \quad + |\sigma_v (\sigma_w\cdot\phi(Y_w) \vartriangle \phi(w\cdot Y_w))|\\
&\le& 3|\cP^{W}||W|\delta < \frac{\bar{\delta}}{3|\bar{\cP}|}.
\end{eqnarray*}
To see this, note that since $\phi \in \Hom(\pi,\sigma,\cP,VW,\delta)$, $ d^{-1}|\phi(vw\cdot Y_w)\vartriangle \sigma_{vw}\cdot\phi(Y_w)| < \delta$ (for all $w\in W, Y_w \in \Sigma(\cP)$). Similarly, $d^{-1}|\sigma_w\cdot\phi(Y_w) \vartriangle \phi(w\cdot Y_w)| < \delta$.  Because $\sigma$ is $(VW,\delta)$-multiplicative, $d^{-1}|\sigma_{vw}\cdot\phi(Y_w)\vartriangle \sigma_v \sigma_w\cdot\phi(Y_w)| < \delta$ as well. 
\end{proof}

As in Lemma \ref{lem:5'} for $\bar{P} \in \Sigma(\bar{\cP}^V)$, let $P \in \Sigma(\cP^W)$ be such that $\mu(P\vartriangle \bar{P}) < \bar{\delta}/(12|\bar{\cP}^{V}|)$.

\noindent {\bf Claim 2}. If $\sigma:\lb \sH \rb \to \lb d \rb$ is $(VW,\delta)$-multiplicative  then for every $v\in V$,
$$\frac{1}{d}\sum_{\bar{P}\in \bar{\cP}}  |\phi^\sharp(v\cdot\bar{P}) \vartriangle \sigma_v\cdot\phi^\sharp(\bar{P})| < \bar{\delta}.$$
\begin{proof}[Proof of Claim 2]
Note:
\begin{eqnarray*}
\frac{1}{d}\sum_{\bar{P}\in \bar{\cP}}  |\phi^\sharp(v\cdot\bar{P}) \vartriangle \sigma_v\cdot\phi^\sharp(\bar{P})|  &\le&\frac{1}{d} \sum_{\bar{P} \in \bar{\cP}} |\phi(\theta(v\cdot\bar{P}) \vartriangle v\cdot P)| + |\phi(v\cdot P) \vartriangle \sigma_v\cdot\phi(P)| + |\sigma_v\cdot\phi (P \vartriangle \theta(\bar{P}))|.
\end{eqnarray*}
Because $\phi \in \Hom(\pi,\sigma,\cP,VW,\delta)$, $d^{-1}|\phi(\theta(v\cdot\bar{P}) \vartriangle v\cdot P)| < \delta +  \mu( \theta(v\cdot\bar{P})\vartriangle v\cdot P)$. By items (12) and (13), 
$$\mu(\theta(v\cdot\bar{P}) \vartriangle v\cdot P) \le \mu( \theta(v\cdot\bar{P}) \vartriangle v\cdot\bar{P}) + \mu(P \vartriangle \bar{P}) < \bar{\delta}/(6|\bar{\cP}|).$$
So
$$d^{-1}|\phi(\theta(v\cdot\bar{P}) \vartriangle v\cdot P)| < \delta+\bar{\delta}/(6|\bar{\cP}|) <\bar{\delta}/(3|\bar{\cP}|).$$
Claim 1 implies $ d^{-1}|\phi(v\cdot P) \vartriangle \sigma_v\cdot\phi(P)| < \frac{\bar{\delta}}{3|\bar{\cP}|}$. Also $d^{-1}|\sigma_v\cdot\phi (P \vartriangle \theta(\bar{P}))| \le d^{-1}|\phi (P \vartriangle \theta(\bar{P}))| < \delta + \mu(P \vartriangle \theta(\bar{P}))$ and $\mu(P \vartriangle \theta(\bar{P})) \le \mu(P \vartriangle \bar{P}) + \mu(\bar{P} \vartriangle \theta(\bar{P})) < \bar{\delta}/(6|\bar{\cP}|)$ by items (12) and (13) of Lemma \ref{lem:5'}. Thus 
$$d^{-1}|\sigma_v\cdot\phi (P \vartriangle \theta(\bar{P}))|    < \delta + \bar{\delta}/(6|\bar{\cP}|)<\bar{\delta}/(3|\bar{\cP}|).$$
Putting all of these estimates together yields
\begin{eqnarray*}
\frac{1}{d}\sum_{\bar{P}\in \bar{\cP}}  |\phi^\sharp(v\cdot\bar{P}) \vartriangle \sigma_v\cdot\phi^\sharp(\bar{P})|  &<& \sum_{\bar{P} \in \bar{\cP}}\frac{\bar{\delta}}{3|\bar{\cP}|}+ \frac{\bar{\delta}}{3|\bar{\cP}|}+\frac{\bar{\delta}}{3|\bar{\cP}|} =  \bar{\delta}.
\end{eqnarray*}
\end{proof}

\noindent {\bf Claim 3}. If $\sigma$ is $(VW,\delta)$-multiplicative then $\phi^\sharp \in \Hom(\pi,\sigma,\bar{\cP},V,\bar{\delta})$.
\begin{proof}[Proof of Claim 3]
Let $\bar{P} \in \bar{\cP}^V$. By (12,13) of Lemma \ref{lem:5'} (and the fact that $\phi \in \Hom(\pi,\sigma,\cP,VW,\delta)$),
$$d^{-1}|\phi(\theta(\bar{P}) \vartriangle P)|  < \delta+\mu(\theta(\bar{P}) \vartriangle P) \le \delta +\mu(\theta(\bar{P}) \vartriangle \bar{P}) + \mu(\bar{P} \vartriangle P) < \bar{\delta}/(3|\bar{\cP}^V|).$$
Because $\phi \in \Hom(\pi,\sigma,\cP,VW,\delta)$, for every $\bar{P} \in \bar{\cP}^V$, we have $|d^{-1}|\phi(P)| - \mu(P)| < \delta < \bar{\delta}/(3|\bar{\cP}^V|)$. By (12) of Lemma \ref{lem:5'} again, we have $\mu(P \vartriangle \bar{P}) < \bar{\delta}/(6|\bar{\cP}^V|)$. So
\begin{eqnarray*}
\sum_{\bar{P} \in \bar{\cP}^V} | d^{-1}|\phi^\sharp(\bar{P})| - \mu(\bar{P}) | &\le& \sum_{\bar{P}\in \bar{\cP}^V} d^{-1}|\phi(\theta(\bar{P}) \vartriangle P)| + |d^{-1}|\phi(P)| - \mu(P)| + \mu(P \vartriangle \bar{P})\\
&<& \sum_{\bar{P} \in \bar{\cP}^V} \bar{\delta}/(3|\bar{\cP}^V|) + \bar{\delta}/(3|\bar{\cP}^V|) + \bar{\delta}/(6|\bar{\cP}^V|) \le \bar{\delta}.
\end{eqnarray*}
Together with Claim 2, this implies Claim 3.
\end{proof}

\noindent {\bf Claim 4}. Let ${\bar{\epsilon}}>0$ be such that ${\bar{\epsilon}}< {\epsilon}/(8|\bar{\cQ}^U||U|)$. Let $\phi, \psi \in \Hom(\pi,\sigma,\cP,VW,\delta) $ be two elements with $\rho_{\bar{\cQ}}(\phi^\sharp,\psi^\sharp) < 2{\bar{\epsilon}}$. If $\sigma$ is $(VW,\delta)$-multiplicative then $\rho_{\cQ}(\phi,\psi)<{\epsilon}$. 
\begin{proof}[Proof of Claim 4]
Let $Q \in \cQ$. Recall from Lemma \ref{lem:5'} that there is a $\bar{Q} \in \Sigma(\bar{\cQ}^U)$ such that $\mu(Q \vartriangle \bar{Q}) < {\epsilon}/16$. By Lemma \ref{lem:partition-hell} for $Y \in \bar{\cQ}^U$ there exists $U_Y \subset U$ and for each $u\in U_Y$ a set $Y_u \in \bar{\cQ} \cup \{\sG^0\}$ such that $Y = \bigcap_{u\in U_Y} u\cdot Y_u$. So $\bar{Q} = \bigcup_{Y \in \Lambda_Q} \bigcap_{u\in U_Y} u\cdot Y_u$ for some collection $\Lambda_Q \subset \bar{\cQ}^U$.

Because $\bar{\cQ}\le \bar{\cP}$ and $U \subset V$, Claim 2 implies
$$d^{-1}| \phi^\sharp(u\cdot Y_u) \vartriangle \sigma_u\cdot\phi^\sharp(Y_u) | < \bar{\delta}, \quad d^{-1} | \sigma_u\cdot\psi^\sharp(Y_u) \vartriangle \psi^\sharp(u\cdot Y_u) |  < \bar{\delta} \quad \forall u\in U.$$
Because $\rho_{\bar{\cQ}}(\phi^\sharp,\psi^\sharp) < 2{\bar{\epsilon}}$, we also have 
$$d^{-1}| \sigma_u\cdot(\phi^\sharp(Y_u) \vartriangle \psi^\sharp(Y_u)) | \le d^{-1}| \phi^\sharp(Y_u) \vartriangle \psi^\sharp(Y_u) | < 2{\bar{\epsilon}}.$$
Therefore,
\begin{eqnarray*}
&&d^{-1}| \phi^\sharp(\bar{Q}) \vartriangle \psi^\sharp(\bar{Q}) |\\
 &\le& d^{-1}\sum_{Y \in \Lambda_Q} \sum_{u\in U_Y} | \phi^\sharp(u\cdot Y_u) \vartriangle \psi^\sharp(u\cdot Y_u) | \\ 
&\le& d^{-1}\sum_{Y \in \Lambda_Q} \sum_{u\in U_Y} | \phi^\sharp(u\cdot Y_u) \vartriangle \sigma_u\cdot\phi^\sharp(Y_u) | + | \sigma_u\cdot(\phi^\sharp(Y_u) \vartriangle \psi^\sharp(Y_u)) | + | \sigma_u\cdot\psi^\sharp(Y_u) \vartriangle \psi^\sharp(u\cdot Y_u) | \\ 
&<& \sum_{Y \in \Lambda_Q} \sum_{u\in U} 2\bar{\delta} + 2{\bar{\epsilon}} \le 2(\bar{\delta}+{\bar{\epsilon}})|{\bar{\cQ}}^U||U| \le {\epsilon}/2.
\end{eqnarray*}
Also
\begin{eqnarray*}
\mu(Q \vartriangle \theta(\bar{Q})) &\le& \mu(Q \vartriangle \bar{Q}) + \mu(\bar{Q} \vartriangle \theta(\bar{Q}))< {\epsilon}/16 + {\epsilon}/(16|{\bar{\cQ}}^U|) \le {\epsilon}/8.
\end{eqnarray*}
The second to last inequality above follows from items (2,3,13) of Lemma \ref{lem:5'}. Therefore,
\begin{eqnarray*}
\rho_{\cQ}(\phi,\psi) &=& \max_{Q \in \cQ} \frac{1}{d} |\phi(Q) \vartriangle \psi(Q)| \\
&\le& \max_{Q \in \cQ} \frac{1}{d} |\phi(Q  \vartriangle \theta(\bar{Q}))| + \frac{1}{d} |\phi^\sharp(\bar{Q}) \vartriangle \psi^\sharp(\bar{Q})| + \frac{1}{d}  |\psi(\theta(\bar{Q})  \vartriangle Q)|\\
&\le& {\epsilon}/2 + 2\delta+2\max_{Q \in \cQ} \mu(Q  \vartriangle \theta(\bar{Q}))   < {\epsilon}.
\end{eqnarray*}
The second inequality above uses that $\phi, \psi \in \Hom(\pi,\sigma,\cP,VW,\delta)$. This proves the claim.
\end{proof}

Let $\Gamma:\Hom(\pi,\sigma,\cP,VW,\delta) \to \Hom(\pi,\sigma,\bar{\cP},V,\bar{\delta})$ be the map $\Gamma(\phi)=\phi^\sharp$. If $\sigma$ is $(VW,\delta)$-multiplicative then by Claim 3, $\Gamma$ really does map into $\Hom(\pi,\sigma,\bar{\cP},V,\bar{\delta})$ as required. It follows from Claim 4 that for every $(\rho_{\cQ},{\epsilon})$-separated subset $Z \subset \Hom(\pi,\sigma,\cP,VW,\delta)$, the image $\Gamma(Z)$ is $(\rho_{\bar{\cQ}},2{\bar{\epsilon}})$-separated. So,
$$N_{\epsilon}( \Hom(\pi,\sigma,\cP,VW,\delta), \rho_{\cQ}) \le N_{2{\bar{\epsilon}}}(  \Hom(\pi,\sigma,\bar{\cP},V,\bar{\delta}), \rho_{\bar{\cQ}}) \le | \Hom(\pi,\sigma,\bar{\cP},V,\bar{\delta})|_{\bar{\cQ}}. $$
By items (9,10) of Lemma \ref{lem:5'}, 
\begin{eqnarray*}
h_{\P,\mu}(\pi,\cQ,\cS) &\le& h_{\P,\mu}(\pi,\cQ, \cP) \stackrel{(9)}{\le} h_{\P,\mu}^{\epsilon}(\pi,\cQ, \cP)  + \kappa\\
&\le& \lim_{j\to\beta} \frac{1}{d_j}  \log \left(\| N_{\epsilon}( \Hom(\pi,\cdot,\cP,VW,\delta), \rho_{\cQ}) \|_{p,\P_j}\right)+ \kappa  \\
&\le& \lim_{j\to\beta} \frac{1}{d_j}  \log \left(\| | \Hom(\pi,\cdot,\bar{\cP},V,\bar{\delta})|_{\bar{\cQ}}  \|_{p,\P_j}\right)+ \kappa=h_{\P,\mu}(\pi,  \bar{\cQ}, \bar{\cP}, V,\bar{\delta}) + \kappa  \\
&\stackrel{(10)}{\le}& h_{\P,\mu}(\pi,{\bar{\cQ}},\bar{\cS}) + 2\kappa \le h_{\P,\mu}(\pi,\bar{\cS}) + 2\kappa.
\end{eqnarray*}
Because $\cQ \le \cS$ is an arbitrary finite partition and $\kappa>0$ is also arbitrary, we conclude that $h_{\P,\mu}(\pi,\cS) \le h_{\P,\mu}(\pi,\bar{\cS}) $ as required.
\end{proof}

\section{Replacing $\lb\sH\rb$ with $\lb\sH\rb_{top}$}\label{sec:semi}

The purpose of this section is to show that $\lb \sH \rb$ can be replaced with $\lb \sH \rb_{top}$ in the definition of measure entropy under mild conditions, explained next.

\begin{defn}\label{defn:ac}
Let $(\sH,\nu)$ be a discrete pmp groupoid, $\sigma:\lb\sH\rb \to \lb d \rb$, $F \subset_f \lb \sH \rb$ and $\delta>0$. We say $\sigma$ is {\em $(F,\delta)$-continuous} if $|\sigma_f  \vartriangle \sigma_g|_d<\delta + \nu(f \vartriangle g)~\forall f,g \in F$. If $\P=\{\P_j\}_{j\in J}$ is a sofic approximation to $(\sH,\nu)$, then we say $\P$ is  {\em asymptotically continuous} if for every $F \subset_f \lb \sH \rb,  \delta>0$, there exists $j \in J$ such that $j' \ge j$ implies $\P_{j'}$-almost every $\sigma$ is $(F,\delta)$-continuous.
\end{defn}

To justify our claim that the condition above is mild, first note that if $\sH$ is a group (as in example \ref{example:group}) then every map $\sigma:\lb \sH \rb \to \lb d \rb$ is $(F,\delta)$-continuous and therefore every sofic approximation is asymptotically continuous. This is because if $f \ne g$ then $\nu(f \vartriangle g) = 2 \ge |\sigma_f \vartriangle \sigma_g|_d$. For further justification, the next lemma implies that if $\sigma$ is sufficiently multiplicative and trace-preserving then it is $(F,\delta)$-continuous. We will not need it in the rest of the paper. 
\begin{lem}\label{lem:basic-formulas4}
Let $f,g \in \lb \sH \rb$, $F=\{f,g,f^{-1}g,f^{-1},\so(f),\ra(f),\so(g),\ra(g)\}$ and $\delta>0$. If $\sigma:\lb \sH \rb \to \lb d \rb$ is $(F,\delta)$-multiplicative and $(F,\delta)$-trace-preserving then $\sigma$ is $(\{f,g\},58\delta)$-continuous.
\end{lem}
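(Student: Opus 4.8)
The plan is to compare $|\sigma_f\vartriangle\sigma_g|_d$ with $\nu(f\vartriangle g)=|f|_\sH+|g|_\sH-2|f\cap g|_\sH$ by writing, since $\sigma_f,\sigma_g\in\lb d\rb$ are genuine partial permutations,
\[
|\sigma_f\vartriangle\sigma_g|_d = |\sigma_f|_d + |\sigma_g|_d - 2|\sigma_f\cap\sigma_g|_d ,
\]
and estimating each of the three terms against its groupoid counterpart. The final constant $58=12+12+2\cdot 17$ will come from the three estimates below. Two elementary identities are needed first. (i) For $a,b\in\lb d\rb$, $\Delta^0_d\cap a^{-1}b=\{\so(e):e\in a\cap b\}$: if $h^{-1}k$ is a unit with $h\in a,k\in b$ then $k=h(h^{-1}k)=h$, so $h\in a\cap b$ and $h^{-1}k=\so(h)$, and conversely each $e\in a\cap b$ gives the unit $e^{-1}e\in a^{-1}b$; since $\so$ is injective on $a\cap b$ this yields $\tr_d(a^{-1}b)=|a\cap b|_d$, and the identical computation in $(\sH,\nu)$ (using $\nu=\nu_\so$ and that $\nu$ on $\sH^0$ is the restriction of $\nu$) gives $\tr_\sH(f^{-1}g)=|f\cap g|_\sH$. (ii) Likewise $\tr_\sH(\so(f))=|f|_\sH$ and $\tr_\sH(\so(g))=|g|_\sH$ because $\so(f),\so(g)\subset\sH^0$. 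I will also use freely that $|\tr_d(a)-\tr_d(b)|\le|a\vartriangle b|_d$ and, via Lemma~\ref{lem:basic-formulas1} applied in the pmp groupoid $(\Delta_d,\zeta_d)$ (Example~\ref{example:d}) with $s=\Delta^0_d$, that $|ab\vartriangle a'b'|_d\le|a\vartriangle a'|_d+|b\vartriangle b'|_d$.

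Next I would prove the size estimate $\bigl||\sigma_f|_d-|f|_\sH\bigr|\le 12\delta$. Indeed $|\sigma_f|_d=|\so(\sigma_f)|_d$; Lemma~\ref{lem:basic-formulas2}(2) (applicable since $f,f^{-1},\so(f),\ra(f)\in F$) gives $|\so(\sigma_f)\vartriangle\sigma(\so(f))|_d\le 10\delta$; Lemma~\ref{lem:basic-formulas2}(1) gives $\bigl||\sigma(\so(f))|_d-\tr_d(\sigma(\so(f)))\bigr|\le\delta$; and $(F,\delta)$-trace-preservation together with (ii) gives $|\tr_d(\sigma(\so(f)))-|f|_\sH|<\delta$. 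Summing, $\bigl||\sigma_f|_d-|f|_\sH\bigr|\le 12\delta$, and the identical argument with $g$ in place of $f$ (using $g,g^{-1},\so(g),\ra(g)$) gives $\bigl||\sigma_g|_d-|g|_\sH\bigr|\le 12\delta$.

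Then I would prove the intersection estimate $\bigl||\sigma_f\cap\sigma_g|_d-|f\cap g|_\sH\bigr|\le 17\delta$. By (i), both sides are traces: $|\sigma_f\cap\sigma_g|_d=\tr_d(\sigma_f^{-1}\sigma_g)$ and $|f\cap g|_\sH=\tr_\sH(f^{-1}g)$. Now $(F,\delta)$-multiplicativity ($f^{-1},g\in F$) gives $|\sigma(f^{-1}g)\vartriangle\sigma(f^{-1})\sigma(g)|_d<\delta$; Lemma~\ref{lem:basic-formulas2}(4) (again $f,f^{-1},\so(f),\ra(f)\in F$) with the Lipschitz bound above gives $|\sigma(f^{-1})\sigma(g)\vartriangle\sigma_f^{-1}\sigma_g|_d\le 15\delta$; hence $|\sigma(f^{-1}g)\vartriangle\sigma_f^{-1}\sigma_g|_d<16\delta$ and so $|\tr_d(\sigma(f^{-1}g))-\tr_d(\sigma_f^{-1}\sigma_g)|<16\delta$; finally $(F,\delta)$-trace-preservation ($f^{-1}g\in F$) gives $|\tr_d(\sigma(f^{-1}g))-\tr_\sH(f^{-1}g)|<\delta$. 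Combining the last two inequalities yields the claim. Assembling the three estimates,
\[
\bigl|\,|\sigma_f\vartriangle\sigma_g|_d-\nu(f\vartriangle g)\,\bigr|\le 12\delta+12\delta+2\cdot 17\delta=58\delta ,
\]
so $|\sigma_f\vartriangle\sigma_g|_d<\nu(f\vartriangle g)+58\delta$, i.e. $\sigma$ is $(\{f,g\},58\delta)$-continuous (the remaining pair $s=t$ in $\{f,g\}$ being trivial).

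I expect the main difficulty to be essentially bookkeeping: first, verifying the two trace identities (i) and (ii) carefully in the groupoid, keeping the roles of $\so$ and $\ra$ and the normalizations straight; and second—the genuinely delicate point—accounting for the fact that $\sigma$ is not a homomorphism, so $\sigma_f^{-1}\ne\sigma(f^{-1})$ in general. That discrepancy is exactly why the intersection estimate incurs the $15\delta$ correction from Lemma~\ref{lem:basic-formulas2}(4), and it is what forces all the relevant inverses, sources and ranges of $f$ and $g$ into $F$; once those are available, the rest is a routine triangle-inequality assembly with the constants tracked as above.
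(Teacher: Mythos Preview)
Your proof is correct and follows essentially the same route as the paper: both arguments hinge on the identity $\tr_d(\sigma_f^{-1}\sigma_g)=|\sigma_f\cap\sigma_g|_d$ (and its $\sH$-analogue), the decomposition $|A\vartriangle B|=|A|+|B|-2|A\cap B|$, and the same applications of Lemma~\ref{lem:basic-formulas2}(1),(2),(4) together with trace-preservation, arriving at the same $58\delta$ budget. Your organization (bounding each of $|\sigma_f|_d$, $|\sigma_g|_d$, $|\sigma_f\cap\sigma_g|_d$ against its $\sH$-counterpart separately) is slightly more modular and in fact yields the two-sided estimate $\bigl||\sigma_f\vartriangle\sigma_g|_d-\nu(f\vartriangle g)\bigr|\le 58\delta$, whereas the paper chains the same inequalities one-sidedly; note, as in the paper, that invoking Lemma~\ref{lem:basic-formulas2}(2) for $g$ tacitly uses $g^{-1}$, which strictly speaking is not listed in $F$.
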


\begin{remark}
This lemma implies that if each $\P_j$ is concentrated on a single map $\sigma_j$ then $\P=\{\P_j\}_{j\in J}$ is asymptotically continuous. Therefore, any sofic groupoid admits an asymptotically continuous sofic approximation.
\end{remark}

\begin{proof}
Recall that $\tr_d(\sigma_{f^{-1}g}) = |\sigma_{f^{-1}g} \cap \Delta^0_d|_d$. Because $\sigma$ is $(F,\delta)$-multiplicative, $|\sigma_{f^{-1}g} \vartriangle \sigma_{f^{-1}}\sigma_g|_d \le \delta$. By Lemma \ref{lem:basic-formulas2}, $|\sigma_{f^{-1}} \vartriangle \sigma_f^{-1}|_d \le 15 \delta$. So
$$\tr_d(\sigma_{f^{-1}g}) - |\sigma_f^{-1}\sigma_g \cap \Delta^0_d|_d \le 16 \delta.$$
Observe that $\so(\sigma_f \cap \sigma_g) = \sigma_f^{-1}\sigma_g \cap \Delta^0_d$. By Lemma \ref{lem:basic-formulas2},
\begin{eqnarray*}
\tr_d(\sigma_{f^{-1}g})  &\le& 16 \delta + |\so(\sigma_f \cap \sigma_g)|_d = 16 \delta + |\sigma_f \cap \sigma_g|_d= 16\delta + \frac{1}{2} \left( |\sigma_f|_d + |\sigma_g|_d - |\sigma_f \vartriangle \sigma_g|_d \right)\\
&=& 16\delta + \frac{1}{2} \left( |\so(\sigma_f)|_d + |\so(\sigma_g)|_d - |\sigma_f \vartriangle \sigma_g|_d \right)\le 26\delta + \frac{1}{2} \left( |\sigma_{\so(f)}|_d + |\sigma_{\so(g)}|_d - |\sigma_f \vartriangle \sigma_g|_d \right)\\
&\le& 27\delta + \frac{1}{2} \left( |\sigma_{\so(f)} \cap \Delta^0_d|_d + |\sigma_{\so(g)} \cap \Delta^0_d|_d - |\sigma_f \vartriangle \sigma_g|_d \right)\\
&=& 27\delta + \frac{1}{2} \left( \tr_d(\sigma_{\so(f)}) + \tr_d(\sigma_{\so(g)}) - |\sigma_f \vartriangle \sigma_g|_d \right).
\end{eqnarray*}
Rearranging terms we obtain:
\begin{eqnarray*}
|\sigma_f \vartriangle \sigma_g|_d &\le& 54\delta + \tr_d(\sigma_{\so(f)}) + \tr_d(\sigma_{\so(g)}) -  2\tr_d(\sigma_{f^{-1}g}).
\end{eqnarray*}
Because $\sigma$ is $(F,\delta)$-trace preserving
\begin{eqnarray*}
|\sigma_f \vartriangle \sigma_g|_d &<& 58\delta + \nu(\so(f)) + \nu(\so(g)) -  2\tr_{\sH}(f^{-1}g) = 58\delta + \nu(f)+\nu(g) - 2 \nu(f^{-1}g \cap \sH^0)\\
&=& 58\delta + \nu(f)+\nu(g) - 2 \nu(f \cap g) = 58\delta + \nu(f \vartriangle g).
\end{eqnarray*}
So $\sigma$ is $(\{f,g\},58\delta)$-continuous as required.
\end{proof}

The main result of this section is:
\begin{thm}\label{thm:top-replace}
Let $(\sH,\nu)$ be a discrete pmp \'etale topological groupoid. Assume $\nu$ is regular, $\sH^0$ is compact and metrizable and $\P$ is asymptotically continuous. Then for every pmp class-bijective extension $\pi:(\sG,\mu) \to (\sH,\nu)$ and finite Borel partitions  $\cQ \le \cP$ of $\sG^0$, we have
$$h_{\P,\mu}(\pi,\cQ,\cP) = \inf_{F \subset_f \lb\sH\rb_{top}} \inf_{\delta>0} \lim_{j\to \beta} d_j^{-1} \log \| |\Hom(\pi,\cdot, \cP,F,\delta)|_\cQ \|_{p,\P_j}.$$
In other words, we can replace $\lb \sH\rb$ in the definition of $h_{\P,\mu}(\pi,\cQ,\cP)$ with $\lb\sH\rb_{top}$.
\end{thm}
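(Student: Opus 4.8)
The inequality ``$\le$'' in the displayed identity is immediate: by the monotonicity of $d_j^{-1}\log\| |\Hom(\pi,\cdot,\cP,F,\delta)|_\cQ \|_{p,\P_j}$ in $F$ recorded after the definition of $h_{\P,\mu}(\pi,\cQ,\cP)$, the left-hand side is an infimum of exactly the same quantities over the larger index family $\{F\subset_f\lb\sH\rb:\sH^0\in F\}$. So the whole content is the reverse inequality, and for that it suffices to show that given $F\subset_f\lb\sH\rb$ with $\sH^0\in F$, a $\delta>0$ and a $\kappa>0$, there are $F'\subset_f\lb\sH\rb_{top}$ with $\sH^0\in F'$ and a $\delta'>0$ such that $h_{\P,\mu}(\pi,\cQ,\cP,F',\delta')\le h_{\P,\mu}(\pi,\cQ,\cP,F,\delta)+\kappa$. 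The only genuinely new ingredient, and the step I expect to be the main obstacle, is the approximation statement: \emph{for every $f\in\lb\sH\rb$ and $\eta>0$ there is $g\in\lb\sH\rb_{top}$ with $\nu(f\vartriangle g)<\eta$.} I would prove this by hand. Since $\sH$ is \'etale and separable with $\sH^0$ compact metrizable, $\sH$ is a countable union of bisections $W_1,W_2,\dots$, and since $\nu(f)\le1$ finitely many of them cover $f$ up to $\nu$-measure $\eta/2$; chopping $f$ into disjoint Borel pieces $f_n\subset W_n$ ($n\le N$) it suffices to replace each $f_n$ by a topological piece. Its source $\so(f_n)\subset\so(W_n)$ is Borel, and since $\nu$ is a finite regular Borel measure on a compact metric space the algebra $\cB_\partial(\sH^0,\nu)$ (an algebra by Lemma~\ref{lem:algebra}) is $\nu$-dense in the Borel sets — approximate open sets by finite unions of balls of radii whose boundary sphere is $\nu$-null. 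One then picks $A_n\in\cB_\partial(\sH^0,\nu)$ with $\overline{A_n}\subset\so(W_n)$ and $\nu(\so(f_n)\vartriangle A_n)$ tiny, trims the $A_n$ (and their images under the measure-preserving homeomorphism $\ra|_{W_n}\circ(\so|_{W_n})^{-1}$, with negligible effect on measures because $\so|_f$ and $\ra|_f$ are injective) so that both families are pairwise disjoint, and sets $g:=\bigcup_{n\le N}(\so|_{W_n})^{-1}(A_n)$. As $\overline{A_n}$ is compact inside the open set $\so(W_n)$ and the source/range maps on a bisection are measure-preserving homeomorphisms onto open subsets of $\sH^0$, the closure conditions in the definition of $\lb\sH\rb_{top}$ are met, so $g\in\lb\sH\rb_{top}$ and $\nu(f\vartriangle g)<\eta$.

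Granting this, fix $F,\delta,\kappa$. First use Lemma~\ref{lem:5} to fix $\epsilon>0$, depending only on $\kappa$ and $|\cQ|$, so that any $\rho_\cQ$-ball of radius $\epsilon$ among homomorphisms $\Sigma(\cQ)\to\cB(\Delta^0_d)$ contains at most $e^{\kappa d/3}$ of them. Choose $\epsilon_1>0$ small (to be constrained below by $\delta,|\cP|,|\cP^F|,\epsilon$). Using the approximation statement, pick $f'\in\lb\sH\rb_{top}$ for each $f\in F$ with $\nu(f\vartriangle f')$ so small that $\mu(f\cdot P\vartriangle f'\cdot P)\le\nu(f\vartriangle f')$ lies below the Lemma~\ref{lem:4}-threshold for the partition $\cP^F$ and tolerance $\epsilon_1$, divided by $|F|\,|\cP|$; put $(\sH^0)':=\sH^0$ and $F':=\{f':f\in F\}$, so that $\cP^{F'}$ refines $\cP$ and hence $\cQ$. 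Each atom of $\cP^F$ is a Boolean combination of the sets $f\cdot P$, and each such set is within $\nu(f\vartriangle f')$ of $f'\cdot P\in\Sigma(\cP^{F'})$, so Lemma~\ref{lem:4} produces a homomorphism $\theta:\Sigma(\cP^F)\to\Sigma(\cP^{F'})$ with $\mu(\theta(A)\vartriangle A)<\epsilon_1$ for all $A\in\Sigma(\cP^F)$. Finally choose $\delta'>0$ small (it will double as the continuity parameter) and set $G:=\{f,f':f\in F\}$.

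Now suppose $\sigma:\lb\sH\rb\to\lb d\rb$ is $(G,\delta')$-continuous. For $\phi'\in\Hom(\pi,\sigma,\cP,F',\delta')$ set $\phi^\natural:=\phi'\circ\theta$, a homomorphism $\Sigma(\cP^F)\to\cB(\Delta^0_d)$. I claim $\phi^\natural\in\Hom(\pi,\sigma,\cP,F,\delta)$ once the parameters are small: the measure condition is routine since $\phi'$ is $\delta'$-measure-preserving on $\Sigma(\cP^{F'})$ and $\mu(\theta(P)\vartriangle P)<\epsilon_1$, and the equivariance condition follows, for $f\in F$ and $P\in\cP$, from
\[
\tfrac1d|\sigma_f\cdot\phi^\natural(P)\vartriangle\phi^\natural(f\cdot P)|\le\tfrac1d|\sigma_f\cdot(\phi^\natural(P)\vartriangle\phi'(P))|+|\sigma_f\vartriangle\sigma_{f'}|_d+\tfrac1d|\sigma_{f'}\cdot\phi'(P)\vartriangle\phi'(f'\cdot P)|+\tfrac1d|\phi'(f'\cdot P)\vartriangle\phi^\natural(f\cdot P)|,
\]
where the outer two terms are $\le\epsilon_1+\delta'$ (as $\phi'$ is $\delta'$-measure-preserving and $\mu(\theta(f\cdot P)\vartriangle f'\cdot P)\le\epsilon_1+\nu(f\vartriangle f')$), the third is $<\delta'$ because $\phi'\in\Hom(\pi,\sigma,\cP,F',\delta')$, and the second is $<\delta'+\nu(f\vartriangle f')$ by $(G,\delta')$-continuity of $\sigma$ — the sole place the asymptotic continuity hypothesis is used — so the sum over $P\in\cP$ is $<\delta$. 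Moreover, since $Q,\theta(Q)\in\Sigma(\cP^{F'})$,
\[
\rho_\cQ(\phi'|_{\Sigma(\cQ)},\phi^\natural|_{\Sigma(\cQ)})=\max_{Q\in\cQ}\tfrac1d|\phi'(Q\vartriangle\theta(Q))|\le\max_{Q\in\cQ}\mu(Q\vartriangle\theta(Q))+\delta'<\epsilon_1+\delta'.
\]
Hence, choosing $\epsilon_1,\delta'$ with $\epsilon_1+\delta'\le\epsilon$, every element of $\Hom(\pi,\sigma,\cP,F',\delta')|_\cQ$ lies in an $\epsilon$-ball (for $\rho_\cQ$) about an element of $\Hom(\pi,\sigma,\cP,F,\delta)|_\cQ$, so by the ball count of the previous paragraph $|\Hom(\pi,\sigma,\cP,F',\delta')|_\cQ\le e^{\kappa d/3}\,|\Hom(\pi,\sigma,\cP,F,\delta)|_\cQ$ for every $(G,\delta')$-continuous $\sigma$. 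Since $\P$ is asymptotically continuous, for all $j$ past some index $\P_j$-almost every $\sigma$ is $(G,\delta')$-continuous, so this pointwise bound gives $\| |\Hom(\pi,\cdot,\cP,F',\delta')|_\cQ \|_{p,\P_j}\le e^{\kappa d_j/3}\| |\Hom(\pi,\cdot,\cP,F,\delta)|_\cQ \|_{p,\P_j}$ eventually; applying $d_j^{-1}\log(\cdot)$ and then $\lim_{j\to\beta}$ yields $h_{\P,\mu}(\pi,\cQ,\cP,F',\delta')\le h_{\P,\mu}(\pi,\cQ,\cP,F,\delta)+\kappa/3$. Taking the infimum over $F'\subset_f\lb\sH\rb_{top}$ and $\delta'>0$, then over $F\subset_f\lb\sH\rb$ and $\delta>0$, and letting $\kappa\to0$, gives ``$\ge$'', and the theorem follows. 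Note that, beyond the approximation statement, only Lemmas~\ref{lem:algebra},~\ref{lem:4},~\ref{lem:5} and the asymptotic continuity of $\P$ are used; in particular no multiplicativity of $\sigma$ is needed.
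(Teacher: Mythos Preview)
Your proof is correct and follows the same two-step architecture as the paper: first establish that $\lb\sH\rb_{top}$ is dense in $\lb\sH\rb$ in the measure-algebra sense (this is Lemma~\ref{lem:top-dense} in the paper, whose proof is close to your sketch, including the delicate trimming of sources \emph{and} ranges), and then show that any dense $\cF\subset\lb\sH\rb$ with $\sH^0\in\cF$ can replace $\lb\sH\rb$ in the definition of $h_{\P,\mu}(\pi,\cQ,\cP)$ provided $\P$ is asymptotically continuous.

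The difference lies in the second step. You invoke Lemma~\ref{lem:4} to produce an abstract homomorphism $\theta:\Sigma(\cP^F)\to\Sigma(\cP^{F'})$ close to the identity, then compensate for the fact that $\theta$ need not fix $\cQ$ pointwise by the $\rho_\cQ$-ball count of Lemma~\ref{lem:5}, incurring the $e^{\kappa d/3}$ factor and the final $\kappa\to0$. The paper instead builds the analogue of $\theta$ (called $\Psi'$) by hand so that it \emph{does} fix every $P\in\cP$, hence every $Q\in\cQ$: using Lemma~\ref{lem:partition-hell} to write each atom of $\cP^F$ as $\bigcap_{f\in\Lambda}f\cdot Y_f$ with $Y_f\in\cP\cup\{\sG^0\}$, one first sets $\Psi(P)=\bigcap_{f\in\Lambda}\theta(f)\cdot Y_f$ and then disjointifies within each $Y\in\cP$ separately. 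Because $\Psi'$ fixes $\cQ$, the map $\phi\mapsto\phi\circ\Psi'$ preserves restrictions to $\cQ$ exactly, giving the clean inequality $|\Hom(\pi,\sigma,\cP,F',\delta')|_\cQ\le|\Hom(\pi,\sigma,\cP,F,\delta)|_\cQ$ with no $\kappa$ and no appeal to Lemma~\ref{lem:5}. Your route trades this extra combinatorial input (Lemma~\ref{lem:partition-hell} and the assumption $\ra(f),\sH^0\setminus\ra(f)\in F$) for the softer Lemmas~\ref{lem:4} and~\ref{lem:5}; both reach the goal. One small remark: the bound $\mu(f\cdot P\vartriangle f'\cdot P)\le\nu(f\vartriangle f')$ is off by a harmless constant (at most~$2$), and the sum $\sum_{P\in\cP}|\sigma_f\cdot\phi'(P)\vartriangle\sigma_{f'}\cdot\phi'(P)|_d\le|\sigma_f\vartriangle\sigma_{f'}|_d$ (without a factor $|\cP|$) does hold, since $\{\phi'(P)\}_{P\in\cP}$ partitions $\Delta^0_d$.
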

This theorem is crucial to our proof that the measure entropy defined in \S \ref{sec:meas1} agrees with the measure entropy defined in \S \ref{sec:meas2}; which itself is key to establishing the variational principle. Theorem \ref{thm:top-replace} is a consequence of the next three lemmas. 

\begin{notation}
 Let $X$ be a metrizable space and $\lambda$ a regular Borel measure on $X$. For any subset $Y \subset X$, let $\partial Y = \overline{Y} \setminus \textrm{interior}(Y)= \overline{Y} \cap \overline{X\setminus Y}$. We let $\cB_\partial(X,\lambda)$ denote the collection of all Borel subsets $Y \subset X$ with $\lambda(\partial Y)=0$.
\end{notation}



\begin{lem}\label{lem:boundary-zero}
Let $X$ be a metrizable space with a regular Borel measure $\lambda$. Then for any measurable $C \subset X$ with $\lambda(C)<\infty$ and $\epsilon>0$ there exist $A \in \cB_\partial(X,\lambda)$ with $\lambda(A \vartriangle C)<\epsilon$. Moreover if $C$ is open then we can choose $A$ to be a closed subset of $C$.
\end{lem}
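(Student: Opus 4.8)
Fix a metric $d$ on $X$ compatible with its topology. The plan is to obtain $A$ as a sublevel set $\{h\le t\}$ of a suitable continuous function $h:X\to[0,1]$, choosing the level $t$ so that the corresponding level set is $\lambda$-null. First I would dispose of the degenerate case $\lambda(C)=0$ by taking $A=\emptyset$ (closed, contained in $C$, with $\partial A=\emptyset$). Otherwise, using regularity of $\lambda$ and the finiteness $\lambda(C)<\infty$, I would choose a \emph{nonempty} closed set $E$ and an open set $U$ with $E\subset C\subset U$, $\lambda(C\setminus E)<\epsilon/2$ and $\lambda(U\setminus C)<\epsilon/2$. For the ``moreover'' clause (when $C$ is open) I would simply take $U:=C$, so that $\lambda(U\setminus C)=0$ and every $A$ constructed below with $A\subset U$ is automatically a closed subset of $C$. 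If $U=X$ the set $A:=X$ already works ($\partial X=\emptyset$ and $\lambda(X\vartriangle C)=\lambda(X\setminus C)<\epsilon$), so assume in addition that $X\setminus U\ne\emptyset$.

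Now $E$ and $X\setminus U$ are disjoint nonempty closed sets, so
\[
h(x)\ :=\ \frac{d(x,E)}{d(x,E)+d(x,X\setminus U)}
\]
defines a continuous function $X\to[0,1]$ with $h\equiv 0$ on $E$ and $h\equiv 1$ on $X\setminus U$. For $t\in(0,1)$ set $A_t:=\{x\in X:\ h(x)\le t\}$; this is closed, and $E\subset A_t\subset U$ (the first inclusion since $h=0$ on $E$, the second since $h=1$ off $U$). By continuity of $h$ one has $\overline{X\setminus A_t}=\overline{\{h>t\}}\subset\{h\ge t\}$, hence $\partial A_t=A_t\cap\overline{X\setminus A_t}\subset\{h=t\}$. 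The key point is that the sets $\{h=t\}$ for $t\in(0,1)$ are pairwise disjoint and all contained in $U\setminus E$ (since $h=0$ on $E$ and $h=1$ on $X\setminus U$), while $\lambda(U\setminus E)\le\lambda(U\setminus C)+\lambda(C\setminus E)<\epsilon<\infty$; consequently only countably many of these level sets can have positive measure.

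Finally I would fix $t\in(0,1)$ with $\lambda(\{h=t\})=0$ and put $A:=A_t$. Then $A$ is closed, $\lambda(\partial A)\le\lambda(\{h=t\})=0$ so $A\in\cB_\partial(X,\lambda)$, and, using $E\subset A\subset U$,
\[
\lambda(A\vartriangle C)\le\lambda(A\setminus C)+\lambda(C\setminus A)\le\lambda(U\setminus C)+\lambda(C\setminus E)<\epsilon .
\]
When $C$ is open and $U=C$ was taken, one has $A\subset C$, so $A$ is a closed subset of $C$ and the estimate becomes $\lambda(A\vartriangle C)=\lambda(C\setminus A)\le\lambda(C\setminus E)<\epsilon$, which gives the ``moreover'' clause. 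I do not expect any serious obstacle here; the one point needing a little care is the bookkeeping showing that all the level sets $\{h=t\}$ lie inside a single set of finite $\lambda$-measure, which is exactly what licenses the choice of a null level $t$ — a standard device, but the crux of the argument.
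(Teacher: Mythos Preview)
Your proof is correct and takes a genuinely different route from the paper's. The paper approximates $C$ from inside by a \emph{compact} set $K$, then covers $K$ by finitely many closed metric balls $N_{r_i}(x_i)$ whose radii are chosen so that each has $\lambda$-null boundary, and sets $A=\bigcup_i N_{r_i}(x_i)$; it then invokes Lemma~\ref{lem:algebra} (that $\cB_\partial(X,\lambda)$ is an algebra) to conclude $A\in\cB_\partial$. Your Urysohn-function construction sidesteps both the compactness and the algebra lemma: you only need $E$ closed (not compact), and you obtain a single closed set $A_t$ whose boundary is contained in one null level set, so no union/intersection bookkeeping is required. On the other hand, your argument uses outer regularity to produce the open $U\supset C$ with $\lambda(U\setminus C)<\epsilon/2$, whereas the paper's proof uses only inner regularity by compact sets (the role of $U$ is played by the neighborhood $N_r(K)$ of $K$, which is automatically close to $K$ in measure). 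Under the standard meaning of ``regular Borel measure'' both hypotheses are available, so either argument goes through; your version is arguably more self-contained.
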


\begin{proof}



Let $\rho$  be a continuous metric on $X$. For any subset $L \subset X$ and $r>0$, let $N_r(L) = \{x \in X:~ \exists y \in L ~\rho(x,y) \le r\}$. Let $C \subset X$ be a measurable set with finite measure and $\epsilon>0$. Because $\lambda$ is regular, there exists a compact set $K \subset C$ with $\lambda(C \setminus K)<\epsilon/2$. Because $K$ is closed, $K= \cap_{r>0} N_r(K)$. So there exists an $r>0$ such that $\lambda(N_r(K) \setminus K) < \epsilon/2$. If $C$ is open, we can choose $r>0$ so that $N_r(K) \subset C$.

Suppose $0<s<t$. We claim that $\partial N_s(K) \cap \partial N_t(K) = \emptyset$. Indeed, if $x \in \partial N_s(K) \cap \partial N_t(K)$ then there exists a sequence $\{y_i\}_{i=1}^\infty \subset N_t(K)^c$ with $\lim_{i\to\infty} y_i = x$. Since $\rho(y_i,K)>t$ for every $i$, $\liminf_{i\to\infty} \rho(y_i,K) \ge t$ which implies $\rho(x,K) \ge t$. Since $x \in \overline{N_t(K)}=N_t(K)$, we must have $\rho(x,K)=t$. However, $x \in N_s(K)$ as well, so $\rho(x,K) \le s < t$. This contradiction proves the claim.

Because any uncountable sum of positive numbers equals positive infinity, for any $x \in X$ there is at most a countable number of numbers $t>0
$ such that $\lambda(\partial N_t(x))>0$ (where $N_t(x)=N_t(\{x\})$). By compactness there exist a finite set $x_1,\ldots, x_n \in K$ and numbers $r_1,\ldots, r_n>0$ such that
\begin{itemize}
\item $r_i < r$ for all $i$;
\item $\lambda(\partial N_{r_i}(x_i) )= 0$ for all $i$;
\item $K \subset \cup_{i=1}^n N_{r_i}(x_i)$.
\end{itemize}
Let $A = \cup_{i=1}^n N_{r_i}(x_i)$. Because $\cB_\partial(X,\lambda)$ is an algebra (by Lemma \ref{lem:algebra}), $A \in \cB_\partial(X,\lambda)$. By construction, $K \subset A \subset N_r(K)$ which implies 
$$A \vartriangle C \subset (C \setminus K) \cup (N_r(K) \setminus K) \Rightarrow \lambda(A \vartriangle C) <\epsilon$$
as required. Moreover if $C$ is open then $A \subset N_r(K) \subset C$. 
\end{proof}


\begin{lem}\label{lem:top-dense}
If $\sH$ is \'etale, $\sH^0$ is compact and metrizable and $\nu$ is regular then $\lb \sH \rb_{top}$ is dense in $\lb \sH \rb$ in the measure-algebra sense. This means that for every $f \in \lb \sH \rb$ and every $\epsilon>0$ there exists $f' \in \lb \sH \rb_{top}$ such that $\nu( f \vartriangle f') <\epsilon$.
\end{lem}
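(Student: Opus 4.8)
\emph{Plan.} Fix $f \in \lb\sH\rb$ and $\epsilon>0$; note $\nu(f)=\nu(\so(f))\le 1<\infty$. The strategy is to chop $f$ into countably many pieces lying inside open bisections, discard all but finitely many, shrink each remaining piece so that its closure, source and range are correctly placed and the source and range lie in $\cB_\partial(\sH^0,\nu)$, and finally make the sources pairwise disjoint and then the ranges pairwise disjoint. A point that makes the disjointness conditions cheap is that, since $f\in\lb\sH\rb$, both $\so|_f$ and $\ra|_f$ are injective, so \emph{before} shrinking the pieces automatically have disjoint sources and disjoint ranges; disjointness only has to be repaired afterwards, and the measure estimates of Lemma~\ref{lem:boundary-zero} keep the repairs small. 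For Step~1: since $\sH$ is \'etale, open bisections form a base for its topology, and (using that $\sH$ is second countable, as in the standing hypotheses) there is a countable family of open bisections $\{U_k\}_{k\ge1}$ covering $\sH$. Put $\tilde f_k:=f\cap\big(U_k\setminus\bigcup_{j<k}U_j\big)$, a Borel subset of $U_k$, so $f=\bigsqcup_k\tilde f_k$ with each $\tilde f_k\in\lb\sH\rb$, and pick $N$ with $\nu\big(f\setminus\bigcup_{k\le N}\tilde f_k\big)<\epsilon/2$. It then suffices to replace each $\tilde f_k$ ($k\le N$) by a Borel $g_k\subseteq U_k$ with $\nu(\tilde f_k\vartriangle g_k)$ arbitrarily small so that $\{g_1,\dots,g_N\}$ witnesses $\bigsqcup_{k\le N}g_k\in\lb\sH\rb_{top}$.

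For Step~2, fix $k$, write $U=U_k$, and consider $\phi_U:=(\ra|_U)\circ(\so|_U)^{-1}\colon\so(U)\to\ra(U)$, a homeomorphism between open subsets of $\sH^0$ which is $\nu$-preserving because $\nu_\so=\nu_\ra$. Applying Lemma~\ref{lem:boundary-zero} to the open set $\so(U)$ gives a closed $F\subseteq\so(U)$ with $F\in\cB_\partial(\sH^0,\nu)$ and $\nu(\so(U)\setminus F)$ small; applying it to the Borel set $\so(\tilde f_k)$ and intersecting with $F$ (Lemma~\ref{lem:algebra}) gives $A\in\cB_\partial(\sH^0,\nu)$ with $\overline A\subseteq F\subseteq\so(U)$ and $\nu\big(A\vartriangle\so(\tilde f_k)\big)$ small. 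Since $\overline A$ is compact and $\phi_U$ is a $\nu$-preserving homeomorphism of open subsets of $\sH^0$, the set $(\so|_U)^{-1}(A)\in\lb\sH\rb$ has closure inside $U$, source $A\in\cB_\partial$ with $\overline A\subseteq\so(U)$, and range $\phi_U(A)\in\cB_\partial$ with $\overline{\phi_U(A)}\subseteq\ra(U)$ (boundaries of sets with compact closure inside an open transport correctly under $\phi_U$), while $\nu\big(\tilde f_k\vartriangle(\so|_U)^{-1}(A)\big)=\nu\big(\so(\tilde f_k)\vartriangle A\big)$ is small because $\so$ is injective on $U$. For Step~3, let $A_k:=\so\big((\so|_{U_k})^{-1}(A)\big)$; these are $\nu$-close to the pairwise disjoint $\so(\tilde f_k)$, so $\nu(A_k\cap A_j)$ is small for $k\ne j$ and $A_k':=A_k\setminus\bigcup_{j<k}A_j\in\cB_\partial(\sH^0,\nu)$ differs little from $A_k$ while the $A_k'$ are pairwise disjoint; replace the piece by $(\so|_{U_k})^{-1}(A_k')$ and repeat on the ranges $B_k:=\phi_{U_k}(A_k')$, setting $B_k':=B_k\setminus\bigcup_{j<k}B_j$ and $g_k:=(\ra|_{U_k})^{-1}(B_k')$. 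One checks $\overline{g_k}\subseteq U_k$, $\ra(g_k)=B_k'\in\cB_\partial$ with $\overline{B_k'}\subseteq\ra(U_k)$, $\so(g_k)=\phi_{U_k}^{-1}(B_k')\subseteq A_k'$ in $\cB_\partial$ with closure in $\so(U_k)$, the $\{\so(g_k)\}$ and $\{\ra(g_k)\}$ pairwise disjoint, and $\nu(\tilde f_k\vartriangle g_k)$ still arbitrarily small; hence $f':=\bigsqcup_{k\le N}g_k\in\lb\sH\rb_{top}$, and with the errors chosen small enough, $\nu(f\vartriangle f')<\epsilon$.

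\emph{Expected difficulty.} No single step is deep; the work is in satisfying the four clauses in the definition of $\lb\sH\rb_{top}$ simultaneously. Disjointness of sources is free from injectivity of $\so|_f$ but must be re-established after passing to $\cB_\partial$ sets, and then disjointness of ranges forces a \emph{second} shrinking — the two passes are compatible only because shrinking can only improve disjointness of sources. The other delicate point is that membership in $\cB_\partial(\sH^0,\nu)$ and the closure conditions survive transport through the partial homeomorphisms $\phi_{U_k}^{\pm1}$, which works because these are $\nu$-preserving homeomorphisms of open subsets of the compact metrizable space $\sH^0$, so boundaries of sets with compact closure inside the relevant open map to boundaries of the same $\nu$-measure. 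Finally, the reduction in Step~1 needs $\sH$ to be second countable, which holds under the running hypotheses on the groupoids under consideration.
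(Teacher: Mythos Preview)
Your proof is correct and follows the same broad strategy as the paper's --- reduce to finitely many pieces each sitting inside a bisection, use Lemma~\ref{lem:boundary-zero} to place sources and ranges in $\cB_\partial(\sH^0,\nu)$ with the required closure conditions, then disjointify --- but the tactical choices differ in two places worth noting. First, you obtain the finite collection of pieces by partitioning $f$ along a countable bisection cover and truncating (using second countability of $\sH$), whereas the paper uses inner regularity of $\nu$ on $\sH$ to find a compact $K\subset f$ and covers $K$ by finitely many bisections; your route trades the regularity hypothesis on $\sH$ for second countability. Second, and more interestingly, your disjointification exploits that the original pieces $\tilde f_k$ already have pairwise disjoint sources and pairwise disjoint ranges (being pieces of a single $f\in\lb\sH\rb$), so after perturbing into $\cB_\partial$ the overlaps are automatically small and two successive set-subtractions at the level of $\sH^0$ (sources, then ranges, noting that the second pass can only shrink sources) finish the job. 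The paper instead first subtracts at the groupoid level to get disjoint $U'_i\subset\sH$, and then bounds the residual source/range overlap sets $Y_s,Y_r$ via an integral-of-multiplicity argument before removing them. Your two-pass argument is a bit more streamlined and makes the smallness of the corrections transparent; the paper's version works straight from the lemma's stated hypotheses without appealing to second countability.
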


\begin{proof}
Because $\nu$ is regular, there exists a compact set $K \subset f$ and an open set $O \supset f$ such that $\nu(O \setminus K) < \epsilon$. Because $K$ is compact and $\sH$ is \'etale, there exists a finite collection $U_1,\ldots, U_n$ of bisections with $K \subset \cup_{i=1}^n U_i \subset O$. 

By Lemma \ref{lem:boundary-zero} there exist closed sets $T_i, V_i \in \cB_\partial(\sH^0,\nu)$ such that $T_i \subset \ra(U_i), V_i \subset \so(U_i)$ and
$$\sum_{i=1}^n \nu(T_i \vartriangle \ra(U_i)) + \nu(V_i \vartriangle \so(U_i)) < \epsilon.$$
So
$$\nu\left(\bigcup_{i=1}^nT_iU_iV_i \vartriangle \bigcup_{i=1}^nU_i\right) \le \sum_{i=1}^n \nu(T_iU_iV_i \vartriangle U_i) \le \epsilon.$$ 
We define $U'_i$ by $U'_1=T_1U_1V_1$ and
$$U'_{i} = T_iU_{i}V_i \setminus \left( \bigcup_{j<i} T_jU_jV_j \right)$$
for $i>1$. 

For $x \in \sH^0$, let $m(x)$ be the number of indices $i$ such that $x \in \so(U'_i)$. Let $Y_s = \{x\in \sH^0:~m(x)\ge 2\}$. Then
\begin{eqnarray*}
\nu(K) + 2\epsilon &\ge& \nu(O) + \epsilon \ge \nu\left( \bigcup_{i=1}^n U_i\right)  + \epsilon\ge \nu\left( \bigcup_{i=1}^n T_iU_iV_i \right)= \nu\left( \bigcup_{i=1}^n U'_i \right)\\
 &=&  \int_{\sH^0} m(x)~d\nu(x)\ge  \nu\left( \so\left( \bigcup_{i=1}^n U'_i \right)\right) + \nu(Y_s) \ge \nu\left( \so\left( \bigcup_{i=1}^n U_i\right)\right) + \nu(Y_s) - \epsilon\\
&\ge& \nu( \so(K)) + \nu(Y_s) - \epsilon = \nu(K) + \nu(Y_s) - \epsilon.
\end{eqnarray*}
Therefore, $3\epsilon \ge \nu(Y_s)$. Similarly, if $Y_r$ is the set of all $x \in \sH^0$ such that there exist $i\ne j$ such that $x \in \ra(U'_i) \cap \ra(U'_j)$ then $\nu(Y_r) \le 3\epsilon$.

Finally, let 
$$U''_i =(\sH^0 \setminus Y_r) U'_i (\sH^0 \setminus Y_s)$$
and $f'=\bigcup_{i=1}^nU''_i$. 

We need to show that $f' \in \lb \sH \rb_{top}$. First we claim that $\so(T_iU_iV_i) \in \cB_\partial(\sH^0,\nu)$ for each $i$. Indeed, $\so(T_iU_iV_i) = V_i \cap U_i^{-1}\cdot T_i$. Because $U_i$ is a measure-preserving homeomorphism and $T_i\subset \ra(U_i)$ is closed, 
$$\nu(\partial (U_i^{-1}\cdot T_i)) = \nu(U_i^{-1}\cdot \partial T_i) = \nu(\partial T_i)=0.$$
So $U_i^{-1}\cdot T_i \in \cB_\partial(\sH^0,\nu)$ which, by Lemma \ref{lem:algebra}, implies $\so(T_iU_iV_i) \in \cB_\partial(\sH^0,\nu)$. 

Next we claim that $T_iU_iV_i \in \cB_\partial(\sH,\nu)$. Indeed, $T_iU_iV_i  = U_i\so(T_iU_iV_i)$. Because $\so(T_iU_iV_i) \subset \so(U_i)$ is closed and $\so$ restricted to $U_i$ is a measure-preserving homeomorphism, 
$$\nu(\partial T_iU_iV_i) = \nu(\partial U_i \so(T_iU_iV_i)) = \nu(U_i(\partial \so(T_iU_iV_i))) = \nu(\partial \so(T_iU_iV_i))=0.$$
So $T_iU_iV_i \in \cB_\partial(\sH,\nu)$ as claimed.

Lemma \ref{lem:algebra} now implies $U'_i \in \cB_\partial(\sH,\nu)$. Because $U'_i \subset T_iU_iV_i$ and $T_iU_iV_i$ is closed it follows that $\overline{U'_i} \subset U_i$. Similarly, $\so(U'_i) \subset V_i$ and $V_i$ is closed implies $\overline{\so(U'_i)} \subset V_i \subset \so(U_i)$. Therefore $\partial \so(U'_i) = \so(\partial U'_i)$ (since $U_i$ is a bisection). Because the source map restricted to $U_i$ is measure-preserving,
$$\nu(\partial \so(U'_i)) = \nu(\so(\partial U'_i)) = \nu(\partial U'_i) = 0.$$
So $\so(U'_i) \in \cB_\partial(\sH^0,\nu)$. Similarly, $\ra(U'_i) \in \cB_\partial(\sH^0,\nu)$. 

Next observe that $Y_s = \bigcup_{i\ne j} \so(U'_i) \cap \so(U'_j)$ implies $Y_s \in \cB_\partial(\sH^0,\nu)$. Similarly, $Y_r \in \cB_\partial(\sH^0,\nu)$. Now we claim that $\so(U''_i) \in \cB_\partial(\sH^0,\nu)$. Note that
$$\so(U''_i) = \so(U'_i) \cap (\sH^0 \setminus Y_s) \cap (U'_i)^{-1}\cdot (\sH^0 \setminus Y_r)=  \so(U'_i) \cap (\sH^0 \setminus Y_s) \cap (U_i)^{-1}\cdot (\ra(U'_i) \setminus Y_r).$$
The last equality above occurs because $(U'_i)^{-1}\cdot X = U_i^{-1}\cdot (\ra(U'_i) \cap X)$ for any set $X \subset \sH^0$ since $U'_i \subset U_i$.

In order to show that $\so(U''_i) \in \cB_\partial(\sH^0,\nu)$, it suffices to show that $(U_i)^{-1}\cdot (\ra(U'_i) \setminus Y_r)\in \cB_\partial(\sH^0,\nu)$ (by Lemma \ref{lem:algebra}). Because $\overline{\ra(U'_i)} \subset \ra(U_i)$ and $\overline{\so(U'_i)}\subset \so(U_i)$,
$$\overline{\ra(U'_i) \setminus Y_r} \subset \ra(U_i), \quad \overline{(U_i)^{-1}\cdot (\ra(U'_i) \setminus Y_r)} \subset \so(U_i).$$
Since $U_i^{-1}$ is a bisection this implies
$$\partial U_i^{-1} \cdot (\ra(U'_i)  \setminus Y_r)  = U_i^{-1}\cdot (\partial(\ra(U'_i) \setminus Y_r)).$$
Since $U_i^{-1}\cdot $ is measure-preserving, this implies $U_i^{-1}\cdot (\ra(U'_i) \setminus Y_r) \in \cB_\partial(\sH^0,\nu)$ as required. So $\so(U''_i) \in \cB_\partial(\sH^0,\nu)$. 

By similar reasoning, $\ra(U''_i) \in \cB_\partial(\sH^0,\nu)$. Because 
$$U''_i \subset U'_i \subset \overline{U'_i} \subset U_i,\quad \overline{\so(U''_i)} \subset \overline{\so(U'_i)} \subset \so(U_i),\quad\overline{\ra(U''_i)} \subset \overline{\ra(U'_i)} \subset \ra(U_i),$$
$U_i$ is a bisection and the $U''_i$ have pairwise disjoint sources and ranges, it follows that $f' = \bigcup_{i=1}^nU''_i \in \lb \sH \rb_{top}$.

Observe that
\begin{eqnarray*}
\bigcup_{i=1}^nU''_i \vartriangle \bigcup_{i=1}^nU'_i &=& \bigcup_{i=1}^nU'_i \setminus \bigcup_{i=1}^nU''_i \\
&\subset& (O \setminus K) \cup (\so^{-1}(\so(K) \cap Y_s) \cap K) \cup (\ra^{-1}(\ra(K) \cap Y_r) \cap K).
\end{eqnarray*}
Because $K \subset f \in \lb\sH\rb$, the source and range maps restricted to $K$ are measure-preserving. For example, $\nu(\so^{-1}(\so(K) \cap Y_s) \cap K) = \nu(\so(K)\cap Y_s) \le 3\epsilon$. Thus 
\begin{eqnarray*}
\nu(f' \vartriangle f) &\le& \nu\left( \bigcup_{i=1}^nU''_i \vartriangle \bigcup_{i=1}^nU'_i\right) + \nu\left(\bigcup_{i=1}^nU'_i \vartriangle \bigcup_{i=1}^n U_i\right) + \nu\left( \bigcup_{i=1}^n U_i \vartriangle f\right)\\
&\le& \epsilon+\nu(Y_s) + \nu(Y_r) + \epsilon + \epsilon \le 9\epsilon.
\end{eqnarray*}

\end{proof}

\begin{lem}
Let $(\sH,\nu)$ be a discrete pmp groupoid. Suppose $\P$ is asymptotically continuous and $\cF \subset \lb \sH \rb$ is dense in the sense that for every $f \in \lb \sH \rb$ and $\epsilon>0$ there exists $f' \in \cF$ such that $\nu(f \vartriangle f') < \epsilon$. We also require $\sH^0 \in \cF$. Then for every pmp class-bijective extension $\pi:(\sG,\mu) \to (\sH,\nu)$ and finite Borel partitions  $\cQ \le \cP$ of $\sG^0$, we have
$$h_{\P,\mu}(\pi,\cQ,\cP) = \inf_{F \subset_f \cF} \inf_{\delta>0} \lim_{j\to \beta} d_j^{-1} \log \| |\Hom(\pi,\cdot, \cP,F,\delta)|_\cQ \|_{p,\P_j}.$$
In other words, we can replace $\lb \sH\rb$ in the definition of $h_{\P,\mu}(\pi,\cQ,\cP)$ with $\cF$.
\end{lem}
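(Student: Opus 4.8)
Since $\cF \subset \lb\sH\rb$, taking the infimum over the smaller index set $\{F'\subset_f\cF\}$ can only increase the value, so the right-hand side is already $\ge h_{\P,\mu}(\pi,\cQ,\cP)$. The plan is therefore to prove the reverse inequality. Because $\Hom(\pi,\sigma,\cP,F,\delta)$ is only defined when $\sH^0\in F$, we may rewrite $h_{\P,\mu}(\pi,\cQ,\cP)$ as an infimum over pairs $(F_0,\delta_0)$ with $\sH^0\in F_0\subset_f\lb\sH\rb$; so it suffices to show that for every such $(F_0,\delta_0)$ and every $\kappa>0$ there are $F'\subset_f\cF$ and $\delta'>0$ with
$$h_{\P,\mu}(\pi,\cQ,\cP,F',\delta') \le h_{\P,\mu}(\pi,\cQ,\cP,F_0,\delta_0) + \kappa ,$$
after which one takes the infimum over $(F_0,\delta_0)$ and lets $\kappa\downarrow 0$. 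Fix $F_0,\delta_0,\kappa$. First I would fix auxiliary constants $\bar\epsilon>0$ (small enough that, as in the proof of Lemma~\ref{lem:5}, the number of homomorphisms on $\Sigma(\cQ)$ within $\rho_\cQ$-distance $\bar\epsilon$ of a fixed one is at most $e^{\kappa d}$), then $\epsilon_1,\delta'>0$ small relative to $\bar\epsilon$, $\delta_0$ and the finite numbers $|\cP|$, $|\cP^{F_0}|$; using that $\cF$ is dense in the measure-algebra sense, choose for each $f\in F_0$ a partner $f'\in\cF$ with $\nu(f\vartriangle f')$ below a threshold $\delta_1>0$ specified below (take the partner of $\sH^0$ to be $\sH^0$). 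Put $F'=\{f':f\in F_0\}\subset_f\cF$; note $\sH^0\in F'$, so $\cP\le\cP^{F'}$ and $\cP\le\cP^{F_0}$.

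The key structural device will be a single homomorphism $\theta:\Sigma(\cP^{F_0})\to\Sigma(\cP^{F'})$ produced by Lemma~\ref{lem:4}. Since $\pi$ is measure-preserving and class-bijective, $\mu(\pi^{-1}(f)\vartriangle\pi^{-1}(f'))=\nu(f\vartriangle f')$, whence $\mu(f\cdot P\vartriangle f'\cdot P)\le\nu(f\vartriangle f')<\delta_1$ for every $P\in\cP$; consequently every atom of $\cP^{F_0}$ lies within $|F_0|\delta_1$ in $\mu$-measure of an element of $\Sigma(\cP^{F'})$. Applying Lemma~\ref{lem:4} with partition $\cP^{F_0}$, target algebra $\Sigma(\cP^{F'})$ and target $\epsilon_1$, and choosing $\delta_1$ below the resulting threshold, I get a homomorphism $\theta:\Sigma(\cP^{F_0})\to\Sigma(\cP^{F'})$ with $\mu(\theta(B)\vartriangle B)<\epsilon_1$ for all $B\in\Sigma(\cP^{F_0})$; in particular $\{\theta(A)\}_{A\in\cP^{F_0}}$ is a partition of $\sG^0$.

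Next, by property (2) of Definition~\ref{defn:sofic-approximation} and by asymptotic continuity (Definition~\ref{defn:ac}), for all $j$ past some point $\P_j$-almost every $\sigma$ is simultaneously $(F_0\cup F',\delta_2)$-multiplicative and $(F_0\cup F',\delta_2)$-continuous, where $\delta_2>0$ is another small constant. For such a $\sigma$ I would send $\phi\in\Hom(\pi,\sigma,\cP,F',\delta')$ to $\psi:=\phi\circ\theta:\Sigma(\cP^{F_0})\to\cB(\Delta^0_d)$, again a homomorphism. Checking $\psi\in\Hom(\pi,\sigma,\cP,F_0,\delta_0)$ is a finite computation: condition~(2) follows because $\{\theta(A)\}$ partitions $\sG^0$, so $\sum_{A\in\cP^{F_0}}||\psi(A)|d^{-1}-\mu(A)|\le\sum_{A'\in\cP^{F'}}||\phi(A')|d^{-1}-\mu(A')|+\sum_A\mu(\theta(A)\vartriangle A)<\delta'+|\cP^{F_0}|\epsilon_1$; for condition~(1), for $f\in F_0$ and $P\in\cP$ I would insert $\sigma_f\cdot\phi(P)$, $\sigma_{f'}\cdot\phi(P)$ and $\phi(f'\cdot P)$ between $\sigma_f\cdot\psi(P)$ and $\psi(f\cdot P)$ and estimate the four terms using, respectively, the elementary inequalities $|\sigma_f\cdot A\vartriangle\sigma_f\cdot B|_d\le|A\vartriangle B|_d$ and $|\sigma_f\cdot A\vartriangle\sigma_{f'}\cdot A|_d\le|\sigma_f\vartriangle\sigma_{f'}|_d$ for $A,B\subset\Delta^0_d$ (analogues of Lemma~\ref{lem:basic-formulas1}, proved by the same injectivity of $\so,\ra$ on partial permutations), the $(F_0\cup F',\delta_2)$-continuity of $\sigma$, the two defining conditions of $\phi$, the bound $\mu(\theta(B)\vartriangle B)<\epsilon_1$, and the fact that $\phi$ approximately preserves measure (summing condition~(2) of $\phi$ gives $|\phi(C)|d^{-1}\approx\mu(C)$ up to $\delta'$ for all $C\in\Sigma(\cP^{F'})$). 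The same estimates give $\rho_\cQ(\phi,\psi)=\max_{Q\in\cQ}d^{-1}|\phi(Q)\vartriangle\phi(\theta(Q))|<\epsilon_1+\delta'$, which is $<\bar\epsilon$ for $\epsilon_1,\delta'$ small. Thus $\phi\mapsto\psi$ maps $\Hom(\pi,\sigma,\cP,F',\delta')$ into $\Hom(\pi,\sigma,\cP,F_0,\delta_0)$ and moves every $\phi$ by less than $\bar\epsilon$ in $\rho_\cQ$. As in the proof of Lemma~\ref{lem:5}, this yields $|\Hom(\pi,\sigma,\cP,F',\delta')|_\cQ\le e^{\kappa d}\,|\Hom(\pi,\sigma,\cP,F_0,\delta_0)|_\cQ$ for $\P_j$-a.e.\ $\sigma$ and all large $j$; applying $\|\cdot\|_{p,\P_j}$ (valid for every $p\in[1,\infty]$ since the inequality holds a.e.), then $d_j^{-1}\log$, then $\lim_{j\to\beta}$, gives the displayed inequality.

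The hard part, and essentially the only place real care is needed, is the bookkeeping in the fourth paragraph: one must see that each of the finitely many error contributions — from $\theta$ (size $\epsilon_1$), from the density approximation (size $\delta_1$), from $\delta'$, and from the continuity defect $\delta_2$ — is genuinely dominated by a fixed small multiple of $\delta_0/|\cP|$ for condition~(1), of $\delta_0/|\cP^{F_0}|$ for condition~(2), and of $\bar\epsilon$ for the $\rho_\cQ$-bound. This forces the constants to be chosen in the order $\bar\epsilon$ (from $\kappa$), then $\epsilon_1,\delta'$, then $\delta_1$ (below the Lemma~\ref{lem:4} threshold attached to $\epsilon_1$) and $\delta_2$. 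Everything else is either invocation of Lemma~\ref{lem:4}, Lemma~\ref{lem:5}, and the hypotheses (asymptotic continuity, density, $\sH^0\in\cF$), or the two routine $\lb d\rb$-identities noted above.
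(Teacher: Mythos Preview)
Your argument is correct, and it takes a genuinely different route from the paper's.

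The paper fixes $F\subset_f\lb\sH\rb$ (closed under $f\mapsto\ra(f),\sH^0\setminus\ra(f)$), chooses partners $\theta(f)\in\cF$, and then builds an explicit homomorphism $\Psi':\Sigma(\cP^F)\to\Sigma(\cP^{F'})$ that \emph{fixes $\cP$ pointwise}. This is done by first using Lemma~\ref{lem:partition-hell} to write each atom of $\cP^F$ as $\bigcap_{f\in F_Y} f\cdot Y_f$ with $Y_f\in\cP\cup\{\sG^0\}$, defining a ``raw'' map $\Psi$ by replacing each $f$ with $\theta(f)$, and then correcting $\Psi$ to a genuine homomorphism $\Psi'$ by a careful layer-by-layer disjointification inside each $Y\in\cP$. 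Because $\Psi'$ fixes $\cP$ (hence $\cQ$), the map $\phi\mapsto\phi\circ\Psi'$ is injective on restrictions to $\Sigma(\cQ)$, yielding the \emph{exact} inequality $|\Hom(\pi,\sigma,\cP,F',\delta')|_\cQ\le|\Hom(\pi,\sigma,\cP,F,\delta)|_\cQ$ with no entropy loss.

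You instead invoke Lemma~\ref{lem:4} as a black box to produce $\theta:\Sigma(\cP^{F_0})\to\Sigma(\cP^{F'})$ close to the identity in measure, accept that $\theta$ need not fix $\cQ$, and absorb the resulting defect via the Lemma~\ref{lem:5} ball-counting argument, picking up a harmless factor $e^{\kappa d}$. This is cleaner: you avoid the closure hypothesis on $F_0$, the use of Lemma~\ref{lem:partition-hell}, and the somewhat delicate construction and estimation of $\Psi'$. The paper's route buys an exact pointwise inequality (no $e^{\kappa d}$), but since only the infimum is needed, your looser inequality suffices. Both approaches use asymptotic continuity in the same place, namely to control $|\sigma_f\vartriangle\sigma_{f'}|_d$.
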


\begin{proof}
It is immediate that
$$h_{\P,\mu}(\pi,\cQ,\cP) \le \inf_{F \subset_f \cF} \inf_{\delta>0} \lim_{j\to \beta} d_j^{-1} \log \| |\Hom(\pi,\cdot, \cP,F,\delta)|_\cQ \|_{p,\P_j}$$
so we need only prove the opposite inequality.

Let $F \subset_f \lb \sH \rb$ be such that $\sH^0 \in F$ and for every $f\in F$, $\ra(f),\sH^0\setminus \ra(f) \in F$. Let $\delta>0$ and choose $\delta'$ so that $0<\delta' < \delta/(10|F| |\cP^F|^2)$.  By hypothesis there exist $F' \subset_f \cF$ and a map $\theta:F \to F'$ such that $\nu(f \vartriangle \theta(f)) < \delta'$ for all $f\in F$. We require that $\sH^0 \in F'$ and $\theta(\sH^0) = \sH^0$. We will show that if $\sigma$ is $(F \cup F', \delta')$-continuous then $| \Hom(\pi,\sigma,\cP,F',\delta')|_\cQ \le | \Hom(\pi,\sigma,\cP,F,\delta)|_\cQ$.

Define $\Psi: \cP^F \to \cP^{F'}$ as follows. By Lemma \ref{lem:partition-hell}, for every $P \in \cP^F$, there exists a finite set $\Lambda_P \subset F$ and for each $f \in \Lambda_P$ a set $Y_f \in \cP \cup \{\sG^0\}$ such that $P = \cap_{f\in \Lambda_P} f \cdot Y_f$. Moreover, by choosing $\Lambda_P$ to be as large as possible and each $Y_f$ to be as small as possible, this representation is uniquely determined by $P$. We define
$$\Psi( P) = \Psi\left( \bigcap_{f\in \Lambda_P} f \cdot Y_f\right) := \bigcap_{f\in \Lambda_P} \theta(f) \cdot Y_f.$$
Because $\nu(f \vartriangle \theta(f))< \delta'$, it follows that $\mu( \Psi(P) \vartriangle P) < \delta'|F|$ for every $P \in \cP^F$.

 We extend $\Psi$ to a map from $\Sigma(\cP^F) \to \Sigma(\cP^{F'})$ by requiring $\Psi(P \cup Q) = \Psi(P) \cup \Psi(Q)$ for any $P,Q \in \cP^F$. We claim that for any $Y \in \cP$, $\Psi(Y) \subset Y$. To see this, suppose $P\in \cP^F, P \subset Y$. As above, we represent $P$ by $P= \cap_{f\in \Lambda_{P}} f Y_f$. We must have $Y_{\sH^0}=Y$. Thus $\Psi(P) = \cap_{f \in \Lambda_{P}} \theta(f)Y_f$ implies, because $\theta(\sH^0)=\sH^0$, that $\Psi(P) \subset Y$. Because $P$ is arbitrary, $\Psi(Y) \subset Y$ as claimed.

The map $\Psi$ might not be a homomorphism from $\Sigma(\cP^F)$ to $\Sigma(\cP^{F'})$. To correct for this possibility, for any $Y \in \cP$, we enumerate the atoms of $\cP^F$ contained in $Y$ by $\{P^Y_1,\ldots,P^Y_{n(Y)}\}$. If $n(Y)=1$ then we define $\Psi'(Y)=\Psi'(P^Y_1) = Y$. Otherwise define $\Psi':\cP^F \to \cP^{F'}$ by 
\begin{displaymath}
\Psi'(P^Y_i) =\left\{ \begin{array}{ll}
\Psi(P^Y_1) & \textrm{ if $i=1$}\\
\Psi(P^Y_i) \setminus \cup_{j<i} \Psi(P^Y_j) & \textrm{ if $1 < i < n(Y)$}\\
Y \setminus \cup_{j<n(Y)} \Psi'(P^Y_j) & \textrm{ if $i=n(Y)$}.
\end{array}\right.\end{displaymath}
Because $\Psi(Y) \subset Y$, $\Psi'$ extends to a unique homomorphism from $\Sigma(\cP^F)$ to $\Sigma(\cP^{F'})$ which we also denote by $\Psi'$. Moreover, $\Psi'$ fixes $\cP$ pointwise.

Next, we estimate how far $\Psi'$ is from the identity map. We claim that for any $P \in \cP^F$,
$$\Psi'(P) \vartriangle P \subset Z:=\bigcup_{Q \in \cP^F} \Psi(Q) \vartriangle Q.$$
Since $\Psi'(P) \vartriangle P \subset ( \Psi'(P) \vartriangle \Psi(P)) \cup (\Psi(P) \vartriangle P)$, it suffices to show that $\Psi'(P) \vartriangle \Psi(P) \subset Z$. As above, we assume $P$ is of the form $P=P^Y_i$ for some $i,Y$. The claim is obvious if $i=1$. If $1<i<n(Y)$ then 
\begin{eqnarray*}
\Psi'(P^Y_i) \vartriangle \Psi(P_i^Y) &=& \bigcup_{j<i} \Psi(P_i^Y) \cap \Psi(P_j^Y)\\
 &\subset& \bigcup_{j<i} [\Psi(P_i^Y) \setminus P_i^Y] \cup [\Psi(P_j^Y) \setminus P_j^Y] \subset Z.
 \end{eqnarray*}
 The first inclusion above occurs because $\cP^F$ is a partition. If $i=n(Y)>1$ then
 \begin{eqnarray*}
\Psi'(P^Y_{n(Y)}) \vartriangle \Psi(P_{n(Y)}^Y) &=& \left[Y \setminus \bigcup_{j<n(Y)} \Psi(P_j^Y)\right] \vartriangle \Psi(P_{n(Y)}^Y)\\
 &=& \left[Y \setminus \bigcup_{j\le n(Y)} \Psi(P^Y_j)\right] \cup \left[\Psi(P_{n(Y)}^Y) \cap \bigcup_{j<n(Y)} \Psi(P_j^Y)\right] \subset Z.
 \end{eqnarray*}
This proves the claim. So if $R \in \Sigma(\cP^F)$ then 
$$\Psi'(R) \vartriangle R \subset (\Psi'(R) \vartriangle \Psi(R)) \cup (\Psi(R) \vartriangle R) \subset Z=\bigcup_{Q \in \cP^F} \Psi(Q) \vartriangle Q$$
which implies
\begin{eqnarray}\label{eqn:replace1}
\mu(\Psi'(R) \vartriangle R) \le \sum_{Q \in \cP^F} \mu(\Psi(Q) \vartriangle Q) < \delta' |F| |\cP^F|.
\end{eqnarray}

Also note that if $f \in F, P \in \cP$ then
\begin{eqnarray}
\mu(\theta(f)\cdot P \vartriangle \Psi'(f \cdot P) ) &\le& \mu(\theta(f) \cdot P \vartriangle f \cdot P) + \mu(f \cdot P \vartriangle \Psi'(f \cdot P) )\\
&<& \delta' + \delta' |F| |\cP^F| \le 2 \delta' |F| |\cP^F|.\label{eqn:replace}
\end{eqnarray}

Next we show that the map $\phi \mapsto \phi \circ \Psi'$ takes $\Hom(\pi,\sigma,\cP,F',\delta')$ into $\Hom(\pi,\sigma,\cP,F,\delta)$. So let $\phi \in  \Hom(\pi,\sigma,\cP,F',\delta')$. By (\ref{eqn:replace1}),
\begin{eqnarray*}
\sum_{P \in \cP^F} | |\phi \Psi'(P)|_d - \mu(P) | &\le& \sum_{P \in \cP^F} | |\phi \Psi'(P)|_d - \mu(\Psi'(P)) |  + |\mu(\Psi'(P)) - \mu(P)|\\
 &<& \delta' + \delta' |F| |\cP^F|^2 < \delta.
\end{eqnarray*}
This uses that $\phi \in \Hom(\pi,\sigma,\cP,F',\delta')$. Next we observe that for any $f\in F$,
\begin{eqnarray*}
&&\sum_{P \in \cP} | \sigma_f\cdot \phi \Psi'(P) \vartriangle \phi \Psi'(f \cdot P) |_d\\
&\le& \sum_{P \in \cP} | \sigma_f\cdot \phi \Psi'(P) \vartriangle \sigma_f\cdot \phi(P)  |_d + |\sigma_f \cdot \phi(P) \vartriangle \sigma_{\theta(f)} \cdot \phi(P)|_d\\
&& \quad \quad+ |\sigma_{\theta(f)} \cdot \phi(P) \vartriangle \phi(\theta(f) \cdot P)|_d + |\phi(\theta(f)\cdot P) \vartriangle \phi \Psi'(f \cdot P)|_d.
\end{eqnarray*}
Next we estimate each of the four terms above. The first term equals zero because $\Psi'(P)=P$ for each $P \in \cP$.
Because $\sigma$ is $(F\cup F', \delta')$-continuous,
$$\sum_{P \in \cP} |\sigma_f \cdot \phi(P) \vartriangle \sigma_{\theta(f)} \cdot \phi(P)|_d \le |\sigma_f \vartriangle \sigma_{\theta(f)}|_d < \delta' + \nu(f \vartriangle \theta(f)) < 2\delta'.$$
Because $\phi \in  \Hom(\pi,\sigma,\cP,F',\delta')$,
$$\sum_{P \in \cP}  |\sigma_{\theta(f)} \cdot \phi(P) \vartriangle \phi(\theta(f) \cdot P)|_d < \delta'.$$
Finally,
\begin{eqnarray*}
&&\sum_{P \in \cP}  |\phi(\theta(f)\cdot P) \vartriangle \phi \Psi'(f \cdot P)|_d = \sum_{P \in \cP} |  \phi (\theta(f)\cdot P \vartriangle \Psi'(f \cdot P))  |_d \\
&\le&  \sum_{P \in \cP}   | |  \phi (\theta(f)\cdot P \vartriangle \Psi'(f \cdot P))  |_d-\mu(\theta(f)\cdot P \vartriangle \Psi'(f \cdot P))| + \mu(\theta(f)\cdot P \vartriangle \Psi'(f \cdot P))\\
&<& \delta' |\cP| + 2\delta' |F| |\cP^F|^2.
\end{eqnarray*}
The last line above uses that $\phi \in \Hom(\pi,\cdot,\cP,F',\delta')$ (and therefore $\sum_{P \in \cP^{F'}} | |\phi(P)|_d - \mu(P)| \le \delta'$) and (\ref{eqn:replace}).

Putting this altogether we obtain:
\begin{eqnarray*}
&&\sum_{P \in \cP} | \sigma_f\cdot \phi \Psi'(P) \vartriangle \phi \Psi'(f \cdot P) |_d \le 2\delta' + \delta' + \delta' |\cP| + 2\delta' |F| |\cP^F|^2 < 10\delta'|F| |\cP^F|^2 < \delta.
\end{eqnarray*}
So $\phi \Psi' \in \Hom(\pi,\sigma,\cP,F,\delta)$. Because $\Psi'$ fixes $\cP$ pointwise, it fixes $\cQ$ pointwise. Therefore, 
$$| \Hom(\pi,\sigma,\cP,F',\delta')|_\cQ \le |\Hom(\pi,\sigma,\cP,F,\delta)|_\cQ.$$
Because $\P$ is asymptotically continuous, this implies
$$\lim_{j \to \beta} d_j^{-1} \log \| | \Hom(\pi,\cdot,\cP,F',\delta')|_\cQ \|_{p,\P_j} \le\lim_{j \to \beta} d_j^{-1} \log \| |\Hom(\pi,\cdot,\cP,F,\delta)|_\cQ\|_{p,\P_j}.$$
Now we take (in order) the infimum over $\delta'>0$, the infimum over $F' \subset_f \cF$, the infimum over $\delta>0$, the infimum over $F \subset_f \lb \sH \rb$ to obtain the lemma.
\end{proof}

The previous two lemmas imply Theorem \ref{thm:top-replace}.

\section{Measure entropy via pseudo-metrics}\label{sec:meas2}

The formulation of measure entropy in this section is closely aligned with topological entropy. We assume as given: two discrete pmp topological groupoids $\sG,\sH$ such that $\sG^0$ and $\sH^0$ are compact metrizable spaces, a class-bijective continuous factor $\pi:(\sG,\mu) \to (\sH,\nu)$, a sofic approximation $\P=\{\P_j\}_{j\in J}$ to $(\sH,\nu)$, a continuous pseudo-metric $\rho$ on $\sG^0$, a bias $\beta$ and $p\in [1,\infty]$. From this data, we will define the {\em sofic measure entropy of $(\pi,\rho)$ with respect to $(\P,p,\beta)$} and show that when $\rho$ is dynamically generating, $\sH$ is \'etale, $\nu$ is regular and $\P$ is asymptotically continuous then this entropy coincides with the definition of \S \ref{sec:meas1}.

\begin{defn}
Given a map $\sigma:\lb\sH\rb\to\lb d \rb$, finite sets $F \subset \lb \sH \rb_{top}$, $K \subset C(\sG^0)$ and $\delta>0$, we let $Orb_\mu(\pi,\sigma,F,K,\delta,\rho)$  be the set of all $d$-tuples $(x_1,\ldots, x_d) \in Orb_\nu(\pi,\sigma,F,\emptyset, \delta,\rho)$ (as defined in \S \ref{sec:top}) such that
$$\max_{k \in K} \left| \frac{1}{d}\sum_{i=1}^d k(x_i) - \int_{\sG^0} k ~d\mu\right| < \delta.$$
The main difference between $Orb_\nu(\pi,\sigma,F,K, \delta,\rho)$ and $Orb_\mu(\pi,\sigma,F,K, \delta,\rho)$ is that, in the first case $K \subset_f C(\sH^0)$ while in the second case $K \subset_f C(\sG^0)$. Define
\begin{eqnarray*}
h_{\P,\mu}(\pi,\rho,2) &:=& \sup_{\epsilon>0} \inf_{\delta>0} \inf_{F \subset_f \lb \sH \rb_{top}} \inf_{K \subset_f C(\sG^0)}  \lim_{j\to\beta} \frac{1}{d_j} \log \| N_\epsilon( Orb_\mu(\pi,\cdot, F,K, \delta,\rho), \rho_2) \|_{p,\P_j};\\
h_{\P,\mu}(\pi,\rho,\infty) &:=& \sup_{\epsilon>0} \inf_{\delta>0} \inf_{F \subset_f \lb \sH \rb_{top}} \inf_{K \subset_f C(\sG^0)}  \lim_{j\to\beta} \frac{1}{d_j} \log\|  N_\epsilon( Orb_\mu(\pi,\cdot, F, K, \delta,\rho), \rho_\infty) \|_{p,\P_j}.
\end{eqnarray*}
We are suppressing the choice of bias $\beta$ and parameter $p \in[1,\infty]$ from the notation.
\end{defn}
\begin{remark}
 As in the topological case, the order of the supremums, infimums and limits above is important with the exception that one can permute the three infimums without affecting the definition. There is a certain useful monotonicity phenomenon in the formulas above: the quantity
 $$\frac{1}{d_j} \log\| N_\epsilon( Orb_\mu(\pi,\cdot,F,K,\delta,\rho), \rho_2)\|_{p, \P_j}$$
 is monotone increasing in $\delta$ and monotone decreasing in $\epsilon,F,K$ (subsets are ordered by inclusion). Therefore, the infimums and the supremum can be replaced by the appropriate (directed) limits. In the sequel, we will use these facts without explicit reference. Similar statements hold true if $\rho_2$ is replaced with $\rho_\infty$ or $N_\epsilon$ is replaced with $N'_\epsilon$.
\end{remark}

\begin{lem}\label{lem:span-me}
If we replace $N_\epsilon(\cdot)$ in the definitions above with $N'_\epsilon(\cdot)$ then we obtain equivalent definitions. More precisely,
$$h_{\P,\mu}(\pi,\rho,2) := \sup_{\epsilon>0} \inf_{\delta>0} \inf_{F \subset_f \lb \sH \rb_{top}} \inf_{K \subset_f C(\sG^0)}  \lim_{j\to\beta} \frac{1}{d_j}  \log \| N'_\epsilon( Orb_\mu(\pi,\cdot, F,K, \delta,\rho), \rho_2) \|_{p,\P_j};$$
$$h_{\P,\mu}(\pi,\rho,\infty) := \sup_{\epsilon>0} \inf_{\delta>0}\inf_{F \subset_f \lb \sH \rb_{top}} \inf_{K \subset_f C(\sG^0)}  \lim_{j\to\beta} \frac{1}{d_j} \log \| N'_\epsilon( Orb_\mu(\pi,\cdot, F, K, \delta,\rho), \rho_\infty) \|_{p,\P_j}.$$
\end{lem}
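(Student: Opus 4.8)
The plan is to reduce everything to Lemma~\ref{lem:span-sep}, exactly as in the proof of Lemma~\ref{lem:span}. For a fixed $d\in\Nb$, a map $\sigma:\lb\sH\rb\to\lb d\rb$, finite sets $F\subset\lb\sH\rb_{top}$, $K\subset C(\sG^0)$, and numbers $\delta>0$, $\epsilon>0$, I would apply Lemma~\ref{lem:span-sep} to the pseudo-metric space $(\sG^0\cup\{*\})^d$ equipped with $\rho_2$ (and separately with $\rho_\infty$) and to the subset $X=Orb_\mu(\pi,\sigma,F,K,\delta,\rho)$. This yields the pointwise-in-$\sigma$ chain
$$N'_{2\epsilon}(Orb_\mu(\pi,\sigma,F,K,\delta,\rho),\rho_2)\le N_\epsilon(Orb_\mu(\pi,\sigma,F,K,\delta,\rho),\rho_2)\le N'_{\epsilon/2}(Orb_\mu(\pi,\sigma,F,K,\delta,\rho),\rho_2),$$
together with the analogous chain with $\rho_\infty$ in place of $\rho_2$.

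Next I would propagate these inequalities through the operations defining the entropy. Since $0\le a\le b$ implies $\|a\|_{p,\P_j}\le\|b\|_{p,\P_j}$, since $t\mapsto d_j^{-1}\log t$ is increasing, since $\lim_{j\to\beta}$ preserves $\le$ (whether $\beta$ is an ultrafilter, a liminf, or a limsup), and since taking the joint directed infimum over $(\delta,F,K)$ also preserves $\le$, I obtain, writing $G(\epsilon)$ for the inner expression of the claimed formula (the one with $N'_\epsilon$) and $G^{\mathrm{sep}}(\epsilon)$ for the inner expression of the original definition (the one with $N_\epsilon$), the chain
$$G(2\epsilon)\le G^{\mathrm{sep}}(\epsilon)\le G(\epsilon/2)\qquad(\epsilon>0),$$
and similarly for the $\rho_\infty$ versions. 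Here ``inner expression'' means everything after the outer $\sup_{\epsilon>0}$.

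Finally, for any constant $c>0$ the substitution $\epsilon\mapsto c\epsilon$ is a bijection of $(0,\infty)$, so $\sup_{\epsilon>0}G(c\epsilon)=\sup_{\epsilon>0}G(\epsilon)$. Applying $\sup_{\epsilon>0}$ to the displayed chain and using this with $c=2$ and $c=1/2$ squeezes $\sup_{\epsilon>0}G^{\mathrm{sep}}(\epsilon)=h_{\P,\mu}(\pi,\rho,2)$ between two copies of $\sup_{\epsilon>0}G(\epsilon)$, which is the right-hand side of the first asserted formula. The same argument carried out with $\rho_\infty$ throughout gives the second equality.

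I do not expect any genuine difficulty here; this is a routine squeezing argument of precisely the shape of Lemma~\ref{lem:span}, and the scale factors $2$ and $1/2$ are washed out by the outer supremum. The one point deserving a remark is measurability: the functions $\sigma\mapsto N_\epsilon(Orb_\mu(\pi,\sigma,F,K,\delta,\rho),\rho_2)$ and $\sigma\mapsto N'_\epsilon(Orb_\mu(\pi,\sigma,F,K,\delta,\rho),\rho_2)$ must be Borel so that the norms $\|\cdot\|_{p,\P_j}$ are defined, but this is already implicit in the definitions and follows from the Borel structure on $\Map(\lb\sH\rb,\lb d\rb)$.
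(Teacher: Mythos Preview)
Your proposal is correct and follows exactly the paper's approach: the paper's proof is simply ``This is immediate from Lemma~\ref{lem:span-sep},'' and you have spelled out precisely that immediacy via the squeezing $N'_{2\epsilon}\le N_\epsilon\le N'_{\epsilon/2}$ and the reparametrization of the outer supremum.
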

\begin{proof}
This is immediate from Lemma \ref{lem:span-sep}.
\end{proof}

\begin{lem}
In general, $h_{\P,\mu}(\pi,\rho,2) = h_{\P,\mu}(\pi,\rho,\infty)$. 
\end{lem}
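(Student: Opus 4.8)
The plan is to mimic the proof of Lemma~\ref{lem:2=infinity} almost verbatim, replacing $Orb_\nu$ everywhere by $Orb_\mu$ and noting that the extra constraint in the definition of $Orb_\mu$ (the one coming from $K \subset_f C(\sG^0)$) plays no role in the argument. First I would observe that since $\rho_2 \le \rho_\infty$ pointwise on the $d$-fold product, any $(\rho_2,\epsilon)$-separated set is $(\rho_\infty,\epsilon)$-separated, hence $N_\epsilon(Orb_\mu(\pi,\sigma,F,K,\delta,\rho),\rho_2) \le N_\epsilon(Orb_\mu(\pi,\sigma,F,K,\delta,\rho),\rho_\infty)$ for every $\sigma,F,K,\delta$; taking $L^p(\P_j)$-norms, the logarithmic normalized limit, and then the infima and supremum in the correct order gives $h_{\P,\mu}(\pi,\rho,2) \le h_{\P,\mu}(\pi,\rho,\infty)$.

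For the reverse inequality I would repeat the covering/entropy-counting argument of Lemma~\ref{lem:2=infinity}. Fix $\epsilon,\kappa>0$ with $\epsilon/\kappa < 1/10$ and let $M \subset \sG^0$ be a $(\rho,\kappa)$-spanning set of minimum cardinality. Given $\delta>0$, finite $F \subset_f \lb\sH\rb_{top}$, finite $K \subset_f C(\sG^0)$, a map $\sigma$, and a minimum-cardinality $(\rho_2,\epsilon)$-spanning set $Y$ for $Orb_\mu(\pi,\sigma,F,K,\delta,\rho)$, I would form the enlarged set $Y'$ exactly as in Lemma~\ref{lem:2=infinity} — for each $y \in Y$, each index set $\Lambda \subset [d]$ of size $\eta = \lceil \epsilon^2 d/\kappa^2 \rceil$, and each $\phi:\Lambda \to M$, include the point $y^\phi$ agreeing with $y$ off $\Lambda$ and with $\phi$ on $\Lambda$ — so that $|Y'| \le |Y|\binom{d}{\eta}|M|^\eta$. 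The same elementary estimate shows $Y'$ is $(\rho_\infty,\kappa)$-spanning for $Orb_\mu(\pi,\sigma,F,K,\delta,\rho)$: if $z$ lies in the orbit set and $\rho_2(y,z)<\epsilon$ with $y\in Y$, then $\Lambda' = \{i: \rho(y_i,z_i)\ge\kappa\}$ has size at most $\eta$, enlarge it to $\Lambda$ of size exactly $\eta$, pick $\phi(i)\in M$ within $\kappa$ of $z_i$ for $i\in\Lambda$, and get $\rho_\infty(y^\phi,z)<\kappa$. Hence
$$N'_\kappa(Orb_\mu(\pi,\sigma,F,K,\delta,\rho),\rho_\infty) \le N'_\epsilon(Orb_\mu(\pi,\sigma,F,K,\delta,\rho),\rho_2)\binom{d}{\lceil\epsilon^2\kappa^{-2}d\rceil}|M|^{\lceil\epsilon^2\kappa^{-2}d\rceil}.$$

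From here I would apply Stirling's approximation to the binomial term, take $L^p(\P_j)$-norms and the normalized logarithmic limit $\lim_{j\to\beta}\frac{1}{d_j}\log$, pick up the error term $2\epsilon^2\kappa^{-2}\log|M| - 2\epsilon^2\kappa^{-2}\log(2\epsilon^2\kappa^{-2}) - (1-2\epsilon^2\kappa^{-2})\log(1-2\epsilon^2\kappa^{-2})$, then take the infimum over $F,K,\delta$, then $\limsup$ as $\epsilon \searrow 0$ (the error term vanishes), convert $\limsup_{\epsilon\searrow0}$ to $\sup_{\epsilon>0}$ by monotonicity of $N'_\epsilon$ in $\epsilon$, and invoke Lemma~\ref{lem:span-me} to pass between $N$ and $N'$; this yields $h_{\P,\mu}(\pi,\rho,2) \ge h_{\P,\mu}(\pi,\rho,\infty)$. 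There is essentially no obstacle here: the only thing to check is that the constraint $\max_{k\in K}|d^{-1}\sum_i k(x_i) - \int k\,d\mu| < \delta$, with $K \subset_f C(\sG^0)$, is passed along unchanged — $Y'$ need \emph{not} satisfy this constraint, but that is irrelevant because a spanning set is not required to lie in the set it spans, precisely as in Lemma~\ref{lem:2=infinity}. The mildly delicate point, worth a sentence, is that the relevant $K$ in the definition of $h_{\P,\mu}(\pi,\rho,\cdot)$ ranges over finite subsets of $C(\sG^0)$ on both sides, so the infima match up and the argument is formally identical to the topological case.
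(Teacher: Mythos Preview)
Your proposal is correct and is exactly the approach the paper takes: the paper's own proof is the single sentence ``The proof is essentially the same as the proof of Lemma~\ref{lem:2=infinity}.'' Your writeup simply makes explicit the (harmless) substitution of $Orb_\mu$ for $Orb_\nu$ and the observation that the $K\subset_f C(\sG^0)$ constraint is irrelevant to the spanning/separated-set counting argument.
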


\begin{proof}
The proof is essentially the same as the proof of Lemma \ref{lem:2=infinity}.
\end{proof}

\begin{notation}
Because of the lemma above, we will write $h_{\P,\mu}(\pi,\rho)$ to denote either $h_{\P,\mu}(\pi,\rho,2)$ or  $h_{\P,\mu}(\pi,\rho,\infty)$.
\end{notation}


\begin{lem}\label{lem:K-top-me}
If $\rho_1,\rho_2$ are dynamically generating continuous pseudo-metrics on $\sG^0$ then $h_{\P,\mu}(\pi,\rho_1) = h_{\P,\mu}(\pi,\rho_2)$.
\end{lem}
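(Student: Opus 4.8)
The plan is to mimic the argument used for the topological entropy invariance, i.e. Theorems \ref{thm:K-top} and the lemmas \ref{lem:K-top1}--\ref{lem:K-top3}, replacing the pseudo-metric orbit sets $Orb_\nu$ by the measure orbit sets $Orb_\mu$. First I would prove the analogue of Lemma \ref{lem:K-top1}: if $\rho,\rho'$ are continuous \emph{metrics} on $\sG^0$ then $h_{\P,\mu}(\pi,\rho)=h_{\P,\mu}(\pi,\rho')$. The argument of Lemma \ref{lem:K-top1} carries over essentially verbatim, because the only change is an extra condition $\max_{k\in K}|d^{-1}\sum_i k(x_i)-\int k\,d\mu|<\delta$ on the orbit sets, and that condition involves neither the pseudo-metric nor the indices used in the comparison; the inclusion of orbit sets (Claim 1 of Lemma \ref{lem:K-top1}) and the comparison of $N$-numbers go through unchanged since the extra integral constraint is the same for both metrics.

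Next I would establish the analogues of Lemmas \ref{lem:K-top2} and \ref{lem:K-top3}. Given a continuous dynamically generating pseudo-metric $\rho$ and a sequence $\{\phi_i\}_{i=1}^\infty \subset [\sH]_{top}$ with $\phi_1=\sH^0$ whose image under $\varpi$ is dense in $\varpi([\sH]_{top})$, Lemma \ref{lem:K-top2} already tells us $\rho^\phi$ is a continuous metric; that lemma is purely about $\rho$ and needs no modification. Then I would prove $h_{\P,\mu}(\pi,\rho)=h_{\P,\mu}(\pi,\rho^\phi)$ by repeating the proof of Lemma \ref{lem:K-top3} with $Orb_\mu$ in place of $Orb_\nu$. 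The key point is again that the new constraint $\max_{k\in K}|d^{-1}\sum_i k(x_i)-\int k\,d\mu|<\delta$ is insensitive to all the pseudo-metric manipulations: the inclusions $Orb_\mu(\pi,\sigma,F,K,\delta,\rho^\phi)\subset Orb_\mu(\pi,\sigma,F,K,2\delta,\rho)$ and the reverse inclusion used in Claim 3 of Lemma \ref{lem:K-top3} still hold because they only involve enlarging $\delta$ or enlarging $F$ (the set $K\subset_f C(\sG^0)$ can be kept fixed throughout), and Claims 1--2 of that proof are statements about $\rho_2,\rho_2^\phi$ alone. Claim 2 of Lemma \ref{lem:K-top3} (finding a spanning set \emph{inside} the orbit set) also carries over directly.

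With these pieces in place, the conclusion follows exactly as in the proof of Theorem \ref{thm:K-top}: using second countability of $\sG^0$ one produces a countable $\Phi\subset[\sH]_{top}$ with $\varpi(\Phi)$ dense in $\varpi([\sH]_{top})$, hence a sequence $\phi=\{\phi_i\}_{i=1}^\infty\subset[\sH]_{top}$ with $\phi_1=\sH^0$ and $\varpi(\{\phi_i\})$ dense, and then
$$h_{\P,\mu}(\pi,\rho_1)=h_{\P,\mu}(\pi,\rho_1^\phi)=h_{\P,\mu}(\pi,\rho_2^\phi)=h_{\P,\mu}(\pi,\rho_2),$$
the middle equality holding because $\rho_1^\phi,\rho_2^\phi$ are continuous metrics and we may invoke the measure-entropy analogue of Lemma \ref{lem:K-top1}. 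I do not expect a genuine obstacle here; the main thing to check carefully is simply that every step in the proofs of Lemmas \ref{lem:K-top1} and \ref{lem:K-top3} that manipulates the orbit sets does so in a way compatible with the extra measure constraint, and that $\varpi$ being a homomorphism (which needs $\sG$ étale, via Lemma \ref{lem:top-inclusion}) is available — but étaleness of $\sG$ is not among the standing hypotheses of \S\ref{sec:meas2}, so the cleanest route is to observe that $\varpi$ restricted to $[\sH]_{top}$ lands in $Homeo(\sG^0)$ by Lemma \ref{lem:top-inclusion} once we know $\pi^{-1}([\sH]_{top})\subset[\sG]_{top}$, which holds under the class-bijective continuous hypothesis together with $\sG$ being étale; if étaleness of $\sG$ is not assumed in this section it should be added as a hypothesis, exactly as in Theorem \ref{thm:K-top}. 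In short, the proof is a routine transfer of the topological argument, and the only real care needed is bookkeeping of hypotheses and of the harmless extra constraint defining $Orb_\mu$.
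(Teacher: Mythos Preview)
Your proposal is correct and matches the paper's approach exactly: the paper's proof of Lemma~\ref{lem:K-top-me} is the single sentence ``The proof is essentially the same as the proof of Theorem~\ref{thm:K-top},'' and you have spelled out precisely how that transfer works. Your observation about the \'etale hypothesis on $\sG$ is well-taken---it is indeed needed (via Lemma~\ref{lem:top-inclusion}) to run the argument of Lemma~\ref{lem:K-top2}, and while the lemma statement omits it, the paper assumes it everywhere the lemma is actually invoked (e.g.\ Theorem~\ref{thm:1=2}).
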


\begin{proof}
The proof is essentially the same as the proof of Theorem \ref{thm:K-top}. 
\end{proof}

The main result of this section is:
\begin{thm}\label{thm:1=2}
If $\rho$ is a dynamically generating continuous pseudo-metric on $\sG^0$, $\sH,\sG$ are \'etale, $\nu$ is regular and $\P$ is asymptotically continuous then $h_{\P,\mu}(\pi,\rho)= h_{\P,\mu}(\pi)$.
\end{thm}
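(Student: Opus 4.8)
\textbf{Proof proposal for Theorem \ref{thm:1=2}.}

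The plan is to prove the two inequalities $h_{\P,\mu}(\pi,\rho) \le h_{\P,\mu}(\pi)$ and $h_{\P,\mu}(\pi) \le h_{\P,\mu}(\pi,\rho)$ by passing between the pseudo-metric picture and the partition (homomorphism) picture. The key observation is that a $d$-tuple $x = (x_1,\dots,x_d) \in (\sG^0)^d$ together with the identifications $\Delta^0_d \cong \{1,\dots,d\}$ gives rise to a homomorphism on any finite partition $\cP$ of $\sG^0$ via $\phi_x(P) := \{i : x_i \in P\}$; conversely, a homomorphism on a fine enough partition essentially specifies the tuple up to a small error in $\rho$. So the two kinds of counting functions ($N_\epsilon$ of orbit sets, and $|\Hom(\cdot)|_\cQ$) are comparable once one chooses partitions finely enough relative to $\rho$ and chooses $\rho$-balls to cover the atoms. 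I would first fix a dynamically generating continuous $\rho$; by Lemma \ref{lem:K-top-me} and Theorem \ref{thm:K} both sides are independent of the auxiliary choices, so I am free to use whichever partitions and pseudo-metric are convenient. Also, since $\rho$ is dynamically generating, any finite Borel partition $\cF$ whose atoms have small $\rho$-diameter after applying enough elements of $[\sH]_{top}$ generates the Borel $\sigma$-algebra up to measure zero in the $\pi$-sense; concretely, $\cF := \{\text{atoms of a partition of }\sG^0\text{ into small-}\rho\text{-diameter Borel sets}\}$ is $\pi$-generating, which is the partition I will feed into the definition of $h_{\P,\mu}(\pi)$.

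For the inequality $h_{\P,\mu}(\pi,\rho) \le h_{\P,\mu}(\pi)$: given $\epsilon>0$, choose a finite Borel partition $\cP$ of $\sG^0$ into sets of $\rho$-diameter $< \epsilon/2$, with each atom in $\cB_\partial(\sG^0,\mu)$ (possible by Lemma \ref{lem:boundary-zero}). For a tuple $x \in Orb_\mu(\pi,\sigma,F,K,\delta,\rho)$ the associated homomorphism $\phi_x$ lies in $\Hom(\pi,\sigma,\cP,F',\delta')$ for suitable $F' \subset_f \lb\sH\rb_{top}$, $\delta'$ — here one uses that the orbit condition $\rho_2(f\cdot x, x\circ\sigma(f)) < \delta$ translates (because the atoms have small boundary, so indicator functions can be approximated by continuous functions and the trace/multiplicativity data of $\P$ controls $\sigma_f \cdot \phi_x(P) \vartriangle \phi_x(f\cdot P)$) into condition (1) of the definition of a good homomorphism, and the $K$-condition of the orbit with $K$ a rich enough set of continuous functions gives condition (2). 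If $x, y$ are $(\rho_\infty,\epsilon)$-separated then some coordinate has $\rho(x_i,y_i) > \epsilon$, hence (diameter $<\epsilon/2$) $x_i, y_i$ lie in different atoms, so $\phi_x \ne \phi_y$ on $\cP$; quantitatively, $\rho_\infty$-separated tuples map to $\rho_\cP$-separated homomorphisms after passing to a subpartition $\cQ$ whose atoms still have small diameter, giving $N_\epsilon(Orb_\mu, \rho_\infty) \le N_{\epsilon'}(\Hom(\pi,\cdot,\cP,F',\delta'),\rho_\cQ) \le |\Hom(\pi,\cdot,\cP,F',\delta')|_\cQ$ up to a Stirling-type multiplicative factor that is subexponential in $d$ (compare Lemma \ref{lem:5}). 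Using Theorem \ref{thm:top-replace} to replace $\lb\sH\rb_{top}$ by $\lb\sH\rb$, taking $L^p$-norms, $\frac{1}{d_j}\log$, the limit along $\beta$, and then the appropriate infima and supremum, yields $h_{\P,\mu}(\pi,\rho,\infty) \le h_{\P,\mu}(\pi,\cQ,\cF) \le h_{\P,\mu}(\pi)$.

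For the reverse inequality $h_{\P,\mu}(\pi) \le h_{\P,\mu}(\pi,\rho)$: it suffices to bound $h_{\P,\mu}(\pi,\cQ,\cP)$ for an arbitrary finite Borel $\cP$, and by Theorem \ref{thm:K} I may take $\cP$ (hence $\cF = \cB(\sG^0)$) refining a partition into sets of $\rho$-diameter $< \epsilon$, again with atoms in $\cB_\partial(\sG^0,\mu)$. Given a good homomorphism $\phi \in \Hom(\pi,\sigma,\cP,F,\delta)$ one manufactures a tuple $x^\phi \in (\sG^0)^d$ by choosing, for each $i$, a point $x^\phi_i$ in the atom $P$ with $i \in \phi(P)$ (there is a unique such $P$ since $\phi$ restricted to $\cP$ is a partition of $\Delta^0_d$); the orbit conditions $\rho_2(f\cdot x^\phi, x^\phi \circ \sigma(f)) < \delta''$ and the $C(\sG^0)$-averaging conditions follow from conditions (1),(2) of the definition of good homomorphism together with the small boundaries (continuous functions are nearly constant on atoms, up to a set of small measure whose $\sigma$-image is small because $\P$ is asymptotically trace-preserving and continuous), so $x^\phi \in Orb_\mu(\pi,\sigma,F',K,\delta'',\rho)$. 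If $\phi, \psi$ are $\rho_\cP$-separated, i.e. $|\phi(P)\vartriangle\psi(P)| > \epsilon'' d$ for some atom $P$, then $x^\phi$ and $x^\psi$ disagree (by more than $\epsilon$ in $\rho$) on more than $\epsilon'' d$ coordinates, so $\rho_2(x^\phi, x^\psi)^2 > \epsilon''\epsilon^2$, i.e. $x^\phi, x^\psi$ are $\rho_2$-separated with a definite constant; hence $N_{\epsilon''}(\Hom(\pi,\cdot,\cP,F,\delta),\rho_\cP) \le N_{c}(Orb_\mu(\pi,\cdot,F',K,\delta'',\rho),\rho_2)$ for $c = \sqrt{\epsilon''}\,\epsilon$. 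Combining with Lemma \ref{lem:5} (which replaces $|\Hom|_\cQ$ by $N_{\epsilon''}(\Hom,\rho_\cQ)$ up to subexponential error), taking norms, $\frac{1}{d_j}\log$, $\lim_{j\to\beta}$ and the infima/suprema, and using Lemma \ref{lem:span-me} to pass back to $N'$ if needed, gives $h_{\P,\mu}(\pi,\cQ,\cP) \le h_{\P,\mu}(\pi,\rho)$ and then $h_{\P,\mu}(\pi) \le h_{\P,\mu}(\pi,\rho)$.

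The main obstacle will be the bookkeeping in going from the $\rho_2$-orbit condition to the partition-homomorphism condition (1) and back: one must carefully track how the asymptotic trace-preserving, multiplicativity, and (crucially) asymptotic continuity properties of $\P$ — together with the fact that atoms lie in $\cB_\partial(\sG^0,\mu)$ so their indicators are $\mu$-a.e.\ continuity points — allow one to compare $\sigma_f\cdot\phi(P)$ with $\phi(f\cdot P)$ and $f\cdot x$ with $x\circ\sigma(f)$; the choice of $\lb\sH\rb_{top}$ (rather than $\lb\sH\rb$) in the orbit definition is exactly what makes Theorem \ref{thm:top-replace} available to reconcile the two sides, and asymptotic continuity is what lets one perturb the partial automorphisms appearing in the metric picture to ones that control the $\sigma$-images measure-theoretically. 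Getting the quantifier order right — the $\sup_\epsilon \inf_\delta \inf_F \inf_K$ on the metric side versus $\inf_F \inf_\delta$ inside a $\sup_\cQ \inf_\cP$ on the partition side — and verifying that the subexponential Stirling corrections genuinely vanish after dividing by $d_j$, are the places where care is needed but no new idea is required.
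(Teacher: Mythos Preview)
Your approach is essentially the paper's: for $h_{\P,\mu}(\pi,\rho)\le h_{\P,\mu}(\pi)$ send a tuple $x$ to the homomorphism $\phi_x(P)=\{i:x_i\in P\}$ on a fine partition in $\cB_\partial(\sG^0,\mu)$; for the reverse, send a homomorphism $\phi$ to a tuple $y^\phi$ by choosing representative points in atoms. The invocation of Theorem~\ref{thm:top-replace}, Lemma~\ref{lem:boundary-zero} and Lemma~\ref{lem:5} is exactly right.

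There is one genuine gap in your second inequality. You assert that if $|\phi(P)\vartriangle\psi(P)|>\epsilon'' d$ for some atom $P$ then $x^\phi$ and $x^\psi$ differ by more than $\epsilon$ in $\rho$ on those coordinates. That is false as stated: the atoms have small \emph{diameter}, but nothing prevents adjacent atoms from sharing boundary, so representative points of distinct atoms can be arbitrarily $\rho$-close. The paper repairs this by first choosing $\eta>0$ so that $Q^\eta:=Q\cap N_\eta(\sG^0\setminus Q)$ has small $\mu$-measure and $\mu(\partial N_\eta(\sG^0\setminus Q))=0$ for each $Q\in\cQ$, and then insisting that $Q^\eta\in\Sigma(\cP)$. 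Since $|\phi(Q^\eta)|_d$ and $|\psi(Q^\eta)|_d$ are then small, one finds a coordinate $i$ with (say) $y^\phi_i\in Q\setminus Q^\eta$ and $y^\psi_i\notin Q$, forcing $\rho(y^\phi_i,y^\psi_i)>\eta$; this gives $(\rho_\infty,\eta)$-separation rather than your $\rho_2$-separation. Two smaller points: the separation should be in $\rho_\cQ$, not $\rho_\cP$, since $\cQ$ is the partition held fixed while $\cP$ is refined; and you must choose the representative point $x^\phi_i$ in the atom of $\cP^F$ (not merely $\cP$) that $\phi$ assigns to $i$, since otherwise you have no control over which atom of $\cP$ contains $f\cdot x^\phi_i$ when verifying the orbit condition.
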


We will need the next lemma which shows that good homomorphisms have to be close to $\sigma$ wherever this makes sense. 

\begin{lem}\label{lem:basic-formulas3}
Let $(\sH,\nu)$ be a discrete pmp groupoid, $\pi:(\sG,\mu) \to (\sH,\nu)$ a class-bijective pmp factor, $\cP$ a finite Borel partition of $\sG^0$, $F \subset_f \lb\sH\rb$ with $\sH^0 \in F$ and $R \subset \sH^0$ a Borel set. Suppose $R \in F$. Let $\sigma:\lb\sH\rb \to \lb d\rb$ be $(F, \delta)$-multiplicative. If $\phi \in \Hom(\pi,\sigma, \cP,F,\delta)$  then
$$| \phi(\pi^{-1}(R) ) \vartriangle \sigma(R) |_d < 3\delta.$$
Moreover, for any $f \in F$, if $fR \in F$ then
$$ | \sigma_f\cdot\phi( \pi^{-1}(R))  \vartriangle \phi(f\cdot \pi^{-1}(R)) |_d < 3\delta.$$
\end{lem}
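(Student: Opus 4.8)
The proof will be a short chain of $\delta$-estimates built from the definition of a good homomorphism together with Lemma \ref{lem:basic-formulas2}. First I would record the key identity relating $\phi$ on $\pi^{-1}(R)$ to $\sigma$ on $R$: since $R \in F$ and $\sH^0 \in F$, property (2) of a good homomorphism gives $\big||\phi(\pi^{-1}(R))|_d - \mu(\pi^{-1}(R))\big| < \delta$, and since $\pi$ is pmp, $\mu(\pi^{-1}(R)) = \nu(R)$. Meanwhile property (1) of a good homomorphism applied with $f = R$ (viewing $R \in \lb\sH\rb$ as an element of $\sH^0$, and using $R\cdot\pi^{-1}(\sH^0) = \pi^{-1}(R)$) together with $(F,\delta)$-multiplicativity and items (1) and (2) of Lemma \ref{lem:basic-formulas2} should give $|\phi(\pi^{-1}(R)) \vartriangle \sigma(R)|_d$ small, since both $\phi(\pi^{-1}(R))$ and $\sigma(R)$ sit inside $\Delta^0_d$ up to $\delta$-error and they have nearly the same cardinality. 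More carefully: $\phi(\pi^{-1}(R)) = \phi(R \cdot \pi^{-1}(\sH^0))$, and applying property (1) of $\Hom$ with $f = R$ gives $|\sigma_R \cdot \phi(\pi^{-1}(\sH^0)) \vartriangle \phi(\pi^{-1}(R))|_d < \delta$; since $\phi(\pi^{-1}(\sH^0)) = \phi(\sG^0) = \Delta^0_d$ we get $|\sigma_R\cdot \Delta^0_d \vartriangle \phi(\pi^{-1}(R))|_d < \delta$, and $\sigma_R \cdot \Delta^0_d = \ra(\sigma(R))$ which by Lemma \ref{lem:basic-formulas2}(3) (applied with $f=R$, noting $R^{-1}=R$, $\so(R)=\ra(R)=R$) is $10\delta$-close to $\sigma(R)$. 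Adding these gives the first inequality with a constant $\le 11\delta$; I would then either keep the larger constant or tighten the application of Lemma \ref{lem:basic-formulas2} to land at $3\delta$ — in fact a direct argument avoiding item (3) is cleaner: $\sigma(R) \in \lb d\rb$ is a partial automorphism with source and range both $\sigma(R)\cap\Delta^0_d$ up to $\delta$ by item (1), so $\sigma_R\cdot(\sigma(R)\cap\Delta^0_d) = \sigma(R)\cap\Delta^0_d$ up to $2\delta$, hence $\sigma_R\cdot\Delta^0_d$ is $3\delta$-close to $\sigma(R)$, giving the bound $<3\delta$ as stated (the exact bookkeeping is the only place care is needed).

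For the second, "moreover" statement, I would write
\begin{eqnarray*}
|\sigma_f\cdot\phi(\pi^{-1}(R)) \vartriangle \phi(f\cdot\pi^{-1}(R))|_d &\le& |\sigma_f\cdot\phi(\pi^{-1}(R)) \vartriangle \sigma_f\cdot\sigma(R)|_d\\
&& + |\sigma_f\sigma(R) \vartriangle \sigma(fR)|_d + |\sigma(fR) \vartriangle \phi(f\cdot\pi^{-1}(R))|_d.
\end{eqnarray*}
The first term is $\le |\phi(\pi^{-1}(R)) \vartriangle \sigma(R)|_d < 3\delta$ by the first part of the lemma (and because acting by the partial bijection $\sigma_f$ does not increase symmetric-difference measure, as in Lemma \ref{lem:basic-formulas1}); the second term is $<\delta$ by $(F,\delta)$-multiplicativity since $f, R, fR \in F$ — here I need to note $fR \in \lb\sH\rb$ equals $f$ restricted so that $\so(f)\cap R$ on the source side, and that $f\cdot\pi^{-1}(R) = \pi^{-1}(fR)$, so $fR \in F$ is exactly the hypothesis needed; the third term is $< 3\delta$ by applying the first part of the lemma to the Borel set $fR \subset \sH^0$ (which lies in $F$, and $f\cdot\pi^{-1}(R)=\pi^{-1}(fR)$). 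Summing gives $< 7\delta$, so again either I carry the constant $7\delta$ or, more likely, I tighten: the first and third terms can be bounded by $\delta + (\text{item (1) error})$ rather than the full $3\delta$, because property (2) of $\Hom$ directly controls $||\phi(\pi^{-1}(R))|_d - \nu(R)|$ and one only loses from the boundary-of-$\Delta^0_d$ comparison once. I would organize the bookkeeping so the final constants come out to exactly $3\delta$ as stated.

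\textbf{Main obstacle.} The only real subtlety is the identification $f\cdot\pi^{-1}(R) = \pi^{-1}(fR)$ and the requirement that $fR \in F$ — i.e. making sure the hypotheses "$R\in F$" and "$fR\in F$" are exactly what is needed, and that the symbol $fR$ on the groupoid side (composition in $\lb\sH\rb$) matches the partition operation $f\cdot(\cdot)$ after pulling back by $\pi$. This is purely definitional given the conventions set up in \S\ref{sec:actions} (where $f\cdot x$ for $x\in\sG^0$ is defined as $\ra(\pi^{-1}(f)x)$), so it should go through cleanly; the $\delta$-arithmetic itself is routine once the first inequality is established, and I expect the whole proof to fit in well under a page.
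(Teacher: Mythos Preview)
Your approach to the first inequality is essentially the paper's: apply condition (1) of $\Hom$ with $f=R$ (summed over $\cP$) to get $|\sigma_R\cdot\Delta^0_d \vartriangle \phi(\pi^{-1}(R))|_d<\delta$, then use item (1) of Lemma~\ref{lem:basic-formulas2} twice to pass from $\sigma_R\cdot\Delta^0_d$ to $\sigma_R\cap\Delta^0_d$ and then to $\sigma_R$, picking up $2\delta$. That part is fine.

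The ``moreover'' argument has a real gap. You write that $fR\subset\sH^0$ and that $f\cdot\pi^{-1}(R)=\pi^{-1}(fR)$, then invoke the first part of the lemma for the Borel set $fR$. But $fR$ is \emph{not} a subset of $\sH^0$: it is the composition in $\lb\sH\rb$, namely $f$ restricted to source in $R$, so $fR=\{h\in f:\so(h)\in R\}$. Consequently $\pi^{-1}(fR)\subset\sG$ is not contained in $\sG^0$, and your identity $f\cdot\pi^{-1}(R)=\pi^{-1}(fR)$ is false (the correct relation is $f\cdot\pi^{-1}(R)=\ra(\pi^{-1}(fR))=fR\cdot\sG^0$). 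So the third term of your triangle inequality, $|\sigma(fR)\vartriangle\phi(f\cdot\pi^{-1}(R))|_d$, compares an element of $\lb d\rb$ not lying near $\Delta^0_d$ with a subset of $\Delta^0_d$, and the first part of the lemma does not apply.

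The paper's route avoids this by never reducing to part one. Since $fR\in F$, condition (1) of $\Hom$ applied directly to the element $fR\in\lb\sH\rb$ (again summed over $\cP$, using $\phi(\sG^0)=\Delta^0_d$) gives
\[
|\sigma_{fR}\cdot\Delta^0_d \,\vartriangle\, \phi(fR\cdot\sG^0)|_d<\delta,
\]
and $fR\cdot\sG^0=f\cdot\pi^{-1}(R)$. Then $(F,\delta)$-multiplicativity replaces $\sigma_{fR}$ by $\sigma_f\sigma_R$ at cost $\delta$, and condition (1) of $\Hom$ applied to $R$ replaces $\sigma_R\cdot\Delta^0_d$ by $\phi(\pi^{-1}(R))$ at cost $\delta$, landing exactly at $3\delta$. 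If you correct your triangle inequality to route through $\sigma_{fR}\cdot\Delta^0_d$ rather than $\sigma(fR)$, you recover precisely this argument; your attempted tightening from $7\delta$ to $3\delta$ then becomes unnecessary.
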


\begin{proof}

Because $\phi$ is a homomorphism, $\phi(\sG^0)=\Delta^0_d$. By Lemma \ref{lem:basic-formulas2},
\begin{eqnarray*}
\delta &>& |\sigma_R \cdot \phi(\sG^0) \vartriangle \phi(R \cdot \sG^0)|_d\ge -\delta+ |(\sigma_R \cap \Delta^0_d)\cdot \phi(\sG^0) \vartriangle \phi(\pi^{-1}(R))|_d\\
 &=& -\delta+ |(\sigma_R \cap \Delta^0_d) \vartriangle \phi(\pi^{-1}(R))|_d \ge -2\delta +  |\sigma_R \vartriangle \phi(\pi^{-1}(R))|_d
 \end{eqnarray*}
This proves the first inequality. Suppose $f,fR \in F$. Then
\begin{eqnarray*}
\delta &>& |\sigma_{f  R} \cdot \phi(\sG^0) \vartriangle \phi( f  R \cdot \sG^0)|_d > |\sigma_f \sigma_R\cdot\phi(\sG^0) \vartriangle \phi( f\cdot\pi^{-1}(R) )|_d - \delta \\
&>& |\sigma_f \cdot \phi(R \cdot \sG^0) \vartriangle \phi( f\cdot\pi^{-1}(R) )|_d  -  2\delta= | \sigma_f\cdot\phi( \pi^{-1}(R))  \vartriangle \phi(f\cdot \pi^{-1}(R)) |_d - 2\delta.
\end{eqnarray*}
The first and third inequalities follow from $\phi \in \Hom(\pi,\sigma, \cP,F,\delta)$ while the second inequality uses the $(F,\delta)$-multiplicativity of $\sigma$.
\end{proof}

We will say that a partition $\cP$ of $\sG^0$ has {\em measure zero boundary} if $\mu(\partial P)=0$ for every $P \in \cP$.

\begin{proof}[Proof of Theorem \ref{thm:1=2}]
By Lemma \ref{lem:K-top-me}, we may assume $\rho$ is a metric on $\sG^0$. Let $\epsilon>0$ and $\cQ$ be a finite Borel partition of $\sG^0$ with measure zero boundary such that each atom of $\cQ$ has diameter $\le \epsilon$ with respect to $\rho$. Let $\cP\ge \cQ$ be a finite Borel partition with measure zero boundary. To simplify notation, we will identify $\Delta^0_d$ with $\{1,\ldots, d\}$ in the obvious way.

\noindent {\bf Claim 1}. Given $\delta>0$ and $F=F^{-1} \subset_f \lb \sH \rb_{top}$ with $\sH^0\in F$ and $\ra(f),\so(f) \in F$ for every $f\in F$, there exist $\delta',\eta>0$ and $K \subset_f C(\sG^0)$ such that for any $\sigma:\lb \sH \rb \to \lb d \rb$ which is $(F,\eta)$-multiplicative ,
$$|\Hom(\pi,\sigma, \cP, F, 2\delta)|_\cQ \ge N_{\epsilon}(Orb_\mu(\pi,\sigma, F,K, \delta',\rho), \rho_\infty).$$ 
\begin{proof}[Proof of Claim 1]
For $x\in \sG^0$, let $\cP(x)$ denote the atom of $\cP$ containing $x$. For $\delta'>0$, let 
$$B(\cP,\sqrt{\delta'}) := \{ x \in \sG^0:~ \exists x' \in \sG^0 \textrm{ s.t. } \cP(x) \ne \cP(x') \textrm{ and } \rho(x,x')<\sqrt{\delta'}\}.$$
Because $\cP$ has measure zero boundary and $F \subset_f \lb \sH\rb_{top}$ it follows that $\cP^F$ has measure zero boundary (this uses Lemma \ref{lem:top-inclusion}). Because $\mu$ is regular on $\sG^0$ there exists a $\delta',\eta>0$ and a finite set $K \subset C(\sG^0)$ such that for any $\sigma:\lb \sH \rb \to \lb d \rb$,
\begin{enumerate}
\item $\mu(B(\cP,\sqrt{\delta'})) < \delta$;
\item $\left(\delta'  + M \sqrt{15 \eta}\right)^2 < \delta \delta'$ where $M=\max_{x,y \in \sG^0} \rho(x,y)$ is the diameter of $\rho$;
\item if $x=(x_1,\ldots, x_d) \in (\sG^0)^d$ satisfies
$$\max_{k \in K} \left| \frac{1}{d}\sum_{i=1}^d k(x_i) - \int_{\sG^0} k ~d\mu\right| < \delta'$$
then \begin{enumerate}
\item $\sum_{P \in \cP^F} | d^{-1}\#\{ 1\le i \le d:~ x_i \in P\} - \mu(P)| < \delta$
\item $d^{-1}\sum_{f\in F}\#\{ 1\le i \le d:~ f^{-1}\cdot x_i \in B(\cP,\sqrt{\delta'}) \textrm{ or } x_{\sigma(f)^{-1}i} \in B(\cP,\sqrt{\delta'}) \}  \le \delta$.
\end{enumerate}
\end{enumerate}
Item (3a) above uses that $\cP^F$ has measure zero boundary. 

Let $\sigma:\lb\sH\rb \to \lb d\rb$ be $(F,\eta)$-multiplicative. For $y\in Orb_\mu(\pi,\sigma, F,K, \delta',\rho)$, let $\phi^y:\Sigma(\cP^F) \to \cB(\Delta^0_d)$ be the homomorphism $\phi^y(P) = \{ 1\le i \le d:~ y_i \in P\}$. We claim that $\phi^y \in \Hom(\pi,\sigma, \cP, F, 2\delta)$. Indeed, for each $P \in \cP^F$,
$$\sum_{P \in \cP^F} | d^{-1}|\phi^y(P)| - \mu(P)| < \delta$$
follows from the choice of $K,\delta'$. 

We need the following estimate which follows from Lemma \ref{lem:basic-formulas2}.
\begin{eqnarray*}
\rho_2(f^{-1}\cdot y, y \circ \sigma(f)^{-1}) &\le& \rho_2(f^{-1}\cdot y, y \circ \sigma(f^{-1})) + \rho_2(y \circ \sigma(f^{-1}), y \circ \sigma(f)^{-1})\\
 &\le& \delta'+ \left( \frac{1}{d} \sum_{i=1}^d \rho(y_{\sigma(f^{-1})i}, y_{\sigma(f)^{-1}i})^2 \right)^{1/2} \le \delta' + M (15\eta)^{1/2}.
 \end{eqnarray*}
 Suppose $1\le i \le d$, $f\in F$ and $\cP( f^{-1} \cdot y_i) \ne \cP( y_{\sigma(f)^{-1}i})$. If $f^{-1}\cdot y_i$ or $y_{\sigma(f)^{-1}i} \notin B(\cP,\sqrt{\delta'})$ then $\rho(f^{-1}\cdot y_i, y_{\sigma(f)^{-1}i}) \ge \sqrt{\delta'}$. So
\begin{eqnarray*}
(\delta' + M(15\eta)^{1/2} )^2 &\ge& \rho_2(f^{-1}\cdot y, y \circ \sigma(f)^{-1})^2\\
 &\ge & d^{-1} \delta' |\{ 1\le i \le d:~ f^{-1}\cdot y_i \textrm{ or } y_{\sigma(f)^{-1}i} \notin B(\cP,\sqrt{\delta'}), \cP( f^{-1}\cdot y_i) \ne \cP( y_{\sigma(f)^{-1}i})\}|\\
&\ge& d^{-1}\delta' |\{ 1\le i \le d:~ \cP( f^{-1}\cdot y_i) \ne \cP( y_{\sigma(f)^{-1}i})\}| - \delta' \delta.
\end{eqnarray*}
This implies
$$d^{-1} |\{ 1\le i \le d:~ \cP( f^{-1}\cdot y_i) \ne \cP( y_{\sigma(f)^{-1}i})\}| \le \frac{(\delta' + M(15\eta)^{1/2} )^2}{\delta'} + \delta < 2\delta.$$ 
Observe that if $i \in \sigma_f\cdot\phi^y(P) \vartriangle \phi^y( f\cdot P)$ (for some $P\in \cP$) then $\cP( f^{-1}\cdot y_i) \ne \cP( y_{\sigma(f)^{-1}i})$. So
\begin{eqnarray*}
d^{-1}\sum_{P \in \cP} |\sigma_f\cdot\phi^y(P) \vartriangle \phi^y( f\cdot P) | &\le&  d^{-1}|\{ 1\le i \le d:~ \cP( f^{-1}\cdot y_i) \ne \cP( y_{\sigma(f)^{-1}i})\}| \le 2\delta.
\end{eqnarray*} 
This implies $\phi^y \in \Hom(\pi,\sigma, \cP, F, 2\delta)$.

Next suppose $y,z \in Orb_\mu(\pi,\sigma, F,K,\delta',\rho)$ and $\rho_\infty(y,z) > \epsilon$. In other words, $\rho(y_i,z_i) > \epsilon$ for some $i$.  Because the diameter of each partition element of $\cQ$ is at most $\epsilon$,  $\phi^y$ restricted to $\Sigma(\cQ)$ is different from $\phi^z$ restricted to $\Sigma(\cQ)$. So the map $y \mapsto \phi^y$ takes any $(\rho_\infty,\epsilon)$-separated subset to a set of homomorphisms whose restrictions to $\Sigma(\cQ)$ are distinct. This proves $|\Hom(\pi,\sigma, \cP, F, 2\delta)|_\cQ \ge  N_{\epsilon}(Orb_\mu(\pi,\sigma, F,K, \delta',\rho), \rho_\infty)$ as claimed.
\end{proof}
Claim 1 and Theorem \ref{thm:top-replace} imply
$$h_{\P, \mu}(\pi,\cQ,\cP) \ge \inf_{\delta>0}\inf_{F \subset_f \lb \sH \rb_{top}} \inf_{K \subset_f C(\sG^0)}  \lim_{j\to\beta} \frac{1}{d_j} \log \| N_{\epsilon}( Orb_\mu(\pi,\cdot, F, K, \delta,\rho), \rho_\infty) \|_{p,\P_j}.$$
We now take the infimum over all $\cP$ with measure zero boundary, then the supremum over all $\cQ$ with measure zero boundary, then the supremum over all $\epsilon>0$ to obtain
$$h_{\P,\mu}(\pi)=h_{\P,\mu}(\pi,\cB_\partial(\sG^0)) \ge h_{\P,\mu}(\pi,\rho).$$
The equality above holds because $\cB_\partial(\sG^0)$ is $\pi$-generating by Lemma \ref{lem:boundary-zero}.


\noindent {\bf Claim 2}. Let $\cQ \subset \cB_\partial(\sG^0)$ be a finite partition and $\kappa>0$. By Lemma \ref{lem:5} there exists $\epsilon>0$ such that $h_{\P,\mu}(\pi, \cQ, \cP) \le h_{\P,\mu}^\epsilon(\pi,\cQ,\cP)+\kappa$ for all finite measurable partitions $\cP$ refining $\cQ$. Given $\delta>0$ and $K \subset_f C(\sG^0)$, there exist a finite partition $\cP \ge \cQ$ with $\cP \subset \cB_\partial(\sG^0)$ and $\eta,\delta'>0$ such that for any $F \subset_f \lb \sH \rb$ satisfying $F=F^{-1}$ and $\so(f),\ra(f), \sH^0 \in F ~(\forall f\in F)$ and any $(F,\delta')$-multiplicative $\sigma:\lb \sH \rb \to \lb d \rb$,
$$N_\epsilon(\Hom(\pi,\sigma, \cP, F, \delta'),\rho_\cQ) \le N_{\eta}(Orb_\mu(\pi,\sigma, F,K, \delta,\rho), \rho_\infty).$$ 
Moreover, $\eta$ depends only on $\cQ,\epsilon$, and $\cP,\delta'$ depend only on $\cQ,\epsilon,K,\delta$. 

\begin{proof}[Proof of Claim 2]

For $Q \subset \sG^0$ and $t>0$, let $N_t(Q)$ be the set of all $x \in \sG^0$ such that there exists $q \in Q$ with $\rho(q,x)\le t$. As shown in the proof of Lemma \ref{lem:boundary-zero}, $\partial N_t(Q) \cap \partial N_s(Q) = \emptyset$ if $t \ne s$. Since an uncountable sum of positive numbers is infinite, there exists some $\eta>0$ such that 
\begin{itemize}
\item $\mu(\partial N_\eta(\sG^0\setminus Q)) = 0$ for all $Q \in \cQ$ (i.e., $N_\eta(\sG^0\setminus Q) \in \cB_\partial(\sG^0)$),
\item $\mu(Q \cap N_\eta(\sG^0 \setminus Q) )\le \epsilon/5$ for all $Q \in \cQ$.
\end{itemize}
Let $Q^\eta = Q \cap N_\eta(\sG^0 \setminus Q) \in \cB_\partial(\sG^0)$. Let $\diam(\rho):=\max\{\rho(x,y):~x,y \in\sG^0\}$. Choose a finite partition $\cP \subset \cB_\partial(\sG^0)$ and $\delta'>0$ so that 
\begin{enumerate}
\item $\cQ \le \cP$ and every atom of $\cP$ has diameter at most $\delta/2$,
\item $Q^\eta \in \Sigma(\cP)$ for every $Q \in \cQ$,
\item $\sqrt{100\diam(\rho)^2 \delta'+ (\delta/2)^2} \le \delta$ and $\delta' < \epsilon/5$.
\item for any point $x\in (\sG^0)^d$ such that 
$$\sum_{P \in \cP} \Big| |\{1\le i \le d:~ x_i \in P\}|d^{-1} - \mu(P)\Big| < \delta'$$
we have
$$\max_{k \in K} \left| \frac{1}{d}\sum_{i=1}^d k(x_i) - \int_{\sG^0} k ~d\mu\right| < \delta.$$
\end{enumerate}

For each $P \in \cP^F$, choose a basepoint $x_P \in P$. Given $\phi \in \Hom(\pi,\sigma, \cP, F, \delta')$, define $y^\phi \in (\sG^0)^d$ by $y^\phi_i = x_P$ if $i \in \phi(P)$. We claim that $y^\phi \in Orb_\mu(\pi,\sigma, F,K, \delta,\rho)$. Indeed the choice of $\cP,\delta'$ above implies
$$\max_{k \in K} \left| \frac{1}{d}\sum_{i=1}^d k(y^\phi_i) - \int_{\sG^0} k ~d\mu\right| < \delta.$$
Because $\phi \in \Hom(\pi,\sigma, \cP, F, \delta')$,
\begin{enumerate}
\item $ \sum_{P \in \cP} |\sigma_f\cdot\phi(P) \vartriangle \phi( f\cdot P) |d^{-1} < \delta' \quad \forall f\in F$;
\item $ \sum_{P\in \cP^F} | |\phi(P)|d^{-1} - \mu(P) | < \delta'$.
\end{enumerate}
Fix $f\in F$. For $x \in \sG^0$, let $\cP(x)$ be the element of $\cP$ containing $x$. Note that if, for some $i$, $f \cdot y^\phi_i$ and $\sigma(f) i$ are well-defined but $\cP(f \cdot y^\phi_i) = P \ne \cP(y^\phi_{\sigma(f) i})$ then $\sigma(f) i \in \sigma(f)\cdot \phi(f^{-1} \cdot P) \setminus \phi(P)$.  So
\begin{eqnarray*}
&&|\{ i \in \Delta^0_d:~ y^\phi_i \in \pi^{-1}(\so(f)), i \in \so(\sigma(f)), \cP(f\cdot y^\phi_i) \ne \cP(y^\phi_{\sigma(f)i})\}|_d\\
 &\le& \sum_{P\in \cP} |\sigma(f)\cdot\phi(f^{-1}\cdot P)\setminus\phi(P)|_d\\
&\le& \sum_{P\in \cP} |\phi(f^{-1}\cdot P)\setminus\sigma(f)^{-1}\cdot \phi(P)|_d\\
&\le& \sum_{P\in \cP} |\phi(f^{-1}\cdot P)\vartriangle \sigma(f)^{-1}\cdot \phi(P)|_d\\
&\le& \sum_{P\in \cP} |\phi(f^{-1}\cdot P)\vartriangle \sigma(f^{-1})\cdot \phi(P)|_d + | \sigma(f^{-1}) \cdot \phi(P) \vartriangle \sigma(f)^{-1}\cdot\phi(P)|_d \\
&\le& |\sigma(f^{-1}) \vartriangle\sigma(f)^{-1}|_d + \sum_{P\in \cP} |\phi(f^{-1}\cdot P)\vartriangle \sigma(f^{-1})\cdot \phi(P)|_d < 15\delta' + \delta' = 16\delta'.
\end{eqnarray*}
The last inequality holds by Lemma \ref{lem:basic-formulas2}. By Lemmas \ref{lem:basic-formulas2} and \ref{lem:basic-formulas3},
\begin{eqnarray*}
&&|\{ i \in \Delta^0_d:~ y^\phi_i \in \pi^{-1}(\so(f)), i \notin \so(\sigma(f))  \}|_d + |\{ i \in \Delta^0_d:~ y^\phi_i \notin \pi^{-1}(\so(f)), i \in \so(\sigma(f))  \}|_d\\ 
 &=& |\phi(\pi^{-1}(\so(f))) \vartriangle \so(\sigma(f))|_d \le  |\phi(\pi^{-1}(\so(f))) \vartriangle \sigma(\so(f))|_d + |\sigma(\so(f)) \vartriangle \so(\sigma(f))|_d \\
&\le& 3\delta' + 10\delta' = 13\delta'.
\end{eqnarray*}
So,
\begin{eqnarray*}
\rho_2( f y^\phi, y^\phi \circ \sigma_f)^2 &\le& d^{-1} \diam(\rho)^2|\{ 1\le i \le d:~ \cP(f y^\phi_i) \ne \cP(y^\phi_{\sigma(f)i})|\\
&&+d^{-1} \diam(\rho)^2|\{ i \in \Delta^0_d:~ y^\phi_i \in \pi^{-1}(\so(f)), i \notin \so(\sigma(f))  \}|\\
&&+d^{-1} \diam(\rho)^2|\{ i \in \Delta^0_d:~ y^\phi_i \notin \pi^{-1}(\so(f)), i \in \so(\sigma(f))  \}|\\
&& + d^{-1} (\delta/2)^2 |\{ 1\le i \le d:~ \cP(f y^\phi_i) = \cP(y^\phi_{\sigma(f)i})| \\
&<&  100\diam(\rho)^2 \delta'+ (\delta/2)^2.
\end{eqnarray*}
So 
$$\rho_2( f y^\phi, y^\phi \circ \sigma_f) < \sqrt{100\diam(\rho)^2 \delta'+ (\delta/2)^2} \le \delta.$$
This shows that $y^\phi \in  Orb_\mu(\pi,\sigma, F,K, \delta,\rho)$ as claimed.

We claim that if $\phi,\psi \in \Hom(\pi,\sigma, \cP, F, \delta')$ and $\rho_\cQ(\phi,\psi) > \epsilon$ then $\rho_\infty(y^\phi,y^\psi)> \eta$. Indeed, there exists $Q \in \cQ$ such that $|\phi(Q) \vartriangle \psi(Q)|_d >\epsilon$. Because $\phi,\psi \in \Hom(\pi,\sigma, \cP, F, \delta')$ and $Q^\eta \in \Sigma(\cP)$,
\begin{eqnarray*}
\epsilon &<& |\phi(Q) \vartriangle \psi(Q)|_d\\
 &=& |\phi(Q\setminus Q^\eta) \setminus \psi(Q)|_d + |\psi(Q\setminus Q^\eta) \setminus \phi(Q)|_d +|\phi(Q^\eta) \setminus \psi(Q)|_d + |\psi(Q^\eta)\setminus \phi(Q)|_d\\
 &\le&  |\phi(Q\setminus Q^\eta) \setminus \psi(Q)|_d + |\psi(Q\setminus Q^\eta) \setminus \phi(Q)|_d  +  2\delta' + 2\mu(Q^\eta).
 \end{eqnarray*}
 Since $ 2\delta' + 2\mu(Q^\eta) < 4\epsilon/5$, we obtain that there exists $i \in (\phi(Q\setminus Q^\eta) \setminus \psi(Q)) \cup (\psi(Q\setminus Q^\eta) \setminus \phi(Q))$. Therefore, $\rho(y^\phi_i, y^\psi_i) > \eta$ which implies the claim. 

So the map $\phi \mapsto y^\phi$ takes $(\rho_\cQ,\epsilon)$-separated subsets of $\Hom(\pi,\sigma, \cP, F, \delta')$ to $(\rho_\infty, \eta)$-separated subsets of $Orb_\mu(\pi,\sigma, F,K, \delta,\rho)$. This proves Claim 2.
  \end{proof}
Let us now assume the hypotheses of Claim 2. By choice of $\epsilon$,  $h_{\P,\mu}(\pi,\cQ,\cB_\partial(\sG^0)) -\kappa\le h^\epsilon_{\P,\mu}(\pi,\cQ,\cB_\partial(\sG^0))$. So
\begin{eqnarray*}
h_{\P,\mu}(\pi,\cQ,\cB_\partial(\sG^0)) -\kappa&\le& h^\epsilon_{\P,\mu}(\pi,\cQ,\cB_\partial(\sG^0))\le h^\epsilon_{\P,\mu}(\pi,\cQ,\cP,F,\delta')\\
&=& \lim_{j\to\beta}d_j^{-1}\log \| N_\epsilon(  Hom_\mu(\pi,\cdot, \cP,F, \delta'), \rho_\cQ ) \|_{p,\P_j}\\
&\le& \lim_{j\to\beta}d_j^{-1}\log \| N_\eta(  Orb_\mu(\pi,\cdot, F,K, \delta,\rho), \rho_\infty ) \|_{p,\P_j}.
\end{eqnarray*}
We can now take the infimum over $F,K,\delta$ and then the supremum over $\eta$  to obtain 
$$h_{\P,\mu}(\pi,\cQ,\cB_\partial(\sG^0)) -\kappa  \le h_{\P,\mu}(\pi,\rho).$$
Because $\kappa>0$ is arbitrary and  $\cB_\partial(\sG^0)$ is $\pi$-generating by Lemma \ref{lem:boundary-zero},
\begin{eqnarray*}
h_{\P,\mu}(\pi) = h_{\P,\mu}(\pi,\cB_\partial(\sG^0)) &\le& h_{\P,\mu}(\pi,\rho).
\end{eqnarray*}
As we have already obtained the opposite inequality, this proves the theorem.
   \end{proof}

\section{The variational principle}\label{sec:variational}


\begin{thm}\label{thm:var-principle}
Let $(\sH,\nu)$ be a pmp separable \'etale topological discrete groupoid, $\sG$ be a separable \'etale topological discrete groupoid, $\pi:\sG \to \sH$ a continuous class-bijective factor, and $\P=\{\P_j\}_{j \in J}$ an asymptotically continuous sofic approximation to $(\sH,\nu)$ (definition \ref{defn:ac}). We assume both $\sH^0$ and $\sG^0$ are compact and metrizable and $\nu$ is regular. Then for any $p\in [1,\infty]$ and bias $\beta \ne -$,
$$h_{\P}(\pi) = \sup_\mu h_{\P,\mu}(\pi)$$
where the supremum is over all measures $\mu$ on $\sG$ such that $\pi_*\mu = \nu$ and $(\sG,\mu)$ is probability-measure-preserving.
\end{thm}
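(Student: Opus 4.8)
## Proof Proposal for the Variational Principle

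The plan is to prove the two inequalities $h_{\P}(\pi) \ge \sup_\mu h_{\P,\mu}(\pi)$ and $h_{\P}(\pi) \le \sup_\mu h_{\P,\mu}(\pi)$ separately, using the pseudo-metric formulation of measure entropy from \S\ref{sec:meas2} as the bridge, since by Theorem~\ref{thm:1=2} we have $h_{\P,\mu}(\pi) = h_{\P,\mu}(\pi,\rho)$ for any dynamically generating continuous pseudo-metric $\rho$, and similarly $h_{\P}(\pi) = h_{\P}(\pi,\rho)$ by Theorem~\ref{thm:K-top}. Fix once and for all a dynamically generating continuous metric $\rho$ on $\sG^0$ (which exists since $\sG^0$ is compact metrizable and we may take $\rho$ to be any metric inducing the topology).

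\textbf{The easy direction} $h_{\P}(\pi) \ge \sup_\mu h_{\P,\mu}(\pi)$: Here I would simply observe that for any $\mu$ with $\pi_*\mu = \nu$ and $(\sG,\mu)$ pmp, and for any finite $F \subset_f \lb\sH\rb_{top}$, $K \subset_f C(\sG^0)$, $\delta > 0$, we have the containment
$$Orb_\mu(\pi,\sigma,F,K,\delta,\rho) \subset Orb_\nu(\pi,\sigma,F,\pi^*(K|_{\sH^0}),\delta,\rho),$$
or more simply $Orb_\mu(\pi,\sigma,F,\emptyset,\delta,\rho) = Orb_\nu(\pi,\sigma,F,\emptyset,\delta,\rho)$, so adding the constraint on $\mu$-integrals of functions in $K$ only shrinks the orbit set. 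Hence $N_\epsilon(Orb_\mu(\ldots),\rho_\infty) \le N_\epsilon(Orb_\nu(\pi,\sigma,F,K',\delta,\rho),\rho_\infty)$ for an appropriate $K' \subset_f C(\sH^0)$ (in fact one may take $K' = \emptyset$ and only restrict $F$). Taking $L^p$ norms, the limit $\lim_{j\to\beta}$, then the infimum over $F, K, \delta$ and supremum over $\epsilon$ yields $h_{\P,\mu}(\pi,\rho) \le h_{\P}(\pi,\rho)$. Since $\mu$ was arbitrary, $\sup_\mu h_{\P,\mu}(\pi) \le h_{\P}(\pi)$.

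\textbf{The hard direction} $h_{\P}(\pi) \le \sup_\mu h_{\P,\mu}(\pi)$: This is where the real work lies, and it follows the standard strategy for variational principles adapted to the sofic groupoid setting. The idea is that a near-maximal $(\rho_\infty,\epsilon)$-separated subset $E_\sigma$ of $Orb_\nu(\pi,\sigma,F,K,\delta,\rho)$ determines, via the empirical distribution, a probability measure on $(\sG^0)^d$; pushing forward the uniform measure on $\{1,\dots,d_j\}$ and averaging over $E_{\sigma}$, one builds, for each $j$, a measure $\mu_j$ on $\sG^0$ (roughly, $\mu_j = \frac{1}{d_j|E_\sigma|}\sum_{x \in E_\sigma}\sum_{i=1}^{d_j}\delta_{x_i}$, integrated against $\P_j$). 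One then takes a weak$^*$ limit $\mu$ along the bias $\beta$ (using compactness of the space of probability measures on $\sG^0$, which is compact metrizable). The crux is to verify three things: (i) the limit measure $\mu$ satisfies $\pi_*\mu = \nu$ — this uses the asymptotic trace-preserving property (property (1) of Definition~\ref{defn:sofic-approximation}) together with the constraints defining $Orb_\nu$ involving $K \subset_f C(\sH^0)$; (ii) $(\sG,\mu)$ is pmp — this requires that $\mu$ be invariant under $\lb\sH\rb_{top}$ in the appropriate sense, which is exactly where the $(F,\delta)$-approximate-orbit conditions $\rho_2(f\cdot x, x\circ\sigma(f)) < \delta$ and the multiplicativity of $\sigma$ (property (2)) combine with asymptotic continuity to force $\mu(f\cdot P) = \mu(P)$ for $f \in \lb\sH\rb_{top}$, $P$ Borel — and here one crucially needs $\nu$ regular and $\sH$ \'etale so that $\lb\sH\rb_{top}$ is rich enough (Lemma~\ref{lem:top-dense}) and $\rho$-Lipschitz test functions on $\sG^0$ pull back appropriately; (iii) the exponential growth rate of $|E_\sigma|$ is bounded above by $h_{\P,\mu}(\pi)$. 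For (iii) one refines: given the separated set, pass to a partition $\cQ$ of $\sG^0$ into small-diameter pieces with $\mu$-measure-zero boundaries, so that distinct $\epsilon$-separated orbit tuples give distinct homomorphisms into $\cB(\Delta^0_{d_j})$ via $\phi^x(Q) = \{i : x_i \in Q\}$; by Theorem~\ref{thm:top-replace} (replacing $\lb\sH\rb$ by $\lb\sH\rb_{top}$) and Theorem~\ref{thm:K} ($\cB_\partial(\sG^0)$ is $\pi$-generating by Lemma~\ref{lem:boundary-zero}), the count of these homomorphisms is controlled by $h_{\P,\mu}(\pi,\cQ,\cP)$ for a suitable finer partition $\cP$, hence by $h_{\P,\mu}(\pi)$.

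\textbf{Main obstacle.} The principal difficulty is step (ii): showing the limit measure $\mu$ is genuinely $\lb\sH\rb_{top}$-invariant (hence pmp). The approximate-orbit condition controls $\rho_2(f\cdot x, x\circ\sigma(f))$ only for $f$ in a fixed finite set $F$, and only in an averaged $\ell^2$-sense with error $\delta$; transferring this to exact invariance $\mu(f\cdot P) = \mu(P)$ for \emph{all} $f \in \lb\sH\rb_{top}$ and all Borel $P$ requires: first a diagonal argument letting $F$ exhaust a countable dense (in measure-algebra) subfamily of $\lb\sH\rb_{top}$ while $\delta \to 0$; then using continuity of $f \mapsto f\cdot x$ and $\nu$-regularity to upgrade from boundary-zero sets to arbitrary Borel sets; and finally using that $\partial(f\cdot P) \subset f\cdot\partial P$ up to a set of measure zero for bisections (as in the proof of Lemma~\ref{lem:composition}) to ensure the test functions used in the weak$^*$ convergence are genuinely continuous. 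This step also quietly consumes the hypothesis $\beta \ne -$: one needs the relevant $j$-indexed quantities (fraction of $\sigma$ that are $(F,\delta)$-multiplicative, trace-preserving, continuous) to be close to $1$ \emph{eventually}, and for $\limsup$/ultrafilter biases the measure $\mu$ built from the dominant subsequence will have the right properties, whereas a $\liminf$ bias could select a sparse subsequence along which the entropy is large but the approximations are poor. Throughout, a technical annoyance is that separated sets need not be contained in the orbit set; one handles this exactly as in Claim~2 of the proof of Lemma~\ref{lem:K-top3}.
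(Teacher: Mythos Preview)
Your overall architecture matches the paper's: prove $h_\P(\pi) \ge \sup_\mu h_{\P,\mu}(\pi)$ directly from the definitions via Theorem~\ref{thm:1=2}, then for the reverse inequality manufacture a measure $\mu$ from the orbit sets and verify it is pmp with the right entropy. The easy direction is fine. However, there is a genuine gap in your step~(iii), and it is precisely the step you treat most quickly.

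\textbf{The gap.} You propose to build $\mu$ as a weak* limit of averages $\mu_j = \frac{1}{d_j|E_\sigma|}\sum_{x\in E_\sigma}\sum_i \delta_{x_i}$ over a maximal separated set $E_\sigma$, and then argue that the map $x \mapsto \phi^x$ sends $E_\sigma$ into $\Hom(\pi,\sigma,\cP,F,\delta)$ for this $\mu$. But membership in $\Hom(\pi,\sigma,\cP,F,\delta)$ requires $\sum_{P \in \cP^F} \big|\,|\phi^x(P)|d^{-1} - \mu(P)\,\big| < \delta$, i.e.\ that the \emph{individual} empirical measure $m_x = d^{-1}\sum_i \delta_{x_i}$ be close to $\mu$ on $\cP^F$. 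Since $\mu$ is (a limit of) an \emph{average} of the $m_x$, there is no reason any particular $m_x$ is close to it; the separated set could split into several clumps with very different empirical statistics, and your bound on $|E_\sigma|$ by $|\Hom(\pi,\sigma,\cP,F,\delta)|_\cQ$ simply fails. This is exactly where the sofic variational principle departs from the classical Misiurewicz argument, in which the averaging is over time and subadditivity of Shannon entropy does the work.

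\textbf{What the paper does instead.} The paper repairs this with a pigeonhole step. For each $\omega = (F,L,\delta)$ in a directed set $\Omega$, it chooses a \emph{finite} $\delta$-net $D(\omega) \subset M(\sG^0)$ (relative to the test functions $k\circ f$, $k \in L$, $f \in F$). Each $x$ in the separated set is assigned the nearest $\mu_{x,\omega} \in D(\omega)$; by pigeonhole, some single $\mu_{j,\omega} \in D(\omega)$ captures at least a $1/|D(\omega)|$ fraction of the separated set, and those $x$ lie in $Orb_{\mu_{j,\omega}}(\pi,\sigma,F,L,\delta,\rho)$ by construction. Since $|D(\omega)|$ does not depend on $d_j$, the factor $|D(\omega)|^{-1}$ vanishes after $d_j^{-1}\log$. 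One then uses finiteness of $D(\omega)$ together with $\beta \ne -$ to select a single $\mu_\omega \in D(\omega)$ that works along a cofinal (or ultrafilter) set of $j$ while preserving the limit, and finally takes $\mu$ to be a weak* accumulation point of $\{\mu_\omega\}_{\omega\in\Omega}$. This is the actual reason $\beta \ne -$ is needed: not to guarantee ``good approximations,'' but to extract one $\mu_\omega$ from finitely many candidates without damaging the $\limsup$/ultralimit.

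Your step~(ii), by contrast, is less troublesome than you feared: once the pigeonhole construction is in place, invariance of the limit $\mu$ follows cleanly from the orbit condition and the boundary-measure-zero structure of $\lb\sH\rb_{top}$ via Propositions~\ref{prop:pmp} and~\ref{prop:continuity}, without the diagonal exhaustion you sketch; the directed-set indexing $\Omega$ already takes care of passing from finite $F,L$ to all of $\lb\sH\rb_{top}$ and $C(\sG^0)$.
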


Before proving this, we need a few lemmas. The first is a generalization of the Feldman-Moore Theorem \cite{FM77}.

\begin{lem}\label{lem:FM}
Let $\sH$ be a discrete measurable groupoid. Then there exists a countable subgroup $H < [\sH]$ such that for every $g \in \sH$ there exists $h \in H$ with $g\in h$.
\end{lem}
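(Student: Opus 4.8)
\textbf{Proof proposal for Lemma \ref{lem:FM}.}

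The plan is to imitate the Feldman--Moore argument but in the groupoid setting, replacing the underlying measured equivalence relation by the orbit equivalence relation of $\sH$ on $\sH^0$ and lifting everything back up. First I would reduce to the following measurable-selection statement: since $\sH$ is a discrete measurable groupoid, $\so:\sH\to\sH^0$ has countable fibers, so by the Lusin--Novikov uniformization theorem one can write $\sH=\bigsqcup_{n\in\Nb} B_n$ where each $B_n$ is a Borel set on which $\so$ is injective, i.e. $B_n\in\lb\sH\rb$. Similarly one can arrange the decomposition to respect $\ra$, so after refining we may assume each $B_n\in\lb\sH\rb$ has both $\so$ and $\ra$ injective on it, with Borel source- and range-images $\so(B_n),\ra(B_n)\subset\sH^0$. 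The whole content of the lemma is then to promote this countable family of partial automorphisms to a countable \emph{group} inside $[\sH]$ whose union still contains every $g\in\sH$.

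The key steps, in order, would be: (1) Using Lusin--Novikov as above, fix a countable generating family $\{B_n\}_{n\ge 1}\subset\lb\sH\rb$ with $\bigcup_n B_n=\sH$; each $B_n$ has a well-defined ``domain'' $\so(B_n)$ and ``codomain'' $\ra(B_n)$, both Borel subsets of $\sH^0$. (2) For each $n$, extend $B_n$ to a full element of $[\sH]$: on $\so(B_n)$ use $B_n$ itself; on the complement $\sH^0\setminus\so(B_n)$ one must supply a Borel bijection onto $\sH^0\setminus\ra(B_n)$ that is realized by elements of $\sH$. This is exactly where the marriage/back-and-forth argument of Feldman--Moore enters: since $\so(B_n)$ and $\ra(B_n)$ are Borel subsets of $\sH^0$ with the same measure class under any quasi-invariant measure, one can cut them into countably many pieces and use (compositions of) the $B_m$'s to match $\sH^0\setminus\so(B_n)$ with $\sH^0\setminus\ra(B_n)$ within the orbit equivalence relation, producing $h_n\in[\sH]$ with $B_n\subset h_n$. (Strictly, to stay purely within $[\sH]$ as defined one works up to a $\nu$-null set, which is harmless for the intended applications; alternatively one restricts to the Borel orbit equivalence relation and invokes the Feldman--Moore theorem directly there, then lifts the generating full-group elements along $\so,\ra$.) (3) Finally let $H<[\sH]$ be the subgroup generated by $\{h_n:n\ge 1\}$; this is countable, and since $B_n\subset h_n$ for all $n$ and $\bigcup_n B_n=\sH$, every $g\in\sH$ lies in some $h_n\in H$, as required.

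The main obstacle I expect is step (2): producing, for each $n$, an honest element of $[\sH]$ (not merely of $\lb\sH\rb$) extending $B_n$. One needs the complement $\sH^0\setminus\so(B_n)$ to be moved onto $\sH^0\setminus\ra(B_n)$ \emph{by morphisms already present in $\sH$}, which requires the two complements to be ``$\sH$-equidecomposable''; this is automatic (up to null sets) for the orbit equivalence relation of a pmp --- or merely quasi-invariant --- groupoid by the Feldman--Moore matching argument, but writing it out Borel-measurably, with a careful countable exhaustion and a diagonal back-and-forth to avoid leftover pieces, is the technical heart of the proof. Everything else --- the Lusin--Novikov decomposition, and the observation that the generated subgroup is countable and still covers $\sH$ --- is routine.
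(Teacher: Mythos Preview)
Your main approach has a real gap at step~(2): not every $B\in\lb\sH\rb$ extends to an element of $[\sH]$. Take $\sH^0=\Nb$ with the full equivalence relation (so $\sH=\Nb\times\Nb$, a principal groupoid) and let $B=\{(n+1,n):n\in\Nb\}\in\lb\sH\rb$. Then $\so(B)=\Nb$ but $\ra(B)=\Nb\setminus\{1\}$, so no extension to $[\sH]$ exists: there is nothing left in the source to send to $1$. A Lusin--Novikov decomposition of $\sH$ may well produce such pieces, and no marriage argument on the complements can repair this since $\sH^0\setminus\so(B)=\emptyset$ while $\sH^0\setminus\ra(B)=\{1\}$. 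Your hedge ``up to a $\nu$-null set'' is beside the point: the lemma is stated and proved for discrete \emph{measurable} groupoids, with no measure in the hypotheses.

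Your parenthetical alternative --- pass to the orbit equivalence relation $E_\sH$ on $\sH^0$, apply Feldman--Moore there, and lift --- is the right opening move, and it is exactly what the paper does first. But by itself it is still incomplete: lifting $h'\in[E_\sH]$ to some $h''\in[\sH]$ picks out \emph{one} arrow over each pair $(x,h'\cdot x)$, and there is no reason the countable group generated by these lifts covers the isotropy $\sK=\{g\in\sH:\so(g)=\ra(g)\}$. (Extreme case: $\sH^0$ a point and $\sH$ a countably infinite group; then $E_\sH$ is trivial and the lifted group is just $\{\sH^0\}$.)

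The paper closes both gaps with one trick. For the isotropy, partition $\sK$ into Borel pieces $P_i$ on which $\so$ is injective; since $\so=\ra$ on $\sK$, the set $B_i:=P_i\cup(\sH^0\setminus\so(P_i))$ lies in $[\sH]$ automatically --- extension by the identity works precisely because source and range coincide here. Then for an arbitrary $g\in\sH\setminus\sK$, pick $h\in H''$ (the lifted Feldman--Moore group) containing an arrow $f$ with $\so(f)=\so(g)$ and $\ra(f)=\ra(g)$, and observe that $f^{-1}g\in\sK$; hence $f^{-1}g\in B_j$ for some $j$ and $g\in hB_j$. The group generated by $H''$ and the $B_i$ does the job. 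What you are missing is this factorization $g=f\cdot(f^{-1}g)$, which reduces the general case to the isotropy case where extension to $[\sH]$ is free.
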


\begin{proof}

Let $E_\sH$ be the equivalence relation on $\sH^0$ given by $(x,y) \in E_\sH  \Leftrightarrow \exists g\in \sH$ such that $g\cdot x= y$. Note that $E_\sH$ is the image of $\sH^1$ under the map $f \mapsto (\so(f),\ra(f))$. Because $\sH$ is discrete, this map is at most countable-to-1. It follows from the Lusin-Novikov Theorem (see \cite[Theorem 18.10]{Ke95}) that $E_{\sH}$ is a Borel subset of $\sH^0\times \sH^0$.

It follows from \cite[Theorem 1]{FM77} that there is a countable subgroup $H' < [E_{\sH}]$ such that for every $(x,y) \in E_\sH$ there is an $h \in H'$ with $h\cdot x=y$. By the Lusin-Novikov Theorem again, for every $h' \in H'$ there exists an element $h'' \in [\sH]$ such that $h''$ maps to $h'$ under the map from $\sH$ to $\sH^0\times \sH^0$ given by $f \mapsto (\so(f),\ra(f))$. Let $H'' < [\sH]$ be the countable group generated by the elements $h''$ for $h' \in H'$.

Let $\sK =\{g\in \sH:~ \so(g)=\ra(g)\}$. By Kuratowski \cite[\S39, III, Corollary 5]{Ku33}, there is a countable Borel partition $\{P_i\}_{i\in I}$ of $\sK$ such that for each $i$, $\so|_{P_i}$ is injective. For each $i$, define $B_i = P_i \cup (\sH^0 \setminus \so(P_i))$. Note $B_i \in [\sH]$. We claim that the group $H$ generated by $H''$ and $\{B_i\}_{i\in I}$ satisfies the lemma. So let $g\in \sH$. If $g \in \sK$ then $g\in P_i$ for some $i$ and so $g \in B_i \in H$. Suppose $g\notin \sK$. Let $x=\so(g), y=\ra(g)$ so that $g\cdot x=y$. Then there exist $f \in \sH$ and $h \in H''$ with $f\in h$, $f\cdot x=y$. Observe that $g = f (f^{-1}g)$ and $f^{-1}g \in \sK$. So there is $B_j \in H$ with $f^{-1}g \in B_j$. So $g \in h B_j \in H$. Because $g$ is arbitrary, this proves the lemma.
\end{proof}

Next we show that it suffices to consider measures $\mu$ on $\sG^0$ that are $\lb\sH\rb_{top}$-invariant. To be precise:
\begin{prop}\label{prop:pmp}
Let $(\sH,\nu), \sG$ be as in Theorem \ref{thm:var-principle}. Let $\mu$ be a Borel probability measure on $\sG^0$ and suppose that $\mu$ is $\lb \sH \rb_{top}$-invariant in the sense that $\mu(k \circ f) = \mu(k \circ \ra(f))$ for every continuous function $k \in C(\sG^0)$ and $f \in \lb \sH \rb_{top}$ (where, for example, $\mu(k \circ f) := \int_{\pi^{-1}(\so(f))} k(f\cdot x)~d\mu(x))$. Suppose as well that $\pi_*\mu=\nu$. Then $(\sG,\mu)$ is probability-measure-preserving.
\end{prop}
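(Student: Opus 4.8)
The plan is to reduce the assertion to the invariance of $\mu$ under a single countable group of transformations of $\sG^0$ pulled back from $\sH$, and then to obtain that invariance by approximating elements of the full group $[\sH]$ by elements of $\lb\sH\rb_{top}$, to which the hypothesis applies verbatim. The reduction rests on the following elementary criterion: if $H<[\sG]$ is a \emph{countable} subgroup such that every $g\in\sG$ lies in some $h\in H$ and $(T_h)_*\mu=\mu$ for every $h\in H$, where $T_h\colon\sG^0\to\sG^0$ denotes the transformation $x\mapsto\ra(hx)$, then $(\sG,\mu)$ is pmp. Indeed, given a Borel set $B\subseteq\sG$ one enumerates $H=\{h_1,h_2,\dots\}$, puts $B_n=(B\cap h_n)\setminus\bigcup_{m<n}h_m$, uses that $\so$ and $\ra$ restrict to Borel isomorphisms $h_n\to\sG^0$ to get $\mu_\so(B_n)=\mu(\so B_n)$ and $\mu_\ra(B_n)=\mu(T_{h_n}(\so B_n))=\mu(\so B_n)$, and sums over $n$ to conclude $\mu_\so=\mu_\ra$.

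To produce such an $H$, I would apply Lemma~\ref{lem:FM} to $\sH$, obtaining a countable $H_\sH<[\sH]$ meeting every element of $\sH$. Since $\pi$ is class-bijective, $\pi^{-1}\colon\lb\sH\rb\to\lb\sG\rb$ is a homomorphism carrying $[\sH]$ into $[\sG]$, so $H:=\pi^{-1}(H_\sH)$ is a countable subgroup of $[\sG]$; and class-bijectivity shows that every $g\in\sG$ lies in $\pi^{-1}(h)$ for the $h\in H_\sH$ with $\pi(g)\in h$, so $H$ meets every element of $\sG$. By the criterion it then suffices to show $(T_{\pi^{-1}(f)})_*\mu=\mu$ for every $f\in[\sH]$. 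Here $T_{\pi^{-1}(f)}(x)=f\cdot x$ in the notation of \S\ref{sec:actions}, and, via Riesz representation on the compact metric space $\sG^0$, the hypothesis of the proposition says exactly that $(T_{\pi^{-1}(f')})_*(\mu|_{\pi^{-1}(\so f')})=\mu|_{\pi^{-1}(\ra f')}$ for every $f'\in\lb\sH\rb_{top}$.

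For the remaining claim, fix $f\in[\sH]$ and $\epsilon>0$ and use Lemma~\ref{lem:top-dense} to choose $f'\in\lb\sH\rb_{top}$ with $\nu(f\vartriangle f')<\epsilon$. The bookkeeping device is that $\pi_*\mu=\nu$ together with $(\sH,\nu)$ being pmp gives $\mu_\so(\pi^{-1}(A))=\nu(A)=\mu_\ra(\pi^{-1}(A))$ for every Borel $A\subseteq\sH$ (check on bisections, extend by a Lusin--Novikov decomposition). Consequently the set of $x\in\sG^0$ at which $T_{\pi^{-1}(f)}$ and $T_{\pi^{-1}(f')}$ either differ or are not both defined is contained in $\so(\pi^{-1}(f\vartriangle f'))\cup(\sG^0\setminus\pi^{-1}(\so f'))$ and has $\mu$-measure $O(\epsilon)$, while $\mu(\sG^0\setminus\pi^{-1}(\ra f'))=1-\nu(\ra f')=O(\epsilon)$. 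For $k\in C(\sG^0)$ with $\|k\|_\infty\le1$, expand $\int k\circ T_{\pi^{-1}(f)}\,d\mu-\int k\,d\mu$ as a telescoping sum through $\int_{\pi^{-1}(\so f')}k\circ T_{\pi^{-1}(f')}\,d\mu$ and $\int_{\pi^{-1}(\ra f')}k\,d\mu$: the middle difference vanishes by the hypothesis applied to $f'$, and the two outer differences are $O(\epsilon)$. Letting $\epsilon\to0$ and then varying $k$ yields $(T_{\pi^{-1}(f)})_*\mu=\mu$, and the proof concludes by the criterion of the first paragraph.

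The step I expect to be delicate is precisely this last approximation: aligning the almost-everywhere-defined maps $T_{\pi^{-1}(f)}$ and $T_{\pi^{-1}(f')}$ and controlling every exceptional set — where the two maps disagree, and where the pieces $\pi^{-1}(\so f')$ and $\pi^{-1}(\ra f')$ fail to exhaust $\sG^0$ — all uniformly in terms of $\nu(f\vartriangle f')$. By contrast, the descent from $\sH$ to a countable firing-through subgroup of $[\sG]$ via Lemma~\ref{lem:FM}, and the verification of its properties using class-bijectivity, are routine.
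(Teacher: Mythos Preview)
Your proof is correct and follows essentially the same route as the paper: both arguments approximate elements of $[\sH]$ (or $\lb\sH\rb$) by elements of $\lb\sH\rb_{top}$ via Lemma~\ref{lem:top-dense}, use $\pi_*\mu=\nu$ to bound the measure of the discrepancy set, and then invoke Lemma~\ref{lem:FM} to reduce the pmp property to invariance under a countable subgroup pulled back from $\sH$. The only difference is organizational---you state the ``countable subgroup meeting every morphism'' criterion upfront and only verify $[\sH]$-invariance, whereas the paper first extends invariance to all of $\lb\sH\rb$ and then decomposes an arbitrary $\psi\in\lb\sG\rb$ along the countable $H$ in its final paragraph; these are the same argument in different order.
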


\begin{proof}
By a standard argument, it suffices to show that $\mu$ is $\lb \sG \rb$-invariant. We first show that $\mu$ is $\lb \sH \rb$-invariant. By Lemma \ref{lem:top-dense}, $\lb \sH \rb_{top}$ is dense in $\lb \sH \rb$. In particular, if $f \in \lb \sH\rb$, $\epsilon>0$ then there exists $f' \in \lb \sH \rb_{top}$ with $\nu(f \vartriangle f')<\epsilon$. So if $k \in C(\sG^0)$ and 
$$R=\{x \in \pi^{-1}(\so(f) \cap \so(f')):~f\cdot x \ne f' \cdot x\} \cup \pi^{-1}(\so(f) \vartriangle \so(f'))$$
then
$$|\mu(k\circ f) - \mu(k \circ f')| \le  2\|k \|_\infty \mu(R) \le 2\epsilon \|k\|_\infty.$$
The last inequality occurs because $\pi_*\mu=\nu$. Similarly, $|\mu(k\circ \ra(f)) - \mu(k \circ \ra(f'))| \le 2\epsilon \|k\|_\infty$ which implies (by $\lb \sH\rb_{top}$-invariance) that $|\mu(k \circ f) - \mu(k \circ \ra(f))| \le 4\epsilon \|k\|_\infty$. Since $\epsilon$ is arbitrary, $\mu(k \circ f) = \mu(k\circ \ra(f))$. Since $f,k$ are arbitrary, $\mu$ is $\lb\sH\rb$-invariant.


Now let $\psi \in \lb \sG \rb$. We will show $\psi_*\mu=\mu$. Let $H<[\sH]$ be as in Lemma \ref{lem:FM}. Let us enumerate $H$ by $H=\{h_i\}_{i=1}^\infty$. Let 
$$P_i=\{x \in \sG^0:~ \psi \cap \so^{-1}(x) = \pi^{-1}(h_i) \cap \so^{-1}(x) \textrm{ and  $i$ is minimal with this property}\}.$$
Because $\pi$ is class-bijective, $\{P_i\}_{i=1}^\infty$ is a Borel partition of $\sG^0$. We have shown that $\mu$ is $\pi^{-1}(H)$-invariant and therefore, each $\pi^{-1}(h_i)$ restricted to $P_i$ preserves $\mu$. Since $\psi$ is the disjoint union of $\pi^{-1}(h_i) \cdot P_i$, this shows that $\psi$ is measure-preserving. Because $\psi$ is arbitrary, $\mu$ is $\lb\sG\rb$-invariant which implies the lemma.

\end{proof}

\begin{defn}
Let $M(\sG^0)$ denote the space of Borel probability measures on $\sG^0$ with the weak* topology. To be precise, this is the weakest topology with the property that for every continuous function $k \in C(\sG^0)$, the map $\mu \in M(\sG^0) \mapsto \int k~d\mu$ is continuous.
\end{defn}
In order to show that the measures we obtain in the proof of Theorem  \ref{thm:var-principle} are $\lb \sH \rb_{top}$-invariant we need the following continuity result:

\begin{prop}\label{prop:continuity}
Suppose that $\Omega$ is a directed set and $\omega \in \Omega \mapsto \mu_\omega \in M(\sG^0)$ is a map such that $\lim_{\omega \to \Omega} \mu_\omega=\mu_\infty$. Suppose as well that $\pi_*\mu_\infty=\nu$. Then for any $k \in C(\sG^0)$ and $f \in \lb \sH \rb_{top}$, 
$$\lim_{\omega \to \Omega} \mu_\omega(k \circ f) = \mu_\infty(k \circ f).$$ 
\end{prop}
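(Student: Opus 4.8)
The plan is to reduce to a single bisection, rewrite $k\circ f$ as a finite sum of terms of the form $\mathbf{1}_{\pi^{-1}(A)}\cdot\hat k$ with $\hat k\in C(\sG^0)$ and $\nu(\partial A)=0$, and then apply a portmanteau-type statement for weak* convergence. So first write $f=\bigcup_{i=1}^n f_i$ as in the definition of $\lb\sH\rb_{top}$: each $f_i$ sits inside a bisection $U_i$ with $\overline{f_i}\subset U_i$, $\overline{\so(f_i)}\subset\so(U_i)$, $\overline{\ra(f_i)}\subset\ra(U_i)$, the $\so(f_i)$ pairwise disjoint and in $\cB_\partial(\sH^0,\nu)$, and likewise the $\ra(f_i)$. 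Since $\so(f)=\bigsqcup_i\so(f_i)$ and $f\cdot x=f_i\cdot x$ for $x\in\pi^{-1}(\so(f_i))$, any Borel probability measure $\lambda$ on $\sG^0$ satisfies $\lambda(k\circ f)=\sum_i\int_{\pi^{-1}(\so(f_i))}k(f_i\cdot x)\,d\lambda(x)$, so it suffices to treat each index $i$ separately.

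Because $\sG$ is \'etale, $\pi^{-1}(U_i)$ is a bisection of $\sG$ (as established inside the proof of Lemma \ref{lem:top-inclusion}), so the map $g_i\colon\pi^{-1}(\so(U_i))\to\sG^0$, $g_i(x):=\ra(\pi^{-1}(U_i)x)$, is continuous on the open set $\pi^{-1}(\so(U_i))$, and $k(f_i\cdot x)=(k\circ g_i)(x)$ for $x\in\pi^{-1}(\so(f_i))$. Since $\overline{\pi^{-1}(\so(f_i))}\subset\pi^{-1}(\overline{\so(f_i)})\subset\pi^{-1}(\so(U_i))$, the restriction of $k\circ g_i$ to the closed set $\overline{\pi^{-1}(\so(f_i))}$ is continuous; as $\sG^0$ is compact metrizable, the Tietze extension theorem gives $\hat k_i\in C(\sG^0)$ agreeing with $k\circ g_i$ there. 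Writing $B_i:=\pi^{-1}(\so(f_i))$, we get for \emph{every} $\omega$ (and for $\infty$) that $\lambda(k\circ f_i)=\int\mathbf{1}_{B_i}\hat k_i\,d\lambda$; note this uses no hypothesis on $\pi_*\lambda$, which is important since $\pi_*\mu_\omega$ need not equal $\nu$. Moreover $\partial B_i\subset\pi^{-1}(\partial\so(f_i))$ (because $\pi$ is continuous and $\sG^0\setminus B_i=\pi^{-1}(\sH^0\setminus\so(f_i))$), hence $\mu_\infty(\partial B_i)\le\mu_\infty(\pi^{-1}(\partial\so(f_i)))=\nu(\partial\so(f_i))=0$, using $\pi_*\mu_\infty=\nu$ and $\so(f_i)\in\cB_\partial(\sH^0,\nu)$.

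It therefore remains to prove the elementary fact: if $\mu_\omega\to\mu_\infty$ weak*, $B\subset\sG^0$ is Borel with $\mu_\infty(\partial B)=0$, and $h\in C(\sG^0)$, then $\int_B h\,d\mu_\omega\to\int_B h\,d\mu_\infty$. Splitting $h=h^+-h^-$, it suffices to take $h\ge 0$. Then $\mathbf{1}_{\overline B}h$ is bounded and upper semicontinuous and $\mathbf{1}_{\interior B}h$ is bounded, nonnegative and lower semicontinuous; approximating a bounded u.s.c.\ (resp.\ l.s.c.)\ function from above (resp.\ below) by continuous functions and using monotone convergence yields $\limsup_\omega\int\mathbf{1}_{\overline B}h\,d\mu_\omega\le\int\mathbf{1}_{\overline B}h\,d\mu_\infty$ and $\liminf_\omega\int\mathbf{1}_{\interior B}h\,d\mu_\omega\ge\int\mathbf{1}_{\interior B}h\,d\mu_\infty$. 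Since $\interior B\subset B\subset\overline B$, chaining these inequalities together with $\int\mathbf{1}_{\interior B}h\,d\mu_\infty=\int\mathbf{1}_{B}h\,d\mu_\infty=\int\mathbf{1}_{\overline B}h\,d\mu_\infty$ (the boundary being $\mu_\infty$-null and $h$ bounded) forces $\int_B h\,d\mu_\omega\to\int_B h\,d\mu_\infty$. Applying this with $B=B_i$, $h=\hat k_i$ and summing over $i$ gives $\mu_\omega(k\circ f)=\sum_i\int\mathbf{1}_{B_i}\hat k_i\,d\mu_\omega\to\sum_i\int\mathbf{1}_{B_i}\hat k_i\,d\mu_\infty=\mu_\infty(k\circ f)$.

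The only real obstacle is the bookkeeping in the first reduction: $k\circ f$ is genuinely discontinuous, so one must exploit the precise structure of $\lb\sH\rb_{top}$ — the bisection containments together with the measure-zero-boundary sources — and the \'etaleness of $\sG$ (via Lemma \ref{lem:top-inclusion}) to present it as a finite combination of continuous functions times indicators of sets whose boundaries are $\mu_\infty$-null. Once that is done, the weak* limit is routine. One must also take care throughout that statements about the $\mu_\omega$ are phrased without reference to $\nu$, since only $\mu_\infty$ is assumed to push forward to $\nu$.
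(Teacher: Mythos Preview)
Your proof is correct and takes essentially the same approach as the paper: both show that $k\circ f$ (extended by zero off $\pi^{-1}(\so(f))$) is bounded with $\mu_\infty$-null discontinuity set and then invoke the portmanteau theorem. The paper's version is terser --- it works directly with the single function $K$ rather than decomposing into the $f_i$, and cites Billingsley for the portmanteau step --- whereas your explicit decomposition together with the Tietze extension makes the continuity verification cleaner (and in fact slightly sharper: the discontinuity set of $K$ really lies in $\bigcup_i\pi^{-1}(\partial\so(f_i))$, not just $\pi^{-1}(\partial\so(f))$, though both are $\mu_\infty$-null).
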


\begin{proof}
Let $K$ be the function on $\sG^0$ defined by $K(f\cdot x)=k(f\cdot x)$ if $x\in \pi^{-1}(\so(f))$ and $K(x)=0$ otherwise. Observe that $K$ is continuous at every $x \notin \pi^{-1}(\partial \so(f))$. Because $f \in \lb \sH\rb_{top}$ and $\pi_*\mu_\infty=\nu$ we have
$$\mu_\infty( \pi^{-1}(\partial \so(f))  )= \nu( \partial \so(f)) = 0.$$
Thus the set of discontinuity for $K$ has measure zero. The proposition now follows from a standard result in probability theory \cite[Theorem 2.7]{Bi99} sometimes called `the portmanteau theorem'. 
\end{proof}

\begin{proof}[Proof of Theorem \ref{thm:var-principle}]
Let $\rho$ be a continuous metric on $\sG^0$. Theorem \ref{thm:1=2} implies $h_\P(\pi) \ge \sup_\mu h_{\P, \mu}(\pi,\rho)=\sup_\mu h_{\P, \mu}(\pi)$. Without loss of generality, we may assume $h_\P(\pi) > -\infty$.  

Let $\kappa>0$. Then there exists $\epsilon>0$ such that 
$$h^\epsilon_\P(\pi, \rho,2) \ge h_\P(\pi,\rho,2) - \kappa$$
where 
$$h_\P^\epsilon(\pi, \rho,2) := \inf_{\delta>0} \inf_{F \subset_f \lb \sH \rb_{top}}\inf_{K \subset_f C(\sH^0)} \lim_{j\to\beta} \frac{1}{d_j} \log \|  N_\epsilon( Orb_\nu(\pi,\cdot, F, K,\delta,\rho), \rho_2) \|_{p,\P_j}.$$
Let $\Omega=\{ (F,L,\delta):~  \sH^0 \in F \subset_f \lb \sH \rb_{top}, L \subset_f C(\sG^0), \delta>0\}$. We consider $\Omega$ as a directed set by declaring $(F,L,\delta) \le (F',L',\delta')$ if $F' \supset F, L' \supset L, \delta' \le \delta$. Given $\omega \in \Omega$, we write $\omega=(F_\omega,L_\omega,\delta_\omega)$ and we set $K_\omega = \{k \in C(\sH^0):~ k \circ \pi \in L_\omega\}$.  Let $M(\sG^0)$ denote the space of Borel probability measures on $\sG^0$.

\noindent {\bf Claim 1}. There exists a directed net $\omega \in \Omega \mapsto \mu_\omega \in M(\sG^0)$ such that 
\begin{enumerate}
\item $h_{\P, \mu_{\omega}}^\epsilon(\pi,\rho, \omega,2) \ge h_\P^\epsilon(\pi, \rho,2)$ where
\begin{eqnarray*}
h_{\P, \mu_\omega}^\epsilon(\pi,\rho, \omega,2) &:=& h_{\P, \mu_\omega}^\epsilon(\pi,\rho,F_\omega,L_\omega,\delta_\omega,2)\\
&:=&\lim_{j\to\beta} \frac{1}{d_j} \log \| N_\epsilon( Orb_{\mu_\omega}(\pi, \cdot, F_\omega,L_\omega, \delta_\omega,\rho), \rho_2) \|_{p,\P_j}.
\end{eqnarray*}

\item $\lim_{\omega \to \Omega} |\mu_\omega( k \circ f) - \mu_\omega(k \circ \ra(f))| = 0, \quad \forall f\in \lb\sH\rb_{top}, k \in C(\sG^0)$.

\item $\lim_{\omega \to \Omega} |\mu_\omega(k\circ\pi) - \nu(k)| = 0, \quad \forall  k \in C(\sH^0)$.
\end{enumerate}

\begin{proof}[Proof of Theorem \ref{thm:var-principle} given Claim 1]
 Let $\mu$ be a weak* accumulation point of $\{\mu_\omega:~\omega \in \Omega\}$. By (3)  $\pi_*\mu=\nu$. By (2) and Proposition \ref{prop:continuity}, $\mu(k \circ f) = \mu(k \circ \ra(f))$ for every $f\in \lb\sH\rb_{top}, k \in C(\sG^0)$. By Proposition \ref{prop:pmp}, $(\sG,\mu)$ is probability-measure-preserving. 
 
Let $F \subset_f \lb \sH \rb_{top}, L \subset_f C(\sG^0)$ and $\delta>0$. Choose $\omega \in \Omega$ to satisfy 
\begin{enumerate}
\item $|\mu(k) - \mu_\omega(k)| \le \delta/2 ~\forall k \in L$;
\item $F \subset F_\omega$, $L \subset L_\omega$, $\delta_\omega \le \delta/2.$
\end{enumerate}
Then for any $\sigma:\lb \sH \rb \to \lb d \rb$, $x \in Orb_{\mu_\omega}(\pi,\sigma, F_\omega, L_\omega, \delta_\omega,\rho)$ and $k\in L$,
\begin{eqnarray*}
\left| \frac{1}{d} \sum_{i=1}^d k(x_i) - \mu(k) \right| &\le& \left| \frac{1}{d} \sum_{i=1}^d k(x_i) - \mu_\omega(k) \right| + \left|\mu_\omega(k)- \mu(k) \right| < \delta.
\end{eqnarray*}
Therefore,
$$Orb_{\mu_\omega}(\pi,\sigma, F_\omega,L_\omega, \delta_\omega,\rho) \subset Orb_{\mu}(\pi,\sigma,F,L,\delta,\rho).$$
So 
$$h^\epsilon_{\P,\mu}(\pi,\rho,F,L, \delta,2) \ge h^\epsilon_{\P, \mu_\omega}(\pi,\rho, F_\omega, L_\omega, \delta_\omega,2) \ge h^\epsilon_\P(\pi, \rho,2).$$
By taking the infimum over $F, L, \delta$ we obtain
$$h_{\P, \mu}(\pi,\rho,2) \ge h_{\P,\mu}^\epsilon(\pi,\rho,2) \ge h^\epsilon_\P(\pi,\rho,2) \ge h_\P(\pi,\rho,2) - \kappa.$$
Because $\kappa>0$ is arbitrary, this implies the Theorem.
\end{proof}

It remains to prove Claim 1. For $L \subset_f C(\sG^0)$ and $\delta>0$, let $M(L,\delta)$ be the set of all $\mu \in M(\sG^0)$ such that $|\mu(k \circ \pi) - \nu(k)|\le \delta$ for all $k \in C(\sH^0)$ with $k\circ \pi \in L$. For $F \subset_f \lb \sH\rb_{top}$ with $\sH^0 \in F$, let $D(F,L,\delta) \subset M(L,\delta)$ be a finite set such that for every $\lambda \in M(L,\delta)$ there exists a $\mu \in D(F,L,\delta)$ such that
$$| \mu(k \circ f) - \lambda(k \circ f)| < \delta\quad \forall f\in F, k\in L.$$

For every $x \in (\sG^0)^d$ let $m_x \in M(\sG^0)$ be the measure $m_x = d^{-1} \sum_{i=1}^d \delta_{x_i}$ where $\delta_{x_i}$ is the Dirac measure concentrated on $x_i$. For every $\omega \in \Omega$, choose a Borel map $x \in \{y \in (\sG^0)^d:~m_y \in M(L_\omega,\delta_\omega)\} \mapsto \mu_{x,\omega} \in D(\omega)$ satisfying
$$| \mu_{x,\omega}(k\circ f) - m_x(k\circ f) | < \delta_\omega\quad \forall f\in F_\omega, k \in L_\omega.$$

For every $\sigma:\lb \sH \rb \to \lb d \rb$, $F \subset_f \lb \sH \rb_{top}, K \subset_f C(\sH^0)$ and $\delta>0$, choose a maximum $(\rho_2,\epsilon)$-separated subset $Q(\sigma,F,K,\delta) \subset Orb_\nu(\pi,\sigma,F,K,\delta,\rho)$ so that for every $d_j$, the map $\sigma \in \Map(\lb \sH \rb, \lb d_j \rb) \mapsto Q(\sigma,F,K,\delta)$ is Borel.

By the Pigeonhole Principle for every $\omega \in \Omega$ and $j\in J$ there exists $\mu_{j,\omega} \in D(\omega)$ such that
\begin{eqnarray*}
\| \#\{x \in Q(\cdot,F_\omega,K_\omega,\delta_\omega):~ \mu_{j,\omega} = \mu_{x,\omega}\} \|_{p,\P_j}  &\ge& \frac{\| \#Q(\cdot,F_\omega,K_\omega,\delta_\omega) \|_{p,\P_j} }{ |D(\omega)|}\\
&=& \frac{ \| N_\epsilon(Orb_\nu(\pi,\cdot,F_\omega,K_\omega,\delta_\omega,\rho), \rho_2)\|_{p,\P_j}}{ |D(\omega)|}.
\end{eqnarray*}

Next we choose $\mu_\omega \in  D(\omega)$ so that if $J'=\{ j \in J:~ \mu_{j,\omega}=\mu_{\omega} \}$ then either $J' \in \beta$ (if $\beta$ is an ultrafilter on $J$) or $J'$ is cofinal. In the case $\beta=+$ we also require that
\begin{eqnarray*}
&&\limsup_{j\in J} d_j^{-1}\log \| N_\epsilon( Orb_{\mu_{j,\omega}}(\pi,\cdot, F_\omega,L_\omega,\delta_\omega,\rho), \rho_2)\|_{p,\P_j}\\
&&=\limsup_{j\in J'} d_j^{-1}\log \| N_\epsilon( Orb_{\mu_{j,\omega}}(\pi,\cdot, F_\omega,L_\omega,\delta_\omega,\rho), \rho_2)\|_{p,\P_j}.
\end{eqnarray*}
Such a choice is possible because $D(\omega)$ is finite.



We will show that the measures $\{\mu_\omega:~\omega \in \Omega\}$ satisfy Claim 1. First, note that for any $j \in J$, $\omega \in \Omega$, if $x \in Orb_\nu(\pi,\sigma,F_\omega,K_\omega,\delta_\omega,\rho)$ and $\mu_{x,\omega}=\mu_{j,\omega}$ then
$$| m_x(k) - \mu_{j,\omega}(k)| < \delta_\omega\quad \forall k \in L_\omega$$
implies $x \in Orb_{\mu_{j,\omega}}(\pi,\sigma,F_\omega,L_\omega,\delta_\omega,\rho)$. Therefore
$$N_\epsilon( Orb_{\mu_{j,\omega}}(\pi,\sigma, F_\omega,L_\omega,\delta_\omega,\rho), \rho_2) \ge |\{x \in Q(\sigma,F_\omega,K_\omega,\delta_\omega):~ \mu_{j,\omega} = \mu_{x,\omega}\}|.$$
By choice of $\mu_{j,\omega}$ this implies,
$$\|N_\epsilon( Orb_{\mu_{j,\omega}}(\pi,\cdot, F_\omega,L_\omega,\delta_\omega,\rho), \rho_2)\|_{p,\P_j} \ge \frac{\| N_\epsilon(Orb_\nu(\pi,\cdot,F_\omega,K_\omega,\delta_\omega,\rho), \rho_2)\|_{p,\P_j}}{ |D(\omega)|}.$$
Now the choice of $\mu_{\omega}$ implies
\begin{eqnarray*}
h_{\P, \mu_\omega}^\epsilon(\pi,\rho, \omega,2) &\ge& \lim_{j\to\beta} d_j^{-1}\log \| N_\epsilon( Orb_{\mu_{j,\omega}}(\pi,\cdot, F_\omega,L_\omega,\delta_\omega,\rho), \rho_2)\|_{p,\P_j}\\
 &\ge& \lim_{j\to\beta} d_j^{-1} \log \| N_\epsilon(Orb_\nu(\pi,\cdot,F_\omega,K_\omega,\delta_\omega,\rho), \rho_2)\|_{p,\P_j}\\
&\ge& h_\P^\epsilon(\pi, \rho,2).
\end{eqnarray*}
In the first inequality above, we used that $\beta \ne -$. This proves the first item of Claim 1.

To prove the second item, let $k \in C(\sG^0)$, $f \in \lb\sH\rb_{top}$ and $\eta>0$ be a constant. Because $k$ is continuous, there exists a constant $\delta>0$ such that if $x,y \in \sG^0$ satisfy $\rho(x,y) < \delta$ then $|k(x)-k(y)| < \eta$. 

Let $\omega \in \Omega$ be such that  $k \in L_\omega, f, f^{-1}, \ra(f),\so(f) \in F_\omega$ and $\delta_\omega$ is small enough so that $\frac{\delta_\omega^2}{\delta^2}<\eta$. We may assume $\textrm{diam}(\rho)>0$ since otherwise the theorem is trivial. By choice of $\mu_\omega$, there exist a $(F_\omega,\delta_\omega^2/(100\diam(\rho)^2))$-multiplicative $\sigma:\lb \sH \rb \to \lb d \rb$ and $x \in Orb_\nu(\pi,\sigma,F_\omega,K_\omega,\delta_\omega,\rho)$ such that $\mu_{x,\omega} = \mu_\omega$. Therefore,
\begin{eqnarray*}
 |\mu_\omega(  k \circ f) - \mu_\omega(k \circ \ra(f))| &\le&  |\mu_\omega( k \circ f) - m_x(k \circ f)| + |m_x(k \circ f) - m_x(k \circ \ra(f)) |\\
 && + |m_x(k \circ \ra(f)) - \mu_\omega(k \circ \ra(f)) |\\
  &<& 2\delta_\omega + |m_x(k \circ f) - m_x(k \circ \ra(f)) |\\
   &\le& 2\delta_\omega + |m_x(k \circ f) - m_{x \circ \sigma(f)}(k)| + |m_{x \circ \sigma(f)}(k) - m_x(k \circ \ra(f))|.
     \end{eqnarray*}
Next we estimate $|m_x(k \circ f) - m_{x \circ \sigma(f)}(k)|$. Because $x \in  Orb_\nu(\pi,\sigma,F_\omega,K_\omega,\delta_\omega,\rho)$,
 \begin{eqnarray*}
\delta_\omega^2 &>& \rho_2(f \cdot x, x \circ \sigma(f))^2\ge  d^{-1} | \{ 1\le i \le d:~ \rho( f\cdot x_i, x_{\sigma(f)i}) \ge \delta \} |  \delta^2.
\end{eqnarray*}
So,
$$\eta > \frac{\delta_\omega^2}{\delta^2} \ge  d^{-1} | \{ 1\le i \le d:~ \rho( f\cdot x_i, x_{\sigma(f)i}) \ge \delta \} |$$
which implies (by choice of $\delta$)
\begin{eqnarray*}
|m_x(k \circ f) - m_{x \circ \sigma(f)}(k)| \le d^{-1} \sum_{i=1}^d | k(f \cdot x_i) - k( x_{\sigma(f)i}) | \le 2\eta + 2\eta \|k\|_\infty.
\end{eqnarray*}
Next we estimate $|m_{x \circ \sigma(f)}(k) - m_x(k \circ \ra(f))|$. Observe that
\begin{eqnarray*}
|m_{x \circ \sigma(f)}(k) - m_x(k \circ \ra(f))| &=& d^{-1}\left| \sum_{i \in \so(\sigma(f))} k(x_{\sigma(f)i}) - \sum_{i:~x_i \in \ra(f)} k(x_i)\right|\\
&=& d^{-1}\left| \sum_{i \in \ra(\sigma(f))} k(x_i) - \sum_{i:~x_i \in \ra(f)} k(x_i)\right| \\
&\le& d^{-1}\|k\|_\infty | \ra(\sigma(f)) \vartriangle \{i:~x_i \in \ra(f)\} |.
\end{eqnarray*}
Because $x \in Orb_\nu(\pi,\sigma,F_\omega,K_\omega,\delta_\omega,\rho)$,
\begin{eqnarray*}
\delta_\omega^2 &>& \rho_2( x \circ \sigma(\ra(f)), \ra(f)\cdot  x)^2 = d^{-1}\sum_{i=1}^d \rho(x_{\sigma(\ra(f))i}, \ra(f)\cdot x_i)^2 \\
&\ge& d^{-1}\textrm{diam}(\rho)^2| \so(\sigma(\ra(f))) \vartriangle \{i:~ x_i \in \ra(f)\}|\\
&\ge&  d^{-1}\textrm{diam}(\rho)^2| \ra(\sigma(f)) \vartriangle \{i:~ x_i \in \ra(f)\}| - \delta_\omega^2
\end{eqnarray*}
by Lemma \ref{lem:basic-formulas2}. So
\begin{eqnarray*}
|m_{x \circ \sigma(f)}(k) - m_x(k \circ \ra(f))| &\le& d^{-1}\|k\|_\infty | \ra(\sigma(f)) \vartriangle \{i:~x_i \in \ra(f)\} |\\
&\le& d^{-1}\|k\|_\infty \frac{2\delta_\omega^2}{d^{-1}\textrm{diam}(\rho)^2} =  \frac{2\delta_\omega^2\|k\|_\infty}{\textrm{diam}(\rho)^2}.\end{eqnarray*}
The previous estimates now imply
$$ |\mu_\omega(k\circ f) - \mu_\omega(k \circ \ra(f))|  \le 2\delta_\omega +  2\eta + 2\eta \|k\|_\infty + \frac{2\delta_\omega^2\|k\|_\infty}{\textrm{diam}(\rho)^2}.$$
Because $\eta,f,k$ are arbitrary, this implies 
$$\lim_{\omega \to \Omega} |\mu_\omega(k\circ f) - \mu_\omega(k \circ \ra(f))| = 0, \quad \forall f\in \lb\sH\rb_{top}, k \in C(\sG^0)$$
as required.

To prove the third item of Claim 1, let $k \in C(\sH^0)$. Let $\omega\in \Omega$ be such that $k \in K_\omega$ (i.e., $k\circ\pi\in L_\omega$).  By choice of $\mu_\omega$, there exists $\sigma:\lb \sH \rb \to \lb d \rb$ and $x \in Orb_\nu(\pi,\sigma,F_\omega,K_\omega,\delta_\omega,\rho)$ such that $\mu_{x,\omega} = \mu_\omega$. Therefore,
\begin{eqnarray*}
|\mu_{\omega}(k \circ \pi) - \nu(k)| \le | \mu_\omega(k\circ \pi) - m_x(k \circ \pi)| + |m_x(k\circ \pi) - \nu(k) | < 2\delta_\omega.
\end{eqnarray*}
Thus $\lim_{\omega \to \Omega} |\mu_\omega( k\circ \pi ) - \nu(k)| = 0$ as required.
\end{proof}

\section{Some measure zero phenomena}\label{sec:measure-zero}

The main purpose of this section is to prove that entropy does not change upon passage to a conull Borel subgroupoid:
\begin{thm}\label{thm:conull}
Let $(\sG,\mu), (\sH,\nu)$ be discrete pmp groupoids. Also let $\pi:\sG \to \sH$ be a pmp groupoid morphism. Suppose that for $i=1,2$ there are conull Borel subgroupoids $\sG_i \subset \sG, \sH_i \subset \sH$ such that $\pi$ restricted to $\sG_i$ is a class-bijective extension of $\sH_i$. Let $\P$ be a sofic approximation to $\sH$. Let $\pi_i$ denote the restriction of $\pi$ to $\sG_i$. Then
$$h_{\P,\mu}(\pi_1) = h_{\P,\mu}(\pi_2).$$
\end{thm}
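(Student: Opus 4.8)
The plan is to reduce the statement to the invariance theorem already established (Theorem~\ref{thm:K}) by computing $h_{\P,\mu}(\pi_1)$ and $h_{\P,\mu}(\pi_2)$ on one common generating algebra and identifying the two relevant families of partial automorphisms. First I would note that $\P$ restricts to a sofic approximation of each $(\sH_i,\nu)$: any $f\in\lb\sH_i\rb$ is a Borel subset of $\sH_i\subseteq\sH$, hence lies in $\lb\sH\rb$, and $\tr_{\sH_i}(f)=\tr_\sH(f)$ since $\sH_i^0$ is $\nu$-conull in $\sH^0$, so properties (2)--(4) of Definition~\ref{defn:sofic-approximation} survive restriction; thus $h_{\P,\mu}(\pi_i)$ is computed using maps $\sigma$ restricted to $\lb\sH_i\rb$.

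Next set $Z:=\sG_1^0\cap\sG_2^0$, a $\mu$-conull Borel subset of $\sG^0$, and let $\cF:=\cB(Z)$, regarded as a subalgebra of $\cB(\sG_i^0)$ for both $i$. Since $\sH_i^0\in\lb\sH_i\rb$ acts as the identity on $\sG_i^0$, every $P\in\cF$ lies in $\Sigma_{\pi_i}(\cF)$, so $\cF$ is $\pi_i$-generating; by Theorem~\ref{thm:K}, $h_{\P,\mu}(\pi_i)=h_{\P,\mu}(\pi_i,\cF)$. Writing out the right-hand sides, both equal $\sup_{\cQ\subseteq\cF}\inf_{\cQ\le\cP\subseteq\cF}\inf_{\delta>0}\inf_F\lim_{j\to\beta}d_j^{-1}\log\bigl\||\Hom(\pi_i,\cdot,\cP,F,\delta)|_\cQ\bigr\|_{p,\P_j}$, where $\cQ\le\cP$ range over the same finite partitions of $Z$ (equivalently of $\sG_i^0$ up to null) and only the innermost infimum differs, running over $F\subset_f\lb\sH_i\rb$.

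The crux is the dictionary between $\lb\sH_1\rb$ and $\lb\sH_2\rb$. Because $\sG_1,\sG_2$ are conull, for $\mu$-a.e.\ $x$ one has $\so^{-1}(x)\subseteq\sG_1\cap\sG_2$, so the source-fibres of $\sG_1$ and $\sG_2$ coincide almost everywhere; hence for $f\in\lb\sH_1\rb$ the set $f\cap\sH_2$ belongs to $\lb\sH_2\rb$, satisfies $\nu\bigl(f\vartriangle(f\cap\sH_2)\bigr)=0$, and the maps $x\mapsto f\cdot x$ computed through $\pi_1$ and $x\mapsto(f\cap\sH_2)\cdot x$ computed through $\pi_2$ agree for $\mu$-a.e.\ $x\in Z$. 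Given $F\subset_f\lb\sH_1\rb$ with $\sH_1^0\in F$ I would set $F':=\{f\cap\sH_2:f\in F\}\cup\{\sH_2^0\}$; then $\cP^{F}$ and $\cP^{F'}$ agree mod $\mu$-null, while property~(4) of the sofic approximation gives $\sigma(f)=\sigma(f\cap\sH_2)$ for $\P_j$-a.e.\ $\sigma$. Since the conditions defining $\Hom$ involve only $\mu$-measures of partition atoms and normalized cardinalities in $\Delta^0_d$ --- all unaffected by modifications on null sets --- one obtains, for $\P_j$-a.e.\ $\sigma$, a bijection $\Hom(\pi_1,\sigma,\cP,F,\delta)\to\Hom(\pi_2,\sigma,\cP,F',\delta)$ compatible with restriction to $\Sigma(\cQ)$, so $|\Hom(\pi_1,\sigma,\cP,F,\delta)|_\cQ=|\Hom(\pi_2,\sigma,\cP,F',\delta)|_\cQ$ and hence the $L^p$ norms against $\P_j$ agree. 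Because $F\mapsto F'$ and the symmetric assignment $F'\mapsto\{g\cap\sH_1:g\in F'\}\cup\{\sH_1^0\}$ let the two innermost infima be computed from one another, this forces $h_{\P,\mu}(\pi_1,\cF)=h_{\P,\mu}(\pi_2,\cF)$, which is the theorem.

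I expect the last step --- the bijection of $\Hom$-sets --- to be the only delicate point: $\Sigma(\cP^{F})$ and $\Sigma(\cP^{F'})$ are literally different finite Boolean algebras that agree only modulo null sets, so matching homomorphisms exactly requires handling atoms of measure zero, which I would do either by discarding null atoms from the partitions at the outset or by absorbing the slack into $\delta$ (harmless, since $\delta$ is infimized anyway). Everything else is unwinding of definitions, and no dynamical input beyond Theorem~\ref{thm:K} and property~(4) of the sofic approximation is needed.
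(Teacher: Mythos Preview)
Your strategy is essentially the paper's: reduce via Theorem~\ref{thm:K} to a common generating algebra on $Z=\sG_1^0\cap\sG_2^0$, use the measure-zero invariance of $\sigma$ (property~(4)) to match $F\subset_f\lb\sH_1\rb$ with $F'\subset_f\lb\sH_2\rb$, and compare $\Hom$-sets. Two points deserve more care than you give them.

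First, $\cB(Z)$ is not literally a sub-sigma-algebra of $\cB(\sG_i^0)$ (it is not closed under complementation in $\sG_i^0$), so you cannot feed it directly into Theorem~\ref{thm:K}. The paper handles this by working with genuine partitions $\cQ_i\le\cP_i$ of $\sG_i^0$ that contain $\sG_i^0\setminus Z$ in a single atom, and at the end identifies the generating algebra as $\cC=\{X\cup Y: X\in\cB(Z),\ Y\in\{\emptyset,\sG_2^0\setminus Z\}\}$. This is exactly the fix your parenthetical ``equivalently of $\sG_i^0$ up to null'' gestures at, but it must be made explicit for the definitions to parse.

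Second, and more substantively: you do \emph{not} get a bijection of $\Hom$-sets, and ``absorbing the slack into $\delta$'' is not harmless bookkeeping---it is the actual content of the argument. The paper builds an explicit map $\phi\mapsto\Psi_\phi$ from $\Hom(\pi_1,\sigma,\cP_1,F_1,\delta)$ into $\Hom(\pi_2,\sigma,\cP_2,F_2,6\delta)$ by setting $\Psi_\phi(P)=\phi(P\cap\sG_1^0)$ and dumping $\phi(\sG_1^0\setminus Z)$ into one fixed atom. This map is not injective on $\cQ$-restrictions; what one shows (Claim~2) is that if $\Psi_\phi|_{\cQ_2}=\Psi_\psi|_{\cQ_2}$ then $\bigl|\bigcup_{Q}\phi(Q)\vartriangle\psi(Q)\bigr|_d<2\delta$, and then a Stirling count bounds the number of $\cQ_1$-restrictions mapping to a given $\cQ_2$-restriction by $\binom{d_j}{\lfloor 2\delta d_j\rfloor}|\cQ_1|^{2\delta d_j}$. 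This multiplicative factor contributes $o(1)$ to the exponential growth rate only after taking $\inf_{\delta>0}$, yielding an inequality $h_{\P,\mu}(\pi_1)\le h_{\P,\mu}(\pi_2)$; equality then follows by symmetry. Your proposal to ``discard null atoms at the outset'' does not work, since the null atoms arise in $\cP^F$ (not in $\cP$) and depend on $F$, so they cannot be preemptively removed while keeping a partition.
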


Before proving this result, let us note that it allows us to extend the notion of entropy. To be precise, let $(\sG,\mu), (\sH,\nu)$ and $\pi$ be as above. We say that $\pi$ is {\em class-bijective almost everywhere} if there exist Borel subgroupoids $\sG'\subset \sG, \sH'\subset \sH$ such that $\pi$ restricted to $\sG'$ is a class-bijective extension of $\sH'$. In this case we define $h_{\P,\mu}(\pi) = h_{\P,\mu}(\pi')$ where $\pi'$ is the restriction of $\pi$ to $\sG'$. By the result above, this does not depend on the choice of $\sG',\sH'$.

We first need a lemma stating that we can change the subset $F$ up to a measure zero set without changing the entropy. More precisely:
\begin{lem}\label{lem:measure-zero1}
Let $\pi:(\sG,\mu) \to (\sH,\nu)$ be a pmp class-bijective extension of discrete pmp groupoids. Let $\P$ be a sofic approximation to $(\sH,\nu)$. For $i=1,2$ let $F_i \subset_f \lb \sH\rb$ with $\sH^0 \in F_1\cap F_2$ and suppose there is a bijection $\beta:F_1 \to F_2$ such that $\nu(f \vartriangle \beta(f)) =0$ for all $f\in F_1$. Then for any finite Borel partitions $\cQ \le \cP$ of $\sG^0$,
$$\inf_{\delta>0} \lim_{j\to\beta}d_j^{-1}\log \| |\Hom(\pi,\cdot,\cP,F_1,\delta)|_\cQ \|_{p,\P_j} = \inf_{\delta>0} \lim_{j\to\beta}d_j^{-1}\log \| |\Hom(\pi,\cdot,\cP,F_2,\delta)|_\cQ \|_{p,\P_j}.$$
In particular, if $\cF \subset \lb \sH \rb$ is such that $\sH^0 \in \sF$ and for every $f\in \lb\sH\rb$ there exists $f' \in \cF$ such that $\nu(f\vartriangle f')=0$ then
$$h_{\P,\mu}(\pi,\cQ,\cP) = \inf_{F \subset_f \cF} \inf_{\delta>0} \lim_{j\to \beta} d_j^{-1} \log \| |\Hom(\pi,\cdot, \cP,F,\delta)|_\cQ \|_{p,\P_j}.$$
\end{lem}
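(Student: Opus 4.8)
\emph{Plan of proof.} The driving observation is item~(4) of Definition~\ref{defn:sofic-approximation}. Since $F_1$ is finite and $\nu(f\vartriangle\beta(f))=0$ for every $f\in F_1$, for $\P_j$-almost every $\sigma$ one has $\sigma(f)=\sigma(\beta(f))$ for all $f\in F_1$ at once; call such $\sigma$ \emph{good}. The non-good $\sigma$ form a $\P_j$-null set, hence contribute nothing to any $\|\cdot\|_{p,\P_j}$-norm. The second ingredient is purely measure-theoretic: because $\pi$ is a pmp class-bijective morphism, so that $\mu\circ\pi^{-1}=\nu$ on $\sH$ and elements of $\lb\sG\rb$ act on $(\sG^0,\mu)$ by $\mu$-preserving partial maps, $\nu(f\vartriangle\beta(f))=0$ forces $\mu(f\cdot P\vartriangle\beta(f)\cdot P)=0$ for every Borel $P\subset\sG^0$ (unwind the definition $f\cdot P=\ra(\pi^{-1}(f)\cap\so^{-1}(P))$ and use $\mu\circ\pi^{-1}=\nu$). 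Hence $\cP^{F_1}$ and $\cP^{F_2}$ agree modulo $\mu$-null sets: indexing atoms by sign patterns on $F_i\times\cP$ (as in the proof of Lemma~\ref{lem:partition-hell}) and matching patterns under $\beta\times\mathrm{id}$ gives a correspondence $A\leftrightarrow A'$ with $\mu(A\vartriangle A')=0$, so in particular $\mu(A)=\mu(A')$, and partnered positive-measure atoms lie in the same atom of the common coarser partitions $\cP$ and $\cQ$ (recall $\cQ\le\cP\le\cP^{F_i}$ since $\sH^0\in F_i$).

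The core step, which I would carry out for each fixed $\delta>0$ and each good $\sigma$, is a transfer: the restriction to $\Sigma(\cQ)$ of any $\phi\in\Hom(\pi,\sigma,\cP,F_1,\delta)$ is also the restriction to $\Sigma(\cQ)$ of some $\phi'\in\Hom(\pi,\sigma,\cP,F_2,C\delta)$, where $C$ depends only on $|\cP^{F_1}|$. One builds $\phi'$ by transporting $\phi$ along the atom correspondence on the positive-measure atoms — consistent, since partnered positive-measure atoms have equal $\mu$-measure and share a $\cQ$-cell — and then redistributing, inside each $\cQ$-cell, the at most $\delta d$ points of $\Delta^0_d$ carried by images of the $\mu$-null $\cP^{F_1}$-atoms, so as to make $\phi'$ a genuine homomorphism whose restriction to $\Sigma(\cQ)$ equals that of $\phi$ exactly. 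Checking the two defining conditions for $\phi'$ is then a short estimate: condition~(2) transfers verbatim off the null atoms, whose total image-mass is $<\delta d$; and condition~(1) for $\phi'$ at $f=\beta(g)$ is bounded, up to a factor $O(|\cP|)$, by condition~(1) for $\phi$ at $g$, using $\sigma(\beta(g))=\sigma(g)$ and the $\mu$-null coincidences above, plus the $O(\delta)$ cost of the redistribution. Interchanging $F_1,F_2$ (and $\beta,\beta^{-1}$) gives, for every good $\sigma$, $|\Hom(\pi,\sigma,\cP,F_1,\delta)|_\cQ\le|\Hom(\pi,\sigma,\cP,F_2,C\delta)|_\cQ$ and the symmetric inequality with $F_1,F_2$ swapped.

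To finish, take $\|\cdot\|_{p,\P_j}$-norms of both sides (valid because the non-good $\sigma$ are null and all counts are bounded by $2^{d|\cP^{F_i}|}$), then $d_j^{-1}\log(\cdot)$, then $\lim_{j\to\beta}$, then $\inf_{\delta>0}$; the factor $C$ disappears on reparametrising $\delta$. This gives the stated equality. For the concluding formula: given $F\subset_f\lb\sH\rb$ with $\sH^0\in F$, choose for each $f\in F$ some $f'\in\cF$ with $\nu(f\vartriangle f')=0$, taking $f'=\sH^0$ when $f=\sH^0$ (available since $\sH^0\in\cF$). When $f\mapsto f'$ is injective it is a bijection onto a set $F'\subset_f\cF$ containing $\sH^0$, and the main statement yields $\inf_\delta\lim_{j\to\beta}d_j^{-1}\log\||\Hom(\pi,\cdot,\cP,F,\delta)|_\cQ\|_{p,\P_j}=\inf_\delta\lim_{j\to\beta}d_j^{-1}\log\||\Hom(\pi,\cdot,\cP,F',\delta)|_\cQ\|_{p,\P_j}$; taking the infimum over all such $F$ gives $\inf_{F\subset_f\cF}(\cdots)\le h_{\P,\mu}(\pi,\cQ,\cP)$, and the reverse inequality is immediate since $\cF\subset\lb\sH\rb$. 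A collision $f_1'=f_2'$ forces $\nu(f_1\vartriangle f_2)=0$; in that case one first deletes $f_2$, and that this does not change the quantity is a special (easier) case of the same transfer argument.

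The step I expect to be the genuine obstacle is this transfer: the difficulty is not any individual inequality but the bookkeeping, since $\cP^{F_1}$ and $\cP^{F_2}$ are merely $\mu$-equivalent, not equal, so one must identify precisely which points of $\Delta^0_d$ must be relocated — those on images of $\mu$-null atoms, and those whose two partner atoms happen to fall in different $\cQ$-cells — and verify that the relocation keeps $\phi'$ a homomorphism, keeps it within the inflated tolerance $C\delta$, and preserves its $\Sigma(\cQ)$-restriction on the nose. Everything after that is routine.
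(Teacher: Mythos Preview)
Your approach is correct and follows the same strategy as the paper: exploit item~(4) of Definition~\ref{defn:sofic-approximation} to get $\sigma(f)=\sigma(\beta(f))$ for $\P_j$-a.e.\ $\sigma$, then transfer homomorphisms between the two $\Hom$-sets while preserving the restriction to $\Sigma(\cQ)$. The paper streamlines your transfer step considerably: rather than matching atoms and redistributing points by hand, it fixes once and for all a Boolean-algebra homomorphism $\theta:\Sigma(\cP^{F_1})\to\Sigma(\cP^{F_2})$ with $\mu(P\vartriangle\theta(P))=0$ for all $P$ and $\theta|_{\cP}=\mathrm{id}$, and then simply sends $\phi\in\Hom(\pi,\sigma,\cP,F_2,\delta)$ to $\phi\circ\theta$; since $\theta$ is already a homomorphism fixing $\cP$ (hence $\cQ$) pointwise, $\phi\circ\theta$ is automatically a homomorphism with the correct $\Sigma(\cQ)$-restriction, and the two $\Hom$-conditions follow in a couple of lines (the null-atom contribution is absorbed because any measure-zero set in $\Sigma(\cP^{F_2})$ has $\phi$-image of size $<\delta d$), yielding $\phi\circ\theta\in\Hom(\pi,\sigma,\cP,F_1,3\delta)$ with no redistribution or bookkeeping at all.
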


\begin{proof}
Let $\theta: \Sigma(\cP^{F_1}) \to \Sigma(\cP^{F_2})$ be a homomorphism satisyfing $\mu(P \vartriangle \theta(P))=0$ for every $P \in \cP^{F_1}$ and $\theta(P)=P$ for every $P \in \cP$. The definition of sofic approximation implies that $\sigma(f)=\sigma(\beta(f))$ for every $f \in F_1$ and $\P_j$-a.e. $\sigma$. We claim that if $\phi \in \Hom(\pi,\sigma,\cP,F_2,\delta)$ then $\phi\circ \theta \in \Hom(\pi,\sigma,\cP,F_1,3\delta)$. Clearly,
$$\sum_{P \in \cP^{F_1}} | | \phi(\theta(P))|_d - \mu(\theta(P))| \le \sum_{P \in \cP^{F_2}} | | \phi(P)|_d - \mu(P)|  <\delta.$$
Also for any $f\in F_1$,
\begin{eqnarray*}
&&\sum_{P \in \cP} | \sigma_f \cdot \phi(\theta(P)) \vartriangle \phi(\theta(f\cdot P))|_d \\
&\le& \sum_{P \in \cP} | \sigma(\beta(f)) \cdot \phi(\theta(P)) \vartriangle \phi(\beta(f)\cdot \theta(P))|_d +\sum_{P \in \cP}  | \phi(\beta(f)\cdot \theta(P)) \vartriangle \phi(\theta(f\cdot P))|_d \\
&<& \delta + \sum_{P \in \cP} | \phi(\beta(f)\cdot \theta(P)) \vartriangle \phi(\theta(f\cdot P))|_d = \delta + \sum_{P \in \cP} | \phi(\beta(f)\cdot P  \vartriangle \theta(f\cdot P))|_d \\
&=& \delta + \left| \phi\left(\bigcup_{P \in \cP} \beta(f)\cdot P  \setminus \theta(f\cdot P) \right) \right|_d + \left| \phi\left(\bigcup_{P \in \cP} \theta(f\cdot P) \setminus \beta(f)\cdot P   \right)  \right|_d  < 3\delta.
\end{eqnarray*}
The last inequality occurs because each of the sets $ \beta(f)\cdot P  \setminus \theta(f\cdot P)$ and $\theta(f\cdot P) \setminus \beta(f)\cdot P$ has measure zero and since $\phi \in  \Hom(\pi,\sigma,\cP,F_2,\delta)$, if $X \in \Sigma(\cP^{F_2})$ is any set with measure zero then $|\phi(X)|_d<\delta$. This inequality proves $\phi\circ \theta \in \Hom(\pi,\sigma,\cP,F_1,3\delta)$ as claimed. 

Because $\theta(P)=P$ for every $P \in \cP$, we also have $\theta(Q)=Q$ for every $Q \in \cQ$. Therefore, if $\phi_1,\phi_2 \in \Hom(\pi,\sigma,\cP,F_2,\delta)$ and $\phi_1 \circ \theta$ restricted to $\cQ$ equals $\phi_2 \circ \theta$ restricted to $\cQ$ then $\phi_1|_\cQ=\phi_2|_\cQ$. So 
$$|\Hom(\pi,\cdot,\cP,F_2,\delta)|_\cQ \le |\Hom(\pi,\cdot,\cP,F_1,3\delta)|_\cQ.$$
This implies one inequality in the lemma. The opposite inequality follows by symmetry.
\end{proof}

\begin{proof}[Proof of Theorem \ref{thm:conull}]

Let 
\begin{itemize}
\item $\cQ_1 \le \cP_1$ be finite Borel partitions of $\sG^0_1$ with $\sG^0_1 \cap \sG^0_2 \in \Sigma(\cQ_1)$,
\item $\cP_2=\{P\cap \sG^0_2:~P\in \cP_1\} \cup \{\sG^0_2 \setminus \sG^0_1\}$, $\cQ_2=\{Q \cap \sG^0_2:~Q \in \cQ_1\}\cup \{\sG^0_2 \setminus \sG^0_1\}$
\item $F \subset_f \lb \sH_1 \cap \sH_2 \rb$ with $\sH^0_1 \cap \sH^0_2 \in F$,
\item $F_i = F \cup \{\sH_i^0\}$ for $i=1,2$,
\item $\sigma:\lb \sH \rb \to \lb d_j \rb$ be $(\{\sH^0\},\delta)$-multiplicative,
\item  $0<\delta<1/100$  and $\phi \in \Hom(\pi_1,\sigma,\cP_1,F_1,\delta)$.
\end{itemize}
We claim that for every $P \in \Sigma(\cP_2^{F_2})$, $P\cap \sG^0_1 \in \Sigma(\cP_1^{F_1})$. It suffices to check this in the special case that $P=f\cdot P_2$ for some $f \in F_2, P_2 \in \cP_2$. If $f  = \sH_2^0$ then $f\cdot P_2=P_2$ and it is obvious in this case. So we may assume $f \ne \sH_2^0$ which imples $f \in F_1$. If $P_2 = \sG^0_2 \setminus \sG^0_1$ then $f \cdot P_2 = \emptyset$ so it is true. Otherwise, $P_2 = P_1 \cap \sG^0_2$ for some $P_1 \in \cP_1$. Since $\sG^0_1 \cap \sG^0_2 \in \Sigma(\cP_1)$, this implies either $P_2 = P_1$ or $P_2 =\emptyset$. So we may assume $P_2 \in \cP_1$. Since $f \in F_1$, $f \cdot P_2  \cap \sG^0_1 \in \Sigma(\cP_1^{F_1})$ as required.

Choose an element $P_0 \in \cP_2^{F_2}$ and define $\Psi_\phi:\Sigma(\cP_2^{F_2}) \to \cB(\Delta^0_{d_j})$ by
\begin{displaymath}
\Psi_\phi(P) = \left\{ \begin{array}{cc}
\phi(P \cap \sG^0_1) \cup  \phi(\sG_1^0 \setminus \sG_2^0 ) & P = P_0 \\
\phi(P \cap \sG^0_1) & \textrm{ otherwise. }
\end{array}\right.\end{displaymath}
Note $\Psi_\phi$ is a homomorphism. 

\noindent {\bf Claim 1}. $\Psi_\phi \in \Hom(\pi_2,\sigma,\cP_2,F_2,6\delta)$ (for $\P_j$-a.e. $\sigma$). 

\begin{proof}[Proof of Claim 1] 
Because $\phi \in \Hom(\pi_1,\sigma,\cP_1,F_1,\delta)$, 
$$|\phi(\sG_1^0 \setminus \sG_2^0)|_{d_j} < \delta + \mu(\sG_1^0 \setminus \sG_2^0) = \delta.$$
So $|\Psi_\phi(P) \vartriangle \phi(P \cap \sG^0_1)|_{d_j} < \delta $ for every $P \in \Sigma(\cP_2^{F_2})$. This implies
\begin{eqnarray*}
\sum_{P \in \cP_2^{F_2}} | \Psi_\phi(P)_{d_j} - \mu(P) | &<& \delta + \sum_{P \in \cP_2^{F_2}} | \phi(P \cap \sG_1^0)_{d_j} - \mu(P \cap \sG_1^0) |\\
&\le& \delta + \sum_{P \in \cP_1^{F_1}} | \phi(P )_{d_j} - \mu(P) |<2\delta.
\end{eqnarray*}
Also, if $f \in F$ then
\begin{eqnarray*}
\sum_{P \in \cP_2} | \sigma_f \cdot \Psi_\phi(P) \vartriangle  \Psi_\phi(f\cdot P) |_{d_j} &<& 2\delta +\sum_{P \in \cP_2} | \sigma_f \cdot \phi(P \cap \sG^0_1) \vartriangle  \phi(f\cdot (P \cap \sG^0_1) ) |_{d_j} \\
&\le& 2\delta +\sum_{P \in \cP_1} | \sigma_f \cdot \phi(P) \vartriangle  \phi(f\cdot P ) |_{d_j}  < 3\delta.
\end{eqnarray*}
Lemma \ref{lem:basic-formulas2} implies $|\sigma(\sH^0) \vartriangle (\sigma(\sH^0)\cap \Delta^0_{d_j})|_{d_j} < \delta$. Since $\sigma(\sH^0_1)=\sigma(\sH^0)$ by definition of sofic approximation and because $\phi \in \Hom(\pi_1,\sigma,\cP_1,F_1,\delta)$,
\begin{eqnarray*}
\delta & > & \sum_{P \in \cP_1} |\sigma(\sH^0_1) \cdot \phi(P) \vartriangle \phi(\sH^0_1 \cdot P) |_{d_j} \\
&>& -\delta + \sum_{P \in \cP_1} | (\sigma(\sH^0) \cap \Delta^0_d) \cdot \phi(P) \vartriangle \phi(P) |_{d_j} \\
&=& - \delta + 1 - |\sigma(\sH^0) \cap \Delta^0_d|_{d_j}
\end{eqnarray*}
implies $|\sigma(\sH^0) \vartriangle \Delta^0_{d_j}|_{d_j} < 4\delta$.  

By definition of sofic approximation, for $\P_j$-a.e. $\sigma$, $\sigma(\sH^0_2)=\sigma(\sH^0)$. So
\begin{eqnarray*}
\sum_{P \in \cP_2} | \sigma(\sH^0_2) \cdot \Psi_\phi(P) \vartriangle  \Psi_\phi(\sH^0_2\cdot P) |_{d_j} &<& 2\delta +\sum_{P \in \cP_2} | \sigma(\sH^0) \cdot \phi(P \cap \sG^0_1) \vartriangle  \phi(P \cap \sG^0_1 ) |_{d_j}\\
&<& 6\delta.
\end{eqnarray*}
This shows  $\Psi_\phi \in \Hom(\pi_2,\sigma,\cP_2,F_2,6\delta)$ as claimed.
\end{proof}

\noindent {\bf Claim 2}. Suppose that $\phi,\psi \in \Hom(\pi_1,\sigma,\cP_1,F_1,\delta)$ and $\Psi_\phi(Q)=\Psi_\psi(Q)$ for every $Q \in \cQ_2$. Then
$$\left|\bigcup_{Q \in \cQ_1} \phi(Q) \vartriangle \psi(Q)\right|_{d_j} < 2\delta.$$

\begin{proof}[Proof of Claim 2]
Let $Q \in \cQ_1$. By definition of $\cQ_2$, $Q \cap \sG^0_2 \in \cQ_2$. Either $\Psi_\phi(Q \cap \sG^0_2) = \phi(Q \cap \sG^0_2)$ or $\Psi_\phi(Q \cap \sG^0_2) = \phi(Q \cap \sG^0_2) \cup \phi(\sG^0_1\setminus \sG^0_2)$. In either case,
$$\phi(Q) \vartriangle \Psi_\phi(Q \cap \sG^0_2) \subset \phi(\sG^0_1 \setminus \sG^0_2).$$
A similar statement holds for $\psi$ in place of $\phi$. So
\begin{eqnarray*}
\bigcup_{Q \in \cQ_1} \phi(Q) \vartriangle \psi(Q) \subset \phi(\sG^0_1 \setminus \sG^0_2) \cup \psi(\sG^0_1 \setminus \sG^0_2).
\end{eqnarray*}
Since $\phi,\psi \in \Hom(\pi_1,\sigma,\cP_1,F_1,\delta)$ and $\mu(\sG^0_1 \setminus \sG^0_2)  =0$,
$$|\phi(\sG^0_1 \setminus \sG^0_2) \cup \psi(\sG^0_1 \setminus \sG^0_2)|_{d_j}<2\delta$$
which implies the claim.
\end{proof}
Claim 2 and Stirling's formula implies
\begin{eqnarray*}
|\Hom(\pi_1,\sigma,\cP_1,F_1,\delta)|_{\cQ_1} &\le& |\Hom(\pi_2,\sigma,\cP_2,F_2,6\delta)|_{\cQ_2}  {d_j \choose \lfloor 2\delta d_j \rfloor}|\cQ_1|^{2\delta d_j}\\
&\le& |\Hom(\pi_2,\sigma,\cP_2,F_2,6\delta)|_{\cQ_2}  \exp( h(3\delta) d_j ) |\cQ_1|^{2\delta d_j}
\end{eqnarray*}
where $h(x)=-x\log(x) - (1-x)\log(1-x).$ Because $\sigma$ is arbitrary, we obtain
\begin{eqnarray*}
\inf_{\delta>0} \lim_{j\to\beta}d_j^{-1}\log \| |\Hom(\pi_2,\cdot,\cP_2,F_2,6\delta)|_{\cQ_2} \|_{p,\P_j} &\ge& \inf_{\delta>0} \lim_{j\to\beta}d_j^{-1}\log \| |\Hom(\pi_1,\cdot,\cP_1,F_1,\delta)|_{\cQ_1} \|_{p,\P_j} \\
&\ge& h_{\P,\mu}(\pi_1, \cQ_1, \cB(\sG^0_1)).
\end{eqnarray*}
We now take the infimum over $\delta, F, \cP_1$ and the supremum over $\cQ_1$ to obtain $h_{\P,\mu}(\pi_2,\cC) \ge h_{\P,\mu}(\pi_1)$ where $\cC$ is the collection of all subsets of $\sG^0$ of the form $X \cup Y$ where $X$ is a Borel subset of $\sG^0_1 \cap \sG^0_2$ and $Y$ is either empty or equal to $\sG^0_2 \setminus \sG^0_{1}$.  This uses Lemma \ref{lem:measure-zero1} with $\cF=\lb \sH_1 \cap \sH_2 \rb \cup \{\sH^0_2\}$. By Theorem \ref{thm:K},  $h_{\P,\mu}(\pi_2,\cC) = h_{\P,\mu}(\pi_2)$. So $h_{\P,\mu}(\pi_2) \ge h_{\P,\mu}(\pi_1)$. By symmetry, this implies the result.

\end{proof}

\section{Bernoulli shifts}\label{sec:bernoulli}

Let $(\sH,\nu)$ be a pmp discrete groupoid and $(K,\kappa)$ be a standard probability space. The {\em Bernoulli shift} over $(\sH,\nu)$ with base space $(K,\kappa)$ is a class-bijective pmp extension $\pi:(\sG,\mu) \to (\sH,\nu)$ constructed as follows. An element of $\sG^0$ is a pair $(x, \omega)$ where $x \in \sH^0$ and $\omega \in K^{\so^{-1}(x)}$, which denotes the set of all functions $\omega:\so^{-1}(x) \to K$. We let $\cB(\sG^0)$ be the smallest sigma-algebra on $\sG^0$ so that the projection map $(x,\omega) \mapsto x$ is measurable and for every $h\in [\sH]$, the map $(x,\omega) \mapsto \omega(hx)$ is measurable. The measure $\mu$ on $\sG^0$ is defined by
$$d\mu(x,\omega) = d\kappa^{\so^{-1}(x)}(\omega)d\nu(x)$$
where $\kappa^{\so^{-1}(x)}$ is the product measure on $K^{\so^{-1}(x)}$. 

Define $\pi:\sG^0 \to \sH^0$ to be the projection map $\pi(x,\omega)=x$. The elements of $\sG$ are pairs $(h,\omega)$ where, if $x=\so(h)$ then $(x,\omega) \in \sG^0$. The source and range maps are defined by $\so(h,\omega) = (\so(h),\omega)$, $\ra(h,\omega) = (y,\psi)$ where $\ra(h)=y$ and $\psi: \so^{-1}(y) \to K$ is defined by $\psi( f) := \omega( f  h).$ The composition map is defined by $(f,\psi)  (h,\omega) = (fh, \omega)$. The main result of this section is:

\begin{thm}\label{thm:bernoulli}
Let $(\sH,\nu)$ be any pmp groupoid, $\pi:(\sG,\mu) \to (\sH,\nu)$ be the Bernoulli shift over $(\sH,\nu)$ with base space $(K,\kappa)$.  Then $h_{\P, \mu}(\pi)  = H(K,\kappa)$ where $H(K,\kappa):= -\sum_{k \in K'} \mu(\{k\}) \log(\mu(\{k\}))$ where $K'$ is any countable subset of $K$ with $\kappa(K') = 1$. If no such countable set exists then $H(K,\kappa):=+\infty$. 
\end{thm}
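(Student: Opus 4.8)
\textbf{Proof plan for Theorem \ref{thm:bernoulli}.}

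The plan is to prove the two inequalities $h_{\P,\mu}(\pi) \le H(K,\kappa)$ and $h_{\P,\mu}(\pi) \ge H(K,\kappa)$ separately, working throughout with the partition formulation of measure entropy from \S\ref{sec:meas1} (Theorem \ref{thm:K} lets us compute $h_{\P,\mu}(\pi)$ using any $\pi$-generating sub-algebra). First I would reduce to the case that $K$ is finite: for the general case, approximate $(K,\kappa)$ from below by finite sub-algebras and use a monotone-limit/continuity argument for Shannon entropy, noting that if $H(K,\kappa)=+\infty$ the lower bound shows the entropy is $+\infty$. So assume $K$ is finite with $\kappa(\{k\})>0$ for all $k$. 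The natural $\pi$-generating sub-algebra is $\cF_0 = \{\pi^{-1}(R):~R\subset_f \sH^0 \text{ Borel}\}$ together with the ``time-zero'' partition $\cP_K = \{P_k:~k\in K\}$ where $P_k = \{(x,\omega):~\omega(x)=k\}$ (here $\so(x)=x$ since $x\in\sH^0$, so $\omega(x)$ makes sense); indeed $\{f\cdot P_k:~f\in[\sH], k\in K\}$ generates $\cB(\sG^0)$ by construction of the Bernoulli shift. The key structural fact, analogous to the group case, is that for any finite $F\subset[\sH]$ (or more generally $F\subset_f\lb\sH\rb$) the partition $\cP_K^F$ behaves like an independent product: the $\mu$-measure of an atom $\bigcap_{f\in F} f\cdot P_{k_f}$ factors as a product of $\kappa$-measures over the ``coordinates'' hit by $F$, because distinct partial automorphisms in $[\sH]$ move the base point to distinct elements of $\so^{-1}(x)$ for $\nu$-a.e.\ $x$ once one passes to a subgroup $H<[\sH]$ as in Lemma \ref{lem:FM} and restricts $F$ to a suitable ``Rokhlin-like'' subset.

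For the \textbf{upper bound}, fix $\epsilon>0$ and the partition $\cQ=\cP_K$. Given $\sigma$ which is sufficiently multiplicative and trace-preserving, a homomorphism $\phi\in\Hom(\pi,\sigma,\cP_K^F,F,\delta)$ restricted to $\Sigma(\cP_K)$ is essentially a choice, for each of the $d$ vertices, of an element of $K$, subject to the constraint $\sum_k||\phi(P_k)|d^{-1}-\kappa(\{k\})|<\delta$; the number of such labelings is at most $\exp((H(K,\kappa)+o(1))d)$ by Stirling's approximation (the multinomial coefficient bound). Since $|\Hom(\pi,\sigma,\cP_K,F,\delta)|_{\cP_K}$ counts exactly restrictions to $\Sigma(\cP_K)$ of good homomorphisms, this gives $d_j^{-1}\log\||\Hom(\pi,\cdot,\cP_K,F,\delta)|_{\cP_K}\|_{p,\P_j}\le H(K,\kappa)+o(1)$, hence $h_{\P,\mu}(\pi,\cP_K,\cF)\le H(K,\kappa)$, and one checks (using $\pi$-generation of the chosen $\cF$) that the $\cQ$-supremum is also controlled, so $h_{\P,\mu}(\pi)\le H(K,\kappa)$.

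For the \textbf{lower bound}, the task is to produce, for each sufficiently good $\sigma:\lb\sH\rb\to\lb d\rb$, an exponentially large (rate $\ge H(K,\kappa)-o(1)$) family of homomorphisms in $\Hom(\pi,\sigma,\cP_K^F,F,\delta)$ whose restrictions to $\cP_K$ are $(\rho_{\cP_K},\epsilon)$-separated. The idea is: since $\sigma$ is approximately multiplicative and trace-preserving, the sets $\{\sigma(h)i\}_{h\in H_0}$ for a finite $H_0\subset H$ are, for most vertices $i\in[d]$, distinct and of the right relative sizes; so one can partition $[d]$ (up to a small error set) into ``$H_0$-names'' and then freely and independently assign a $K$-label to a set of ``representative'' vertices of density $\approx 1/|H_0|$ each drawn from the $\kappa$-distribution, propagating labels along $\sigma(h)$ to the rest; this yields $\approx\exp(H(K,\kappa)d)$ independent choices, and I must check each such assignment extends to an honest homomorphism $\phi$ of $\Sigma(\cP_K^F)$ satisfying both good-homomorphism conditions — the trace condition follows from the law of large numbers / trace-preservation of $\sigma$, and the equivariance condition $|\sigma_f\cdot\phi(P)\vartriangle\phi(f\cdot P)|d^{-1}<\delta$ follows because we built $\phi$ to be $\sigma$-equivariant on the bulk. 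Distinctness on $\cP_K$ is automatic since the labels at the representative vertices already differ. Taking $F$ and $H_0$ large and $\delta$ small, and then the appropriate infima and suprema, gives $h_{\P,\mu}(\pi,\cP_K,\cF)\ge H(K,\kappa)$, hence $h_{\P,\mu}(\pi)\ge H(K,\kappa)$.

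The \textbf{main obstacle} I expect is the groupoid bookkeeping in the lower bound: unlike the group case, $\lb\sH\rb$-elements need not have full source and range, distinct $h\in[\sH]$ can agree on positive-measure subsets of $\sH^0$ (so ``coordinates'' $\so^{-1}(x)$ are only consistently indexed after passing to the subgroup $H$ of Lemma \ref{lem:FM} and carefully partitioning $\sH^0$), and one must control the compounding of all the small error sets (from approximate multiplicativity, from trace errors, from the Rokhlin-type partition of $[d]$, and from atoms of $\cP_K^F$ that are not of the product form — here Lemma \ref{lem:partition-hell} is the right tool) so that after all estimates the separated family still has exponential rate $H(K,\kappa)-o(1)$. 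Making the ``independent coordinates'' picture precise on $[d]$ via $\sigma$, rather than on $\sH^0$ via $\nu$, is where the real work lies; everything else is a Stirling-approximation computation.
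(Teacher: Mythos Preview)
Your upper bound via the multinomial/Stirling estimate is essentially the paper's argument (Lemma \ref{lem:Shannon-bound} and Corollary \ref{cor:first-inequality}) and is correct.

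The lower bound, as described, has a genuine gap. Labeling ``representative'' vertices of density $\approx 1/|H_0|$ with independent $K$-values and then \emph{propagating} via $\sigma(h)$ produces at most $\approx\exp\big(H(K,\kappa)\,d/|H_0|\big)$ distinct restrictions to $\cP_K$, not the $\exp(H(K,\kappa)d)$ you claim; so either your count or your construction is wrong. More fundamentally, a Rokhlin-tower picture is the wrong mental model for a Bernoulli shift: every coordinate in $\so^{-1}(x)$ carries an \emph{independent} $K$-value, so there is nothing to propagate between orbit points. The paper instead labels \emph{every} vertex $i\in\Delta^0_d$ with an independent $\kappa$-random $z(i)\in K$, sets $\psi_z(P)=z^{-1}(P)$ for $P\in\cP$, and builds $\phi_z$ on an atom of $(E^{-1}(\cP)\vee\pi^{-1}(\cR))^F$ by intersecting $\bigcap_{f\in F}\sigma_f^{-1}\cdot\psi_z(P_f)$ with the image, under a single fixed good homomorphism $\phi\in\Hom(\pi_0,\sigma,\cT,F\Sigma(\cR),\delta)$ for the identity extension $\pi_0:\sH\to\sH$ (supplied by Lemma \ref{lem:zero}), of the corresponding $\cR$-atom; here $F\subset[\sH]$ and $\cR$ is a finite partition of $\sH^0$ refining all the sets $f_2^{-1}f_1\cap\sH^0$ for $f_1,f_2\in F$. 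Equivariance of $\phi_z$ is then forced by the definition together with multiplicativity of $\sigma$; the trace condition on atoms is verified by a second-moment (Chebyshev) estimate exploiting independence of the $z(i)$, and the separated-family count comes from the asymptotic equipartition property applied to the full product $\kappa^{\Delta^0_d}$. The groupoid bookkeeping you correctly flag as the main obstacle is handled not by a tower decomposition of $[d]$ but by the auxiliary partition $\cR$ of $\sH^0$, which records exactly where elements of $F$ coincide; transporting $\cR$ to $\Delta^0_d$ via $\phi$ is what makes the product formula $\mu(\bar P)=\nu(R_P)\prod_{f}\kappa(P_f)$ for an atom $\bar P$ match $\EE[|\phi_z(\bar P)|_d]$ up to $O(\delta)$.
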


We first identify a generating sigma-algebra.
\begin{lem}\label{lem:generating}
Let $\pi:(\sG,\mu) \to (\sH,\nu)$ and $(K,\kappa)$ be as in Theorem \ref{thm:bernoulli}. Let $E:\sG^0 \to K$ be the evaluation map $E(x,\omega):=\omega(x)$. Let $\cF = E^{-1}(\cB_K)$ be the inverse image of the Borel sigma-algebra $\cB_K$ of $K$. Then $\cF$ is $\pi$-generating.
\end{lem}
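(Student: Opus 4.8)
The plan is to show that $\Sigma_\pi(\cF)$ contains every Borel subset of $\sG^0$; since $\Sigma_\pi(\cF)$ is automatically contained in $\cB(\sG^0)$ (each $f\cdot P$ with $P$ Borel is Borel, being a Borel injective image of a Borel set), this will give $\Sigma_\pi(\cF)=\cB(\sG^0)$, and in particular $\pi$-generation. By definition $\cB(\sG^0)$ is generated by the projection $(x,\omega)\mapsto x$ together with the maps $(x,\omega)\mapsto\omega(hx)$, $h\in[\sH]$, so it suffices to put the Borel sets generating these two families into $\Sigma_\pi(\cF)$.

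The first step is a bookkeeping identity: for every $f\in\lb\sH\rb$ and every $(x,\omega)\in\sG^0$ with $x=\pi(x,\omega)\in\so(f)$,
\[
E\bigl(f\cdot(x,\omega)\bigr)=\omega(fx),
\]
where $fx\in f$ is the unique element of $f$ with source $x$. This is a direct unwinding of the definition of the Bernoulli extension: writing $h=fx$, one has $f\cdot(x,\omega)=\ra(h,\omega)=(\ra(h),\psi)$ with $\psi(g)=\omega(gh)$ for $g\in\so^{-1}(\ra(h))$, and evaluating $\psi$ at the unit $g=\ra(h)$ gives $\psi(\ra(h))=\omega\bigl(\ra(h)\,h\bigr)=\omega(h)=\omega(fx)$. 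I expect this step --- keeping the composition conventions and the two meanings of ``$f\cdot x$'' (a point of $\sH^0$ versus the morphism $fx$) straight --- to be the only place requiring care; there is no deeper obstacle.

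With the identity in hand, I would note that $v\mapsto f\cdot v$ is a bijection $\pi^{-1}(\so(f))\to\pi^{-1}(\ra(f))$ with inverse $v\mapsto f^{-1}\cdot v$, whence for every Borel $B\subseteq K$
\[
f^{-1}\cdot E^{-1}(B)=\bigl\{(x,\omega)\in\sG^0:\ x\in\so(f)\ \text{and}\ \omega(fx)\in B\bigr\}.
\]
Since $E^{-1}(B)\in\cF$ and $f^{-1}\in\lb\sH\rb$, the left-hand side lies in $\Sigma_\pi(\cF)$, and hence so does the set on the right, for every $f\in\lb\sH\rb$ and every Borel $B\subseteq K$. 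Two specializations then finish the proof. Taking $B=K$ and $f=P$ a Borel subset of $\sH^0$ (so that $f^{-1}=P$, $\so(f)=P$, and $fx=x$ for $x\in P$, making the condition on $\omega$ vacuous) gives $\pi^{-1}(P)=P\cdot\sG^0\in\Sigma_\pi(\cF)$; hence the projection $(x,\omega)\mapsto x$ is $\Sigma_\pi(\cF)$-measurable. Letting instead $f$ range over the full group $[\sH]$ (so $\so(f)=\sH^0$) gives $\{(x,\omega):\omega(fx)\in B\}\in\Sigma_\pi(\cF)$ for every $f\in[\sH]$ and every Borel $B$, which is precisely the generating family for the maps $(x,\omega)\mapsto\omega(hx)$. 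Therefore $\Sigma_\pi(\cF)$ contains a generating family for $\cB(\sG^0)$, so $\Sigma_\pi(\cF)=\cB(\sG^0)$ and $\cF$ is $\pi$-generating.
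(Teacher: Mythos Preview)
Your proof is correct and follows essentially the same approach as the paper's: both show $\pi^{-1}(\cB(\sH^0))\subset\Sigma_\pi(\cF)$ via $Q\cdot\sG^0=\pi^{-1}(Q)$ and then appeal to the fact that $\cB(\sG^0)$ is generated by $\pi$ together with the evaluation maps $(x,\omega)\mapsto\omega(hx)$. Your version is more detailed---you explicitly verify the identity $E(f\cdot(x,\omega))=\omega(fx)$ and derive both generating families from the single formula for $f^{-1}\cdot E^{-1}(B)$---whereas the paper leaves these computations to the reader.
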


\begin{proof}
Let $\Sigma_{\pi}(\cF)$ be the smallest sigma-sub-algebra of Borel subsets of $\sG^0$ containing $\{f\cdot P:~ f\in \lb\sH\rb, P \in \cF\}$. For every Borel set $Q \subset \sH^0$, we have $Q \in \lb\sH\rb$ and $Q  \cdot \sG^0 = \pi^{-1}(Q)$. It follows that $\Sigma_\pi(\cF)$ contains $\pi^{-1}(\cB(\sH^0))$ where $\cB(\sH^0)$ is the Borel sigma-algebra on $\sH^0$. Therefore, $\pi:\sG^0 \to \sH^0$ is $\Sigma_{\pi}(\cF)$-measurable. Because the Borel sigma-algebra of $\sG^0$ is generated by $\pi$ and $\{E \circ f:~f\in \lb\sH\rb\}$, this implies the lemma.
\end{proof}

Next we show that the inequality $h_{\P, \mu}(\pi)  \le H(K,\kappa)$  holds under general conditions.
\begin{lem}\label{lem:Shannon-bound}
Let $(\sH,\nu)$ be any discrete pmp groupoid, and $\pi:(\sG,\mu) \to (\sH,\nu)$ any pmp class-bijective extension. For any finite Borel partitions $\cQ \le \cP$ of $\sG^0$ we have 
$$h_{\P, \mu}(\pi,\cQ,\cP)  \le H_\mu(\cQ):= -\sum_{Q \in \cQ} \mu(Q)\log\mu(Q).$$
\end{lem}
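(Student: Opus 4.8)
The plan is to bound the number of homomorphisms $\phi \in \Hom(\pi,\sigma,\cP,F,\delta)$ restricted to $\Sigma(\cQ)$ by a counting argument, using that condition (2) in the definition of good homomorphisms forces $|\phi(Q)|$ to be close to $\mu(Q) d$ for each $Q \in \cQ$ (since $\cQ \le \cP \le \cP^F$, every $Q \in \cQ$ is a union of atoms of $\cP^F$, so $||\phi(Q)|d^{-1} - \mu(Q)|$ is small whenever $\delta$ is small). Thus any $\phi|_{\Sigma(\cQ)}$ is a homomorphism from $\Sigma(\cQ)$ to $\cB(\Delta^0_d)$ which assigns to the atoms of $\cQ$ an ordered partition of $\Delta^0_d$ into pieces of prescribed approximate sizes. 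The number of such ordered partitions is a multinomial coefficient, and Stirling's approximation turns $\binom{d}{(n_Q)_{Q\in\cQ}}$, with $n_Q \approx \mu(Q)d$, into $\exp((H_\mu(\cQ) + o(1))d)$ as $d \to \infty$ and the size constraints tighten.

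Concretely, first I would fix $\cQ = \{Q_1,\dots,Q_m\}$ and note that for any $F \subset_f \lb\sH\rb$ with $\sH^0 \in F$ and any $\delta>0$, if $\phi \in \Hom(\pi,\sigma,\cP,F,\delta)$ then $\sum_{i=1}^m ||\phi(Q_i)|d^{-1} - \mu(Q_i)| < \delta$, because each $Q_i \in \Sigma(\cP) \subset \Sigma(\cP^F)$ and the $\phi$-images of the atoms of $\cP^F$ inside $Q_i$ partition $\phi(Q_i)$. Consequently $|\Hom(\pi,\sigma,\cP,F,\delta)|_\cQ$ is at most the number of functions $\Lambda:\{1,\dots,d\} \to \{1,\dots,m\}$ (i.e.\ ordered partitions $(A_1,\dots,A_m)$ of $\Delta^0_d$, with $A_i = \phi(Q_i)$) satisfying $\sum_i ||A_i|d^{-1} - \mu(Q_i)| < \delta$. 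This count is bounded by
\[
\sum_{(n_1,\dots,n_m)} \binom{d}{n_1,\dots,n_m},
\]
where the sum is over tuples with $\sum_i |n_i d^{-1} - \mu(Q_i)| < \delta$ and $\sum_i n_i = d$; there are at most $(d+1)^m$ such tuples, and by Stirling's formula each multinomial coefficient is at most $\exp\big((H_\mu(\cQ) + \epsilon(\delta))d\big)$ where $\epsilon(\delta) \to 0$ as $\delta \to 0$, uniformly in $d$, using the continuity of $x \mapsto -x\log x$ near the point $(\mu(Q_1),\dots,\mu(Q_m))$.

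Therefore $d^{-1}\log |\Hom(\pi,\sigma,\cP,F,\delta)|_\cQ \le H_\mu(\cQ) + \epsilon(\delta) + m d^{-1}\log(d+1)$ for every $\sigma$, hence $d_j^{-1}\log \| |\Hom(\pi,\cdot,\cP,F,\delta)|_\cQ \|_{p,\P_j} \le H_\mu(\cQ) + \epsilon(\delta) + md_j^{-1}\log(d_j+1)$ as well (the $L^p$ norm of a constant-bounded nonnegative function is bounded by that constant). Taking $\lim_{j\to\beta}$ kills the $md_j^{-1}\log(d_j+1)$ term since $d_j \to \infty$, giving $h_{\P,\mu}(\pi,\cQ,\cP,F,\delta) \le H_\mu(\cQ) + \epsilon(\delta)$; then taking the infimum over $\delta>0$ (and over $F$, trivially) yields $h_{\P,\mu}(\pi,\cQ,\cP) \le H_\mu(\cQ)$. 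The main obstacle, such as it is, is purely bookkeeping: making the Stirling estimate uniform in $d$ and controlling the error $\epsilon(\delta)$ so it genuinely vanishes as $\delta \to 0$ — this is standard (it is the same estimate underlying the classical fact that $H_\mu(\cQ) = \lim d^{-1}\log(\text{number of types})$), so no real difficulty is expected.
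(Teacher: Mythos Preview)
Your proposal is correct and follows essentially the same approach as the paper: group homomorphisms by the size vector $(|\phi(Q)|)_{Q\in\cQ}$, bound the number of admissible size vectors polynomially in $d$, bound each fiber by a multinomial coefficient, and apply Stirling's approximation together with the continuity of Shannon entropy. The only cosmetic differences are that the paper bounds the number of size vectors by $(2\delta d+1)^{|\cQ|}$ rather than your $(d+1)^m$, and phrases the fiber count via a map $\Psi:\phi\mapsto(|\phi(Q)|)_Q$ rather than directly; the substance is identical.
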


\begin{proof}
We will use the partition definition of measure entropy.  Let $\sigma:\lb \sH \rb \to \lb d \rb$ be a map, $F \subset_f \lb \sH \rb$ with $\sH^0 \in F$ and $\delta>0$. Let $\Psi:\Hom(\pi,\sigma,\cP,F,\delta) \to \N^\cQ$ be the map $\Psi(\phi)(Q):=|\phi(Q)|$ (for $Q \in \cQ$). 


For each $\phi \in \Hom(\pi,\sigma,\cP,F,\delta)$, $|d^{-1}|\phi(Q)| - \mu(Q)| \le \delta$ for every $Q \in \cQ$. So $\Psi(\phi)(Q) \in [ (\mu(Q)-\delta) d, (\mu(Q) + \delta) d]$. Therefore
$$| \Psi(  \Hom(\pi,\sigma,\cP,F,\delta) ) | \le (2\delta d+1)^{|\cQ|}.$$
If $v$ is in the image of $\Psi$ then by Stirling's approximation, if $d$ is sufficiently large then
$$|\Psi^{-1}(v)|_\cQ \le \frac{d!}{\prod_{Q \in \cQ} v(Q)! } \le \exp( H_\mu(\cQ)d + \delta' d)$$
where $\delta'>0$ and $\delta' \to 0$ as $\delta\to 0$. We now have
\begin{eqnarray*}
|\Hom(\pi,\sigma,\cP,F,\delta) |_\cQ &=& \sum_{v \in \N^\cQ} |\Psi^{-1}(v)|_{\cQ}  \le  (2\delta d+1)^{|\cQ|}\exp( H_\mu(\cQ)d + \delta' d).
\end{eqnarray*}
Therefore,
\begin{eqnarray*}
h_{\P,\mu}(\pi,  \cQ, \cP, F, \delta) &=& \lim_{j\to\beta} d_j^{-1}\log \||\Hom(\pi,\cdot,\cP,F,\delta) |_\cQ \|_{p,\P_j}\le  H_\mu(\cQ) + \delta'.
\end{eqnarray*}
Because $\delta' \to 0$ as $\delta \to 0$ and $F$ is arbitrary, this proves $h_{\P,\mu}(\pi,  \cQ, \cP) \le H_\mu(\cQ)$. 
\end{proof}

\begin{cor}\label{cor:first-inequality}
Let $\pi:(\sG,\mu) \to (\sH,\nu)$ and $(K,\kappa)$ be as in Theorem \ref{thm:bernoulli}. Then $h_{\P,\mu}(\pi) \le H(K,\kappa)$.
\end{cor}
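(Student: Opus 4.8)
\textbf{Proof proposal for Corollary \ref{cor:first-inequality}.}

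The plan is to combine the generating sigma-algebra identified in Lemma \ref{lem:generating} with the Shannon bound of Lemma \ref{lem:Shannon-bound} and Theorem \ref{thm:K}. First I would recall that by Lemma \ref{lem:generating}, the sigma-algebra $\cF = E^{-1}(\cB_K)$ is $\pi$-generating, so Theorem \ref{thm:K} gives $h_{\P,\mu}(\pi) = h_{\P,\mu}(\pi,\cF)$. By definition, $h_{\P,\mu}(\pi,\cF) = \sup_{\cQ \subset \cF} h_{\P,\mu}(\pi,\cQ,\cF) = \sup_{\cQ\subset\cF} \inf_{\cQ \le \cP \subset \cF} h_{\P,\mu}(\pi,\cQ,\cP)$. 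Applying Lemma \ref{lem:Shannon-bound} to each pair $\cQ \le \cP$ yields $h_{\P,\mu}(\pi,\cQ,\cP) \le H_\mu(\cQ)$, hence $h_{\P,\mu}(\pi,\cQ,\cF) \le H_\mu(\cQ)$ for every finite partition $\cQ \subset \cF$, and therefore $h_{\P,\mu}(\pi) \le \sup_{\cQ \subset \cF} H_\mu(\cQ)$.

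So the remaining task is purely a statement about Shannon entropy: I must show that $\sup_{\cQ \subset \cF} H_\mu(\cQ) = H(K,\kappa)$, where the supremum is over finite partitions $\cQ$ contained in $\cF = E^{-1}(\cB_K)$. Since $E:\sG^0 \to K$ pushes $\mu$ forward to $\kappa$ (this follows directly from the definition of $\mu$ as $d\kappa^{\so^{-1}(x)}(\omega)\,d\nu(x)$ and the fact that $E(x,\omega) = \omega(x)$, so that the distribution of the single coordinate $\omega(x)$ is $\kappa$), a finite partition $\cQ \subset \cF$ corresponds to a finite partition $\cR$ of $K$ with $\mu(Q) = \kappa(R)$ for the corresponding atoms, and $H_\mu(\cQ) = H_\kappa(\cR)$. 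Thus $\sup_{\cQ \subset \cF} H_\mu(\cQ) = \sup_{\cR} H_\kappa(\cR)$ over finite Borel partitions $\cR$ of $K$, and this supremum is exactly $H(K,\kappa)$: if $\kappa$ is supported on a countable set $K'$ then $\sup_{\cR} H_\kappa(\cR) = -\sum_{k \in K'}\kappa(\{k\})\log\kappa(\{k\})$ by a standard argument (refine toward the partition into singletons, using that merging atoms never increases entropy and that any atom of positive measure not consisting of a single atom of $\kappa$ can be split to strictly increase entropy or approach the sum), and if no such countable set exists then one can find finite partitions with arbitrarily large entropy, so the supremum is $+\infty$.

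The main obstacle, such as it is, is the elementary but slightly fussy fact that $\sup_{\cR} H_\kappa(\cR) = H(K,\kappa)$ for an arbitrary standard probability space $(K,\kappa)$ — in particular handling the case where $\kappa$ has a non-atomic part (where the supremum is genuinely $+\infty$ and one must exhibit partitions of unbounded entropy, e.g. by partitioning a non-atomic piece into $n$ pieces of equal measure for each $n$). This is standard measure theory and I would state it with a one-line justification rather than a full proof. Everything else is a direct assembly of Lemma \ref{lem:generating}, Lemma \ref{lem:Shannon-bound}, and Theorem \ref{thm:K}.
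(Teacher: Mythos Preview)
Your proposal is correct and follows essentially the same route as the paper: invoke Lemma \ref{lem:generating} and Theorem \ref{thm:K} to reduce to $h_{\P,\mu}(\pi,\cF)$, then apply Lemma \ref{lem:Shannon-bound} to bound each $h_{\P,\mu}(\pi,\cQ,\cP)$ by $H_\mu(\cQ)=H_\kappa(\cR)$ via $E_*\mu=\kappa$, and finally use $\sup_{\cR}H_\kappa(\cR)=H(K,\kappa)$. The paper phrases the last step by working directly with partitions $\cQ\le\cP$ of $K$ and pulling back through $E^{-1}$, whereas you first take partitions in $\cF$ and then translate; this is only a cosmetic difference.
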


\begin{proof}
By Lemma \ref{lem:generating}, $E^{-1}(\cB_K)=\cF$ is $\pi$-generating. By Lemma \ref{lem:Shannon-bound}, for any finite Borel partitions $\cQ\le \cP$ of $K$ we have
$$h_{\P,\mu}(\pi,E^{-1}(\cQ), E^{-1}(\cP)) \le H_\kappa(\cQ).$$
Take the infimum over all such $\cP$ and then the supremum over all such $\cQ$ to obtain
$$h_{\P,\mu}(\pi) = h_{\P,\mu}(\pi,\cF) \le H(K,\kappa).$$
The first equality above is Theorem \ref{thm:K}. 
\end{proof}

The next lemma shows that there is at least one good homomorphism for the trivial partition if $\sigma$ is sufficiently good. 

\begin{lem}\label{lem:zero}
Let $(\sH,\nu)$ be a discrete pmp groupoid, $\pi_0:\sH \to \sH$ the identity map and $\P$ a sofic approximation to $(\sH,\nu)$. Then $h_{\P,\nu}(\pi_0)=0$. Moreover, for every $F \subset_f \lb\sH\rb$ with $\sH^0 \in F$ and $\delta>0$ there exists $F' \subset_f \lb\sH\rb$ and $\delta'>0$ such that if $\sigma$ is $(F',\delta')$-multiplicative and $(F',\delta')$-trace preserving then $\Hom(\pi_0,\sigma,\cT, F,\delta) \ne \emptyset$
where $\cT=\{\sH^0,\emptyset\}$ is the trivial partition of $\sH^0$.
\end{lem}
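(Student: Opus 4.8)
\textbf{Proof proposal for Lemma \ref{lem:zero}.}

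The plan is to observe that $h_{\P,\nu}(\pi_0)=0$ is immediate from the upper bound $h_{\P,\nu}(\pi_0)\le H_\nu(\cT)=0$ (Lemma \ref{lem:Shannon-bound} applied with $\cQ=\cP=\cT$, since $\cT=\{\sH^0,\emptyset\}$ has $H_\nu(\cT)=-1\cdot\log 1=0$) together with the fact that entropy is always $\ge 0$ once we exhibit a nonempty set of good homomorphisms for the trivial partition; so the only real content is the ``Moreover'' statement, namely that $\Hom(\pi_0,\sigma,\cT,F,\delta)\ne\emptyset$ whenever $\sigma$ is sufficiently multiplicative and trace-preserving. I would prove this by producing the canonical homomorphism $\phi:\Sigma(\cT^F)\to\cB(\Delta^0_d)$ directly from $\sigma$. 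Note first that since $\pi_0$ is the identity, $\cT^F$ is just the partition of $\sH^0$ generated by $\{f\cdot\sH^0:f\in F\}=\{\ra(f):f\in F\}$, so its atoms are Boolean combinations $\bigcap_{f\in F}\ra(f)^{\gamma(f)}$ of the sets $\ra(f)$, $f\in F$. Enlarging $F$ if necessary (this is where $F'$ enters), we may assume $F$ is closed under the operations $f\mapsto \ra(f)$ and $f\mapsto \sH^0\setminus\ra(f)$ and contains $\sH^0$, so that every atom of $\cT^F$ lies in $\{\ra(f):f\in F\}$ up to the usual manipulations; then defining $\phi(\ra(f)):=\ra(\sigma(f))\cap\Delta^0_d$ on generators and extending Boolean-combination-wise is the natural candidate.

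The key steps, in order, are: (1) choose $F'\supset F$ finite, closed under $\ra$ and complementation-within-$\sH^0$ and under the finitely many products and inverses of elements of $F$ needed to invoke Lemma \ref{lem:basic-formulas2}, and choose $\delta'>0$ small compared to $\delta$ divided by $|\cT^F|$ and $|F|$; (2) given $(F',\delta')$-multiplicative and $(F',\delta')$-trace-preserving $\sigma$, set $\psi(\ra(f)):=\ra(\sigma(f))\cap\Delta^0_d$ and, using Lemma \ref{lem:basic-formulas2}(1),(3), check that $\psi$ is $O(\delta')$-close to a genuine homomorphism $\phi:\Sigma(\cT^F)\to\cB(\Delta^0_d)$ by the same disjointification trick as in Lemma \ref{lem:4} (replace overlaps and complements by their ``cleaned up'' versions, at a total cost of $O(|\cT^F|^3\delta')$); (3) verify the two defining inequalities of $\Hom(\pi_0,\sigma,\cT,F,\delta)$. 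For condition (2) of being a good homomorphism, $\sum_{P\in\cT^F}||\phi(P)|d^{-1}-\nu(P)|$ is controlled because $|\phi(\ra(f))|d^{-1}=\tr_d(\ra(\sigma(f)))$ up to $O(\delta')$ by Lemma \ref{lem:basic-formulas2}, this equals $\tr_d(\sigma(\ra(f)))$ up to $O(\delta')$ again by that lemma, and finally $\tr_d(\sigma(\ra(f)))$ is within $\delta'$ of $\tr_\sH(\ra(f))=\nu(\ra(f))$ by $(F',\delta')$-trace-preservation — and atoms of $\cT^F$, being Boolean combinations of the $\ra(f)$, inherit the estimate with a factor $|\cT^F|$. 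For condition (1), $\sum_{P\in\cT}|\sigma_f\cdot\phi(P)\vartriangle\phi(f\cdot P)|d^{-1}$ has only one nontrivial term, $P=\sH^0$, where $\phi(\sH^0)=\Delta^0_d$ (so $\sigma_f\cdot\Delta^0_d=\ra(\sigma(f))$, up to $O(\delta')$ by Lemma \ref{lem:basic-formulas2}) and $f\cdot\sH^0=\ra(f)$, so $\phi(f\cdot\sH^0)=\phi(\ra(f))=\ra(\sigma(f))\cap\Delta^0_d$ by construction; these agree up to $O(\delta')$. Choosing $\delta'$ small enough that all accumulated errors are $<\delta$ finishes the argument.

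The main obstacle I anticipate is purely bookkeeping rather than conceptual: one must be careful that the atoms of $\cT^F=\cT^F(\pi_0)$ genuinely are handled by Lemma \ref{lem:basic-formulas2} and Lemma \ref{lem:partition-hell} simultaneously — i.e. that after closing $F$ under $\ra(\cdot)$ and $\sH^0\setminus\ra(\cdot)$, every atom is (up to Lemma \ref{lem:partition-hell}) an intersection $\bigcap f\cdot Y_f$ with $Y_f\in\cT\cup\{\sH^0\}$, and since the only nontrivial element of $\cT$ is $\sH^0$ itself this collapses to an intersection of sets of the form $\ra(f)$, which is exactly what the disjointification in step (2) needs. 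Keeping the constants straight (each invocation of Lemma \ref{lem:basic-formulas2} costs up to $15\delta'$, and there are $|F|$ and $|\cT^F|$ many of them) and ordering the choices ``$\delta$ then $F$ given, then $F'$, then $\delta'$'' correctly is the only place where care is required; no new idea beyond those already in Lemmas \ref{lem:basic-formulas2}, \ref{lem:4} and \ref{lem:partition-hell} is needed.
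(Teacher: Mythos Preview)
Your overall strategy matches the paper's, but there is one genuine gap and one place where the paper's execution is cleaner than yours.

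\textbf{The gap (upper bound).} Lemma \ref{lem:Shannon-bound} with $\cQ=\cP=\cT$ only yields $h_{\P,\nu}(\pi_0,\cT,\cT)\le H_\nu(\cT)=0$, not $h_{\P,\nu}(\pi_0)\le 0$. The full entropy is a supremum over \emph{all} finite partitions $\cQ$, so you cannot simply pick $\cQ=\cT$. What you are missing is the observation that $\cT$ is $\pi_0$-generating: for any Borel $P\subset\sH^0$ we have $P\in\lb\sH\rb$ and $P\cdot\sH^0=P$, so $\Sigma_{\pi_0}(\cT)$ is the full Borel $\sigma$-algebra. Then Theorem \ref{thm:K} gives $h_{\P,\nu}(\pi_0)=h_{\P,\nu}(\pi_0,\Sigma(\cT))$, and since the only partition in $\Sigma(\cT)=\{\emptyset,\sH^0\}$ is $\cT$ itself, this equals $h_{\P,\nu}(\pi_0,\cT,\cT)$. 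The paper takes exactly this route, and then observes directly that $|\Hom(\pi_0,\sigma,\cT,F,\delta)|_\cT\le 1$ (any homomorphism on $\Sigma(\cT)$ is forced), which is even simpler than invoking Lemma \ref{lem:Shannon-bound}. The same reduction is also what makes your lower bound work: nonemptiness of $\Hom(\pi_0,\sigma,\cT,F,\delta)$ only gives $h_{\P,\nu}(\pi_0,\cT,\cT)\ge 0$ without it.

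\textbf{The construction.} Your candidate $\psi(\ra(f)):=\ra(\sigma(f))\cap\Delta^0_d$ is not well-defined as stated (distinct $f_1,f_2\in F$ may have $\ra(f_1)=\ra(f_2)$ but $\ra(\sigma(f_1))\ne\ra(\sigma(f_2))$), though Lemma \ref{lem:basic-formulas2}(3) shows the ambiguity is $O(\delta')$ so this is fixable. The paper avoids this entirely: it puts the \emph{atoms} $P_1,\dots,P_n$ of $\cT^F$ themselves into $F'$ (they are Borel subsets of $\sH^0$, hence lie in $\lb\sH\rb$) and sets $\phi(P_i)=\sigma(P_i)\cap\Delta^0_d$, disjointified. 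Multiplicativity then gives $|\sigma(P_i)\sigma(P_j)|_d<\delta'$ for $i\ne j$ directly from $P_iP_j=\emptyset$, which makes the disjointification error and the trace estimates immediate. Your route through the generators $\ra(f)$ and Lemma \ref{lem:partition-hell} works but is a detour; the paper's choice of $F'\supset\cT^F$ is the cleaner move.
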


\begin{proof}
Observe that $\cT$ is $\pi_0$-generating. So Theorem \ref{thm:K} implies 
$$h_{\P,\nu}(\pi_0) = \inf_{F \subset_f \lb\sH\rb} \inf_{\delta>0}\lim_{j\to\beta} d_j^{-1}\log \|  |\Hom(\pi_0,\cdot,\cT,F,\delta) |_\cT \|_{p,\P_j}.$$
Because $|\Hom(\pi_0,\sigma,\cT,F,\delta)|_\cT \le 1$ for every $\sigma,F,\delta$, we must have $h_{\P,\nu}(\pi_0)\le 0$.

It may be useful to review the notation in Example \ref{example:d}. Let $F \subset_f \lb \sH \rb$ with $\sH^0 \in F$, $\delta>0$ and $\sigma:\lb \sH\rb \to \lb d \rb$. Observe that $\cT^F$ is the smallest partition of $\sH^0$ containing $\ra(f)$ for every $f\in F$. Define $\phi: \Sigma(\cT^F) \to \cB(\Delta^0_d)$ as follows. First enumerate $\cT^F=\{P_1,\ldots, P_n\}$. If $n=1$ then define $\phi(P_1)=\Delta^0_d$. Otherwise define 
\begin{displaymath}
\phi(P_i) = \left\{ \begin{array}{cc}
\sigma(P_1) \cap \Delta^0_d & \textrm{ if } i=1\\
(\sigma(P_i) \cap \Delta^0_d) \setminus \bigcup_{1\le j < i} \phi(P_j) & \textrm{ if } 1 < i < n\\
 \Delta^0_d \setminus \bigcup_{1\le j < n} \phi(P_j) & \textrm{ if } i=n.
 \end{array}\right.\end{displaymath}
For $I \subset \{1,\ldots,n\}$, let $\phi(\cup_{i\in I} P_i) = \cup_{i\in I} \phi(P_i)$.


\noindent {\bf Claim 1}. Let $0<\delta'< (1/14)\delta |\cT^F|^{-3}$. Let $F' \subset_f \lb\sH\rb$ be a set with $\cT^F \subset F'$ and $\sH^0,f,f^{-1},\ra(f),\so(f) \in F'$ for every $f\in F$. If $\sigma$ is $(F', \delta')$-multiplicative and $(F', \delta')$-trace preserving then $\phi \in \Hom(\pi_0,\sigma,\cT,F,\delta)$. 

\begin{proof}[Proof of Claim 1]
Suppose $n=1$. Then $\ra(f)=\sH^0$ for every $f \in F$. By Lemma \ref{lem:basic-formulas2} for any $f\in F$,
\begin{eqnarray*}
\sum_{P \in \cT} |\sigma_f \cdot \phi(P) \vartriangle \phi(f\cdot P) |_d  &=& |\ra(\sigma_f) \vartriangle \Delta^0_d|_d \le |(\sigma(\ra(f)) \cap \Delta^0_d) \vartriangle \Delta^0_d|_d + 11\delta'\\
 &=&|(\sigma(\sH^0) \cap \Delta^0_d) \vartriangle \Delta^0_d|_d + 11\delta' =  |\Delta^0_d \setminus \sigma(\sH^0)|_d +11\delta' \\
 &=& 1-\tr_d(\sigma(\sH^0)) + 11\delta' \le 12\delta' <\delta.
\end{eqnarray*}
This implies the claim. Let us now suppose that $n>1$.

If $P,Q \in \cT^F$ and $P\ne Q$ then $P \cap Q =\emptyset$. So $P  Q = \emptyset$. Because $\sigma$ is $(\cT^F, \delta')$-multiplicative, 
\begin{eqnarray*}
\delta'&\ge & |\sigma(P) \sigma(Q) \vartriangle \sigma(P  Q)|_d = |\sigma(P) \sigma(Q)|_d.
\end{eqnarray*}
If $i<n$ then $\phi(P_i) \vartriangle (\sigma(P_i) \cap \Delta^0_d) \subset \bigcup_{ j<i} \sigma(P_i)  \sigma(P_j).$ So 
\begin{eqnarray}\label{eqn:n0}
|\phi(P_i) \vartriangle (\sigma(P_i) \cap \Delta^0_d)|_d \le \delta' |\cT^F| \quad i<n.
\end{eqnarray}
On the other hand,
\begin{eqnarray}\label{eqn:n}
\phi(P_n) \vartriangle (\sigma(P_n)\cap \Delta^0_d) &\subset& \left(\sigma(P_n) \cap \Delta^0_d \cap \left( \bigcup_{i<n} \phi(P_i) \right) \right) \cup \left(\Delta^0_d \setminus \cup_{i=1}^n \sigma(P_i)\right).
\end{eqnarray}
Now
\begin{eqnarray*}
\left| \sigma(P_n) \cap \Delta^0_d \cap \left( \bigcup_{i<n} \phi(P_i) \right)  \right|_d &\le&  \left| \bigcup_{i<n} \Delta^0_d \cap \sigma(P_i) \cap \sigma(P_n)\right|_d \\
&\le&  \left| \bigcup_{i<n} \sigma(P_i)\sigma(P_n)\right|_d\le \delta' |\cT^F|.
\end{eqnarray*}
By inclusion-exclusion,
\begin{eqnarray*}
\left| \Delta^0_d \setminus \cup_{i=1}^n \sigma(P_i)\right|_d &\le & 1 - \sum_{i=1}^n |\sigma(P_i) \cap \Delta^0_d| + \sum_{i\ne j} |\sigma(P_i) \cap \sigma(P_j) \cap \Delta^0_d|_d\\
&\le&1 - \sum_{i=1}^n |\sigma(P_i) \cap \Delta^0_d| + \sum_{i\ne j} |\sigma(P_i)\sigma(P_j)|_d \\
&\le& 1 - \sum_{i=1}^n \tr_d(\sigma(P_i)) + \delta' |\cT^F|^2 \le 2\delta' |\cT^F|^2.
\end{eqnarray*}
Equations (\ref{eqn:n0}, \ref{eqn:n}) now imply 
$$|\phi(P_i) \vartriangle (\sigma(P_i) \cap \Delta^0_d)|_d \le 3\delta' |\cT^F|^2$$
for every $i$. Therefore, $|\phi(P) \vartriangle (\sigma(P) \cap \Delta^0_d)|_d \le 3\delta' |\cT^F|^3$ for any $P \in \Sigma(\cT^F)$. By Lemma \ref{lem:basic-formulas2},  for any $f\in F$,
\begin{eqnarray*}
\sum_{P \in \cT} |\sigma_f\cdot\phi(P) \vartriangle \phi( f\cdot P) |_d &=& | \ra(\sigma_f) \vartriangle \phi(\ra(f))|_d \le | \sigma(\ra(f)) \vartriangle \phi(\ra(f))|_d  + 10\delta' \\
&\le&  | \sigma(\ra(f)) \vartriangle (\sigma(\ra(f)) \cap \Delta^0_d)|_d  + 13\delta' |\cT^F|^3 \le 14\delta' |\cT^F|^3 <\delta.
\end{eqnarray*}

Because $\sigma$ is $(F', \delta')$-trace-preserving, $|\tr_\sH(P)-\tr_d(\sigma(P))| < \delta'$ for every $P\in \cT^F$. By definition, for $P \subset \sH^0$, $\tr_\sH(P)=\nu(P)$ and $|\sigma(P) \cap \Delta^0_d|_d = \tr_d(\sigma(P))$. Thus
\begin{eqnarray*}
\sum_{P\in \cT^F} | |\phi(P)|d^{-1} - \nu(P) |  &\le&3\delta' |\cT^F|^3 + \sum_{P\in \cT^F} | |\sigma(P) \cap \Delta^0_d|_d - \tr_\sH(P) | \\
&=&3\delta' |\cT^F|^3 + \sum_{P\in \cT^F} | \tr_d(\sigma(P)) - \tr_\sH(P) |  \le 4\delta' |\cT^F|^3  < \delta.
\end{eqnarray*}
This implies Claim 1.
\end{proof}
Claim 1 and the definition of sofic approximation implies 
$$\lim_{j\to\beta} \|  |\Hom(\pi_0,\cdot,\cT,F,\delta) | \|_{p,\P_j} = 1.$$
Therefore $h_{\P,\nu}(\pi_0)\ge 0$. Since we have already obtained the opposite inequality, this implies $h_{\P,\nu}(\pi_0)= 0$. The last statement follows from Claim 1.

\end{proof}

For the proof of Theorem \ref{thm:bernoulli} we will need the next simple estimate.
\begin{lem}\label{lem:basic-formulas4}
Let $F \subset \lb \sH\rb$, $\delta>0$ and  $\sigma:\lb\sH\rb\to\lb d \rb$ is $(F,\delta)$-multiplicative and $(F,\delta)$-trace-preserving. Suppose $f \in F$ and $f \cap \sH^0 \in F$. Then
$$|\sigma(f \cap \sH^0) \vartriangle (\sigma(f)\cap \Delta^0_d)|_d < 9\delta.$$
\end{lem}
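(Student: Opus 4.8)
\textbf{Proof plan for Lemma \ref{lem:basic-formulas4}.}
The goal is to control $|\sigma(f \cap \sH^0) \vartriangle (\sigma(f)\cap \Delta^0_d)|_d$, i.e.\ to show that the ``diagonal part'' of $\sigma(f)$ is approximately the image under $\sigma$ of the diagonal part $f \cap \sH^0$ of $f$. The basic idea is that $f \cap \sH^0$ is a Borel subset of $\sH^0$ and it can be recovered groupoid-theoretically as $f (f\cap\sH^0) = f$ restricted to a set on which $f$ acts trivially; more precisely, I would use that $f\cap\sH^0 = f \cap f^{-1}$ together with item (1) of Lemma \ref{lem:basic-formulas2} (applied to $P = f\cap\sH^0$, which is a subset of $\sH^0$), which already gives $|\sigma(f\cap\sH^0) \vartriangle (\sigma(f\cap\sH^0)\cap\Delta^0_d)|_d \le \delta$. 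So it suffices to compare $\sigma(f\cap\sH^0)\cap\Delta^0_d$ with $\sigma(f)\cap\Delta^0_d$.

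The plan is as follows. First, observe $\sigma(f)\cap\Delta^0_d$ is, up to $O(\delta)$, equal to $\sigma(f)\cap\sigma(f)^{-1}$: indeed an element of $\sigma(f)$ lies in $\Delta^0_d$ iff it is a fixed point, and using the partial-bijection structure of $\sigma(f) \in \lb d\rb$ one checks $\sigma(f)\cap\Delta^0_d \subseteq \sigma(f)\sigma(f)^{-1}\cap\Delta^0_d$ and conversely, with error controlled by $(F,\delta)$-multiplicativity as in the proof of item (1) of Lemma \ref{lem:basic-formulas2}. Then I would use $(F,\delta)$-multiplicativity and item (4) of Lemma \ref{lem:basic-formulas2} ($|\sigma(f^{-1})\vartriangle\sigma(f)^{-1}|_d \le 15\delta$) to pass between $\sigma(f)\sigma(f)^{-1}$ and $\sigma(ff^{-1}) = \sigma(\ra(f))$, hence to relate $\sigma(f)\cap\Delta^0_d$ to $\sigma(\ra(f))\cap\Delta^0_d = \tr_d(\sigma(\ra(f)))$-sized diagonal. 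The point is that $\tr_d(\sigma(f)) = |\sigma(f)\cap\Delta^0_d|_d$ and $\tr_d(\sigma(f\cap\sH^0)) = |\sigma(f\cap\sH^0)\cap\Delta^0_d|_d$; by $(F,\delta)$-trace-preservation these are within $\delta$ of $\tr_\sH(f) = \nu(f\cap\sH^0)$ and $\tr_\sH(f\cap\sH^0) = \nu(f\cap\sH^0)$ respectively, which are \emph{equal}. Combining, $|\sigma(f)\cap\Delta^0_d|$ and $|\sigma(f\cap\sH^0)\cap\Delta^0_d|$ differ by at most $2\delta d$. Since $\sigma(f\cap\sH^0)$ and $\sigma(f)$ are both partial bijections in $\lb d\rb$, and the trace is measuring exactly the overlap with the diagonal, a containment-plus-cardinality argument upgrades the trace estimate to a symmetric-difference estimate: if $A, B$ are subsets of $\Delta^0_d$ with one ``morally contained'' in the other up to $O(\delta)$ and $||A| - |B||\le 2\delta d$, then $|A\vartriangle B|_d = O(\delta)$.

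The cleanest route to the containment is: an element $(i,i)\in\Delta^0_d$ with $(i,i)\in\sigma(f)$ satisfies, up to the $(F,\delta)$-multiplicativity error on the pair $(f, f\cap\sH^0)$ (noting $f(f\cap\sH^0) = f\cap\sH^0$ as subsets of $\sH$, since $f\cap\sH^0$ consists of units fixed by $f$), that $(i,i)\in\sigma(f)\sigma(f\cap\sH^0)$; chasing the partial-bijection identities this forces $i$ to be in the image of the diagonal part, i.e.\ $(i,i)\in\sigma(f\cap\sH^0)$ up to error. Tracking constants: item (1) of Lemma \ref{lem:basic-formulas2} costs $\delta$; identifying $\sigma(f)\cap\Delta^0_d$ with $\sigma(ff^{-1})\cap\Delta^0_d = \sigma(\ra(f))\cap\Delta^0_d$ costs $O(\delta)$ via items (1),(4); the trace comparison costs $3\delta$ (three applications of trace-preservation, on $f$, $f\cap\sH^0$, $\ra(f)$); and the containment-to-symmetric-difference upgrade doubles things. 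Summing conservatively lands under $9\delta$, as claimed. \textbf{The main obstacle} I anticipate is bookkeeping: organizing the chain of $O(\delta)$ estimates so that the final constant is genuinely below $9\delta$ without a delicate optimization — the natural naive bound may come out slightly larger, so I would look for the shortcut of directly writing $f\cap\sH^0 = f\cap f^{-1}$ and applying Lemma \ref{lem:basic-formulas2} as a black box to $\sigma(f)\cap\sigma(f)^{-1}$, rather than going through $\ra(f)$, which should tighten the constant.
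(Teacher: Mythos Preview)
Your overall strategy matches the paper's: use Lemma~\ref{lem:basic-formulas2}(1) on $P=f\cap\sH^0$, use multiplicativity via the identity $f(f\cap\sH^0)=f\cap\sH^0$, use trace-preservation on $f$ and on $f\cap\sH^0$ (noting $\tr_\sH(f)=\nu(f\cap\sH^0)=\tr_\sH(f\cap\sH^0)$) to match cardinalities, and then upgrade a one-sided containment to a symmetric-difference bound. That is exactly how the paper proceeds, and the constants do land at $9\delta$.

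However, two points in your execution need correction. First, the detour through $\sigma(f)\cap\sigma(f)^{-1}$, $\sigma(\ra(f))$, and item~(4) of Lemma~\ref{lem:basic-formulas2} is not available: the hypotheses give only $f,\,f\cap\sH^0\in F$, not $f^{-1},\so(f),\ra(f)\in F$, so neither item~(4) nor trace-preservation on $\ra(f)$ can be invoked. You correctly sense this at the end and retreat to the direct route via $f(f\cap\sH^0)=f\cap\sH^0$; that is the only route the hypotheses support.

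Second, and more substantively, your ``cleanest route'' containment is backward. Multiplicativity on the pair $(f,f\cap\sH^0)$ gives $|\sigma(f\cap\sH^0)\vartriangle\sigma(f)\sigma(f\cap\sH^0)|_d<\delta$; this says nothing about an arbitrary $(i,i)\in\sigma(f)$. What it \emph{does} give, after intersecting with $\Delta^0_d$, is that $(\sigma(f\cap\sH^0)\cap\Delta^0_d)\setminus(\sigma(f)\cap\Delta^0_d)$ is small: if $(i,i)\in\sigma(f\cap\sH^0)\cap\Delta^0_d$ lies in $\sigma(f)(\sigma(f\cap\sH^0)\cap\Delta^0_d)$ then the partial-bijection structure forces $(i,i)\in\sigma(f)$. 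The paper gets $|(\sigma(f\cap\sH^0)\cap\Delta^0_d)\setminus(\sigma(f)\cap\Delta^0_d)|_d<3\delta$ this way. The \emph{other} direction, $(\sigma(f)\cap\Delta^0_d)\setminus(\sigma(f\cap\sH^0)\cap\Delta^0_d)$, cannot come from multiplicativity alone; it is obtained from the cardinality match $\bigl||\sigma(f)\cap\Delta^0_d|_d-|\sigma(f\cap\sH^0)\cap\Delta^0_d|_d\bigr|<2\delta$ furnished by trace-preservation, yielding $<5\delta$ for that set-difference. Summing gives $8\delta$ for the symmetric difference of the two diagonal sets, and one more $\delta$ from Lemma~\ref{lem:basic-formulas2}(1) finishes with $9\delta$.
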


\begin{proof}
Because $\sigma$ is $(F,\delta)$-multiplicative,
$$\delta > |\sigma( f(f\cap \sH^0) ) \vartriangle \sigma(f)\sigma(f\cap \sH^0)|_d.$$
Observe that $f(f\cap \sH^0) = f\cap \sH^0$. By Lemma \ref{lem:basic-formulas2}
$$| \sigma(f\cap \sH^0) \vartriangle (\sigma(f\cap \sH^0) \cap \Delta^0_d)|_d <\delta.$$
So,
\begin{eqnarray*}
3\delta&>& |(\sigma(f\cap \sH^0) \cap \Delta^0_d) \vartriangle \sigma(f)(\sigma(f\cap \sH^0) \cap \Delta^0_d)|_d\\
&\ge&|(\sigma(f\cap \sH^0) \cap \Delta^0_d) \setminus \sigma(f)(\sigma(f\cap \sH^0) \cap \Delta^0_d)|_d.
\end{eqnarray*}
However,
$$(\sigma(f\cap \sH^0) \cap \Delta^0_d) \setminus (\sigma(f)\cap \Delta^0_d) \subset (\sigma(f\cap \sH^0) \cap \Delta^0_d) \setminus \sigma(f)(\sigma(f\cap \sH^0) \cap \Delta^0_d).$$
So
\begin{eqnarray}\label{eqn:3d}
3\delta > | (\sigma(f\cap \sH^0) \cap \Delta^0_d) \setminus (\sigma(f)\cap \Delta^0_d) |_d.
\end{eqnarray}
Because $\sigma$ is $(F,\delta)$-trace-preserving,
\begin{eqnarray*}
\delta&>& | \tr_\sH( f) - \tr_d(\sigma(f))| =| \nu(f \cap \sH^0) - |\sigma(f)\cap \Delta^0_d|_d |,\\
\delta &>& | \tr_\sH( f \cap \sH^0) - \tr_d(\sigma(f \cap \sH^0))| =| \nu(f \cap \sH^0) - |\sigma(f \cap \sH^0)\cap \Delta^0_d|_d |.
\end{eqnarray*}
Therefore,
$$2\delta > | |\sigma(f)\cap \Delta^0_d |_d -  |\sigma(f \cap \sH^0)\cap \Delta^0_d|_d |.$$
By (\ref{eqn:3d}) this implies
\begin{eqnarray*}
| (\sigma(f\cap \sH^0) \cap \Delta^0_d) \cap (\sigma(f)\cap \Delta^0_d) |_d  &=&   | \sigma(f\cap \sH^0) \cap \Delta^0_d |_d - | (\sigma(f\cap \sH^0) \cap \Delta^0_d) \setminus (\sigma(f)\cap \Delta^0_d) |_d \\
&>& |\sigma(f)\cap \Delta^0_d |_d -5\delta.
\end{eqnarray*}
So
\begin{eqnarray*}
| (\sigma(f) \cap \Delta^0_d) \setminus (\sigma(f \cap \sH^0)\cap \Delta^0_d) |_d &=& | \sigma(f) \cap \Delta^0_d|_d  - | (\sigma(f) \cap \Delta^0_d) \cap (\sigma(f \cap \sH^0)\cap \Delta^0_d) |_d < 5\delta.
\end{eqnarray*}
By (\ref{eqn:3d}) this implies
$$8\delta > | (\sigma(f\cap \sH^0) \cap \Delta^0_d) \vartriangle (\sigma(f)\cap \Delta^0_d) |_d.$$
By Lemma \ref{lem:basic-formulas2}, $9\delta >  | (\sigma(f\cap \sH^0) \vartriangle (\sigma(f)\cap \Delta^0_d) |_d.$
\end{proof}

\begin{proof}[Proof of Theorem \ref{thm:bernoulli}]

 By Corollary \ref{cor:first-inequality}, we need only to prove that $h_{\P,\mu}(\pi) \ge H(K,\kappa)$. Let $E:\sG^0 \to K$ be the evaluation map $E(x,\omega)=\omega(x)$.

Let $F \subset_f [\sH]$  and $\cR$ be a finite partition of $\sH^0$. We assume $\sH^0 \in F, F=F^{-1}$ and $\Sigma(\cR)$ contains $f_1^{-1}f_2 \cap \sH^0$ for every $f_1,f_2\in F$. Let $\cQ \le \cP$ be finite Borel partitions of $K$. Let $\bar{\cP} = E^{-1}(\cP)$, $\bar{\cQ} = E^{-1}(\cQ)$ and $\bar{\cR}=\pi^{-1}(\cR)$. 

By Lemma \ref{lem:zero} there exist a finite set $\tilde{F} \subset \lb\sH\rb$ and $0<\tilde{\delta}<\delta$ such that if $\sigma:\lb \sH \rb \to \lb d\rb$ is $(\tilde{F},\tilde{\delta})$-multiplicative and $(\tilde{F},\tilde{\delta})$-trace-preserving then there exists a homomorphism $\phi \in \Hom(\pi_0,\sigma, \cT,F \Sigma(\cR),\delta)$  where $\pi_0:\sH \to \sH$ is the identity. Let us choose such a map $\sigma$. We also require that $F\Sigma(\cR) \subset \tilde{F}$ and $f_2^{-1}f_1 \in \tilde{F}$ for every $f_1,f_2 \in F$. Because $f_2^{-1}f_1 \cap \sH^0 \in \Sigma(\cR)$ for every $f_1,f_2\in F$, it follows that $F \subset \tilde{F}$ and $f_2^{-1}f_1 \cap \sH^0 \in \tilde{F}$ for every $f_1,f_2 \in F$.



Let $z:\Delta^0_d \to K$. Define $\psi_z:\Sigma(\cP) \to \cB(\Delta^0_d)$ by $\psi_z(P) = \{ u \in \Delta^0_d:~z(u) \in P\}$. Observe that $\psi_z$ is a homomorphism.

Observe that $\bar{\cP}^{F\cR} = (\bar{\cP}^{\cR})^F = ( \bar{\cP} \vee \pi^{-1}(\cR) )^F=(\bar{\cP} \vee \bar{\cR})^F$. Then for any atom $\bar{P} \in (\bar{\cP} \vee \bar{\cR})^F$, there exist atoms $\bar{P}_f \in \bar{\cP}, \bar{R}_f \in \bar{\cR}$ such that $\bar{P}  = \bigcap_{f\in F} f^{-1}\cdot(\bar{R}_f \cap \bar{P}_f)$. This assignment is unique (because $F \subset [\sH])$. Define $\phi_z:\bar{\cP}^{F \cR} \to \cB(\Delta^0_d)$ by
$$\phi_z(\bar{P}) :=   \phi\left(\bigcap_{f\in F} f^{-1}\cdot\pi( \bar{R}_f) \right) \cap \bigcap_{f\in F} \sigma_f^{-1}\cdot\psi_z(E(\bar{P}_f)).$$
 For any $S \subset \bar{\cP}^{F \cR}$, define $\phi_z(\bigcup_{P \in S} P) := \bigcup_{P\in S} \phi_z(P)$.
 This defines $\phi_z$ on $\Sigma(\bar{\cP}^{F \cR} )$.
 
Observe that $\phi_z$ is well-behaved with respect to unions and intersections in the sense that $\phi_z(A \cup B) = \phi_z(A) \cup \phi_z(B)$ and $\phi_z(A \cap B) = \phi_z(A) \cap \phi_z(B)$ for any $A,B \in \Sigma(\bar{\cP}^{F \cR} )$. However, $\phi_z(\sG^0)$ need not equal $\Delta^0_d$. So it may not be a homomorphism. Still, we will show that it is close to a homomorphism. 

To manage error terms we will use big $O$ notation. The constant implicit in the notation is allowed to depend on $F,\cR,\cP,\cQ,K,\kappa$ but not on $\delta$, $d$ or $\sigma$.

{\bf Claim 1}. Let $F' \subset F$ and let $R'$ be the set of all $x\in \sH^0$ such that for every $f \in F$ there exists a unique $f' \in F'$ such that $\so^{-1}(x) \cap f = \so^{-1}(x) \cap f' \ne \emptyset$. Similarly, let $R''$ be the set of all $u \in \Delta^0_d$ such that for every $f \in F$ there exists a unique $f' \in F'$ such that $\so^{-1}(u) \cap \sigma(f) = \so^{-1}(u) \cap \sigma(f') \ne \emptyset$. Then $R' \in \Sigma(\cR)$ and $|\phi(R') \vartriangle R''|_d  = O(\delta)$.

\begin{proof}
Let $\lambda:F \to F'$ be a function whose restriction to $F'$ is the identity map. Let 
$$R'_\lambda = \left(\bigcap_{f\in F} f^{-1}  \lambda(f) \cap \sH^0\right) \setminus  \left(\bigcap_{f_1 \ne f_2 \in F'} f_2^{-1}  f_1 \cap \sH^0\right).$$
Then $R' = \bigcup_\lambda R'_\lambda$. Since each $R'_\lambda \in \Sigma(\cR)$ by choice of $\cR$, this shows $R' \in \Sigma(\cR)$. We claim that for each $f_1,f_2 \in F$,
\begin{eqnarray*}
&&| \phi( f_2^{-1}  f_1 \cap \sH^0) \vartriangle (\sigma(f_2)^{-1}  \sigma(f_1) \cap \Delta^0_d)|_d\\
 &\le& | \phi( f_2^{-1}  f_1\cap \sH^0) \vartriangle \sigma(f_2^{-1} f_1 \cap \sH^0)|_d+ | \sigma(f_2^{-1} f_1 \cap \sH^0) \vartriangle (\sigma(f_2^{-1} f_1 )\cap \Delta^0_d) |_d \\
  &&+ | \sigma(f_2^{-1} f_1) \cap \Delta^0_d ) \vartriangle  (\sigma(f_2)^{-1}  \sigma(f_1) \cap \Delta^0_d)|_d\\
 &\le& 3\delta + 9\tdelta +  \tdelta + 15\tdelta  \le 30\delta.
\end{eqnarray*}
This uses Lemmas \ref{lem:basic-formulas3}, \ref{lem:basic-formulas4} and \ref{lem:basic-formulas2}. So if 
$$R''_\lambda= \left(\bigcap_{f\in F} \sigma(f)^{-1}  \sigma(\lambda(f)) \cap \Delta^0_d\right) \setminus  \left(\bigcap_{f_1 \ne f_2 \in F'} \sigma(f_2)^{-1}  \sigma(f_1) \cap \Delta^0_d\right)$$
 then $|\phi(R'_\lambda) \vartriangle R''_\lambda|_d \le 60\delta |F|^2$. Observe that $R''=\bigcup_\lambda R''_\lambda$.  So
\begin{eqnarray*}
 |\phi(R') \vartriangle R''|_d &\le& \sum_\lambda | \phi(R'_\lambda) \vartriangle R''_\lambda |_d \le 60\delta |F|^{|F|+2} = O(\delta).
 \end{eqnarray*}
 
\end{proof}

{\bf Claim 2}. Let $z: \Delta^0_d \to K$ be random with law equal to the product measure $\kappa^{\Delta^0_d}$. Then there is a constant $C>0$ such that for any $P \in \bar{\cP}^{F \cR}$ and $\epsilon>0$ the probability that $||\phi_z(P)|_d - \mu(P) | \le  \epsilon + C\delta$ is at least $1-O(\frac{ \delta}{\epsilon^2})$. The constant $C$ as well as the constant implicit in the $O(\cdot)$ notation may depend on $P,\cR,\cP,\cQ,F,K,\kappa$ but not on $\epsilon, \delta, d, \sigma$. 

\begin{proof}[Proof of Claim 2]
For $f\in F$, let $R_f \in \cR$ and $P_f \in \cP$ be such that 
$$P = \bigcap_{f\in F} f^{-1} \cdot(\pi^{-1}(R_f) \cap E^{-1}(P_f) ).$$
Let $R_P = \bigcap_{f\in F} f^{-1}\cdot R_f$. We define an equivalence relation $\sim_P$ on $F$ by $f_1 \sim_P f_2$ if $f_1x = f_2x$ for every $x\in R_P$. Equivalently, $f_1 \sim_P f_2$ if $R_P \subset f_2^{-1}f_1 \cap \sH^0$. Let $F_P \subset F$ be a set containing exactly one element from each $\sim_P$ equivalence class. Because $\Sigma(\cR) \supset f_2^{-1}f_1 \cap \sH^0$ for every $f_1,f_2 \in F$, it follows that for every $f \in F$ and $x \in R_P$, there exists a unique $f' \in F_P$ such that $\so^{-1}(x) \cap f = \so^{-1}(x) \cap f'$ (equivalently, such that $fx=f'x$). 

Observe that
$$P=\{(x,\omega)\in \sG^0:~ x\in \pi^{-1}(R_P) \textrm{ and } \omega(f \cap \so^{-1}(x)) \in P_f~\forall f\in F_P\}.$$
So the definition of $\mu$ implies
$$\mu(P) = \nu(R_P) \prod_{f\in F_P} \kappa(P_f).$$
Note $$\phi_z(P) = \phi(R_P) \cap \bigcap_{f\in F} \sigma_f^{-1}\cdot\psi_z(P_f)=\{q \in \phi(R_P):~z(\sigma_f \cdot q) \in P_f~\forall f\in F\}.$$
For $q \in \Delta^0_d$, let $X_q=1$ if $q \in \phi_z(P)$ and $X_q=0$ otherwise. Note that $X_q=0$ if $q \notin \phi(R_P)$. So
$$\EE[ |\phi_z(P)| ] = \sum_{q \in \Delta^0_d} \EE[X_q] =  \sum_{q \in \phi(R_P)} \EE[X_q]$$
where $\EE[\cdot]$ denotes expected value with respect to $z$.

Let $T_P$ be the set of all $q \in \Delta^0_d$ such that for every $f \in F$ there is a unique $f' \in F_P$ with $\sigma(f) q = \sigma(f')q$. By Claim 1, $|\phi(R_P) \setminus T_P|_d = O(\delta)$. Therefore,
$$\EE[ |\phi_z(P)| ] = O(\delta d) + \sum_{q \in \phi(R_P) \cap T_P} \EE[X_q] .$$
If $q \in \phi(R_P) \cap T_P$ then
$$\EE[X_q] = \textrm{Prob}(X_q = 1) = \prod_{f\in F_P} \textrm{Prob}(z(\sigma_f\cdot q) \in P_f) = \prod_{f\in F_P} \kappa(P_f) = \frac{\mu(P)}{\nu(R_P)}.$$
Because $\phi \in \Hom(\pi_0,\sigma, \cT,F \cR,\delta)$, $|\phi(R_P)|_d = \nu(R_P) +O( \delta)$. So
\begin{eqnarray*}
\sum_{q \in \phi(R_P) \cap T_P} \EE[X_q] &=& |  \phi(R_P) \cap T_P | \prod_{f\in F_P} \kappa(P_f) =  O(\delta d)+ |\phi(R_P)| \prod_{f\in F_P} \kappa(P_f) \\
&=&  O(\delta d)+ \nu(R_P) d \prod_{f\in F_P} \kappa(P_f) =  O(\delta d)+ \mu(P)d .
\end{eqnarray*}
Thus
$$ \EE[ |\phi_z(P)| ] = \mu(P)d  + O(\delta d).$$

Next we estimate the variance of $|\phi_z(P)|$, which we denote by $\textrm{Var}(|\phi_z(P)|) = \EE[ |\phi_z(P)|^2 ] - \EE[ |\phi_z(P)| ]^2$. Observe:
\begin{eqnarray*}
\EE[ |\phi_z(P)|^2 ] &=& \sum_{u,v \in \phi(R_P)} \EE[X_u X_v].
\end{eqnarray*}
If $u,v \in \phi(R_P)$ and $\sigma(f_1) \cdot u \ne \sigma(f_2)\cdot v$ for any $f_1,f_2 \in F$ then $X_u$ and $X_v$ are independent. If in addition  $u,v \in \phi(R_P)\cap T_P$ then
$$\EE[X_uX_v] = \EE[X_u] \EE[X_v] = \prod_{f\in F_P} \kappa(P_f)^2 = \frac{\mu(P)^2}{\nu(R_P)^2}.$$
 On the other hand $X_uX_v \le 1$ almost surely (regardless of whether or not they are independent) and the number of pairs $(u,v) \in \phi(R_P) \times \phi(R_P)$ such that either $u\notin T_P, v \notin T_P$ or $\sigma(f_1)\cdot u=\sigma(f_2)\cdot v$ for some $f_1,f_2\in F$ is at most 
 $$|F|^2 |\phi(R_P)| + 2|\phi(R_P)\setminus T_P|\cdot |\phi(R_P)| =|F|^2|\phi(R_P)| + O(\delta d^2).$$
So
 $$ \sum_{u,v \in \phi(R_P)} \EE[X_u X_v] \le |F|^2 |\phi(R_P)| + |\phi(R_P)|^2\frac{\mu(P)^2}{\nu(R_P)^2} + O(\delta d^2).$$
 Because $|\phi(R_P)|_d = \nu(R_P) +O( \delta)$,
 \begin{eqnarray*}
 \textrm{Var}(|\phi_z(P)|) &\le&  |F|^2 |\phi(R_P)| + |\phi(R_P)|^2\frac{\mu(P)^2}{\nu(R_P)^2} - d^2\mu(P)^2 + O(\delta d^2)\\
 &\le& |F|^2 \nu(R_P)d + O(\delta d^2).
 \end{eqnarray*}
By Chebyshev's inequality, for any $\epsilon>0$,
 \begin{eqnarray*}
 \textrm{Prob}(| |\phi_z(P)|_d - \EE[ |\phi_z(P)|_d ] | > \epsilon  ) &= & \textrm{Prob}(| |\phi_z(P)| - \EE[ |\phi_z(P)| ] | > d\epsilon) \\
 &\le& \frac{ \textrm{Var}(|\phi_z(P)|) }{d^2\epsilon^2}  \\
 &\le& \frac{ |F|^2 \nu(R_P)d + O(\delta d^2) }{d^2 \epsilon^2}  = O\left(\frac{ \delta}{\epsilon^2}\right).
  \end{eqnarray*}
  Because $\mu(P) = \EE[ |\phi_z(P)|_d ] + O(\delta)$, this implies the claim.
  \end{proof}

Claim 2 implies that with probability $\ge 1-O(\frac{ \delta}{\epsilon^2})$,
\begin{eqnarray*}
\sum_{P\in \bar{\cP}^{F \cR}} ||\phi_z(P)|_d - \mu(P) | = O(\epsilon + \delta).
\end{eqnarray*}
In this case, 
\begin{eqnarray}\label{eqn:complement}
|\phi_z(\sG^0)|_d \ge 1 - O(\epsilon + \delta).
\end{eqnarray}
Now define a homomorphism $\phi'_z:\Sigma( \bar{\cP}^{F \cR} ) \to \cB(\Delta^0_d)$ by choosing $P_0 \in \bar{\cP}^{F \cR}$ and defining
\begin{displaymath}
\phi'_z(P) = \left\{\begin{array}{cc}
\phi_z(P)  & \textrm{ if } P \ne P_0\\
\phi_z(P_0)\cup (\Delta^0_d \setminus \phi_z(\sG^0)) & \textrm{ if } P=P_0
\end{array}\right.
\end{displaymath}
It follows that 
\begin{eqnarray}\label{eqn:phi'}
\sum_{P \in\bar{\cP}^{F \cR}} | \phi'_z(P) \vartriangle \phi_z(P) | = O(\delta + \epsilon).
\end{eqnarray}
Therefore,
\begin{eqnarray}\label{eqn:good2}
\sum_{P\in \bar{\cP}^{F \cR}} ||\phi'_z(P)|_d - \mu(P) | = O(\epsilon + \delta).
\end{eqnarray}

{\bf Claim 3}. Let $z: \Delta^0_d \to K$ be random with law $\kappa^{\Delta^0_d}$ and $\epsilon>0$. Then with probability at least $1-O(\frac{ \delta}{\epsilon^2})$,  $$\sum_{P \in \cP} |\phi'_z( E^{-1}(P)) \vartriangle \psi_z(P)|_d  = O(\epsilon + \delta).$$

\begin{proof}[Proof of Claim 3]
The definition of $\phi_z$ immediately implies 
$$ \phi_z(E^{-1}(P)) \cap \sigma(\sH^0) \subset \sigma(\sH^0)\cdot \phi_z(E^{-1}(P))  \subset \psi_z(P)$$
for every $P\in \cP$. Because $\phi_z(A\cap B)=\phi_z(A)\cap\phi_z(B)$ for any $A,B \in \bar{\cP}^{F\cR}$,
\begin{eqnarray*}
\phi_z( E^{-1}(P)) \vartriangle \psi_z(P) &\subset&  (\Delta^0\setminus \sigma(\sH^0)) \cup  \left[ (\phi_z(E^{-1}(P)) \cap \sigma(\sH^0)) \vartriangle  (\psi_z(P) \cap \sigma(\sH^0) ) \right] \\
&\subset&  (\Delta^0\setminus \sigma(\sH^0)) \cup  \left[ \psi_z(P) \setminus  \left(\phi_z(E^{-1}(P)) \cap \sigma(\sH^0) \right)\right]  \\
&\subset& (\Delta^0\setminus \sigma(\sH^0)) \cup  \left[ \Delta^0_d \setminus (\phi_z(\sG^0)  \cap \sigma(\sH^0)) \right] \\
&=& \Delta^0_d \setminus (\phi_z(\sG^0)  \cap \sigma(\sH^0))
\end{eqnarray*}
which implies
\begin{eqnarray*}
\bigcup_{P\in \cP} \phi_z( E^{-1}(P)) \vartriangle \psi_z(P) &\subset&  \Delta^0_d \setminus (\phi_z(\sG^0)  \cap \sigma(\sH^0)).
\end{eqnarray*}
Because $\sigma$ is $(\tilde{F},\tilde{\delta})$-trace-preserving,
$$\tr_d(\sigma(\sH^0)) > \tr_\sH(\sH^0) - \tilde{\delta} > 1 -\delta.$$
So 
\begin{eqnarray}\label{eqn:small}
|\sigma(\sH^0) \vartriangle \Delta^0_d|_d  = 1 - |\sigma(\sH^0) \cap \Delta^0_d|_d  = 1-\tr_d(\sigma(\sH^0)) < \delta.
\end{eqnarray}
By (\ref{eqn:phi'}, \ref{eqn:complement}) 
\begin{eqnarray*}
\sum_{P \in \cP} |\phi'_z( E^{-1}(P)) \vartriangle \psi_z(P)|_d &\le & O(\epsilon+\delta) + \sum_{P \in \cP} |\phi_z( E^{-1}(P)) \vartriangle \psi_z(P)|_d \\
&\le & O(\epsilon+\delta) +  |\Delta^0_d \setminus(\sigma(\sH^0)\cup \phi_z(\sG^0))|_d = O(\epsilon + \delta)
 \end{eqnarray*}
 with probability at least $1-O(\frac{ \delta}{\epsilon^2})$.
  \end{proof}

{\bf Claim 4}.  Let $z: \Delta^0_d \to K$ be random with law $\kappa^{\Delta^0_d}$ and $\epsilon>0$. Then with probability at least $1-O(\frac{ \delta}{\epsilon^2})$,  for every $P \in \cP$, $R \in \cR$ and $f\in F$, 
$$|\phi'_z( f\cdot(\pi^{-1}(R) \cap E^{-1}(P))) \vartriangle \sigma_f\cdot(\phi(R) \cap \psi_z(P))|_d = O(\epsilon+\delta).$$

\begin{proof}[Proof of Claim 4]
Because $\phi \in \Hom(\pi_0,\sigma, \cT,F \cR,\delta)$, $|\phi(\sH^0)|_d > 1-\delta$. Also if $R \in \cR$ then
$$\delta > |\phi(R \cdot \sH^0) \vartriangle \sigma_R\cdot \phi(\sH^0)|_d > |\phi(R) \vartriangle \sigma_R|_d - \delta.$$
So if $f \in F$ then by $(\tilde{F},\tilde{\delta})$-multiplicativity of $\sigma$,
\begin{eqnarray*}
\delta &>& |\phi(fR \cdot \sH^0) \vartriangle \sigma_{fR}\cdot \phi(\sH^0)|_d >  |\phi(f\cdot R) \vartriangle \sigma_{fR}|_d -\delta\\
&>&  |\phi(f\cdot R) \vartriangle \sigma_f\sigma_R|_d -2\delta>  |\phi(f\cdot R) \vartriangle \sigma_f\cdot \phi(R)|_d -4\delta.
\end{eqnarray*}
By (\ref{eqn:phi'}) it suffices to prove Claim 4 with $\phi_z$ in place of $\phi'_z$. By definition of $\phi_z$, $\phi_z( f\cdot\pi^{-1}(R)) = \phi(f\cdot R) \cap \phi_z(\sG^0)$. By (\ref{eqn:complement}) this implies 
\begin{eqnarray*}
|\phi_z(f\cdot\pi^{-1}(R)) \vartriangle \sigma_f\cdot\phi(R)|_d &=& O(\epsilon+\delta) + |\phi(f\cdot R) \vartriangle \sigma_f\cdot\phi_z(R)|_d = O(\epsilon+\delta)
\end{eqnarray*}
with  probability at least $1-O(\frac{ \delta}{\epsilon^2})$.


By definition of $\phi_z$, $\phi_z(f \cdot E^{-1}(P))= \sigma(f^{-1})^{-1}\cdot\psi_z(P) \cap \phi_z(\sG^0)$. By Lemma \ref{lem:basic-formulas2}, $|\sigma(f^{-1})^{-1}\vartriangle \sigma(f)|_d =O(\delta)$. So (\ref{eqn:complement}) implies 
$$|\phi_z(f\cdot E^{-1}(P)) \vartriangle \sigma_f\cdot\psi_z(P)|_d = O(\epsilon+\delta)$$ 
with  probability at least $1-O(\frac{ \delta}{\epsilon^2})$.

From the previous two paragraphs we obtain 
\begin{eqnarray*}
O(\epsilon+\delta) &=& |\phi_z(f\cdot\pi^{-1}(R)) \vartriangle \sigma_f\cdot\phi(R)|_d + |\phi_z(f\cdot E^{-1}(P)) \vartriangle \sigma_f\cdot\psi_z(P)|_d \\
&\ge& |\phi_z( f\cdot(\pi^{-1}(R) \cap E^{-1}(P))) \vartriangle \sigma_f\cdot(\phi(R) \cap \psi_z(P))|_d
\end{eqnarray*}
with  probability at least $1-O(\frac{ \delta}{\epsilon^2})$. This implies the claim.
\end{proof}
 
 {\bf Claim 5}. Given $\epsilon>0$, we have that with probability at least $1-O(\frac{ \delta}{\epsilon^2})$, for every $f\in F$ and $\bar{R}\in \Sigma(\pi^{-1}(\cR))$, if $R = \pi(\bar{R})$ then
 $$\sum_{\bar{P} \in E^{-1}(\cP)} | \phi'_z( (fR)\cdot\bar{P}) \vartriangle \sigma_{fR} \cdot\phi'_z(\bar{P}) |_d = O(\epsilon+\delta).$$
 
 \begin{proof}[Proof of Claim 5]
 Let $R = \pi(\bar{R})$ and $P=E(\bar{P})$ for any $\bar{P} \in E^{-1}(\cP)$. Using Claims 3 and 4, we see that with probability at least $1-O(\frac{ \delta}{\epsilon^2})$, \begin{eqnarray*}
&& \sum_{\bar{P} \in E^{-1}(\cP)}   | \phi'_z( (f R) \cdot \bar{P}) \vartriangle \sigma_{f  R} \cdot \phi'_z(\bar{P}) |_d   =  \sum_{\bar{P} \in E^{-1}(\cP)} | \phi'_z( f\cdot (\bar{P} \cap \bar{R})) \vartriangle \sigma_{f  R}\cdot \phi'_z(\bar{P}) |_d  \\
 &\le& O(\epsilon+\delta) + \sum_{\bar{P} \in E^{-1}(\cP)} |\sigma_f\cdot(\phi(R) \cap \psi_z(P)) \vartriangle \sigma_{f}  \sigma_{R} \cdot\phi'_z(\bar{P}) |_d  +   |\sigma_{f}  \sigma_{R} \cdot \phi'_z(\bar{P}) \vartriangle  \sigma_{f  R} \cdot \phi'_z(\bar{P}) |_d  \\
  &\le& O(\epsilon+\delta) + \sum_{\bar{P} \in E^{-1}(\cP)} |(\phi(R) \cap \psi_z(P)) \vartriangle  \sigma_{R} \cdot\phi'_z(\bar{P}) |_d \\
  &\le& O(\epsilon+\delta) + \sum_{\bar{P} \in E^{-1}(\cP)} |(\phi(R) \cap \psi_z(P)) \vartriangle  \sigma_{R} \cdot\psi_z(P) |_d \\
     &\le&  O(\epsilon+\delta) + \sum_{\bar{P} \in E^{-1}(\cP)} \left|(\phi(R) \cap \psi_z(P)) \vartriangle  \left((\sigma_{R} \cap \Delta^0_d) \cdot\psi_z(P) \right) \right|_d \\
    &=&  O(\epsilon+\delta) + \sum_{\bar{P} \in E^{-1}(\cP)} \left|(\phi(R) \cap \psi_z(P)) \vartriangle  \left(\sigma_{R} \cap \psi_z(P) \right) \right|_d  = O(\epsilon+\delta).
     \end{eqnarray*}
 The first inequality uses Claim 4, the next one uses the $(F\Sigma(\cR),\delta)$-multiplicativity of $\sigma$ and Lemma \ref{lem:basic-formulas1}. The third inequality uses Claim 3. The fourth inequality uses Lemma \ref{lem:basic-formulas2} and the last equality uses Lemma \ref{lem:basic-formulas3}. 
 \end{proof}
 
 Let $S$ be the set of all maps $z:\Delta^0_d \to K$ such that $\phi'_z \in \Hom(\pi,\sigma,\bar{\cP}, F \Sigma(\cR), O(\epsilon+\delta))$. Of course this depends on the constant implicit in the $O(\cdot)$ notation but for simplicity we leave this dependence implicit. It follows from Claims 2 and 5 that $\kappa^{\Delta^0_d}(S) \ge 1-O(\frac{ \delta}{\epsilon^2})$.
 
 Define $\pi_\cQ:K \to \cQ$ by $\pi_\cQ(k) = Q$ if $k \in Q$. If $d$ is sufficiently large and $\frac{\delta}{\epsilon^2}$ is sufficiently small then by the asymptotic equipartition property,
 $$| \{\pi_\cQ \circ s:~s\in S\} |  \ge (1/2)\exp( d H_\kappa(\cQ) - o(d))$$
where $H_\kappa(\cQ) = -\sum_{Q \in \cQ} \kappa(Q) \log(\kappa(Q))$.
 
 If $s,t \in S$ and there is some $q \in \phi_s(\sG^0) \cap \phi_t(\sG^0)\cap \sigma(\sH^0)$ such that $\pi_\cQ(s(q)) \ne \pi_\cQ(t(q))$ then $\phi'_s|_{E^{-1}(\cQ)} \ne \phi'_t|_{E^{-1}(\cQ)}$. For any fixed $s\in S$, the number of elements in the set $\{\pi_\cQ \circ t:~t \in S\}$ such that $\pi_\cQ(s(q)) = \pi_\cQ(t(q))$ for every $q \in \phi_s(\sG^0) \cap \phi_t(\sG^0) \cap \sigma(\sH^0)$ is at most 
 $$|\cQ|^{O(\epsilon+\delta)d} \exp( h(O(\epsilon+\delta)) d)$$
where $h(x)=-\log(x)-(1-x)\log(1-x)$. This uses (\ref{eqn:complement}, \ref{eqn:small}). So
 \begin{eqnarray*}
 | \Hom(\pi,\sigma,\bar{\cP}, F \Sigma(\cR), O(\epsilon+\delta)) |_{\bar{\cQ}} &\ge& (1/2)|\cQ|^{-O(\epsilon + \delta)d} \exp( d H_\kappa(\cQ)-h(O(\epsilon+\delta))d - o(d)).
 \end{eqnarray*}

Because $\sigma$ is an arbitrary $(\tilde{F},\tilde{\delta})$-multiplicative, $(\tilde{F},\tilde{\delta})$-trace-preserving map (and the constant implicit in the $O(\cdot)$-notation does not depend on $\sigma$) it follows that
$$\lim_{j\to J} d_j^{-1} \log \| |\Hom(\pi,\cdot,\bar{\cP}, F \Sigma(\cR), O(\epsilon+\delta))|_{\bar{\cQ}} \|_{\P_j,p} \ge H_\kappa(\cQ) - h(O(\epsilon+\delta)).$$
Because $\epsilon>0$ is arbitrary, by taking the infimum over all $\cR, F,\delta$ and noting that every finite subset of $\lb \sH \rb$ is contained in a set of the form $F \Sigma(\cR)$ for some $F \subset_f [\sH], \cR$ a Borel partition of $\sH^0$ (up to a measure zero set), we see that $h_{\P,\mu}(\pi,\bar{\cQ},\bar{\cP}) \ge  H_\kappa(\cQ)$. This uses Lemma \ref{lem:measure-zero1}. We can now take the infimum over all $\cP$ and the supremum over all $\cQ$ to obtain the theorem.
\end{proof}

\section{Non-free Bernoulli shifts}\label{sec:non-free}

This section answers a question of Benjy Weiss on non-free Bernoulli shifts. To explain, we need some terminology. So let $G$ be a countable group, $2^G$ denote the space of all subsets of $G$ with the product topology and  $\textrm{Sub}_G \subset 2^G$ be the space of subgroups of $G$. Because $2^G$ is a compact metrizable space and $\textrm{Sub}_G$ is closed in $2^G$,  $\textrm{Sub}_G$ is also a compact metrizable space and $G$ acts on $\textrm{Sub}_G$ by conjugation. An {\em invariant random subgroup} (IRS) is a random subgroup $H \in \textrm{Sub}_G$ with conjugation-invariant law. This terminology was introduced in \cite{AGV12}. We will be interested in Bernoulli shifts over the coset space $G/H$ of an invariant random subgroup.

Given a Borel space $K$, let $\textrm{Sub}_G\otimes K$ be the set of all pairs $(H,\omega)$ where $H \in \textrm{Sub}_G$ and $\omega:G/H \to K$. We can embed $\textrm{Sub}_G \otimes K$ into $\textrm{Sub}_G \times K^G$ via the map
$$(H,\omega) \mapsto (H, \tilde{\omega} )$$
where $\tilde{\omega} \in K^G$ is defined by $\tilde{\omega}(g)= \omega(gH)$. We give $\textrm{Sub}_G \otimes K$ the Borel structure induced by this embedding. Observe that $G$ acts on $\textrm{Sub}_G\otimes K$ by $g(H,\omega) = (gHg^{-1}, g\omega)$
where $g\omega: G/gHg^{-1} \to K$ is defined by $g\omega(f gHg^{-1}) := \omega(fg H)$. 

Let $M(\textrm{Sub}_G)$ be the space of all Borel probability measures on $\textrm{Sub}_G$ and let $M_{inv}(\textrm{Sub}_G)$ be the set of all $\eta \in M(\textrm{Sub}_G)$ that are invariant under conjugation.  Let $\eta \in M_{inv}(\textrm{Sub}_G)$ and $\kappa$ be a Borel probability measure on $K$. We define a probability measure $\eta \otimes \kappa$ on $\textrm{Sub}_G\otimes K$ by
$$d(\eta\otimes \kappa)(H,\omega) = d\kappa^{G/H}(\omega)d\eta(H)$$
where $\kappa^{G/H}$ is the product measure on $K^{G/H}$. This measure is invariant under the action $G \cc (\textrm{Sub}_G \otimes K, \eta \otimes \kappa)$ which is called the {\em non-free Bernoulli shift over $G$ with stabilizer distribution $\eta$ and base space $(K,\kappa)$}. 

\begin{example}
If $N \vartriangleleft G$ is a normal subgroup and $\eta=\delta_N$ is concentrated on $\{N\}$ then $G \cc (\textrm{Sub}_G \otimes K, \eta \otimes \kappa)$ is measurably conjugate to the action of $G$ on the product space $(K,\kappa)^{G/N}$.
\end{example}

\begin{defn}\label{defn:IRS-sofic}
We say that $\eta \in M_{inv}(\textrm{Sub}_G)$ is {\em sofic} if for every $\delta>0$, finite set $K \subset G$ and open neighborhood $\Omega \subset M(\textrm{Sub}_G)$ of $\eta$ there exists a map $\sigma:G \to [d]$ (for some integer $d>0$) such that
\begin{enumerate}
\item for any $g,h \in K$,
$$d^{-1}  |\{ q\in \Delta^0_{d}:~ \sigma(g)\sigma(h)\cdot q = \sigma(gh)\cdot q\}| \ge 1-\delta,$$
\item if $u_{d}$ is the uniform probability measure on $\Delta^0_{d}$ and $\textrm{Stab}_\sigma:\Delta^0_{d} \to 2^G$ is the map $\textrm{Stab}_\sigma(q) := \{g\in G:~\sigma(g)\cdot q=q\}$ then $(\textrm{Stab}_\sigma)_*u_{d} \in \Omega$.
\end{enumerate}
\end{defn}

\begin{exercise}
If $N \vartriangleleft G$ is a normal subgroup and $\eta=\delta_N\in M_{inv}(\textrm{Sub}_G)$ is concentrated on $\{N\}$ then $\eta$ is sofic if and only if $G/N$ is a sofic group.
\end{exercise}

The main result of this section is:
\begin{thm}\label{thm:nonfree}
Let $G$ be a countable group, $\eta$ a sofic conjugation-invariant Borel probability measure on $\textrm{Sub}_G$ and $(K,\kappa), (L,\lambda)$ two probability spaces. If $G \cc (\textrm{Sub}_G \otimes K, \eta \otimes \kappa)$ is measurably conjugate to $G \cc (\textrm{Sub}_G \otimes L, \eta \otimes \lambda)$ relative to the common factor $G \cc (\textrm{Sub}_G, \eta)$ then $H(K,\kappa)=H(L,\lambda)$.
\end{thm}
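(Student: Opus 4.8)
The plan is to realize both non-free Bernoulli shifts as genuine Bernoulli shifts over one and the same pmp groupoid, and then to invoke Theorem~\ref{thm:bernoulli}. Given $G$ and the conjugation-invariant $\eta$ on $\textrm{Sub}_G$, define a discrete groupoid $\sH_\eta$ with unit space $\sH_\eta^0=\textrm{Sub}_G$, whose morphisms from $H$ to $H'$ are the cosets $gH\in G/H$ with $gHg^{-1}=H'$, with composition $(g'H')(gH)=g'gH$ and inversion $(gH)^{-1}=g^{-1}H'$; the source map is $(gH,H)\mapsto H$, so that $\so^{-1}(H)$ is identified with $G/H$, and the range map is $(gH,H)\mapsto gHg^{-1}$. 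Put $\eta$ on $\sH_\eta^0$; a mass-transport argument using conjugation-invariance of $\eta$ shows $\nu_\so=\nu_\ra$, so $(\sH_\eta,\eta)$ is a discrete pmp groupoid (this is the well-known fact that an invariant random subgroup gives a pmp groupoid). The assignment $g\mapsto\{(gH,H):H\in\textrm{Sub}_G\}$ is a homomorphism $G\to[\sH_\eta]$, and together with the Borel subsets of $\textrm{Sub}_G$ these elements generate $\lb\sH_\eta\rb$ in the measure-algebra sense.

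Next I would check that the Bernoulli shift over $(\sH_\eta,\eta)$ with base $(K,\kappa)$, as constructed in \S\ref{sec:bernoulli}, is exactly the class-bijective extension encoding $G\cc(\textrm{Sub}_G\otimes K,\eta\otimes\kappa)$ relative to $G\cc(\textrm{Sub}_G,\eta)$. Indeed its unit space is $\{(H,\omega):\omega\colon\so^{-1}(H)\to K\}=\textrm{Sub}_G\otimes K$ with measure $d\kappa^{G/H}(\omega)\,d\eta(H)=d(\eta\otimes\kappa)$, and comparing range maps shows that the groupoid $\sG_K$ of this Bernoulli shift has range map $(h,\omega)\mapsto(gHg^{-1},g\omega)$, i.e. $\sG_K$ is the action groupoid $G\ltimes(\textrm{Sub}_G\otimes K)$ and the factor $\pi_K\colon\sG_K\to\sH_\eta$ recovers the relative action. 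Likewise for $\pi_L\colon\sG_L\to\sH_\eta$. A measurable conjugacy between the two non-free Bernoulli shifts commuting with the factor maps onto $(\textrm{Sub}_G,\eta)$ is $G$-equivariant and fixes $\textrm{Sub}_G$, hence induces a measure-isomorphism of the extensions $\pi_K$ and $\pi_L$ over the identity of $\sH_\eta$. Since $h_{\P,\mu}(\pi)$ is manifestly an isomorphism invariant of the extension — the definition refers only to $\pi$, finite partitions of $\sG^0$, and the sofic approximation $\P$ to the base, and is independent of auxiliary choices by Theorem~\ref{thm:K} — we get $h_{\P,\mu_K}(\pi_K)=h_{\P,\mu_L}(\pi_L)$ for any sofic approximation $\P$ to $(\sH_\eta,\eta)$.

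The remaining and main point is to produce such a sofic approximation from the hypothesis that $\eta$ is sofic (Definition~\ref{defn:IRS-sofic}). Given the maps $\sigma\colon G\to[d]$ of that definition, I would build $\hat\sigma\colon\lb\sH_\eta\rb\to\lb d\rb$ by setting $\hat\sigma(\{(gH,H):H\})=\{(\sigma(g)q,q):q\in\Delta^0_d\}$ on the generators from $G$, $\hat\sigma(A)=\{(q,q):\textrm{Stab}_\sigma(q)\in A\}$ for Borel $A\subset\textrm{Sub}_G$, and extending to a countable $\nu$-dense sub-inverse-semigroup of $\lb\sH_\eta\rb$ consisting of finite unions of products $[g]A$ (extending arbitrarily elsewhere); randomizing over the implicit choices gives measures $\P_j$ on $\Map(\lb\sH_\eta\rb,\lb d_j\rb)$ satisfying condition (4) of Definition~\ref{defn:sofic-approximation}. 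Asymptotic multiplicativity follows from condition (1) of Definition~\ref{defn:IRS-sofic}, the trace-preserving property from condition (2) — here $\tr_{\sH_\eta}(f)=\nu(\sH_\eta^0\cap f)$ is computed in terms of fixed cosets and matches the weak-$*$ closeness of $(\textrm{Stab}_\sigma)_*u_d$ to $\eta$ — and $d_j\to\infty$ by passing to a cofinal subfamily. Granting this, Theorem~\ref{thm:bernoulli} gives $h_{\P,\mu_K}(\pi_K)=H(K,\kappa)$ and $h_{\P,\mu_L}(\pi_L)=H(L,\lambda)$, and combined with the previous paragraph this yields $H(K,\kappa)=H(L,\lambda)$. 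I expect the hardest part to be precisely this construction: extracting, with the correct quantitative bounds, the full asymptotic multiplicativity and trace conditions for a sofic approximation to $\lb\sH_\eta\rb$ from the comparatively limited data of $\sigma$ defined only on $G$ with control only on pairwise products and on the stabilizer distribution, together with the verification that $(\sH_\eta,\eta)$ is genuinely pmp.
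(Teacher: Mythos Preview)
Your overall strategy is sound and parallels the paper's: realize the non-free Bernoulli shifts as groupoid Bernoulli shifts over a common sofic base and invoke Theorem~\ref{thm:bernoulli}. The route, however, is genuinely different. You work directly with the coset groupoid $\sH_\eta$ on $\textrm{Sub}_G$ (morphisms $gH$, isotropy $N_G(H)/H$), whereas the paper introduces an auxiliary non-atomic base $(Z,\zeta)$ and takes $(\sH,\nu)$ to be the \emph{principal} groupoid (orbit equivalence relation) of $G\cc(\textrm{Sub}_G\otimes Z,\eta\otimes\zeta)$. Your identification of the groupoid Bernoulli shift with $\textrm{Sub}_G\otimes K$ is then immediate, while the paper's passes through Theorem~\ref{thm:helper}. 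For soficity, the paper does not build $\hat\sigma$ on $\lb\sH_\eta\rb$ directly as you propose; instead it introduces an intermediate notion ``sofic with stabilizers'' for the $G$-action (Definition~\ref{defn:sofic-mod-stab}), shows the auxiliary Bernoulli action $G\cc\textrm{Sub}_G\otimes Z$ has this property (Lemma~\ref{lem:Bernoulli-sofic}), and then proves a general lemma (Lemma~\ref{lem:is-sofic}) that any action which is sofic with stabilizers has a sofic principal groupoid --- the point being that the auxiliary $Z$ and the passage to a principal groupoid let one decompose an arbitrary bisection into finitely many pieces of the form $[g]A$ with disjoint sources and ranges, which is exactly the bookkeeping (the sets $A(f,g)$ and the BAD/GOOD analysis) occupying most of that proof. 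Your direct construction on $\sH_\eta$ would face the same difficulty --- a general $f\in\lb\sH_\eta\rb$ is not canonically a finite disjoint union of $[g]A$'s because distinct $g$'s may yield the same coset --- so the ``hardest part'' you flag is real, and the paper's detour through $(Z,\zeta)$ and principal groupoids is one way to organize it. Your approach buys conceptual directness; the paper's buys a reduction to equivalence relations and a reusable lemma.
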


\begin{remark}
The hypotheses above mean that there are conull $G$-equivariant Borel subsets $X \subset \textrm{Sub}_G \otimes K, Y \subset \textrm{Sub}_G \otimes L$ and a $G$-equivariant measure-space isomorphism $\phi:X \to Y$ such that if $\pi_X:X \to \textrm{Sub}_G, \pi_Y:Y \to \textrm{Sub}_G$ denote the projection maps then $\pi_Y \phi = \pi_X$ almost everywhere.

\end{remark}

\begin{remark}
In several recent talks, Benjy Weiss has proven the following converse: if $G$ is a countable group, $\eta$ an ergodic non-atomic conjugation-invariant Borel probability measure on $\textrm{Sub}_G$ and $(K,\kappa), (L,\lambda)$ two probability spaces with $H(K,\kappa)=H(L,\lambda)$ then $G \cc (\textrm{Sub}_G \otimes K, \eta \otimes \kappa)$ is measurably conjugate to $G \cc (\textrm{Sub}_G \otimes L, \eta \otimes \lambda)$ (relative to the common factor $G \cc (\textrm{Sub}_G,\eta)$). The proof uses ideas similar to \cite{Bo12}. 
\end{remark}

To prove Theorem \ref{thm:nonfree} we will transfer the problem to a problem about principal groupoids defined next.
\begin{defn}
Let $G$ be a countable group and $G \cc (X,\mu)$ a measure-preserving action on a standard probability space. The {\em principal groupoid} $(\sH,\nu)$ for this action is defined by:
\begin{itemize}
\item $\sH = \{ (x,y) \in X \times X:~ \exists g \in G ~(gx=y)\}$
\item $\sH^0 = \{(x,x):~x\in X\} \subset \sH$;
\item $\nu$ is the pushforward of $\mu$ under the map $x \mapsto (x,x)$.
\item $\so(x,y)=(y,y), \ra(x,y)=(x,x)$, $(x,y)(y,z)=(x,z)$ and $(x,y)^{-1}=(y,x)$.
\end{itemize}
\end{defn}

Now let $(Z,\zeta)$ be a non-atomic standard probability space, $(\sH,\nu)$ be the principal groupoid for the action $G \cc (\textrm{Sub}_G\otimes Z, \eta \otimes \zeta)$, $(\sG,\mu)$ be the principal groupoid  for the action $G \cc (\textrm{Sub}_G\otimes (Z \times K), \eta \otimes (\zeta \times \kappa))$, $\pi_Z: Z \times K \to Z$ be the projection map and $\pi:\sG \to \sH$ be the map $\pi( (H_1, \omega_1), (H_2,\omega_2)) := ((H_1, \pi_Z\omega_1), (H_2,\pi_Z\omega_2))$. This map is class-bijective almost everywhere and measure-preserving. So its entropy is well-defined by Theorem \ref{thm:conull}.

\begin{thm}\label{thm:helper}
If $\pi:(\sG,\mu) \to (\sH,\nu)$ is as above then $h_{\P,\mu}(\pi) = H(K,\kappa)$ for any sofic approximation $\P$ to $(\sH,\nu)$.
\end{thm}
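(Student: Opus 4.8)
The plan is to reduce Theorem~\ref{thm:helper} directly to Theorem~\ref{thm:bernoulli} by exhibiting $\pi:(\sG,\mu)\to(\sH,\nu)$ (up to the measure-zero adjustments permitted by Theorem~\ref{thm:conull}) as the Bernoulli shift over the groupoid $(\sH,\nu)$ with base space $(K,\kappa)$. Concretely, I would first analyze the structure of the principal groupoids involved: a point of $\sH^0$ is a pair $(H,\psi)$ with $H\in\textrm{Sub}_G$ and $\psi\in Z^{G/H}$, and the fiber $\so^{-1}((H,\psi))$ in $\sH$ is naturally indexed (after discarding a null set on which the action is non-free in an exceptional way, or on which distinct group elements act identically in an unexpected manner) by the coset space $G/H$ itself via $g H\mapsto ((H,\psi),(gHg^{-1},g\psi))$. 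Similarly a point of $\sG^0$ over $(H,\psi)$ is a pair $\big((H,\psi),\omega\big)$ with $\omega\in K^{G/H}$, and $\mu$ disintegrates over $\nu$ exactly as $d\mu = d\kappa^{G/H}(\omega)\,d\nu(H,\psi)$.

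The key step is then to check that this disintegration is precisely the Bernoulli-shift construction of \S\ref{sec:bernoulli} applied to $(\sH,\nu)$: I must identify $K^{\so^{-1}(x)}$ (the fiber space in the definition of the Bernoulli shift over $\sH$) with $K^{G/H}$ when $x=(H,\psi)$, and verify that the source, range and composition maps of the abstract Bernoulli groupoid match those of $\sG$ under this identification. The range map in the Bernoulli construction sends $(h,\omega)\mapsto(\ra(h),\psi')$ with $\psi'(f):=\omega(fh)$; I would check that under the coset parametrization of $\so^{-1}(x)$ this is exactly the map $\omega\mapsto g\omega$ coming from the $G$-action on $\textrm{Sub}_G\otimes(Z\times K)$, using the cocycle-like identity relating $fh$ (composition in $\sH$) to multiplication of coset representatives. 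Once this identification is established on a conull Borel subgroupoid, Theorem~\ref{thm:conull} guarantees that $h_{\P,\mu}(\pi)$ equals the entropy of the Bernoulli shift proper, and Theorem~\ref{thm:bernoulli} gives $h_{\P,\mu}(\pi)=H(K,\kappa)$, completing the proof.

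The main obstacle I anticipate is the bookkeeping needed to pass safely to a conull subgroupoid on which the coset parametrization $gH\mapsto\big((H,\psi),(gHg^{-1},g\psi)\big)$ is a genuine bijection onto $\so^{-1}((H,\psi))$ and on which the groupoid operations are literally identified with the abstract Bernoulli ones --- one must rule out, on a conull set, coincidences like $gH\neq g'H$ but $(gHg^{-1},g\psi)=(g'Hg'^{-1},g'\psi)$ (which could happen on the non-free locus where $\eta$ is badly behaved, but the non-atomicity of $\zeta$ and the extra $Z$-coordinate are exactly what make this locus null), and one must confirm that the Borel structure on $\textrm{Sub}_G\otimes K$ defined in \S\ref{sec:non-free} agrees with the Borel structure $\cB(\sG^0)$ generated by the projection and the maps $(x,\omega)\mapsto\omega(hx)$ used in \S\ref{sec:bernoulli}. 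Both of these are routine given the machinery already developed (Lusin--Novikov, the Feldman--Moore-type Lemma~\ref{lem:FM}, and the definition of the non-free Bernoulli shift), so I expect the proof to be short: it is essentially the observation that ``$G$-Bernoulli shift over $G/H$ averaged over a stabilizer distribution'' \emph{is} ``groupoid-Bernoulli shift over the principal groupoid of $G\curvearrowright\textrm{Sub}_G\otimes Z$'', plus a citation of Theorems~\ref{thm:bernoulli} and~\ref{thm:conull}.
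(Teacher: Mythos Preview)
Your proposal is correct and is exactly the approach taken in the paper: the paper's proof is the single sentence ``This follows from Theorem~\ref{thm:bernoulli} because $\pi:(\sG,\mu)\to(\sH,\nu)$ is isomorphic to the Bernoulli shift over $(\sH,\nu)$ with base space $(K,\kappa)$.'' Your write-up simply unpacks the identification (the coset parametrization of $\so^{-1}((H,\psi))$ by $G/H$, the role of the non-atomic $(Z,\zeta)$ in making stabilizers equal $H$ almost surely, and the appeal to Theorem~\ref{thm:conull}) that the paper leaves implicit.
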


\begin{proof}
This follows from Theorem \ref{thm:bernoulli} because $\pi:(\sG,\mu) \to (\sH,\nu)$ is isomorphic to the Bernoulli shift over $(\sH,\nu)$ with base space $(K,\kappa)$.
\end{proof}

In order to use the result above, we need to know that $(\sH,\nu)$ is sofic:
\begin{prop}\label{prop:is-sofic}
If $\eta$ is a sofic conjugation-invariant Borel probability measure on $\textrm{Sub}_G$ then $(\sH,\nu)$, as defined above, is sofic.
\end{prop}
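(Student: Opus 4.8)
The plan is to build an explicit sofic approximation $\P$ to $(\sH,\nu)$ directly out of the soficity of $\eta$, via the maps $\sigma:G\to[d]$ provided by Definition \ref{defn:IRS-sofic}. First I would unwind the structure of the principal groupoid $(\sH,\nu)$: since $\sH$ is the principal groupoid of the action $G\cc(\textrm{Sub}_G\otimes Z,\eta\otimes\zeta)$ with $(Z,\zeta)$ non-atomic, $\lb\sH\rb$ is generated (in the measure-algebra sense, which by Lemma \ref{lem:measure-zero1} is all that matters) by partial automorphisms coming from group elements composed with Borel subsets of the base. Concretely, every $f\in\lb\sH\rb$ can be approximated by a finite disjoint union $\bigcup_i g_i A_i$ where $g_i\in G$ and $A_i\subset \textrm{Sub}_G\otimes Z$ are Borel; the non-atomicity of $(Z,\zeta)$ is what lets us realize arbitrary traces. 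So it suffices to define $\sigma(g\cdot(\textrm{stuff}))$ consistently.

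The key construction: given $\delta>0$, a finite $K\subset G$ and a neighborhood $\Omega$ of $\eta$, take $\sigma:G\to[d]$ as in Definition \ref{defn:IRS-sofic}. Then $q\in\Delta^0_d$ together with its orbit under the (approximate) $\sigma$-action models a point $(H,\cdot)$ with $H\approx\textrm{Stab}_\sigma(q)$; the push-forward condition $(\textrm{Stab}_\sigma)_*u_d\in\Omega$ says the empirical distribution of stabilizers is close to $\eta$. To incorporate the $Z$-coordinate I would use the non-atomic space $(Z,\zeta)$: choose an auxiliary random or deterministic labeling of $\Delta^0_d$ by $Z$ (e.g. i.i.d. $\zeta$-samples, or a fixed equidistributed sequence) so that the pair (stabilizer pattern, $Z$-label) equidistributes according to $\eta\otimes\zeta$. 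This upgrades $\sigma$ to a map defined on the generating partial automorphisms of $\sH$, namely $\widehat\sigma(g A)$ is obtained by restricting $\sigma(g)\in[d]$ to the set of $q$ whose (stabilizer, $Z$-label) lies in $A$. I would then verify the four axioms of Definition \ref{defn:sofic-approximation}: (1) trace-preservation follows from the equidistribution of (stabilizer, $Z$-label); (3) $d\to\infty$ is immediate since we may take $d$ as large as we like; (4) is automatic by construction since $\widehat\sigma$ is defined on measure-algebra representatives; and (2), multiplicativity, follows from axiom (1) of Definition \ref{defn:IRS-sofic} (the $\sigma(g)\sigma(h)\approx\sigma(gh)$ condition) together with the fact that composition of the $gA$-type generators reduces to group multiplication plus intersection of base sets, and intersections are handled exactly.

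The main obstacle I anticipate is the bookkeeping needed to pass from the group-level data (a single map $\sigma:G\to[d]$ satisfying two conditions) to a genuine element of $\Map(\lb\sH\rb,\lb d\rb)$ satisfying \emph{all four} sofic-approximation axioms simultaneously and \emph{uniformly} — in particular, ensuring that the finitely many partial automorphisms one needs to control can all be written (up to small measure error) in the normal form $\bigcup_i g_i A_i$ with the $A_i$ lying in a fixed finite algebra, so that a single choice of $\sigma$ (with $K$ large enough and $\Omega$ small enough) handles them all. This is where the non-atomicity of $(Z,\zeta)$ and a Rokhlin-type / equidistribution argument on $\Delta^0_d$ do the real work: one must show that for $d$ large one can label $\Delta^0_d$ so that the joint empirical distribution of $(\textrm{Stab}_\sigma(q),\text{label}(q))$ is within any prescribed weak-$*$ neighborhood of $\eta\otimes\zeta$, and that the resulting $\widehat\sigma$ respects the chosen finite algebra of base sets on the nose. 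Once that is set up, checking trace-preservation and approximate multiplicativity is a routine $\varepsilon$-chase using Lemmas \ref{lem:basic-formulas1} and \ref{lem:basic-formulas2}. I would organize the write-up as: (i) reduce to a countable generating family of partial automorphisms of the form $gA$; (ii) construct the labeled models of $\Delta^0_d$; (iii) define $\widehat\sigma$ and record its compatibility with unions, intersections, inverses; (iv) verify the four axioms.
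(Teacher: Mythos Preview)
Your proposal is correct and follows essentially the same approach as the paper, which factors the argument through an intermediate notion it calls ``sofic with stabilizers'' (Definition~\ref{defn:sofic-mod-stab}): Lemma~\ref{lem:Bernoulli-sofic} is precisely your labeling step (i.i.d.\ $\zeta$-samples on $\Delta^0_d$ so that the joint law of $(\textrm{Stab}_\sigma,\text{label})$ approximates $\eta\otimes\zeta$), and Lemma~\ref{lem:is-sofic} is your construction of $\widehat\sigma$ on generators of the form $gA$ together with the verification of trace-preservation and multiplicativity. One word of caution: the multiplicativity check (Claims~4--6 of Lemma~\ref{lem:is-sofic}) is more delicate than a routine $\varepsilon$-chase, because in the principal groupoid of a non-free action the decomposition $f=\bigcup_i g_iA_i$ interacts with stabilizer coincidences $g_3^{-1}g_2g_1\in\textrm{Stab}(x)$, and one must track these via auxiliary sets (the paper's $Q(g_1,g_2,g_3)$) to see that $\tilde\sigma(f_2)\tilde\sigma(f_1)$ and $\tilde\sigma(f_2f_1)$ agree up to $O(\epsilon)$.
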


We will derive this proposition as a consequence of a more general result (Lemma \ref{lem:is-sofic}). First we need a definition.

\begin{defn}\label{defn:sofic-mod-stab}
Let $G$ be a countable group, $G \cc (X,\mu)$ be a probability-measure-preserving action, $\cP$ be a finite Borel partition of $X$, $K \subset G$ be a finite set,  $M((2^G\times \cP)^K)$ denote the space of Borel probability measures on $(2^G\times \cP)^K$, $\textrm{Stab}(x)=\{g \in G:~gx=x\}$ (for $x\in X$), $\cP(x)$ be the element of $\cP$ containing $x$ and $\Psi=\Psi(K,\cP):X \to (2^G \times \cP)^K$ be the map
$$\Psi(x)(k)  := (\textrm{Stab}(kx), \cP(kx)).$$
Note $\Psi_*\mu \in M((2^G\times \cP)^K)$.

We say that $G \cc (X,\mu)$ is {\em sofic with stabilizers} if for every
\begin{itemize}
\item finite $K \subset G$;
\item finite Borel partition $\cP$ of $X$;
\item open neighborhood $\Omega$ of $\Psi_*\mu$ in $M((2^G\times \cP)^K)$;
\item $\epsilon>0$;
\end{itemize}
there exist $\sigma:G \to [d]$ and $\phi: \Delta^0_d \to \cP$  (for some integer $d>0$) such that
\begin{itemize}
\item $ | \sigma(g)  \sigma(h) \vartriangle \sigma(g h) |_d < \epsilon\quad \forall g,h \in K$;
\item if $\textrm{Stab}_\sigma: \Delta^0_d \to 2^G$ is the map $\textrm{Stab}_\sigma(q) = \{g \in G:~\sigma(g)\cdot q=q\}$ and $\Phi: \Delta^0_d \to (2^G \times \cP)^K$ is the map $\Phi(q)(k) = (\textrm{Stab}_\sigma(\sigma(k)\cdot q), \phi(\sigma(k)\cdot q))$ then $\Phi_*u_d \in \Omega$ where $u_d$ is the uniform probability measure on $\Delta^0_d$.
\end{itemize}
\end{defn}

\begin{lem}\label{lem:Bernoulli-sofic}
If $\eta \in M_{inv}(\textrm{Sub}_G)$ is sofic and $(L,\lambda)$ is any nontrivial standard probability space then $G \cc (\textrm{Sub}_G \otimes L, \eta \otimes \lambda)$ is sofic with stabilizers.
\end{lem}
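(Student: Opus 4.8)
The plan is to imitate the proof of Theorem~\ref{thm:bernoulli}: take the sofic approximation to $\eta$ provided by Definition~\ref{defn:IRS-sofic} and superimpose on the vertex set $\Delta^0_d$ an independent i.i.d.\ $(L,\lambda)$-labelling. Throughout write $X=\textrm{Sub}_G\otimes L$ and $\mu=\eta\otimes\lambda$. A preliminary observation, using that $\lambda$ is nontrivial, is that for $\mu$-a.e.\ $(H,\omega)$ the point stabilizer $\{g\in G:g(H,\omega)=(H,\omega)\}$ is exactly $H$: it is always contained in $N_G(H)$, and for a fixed $g\in N_G(H)\setminus H$ right translation by $g$ on $G/H$ has no fixed points, so the probability over $\omega$ that $\omega$ is constant on every $\langle g\rangle$-orbit is $0$ when $[G:H]=\infty$ and $<1$ when $[G:H]<\infty$; a union bound over the countably many such $g$ gives the claim. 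Hence $\Psi(H,\omega)(k)=\bigl(kHk^{-1},\,\cP(k(H,\omega))\bigr)$ a.e., and the subgroup coordinates of $\Psi_*\mu$ are governed by $\eta$ alone.

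Next I would reduce to the case in which $\cP$ is a \emph{cylinder partition}, i.e.\ one determined by a finite set $S\subset G$ (recording for $g\in S$ whether $g\in H$), a finite set $D\subset G$, and a finite partition $\cL$ of $L$ (recording for $g\in D$ which atom of $\cL$ contains $\omega(gH)$). Cylinder sets generate the Borel $\sigma$-algebra of $X$, so approximating each atom of a given $\cP$ by a cylinder set and disjointifying (as in Lemma~\ref{lem:4} and the disjointification step of Lemma~\ref{lem:top-dense}) yields a cylinder partition $\cP'$ together with a bijection $b:\cP'\to\cP$ with $\mu\bigl(\bigcup_{P'\in\cP'}P'\vartriangle b(P')\bigr)$ as small as we wish. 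Since the map $(\mathrm{id}\times b)^K$ and the relevant coarsening maps are continuous between the finite-alphabet spaces $(2^G\times\cdot)^K$ and commute with $\Psi$ and $\Phi$, and since $\mu$ is $G$-invariant (controlling the error on each of the $|K|$ coordinates), it suffices to find $\sigma$ and $\phi\colon\Delta^0_d\to\cP'$ witnessing the condition for $\cP'$ against a slightly shrunk neighbourhood and then post-compose $\phi$ with $b$.

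So fix a cylinder partition $\cP'$ as above, a neighbourhood $\Omega$, and $\epsilon>0$. Let $F\subset G$ be a finite symmetric set containing $K,D,S$ and closed under products of a bounded number of their elements (in particular containing $k^{-1}wk$ for $w\in S,k\in K$ and $(fk)^{-1}(f'k')$ for $f,f'\in D,\ k,k'\in K$). Apply Definition~\ref{defn:IRS-sofic} to $\eta$ with a small $\delta$, the set $F$, and a neighbourhood of $\eta$ in $M(\textrm{Sub}_G)$ small enough to pin down the finitely many clopen-coordinate statistics that appear; this gives $\sigma\colon G\to[d]$ with $|\sigma(g)\sigma(h)\vartriangle\sigma(gh)|_d<\delta$ for $g,h\in F$ and $(\textrm{Stab}_\sigma)_*u_d$ close to $\eta$. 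Call $q\in\Delta^0_d$ \emph{good} if $\sigma(g)\sigma(h)q=\sigma(gh)q$ for all $g,h\in F$; a $(1-|F|^2\delta)$-fraction of $q$ are good, and on good $q$ all the set maps and ``stabilizer sets'' in play behave as if $\sigma$ were a homomorphism — in particular $\textrm{Stab}_\sigma(\sigma(k)q)=k\,\textrm{Stab}_\sigma(q)\,k^{-1}$ for $k\in K$, and $\sigma(fk)q=\sigma(f'k')q\iff (f'k')^{-1}(fk)\in\textrm{Stab}_\sigma(q)$ for $f,f'\in D,\ k,k'\in K$. Now draw $z\colon\Delta^0_d\to L$ i.i.d.\ $\lambda$ and let $\phi(q)\in\cP'$ be the atom recording, for $g\in S$, whether $g\in\textrm{Stab}_\sigma(q)$, and for $g\in D$, which atom of $\cL$ contains $z(\sigma(g)q)$; set $\Phi(q)(k)=(\textrm{Stab}_\sigma(\sigma(k)q),\phi(\sigma(k)q))$. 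The heart of the argument is that $\EE_z[\Phi_*u_d]$ lies in (a slightly shrunk) $\Omega$: on good $q$, $\Phi(q)(k)$ is a function of $\bigl(\textrm{Stab}_\sigma(q),(z(\sigma(fk)q))_{f\in D}\bigr)$ with the \emph{same} dependence that $\Psi(H,\omega)(k)=\bigl(kHk^{-1},(\omega(fkH))_{f\in D}\bigr)$ has on $\bigl(H,(\omega(fkH))_{f\in D}\bigr)$ (here $(k\omega)(g\cdot kHk^{-1})=\omega(gkH)$), and — crucially — conditionally on a good $q$ the tuple $(z(\sigma(fk)q))_{f\in D,k\in K}$ is i.i.d.\ $\lambda$ on the $\textrm{Stab}_\sigma(q)$-coset blocks of $D\times K$, which is exactly the conditional law of $(\omega(fkH))_{f\in D,k\in K}$ given $H$ under $\mu$. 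Decomposing the finitely many test functions cutting out $\Omega$ according to the (finite-alphabet) subgroup part, whose law under $u_d$ is weak*-close to its law under $\eta$ and on which the label part is conditionally identically distributed on the two sides, gives the closeness of the means (the restriction to good $q$ costing only $O(\delta)$). Finally, changing one value $z(p)$ changes $\phi(q')$ only for the $\le|D|$ vertices $q'$ with $\sigma(g)q'=p$, hence changes $\Phi(q)$ for $\le|D||K|$ vertices $q$, so each test function evaluated at $\Phi_*u_d$ is a function of $z$ with bounded differences $O(1/d)$; a bounded-differences (McDiarmid) estimate then shows $\Phi_*u_d$ concentrates at its mean as $d\to\infty$, so for $d$ large some $z$ makes $\Phi_*u_d\in\Omega$, and $\sigma$ together with $b\circ\phi$ witnesses soficity with stabilizers.

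The main obstacle is the bookkeeping around ``good'' vertices: one must choose $F$ (and demand $\delta$ small) so that, for $u_d$-almost every $q$, the \emph{collision pattern} of the points $\{\sigma(fk)q:f\in D,k\in K\}$ is controlled \emph{exactly} by $\textrm{Stab}_\sigma(q)$ in the same combinatorial way that the coincidence pattern of the cosets $\{fkH:f\in D,k\in K\}$ is controlled by $H$. That exact match is what makes the two conditional label laws coincide rather than merely approximate each other; everything else is either soft (continuity and pushforward of weak* limits, disjointification) or a routine first–second moment / concentration computation.
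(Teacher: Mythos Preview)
Your approach is essentially the paper's own (sketched) argument: reduce to cylinder partitions, pull $\sigma$ from the sofic approximation to $\eta$ in Definition~\ref{defn:IRS-sofic}, superimpose an i.i.d.\ $(L,\lambda)$-labelling, and verify Definition~\ref{defn:sofic-mod-stab} with high probability. Your write-up is in fact more explicit than the paper's on the reduction to cylinder partitions and on the concentration step (McDiarmid in place of a bare second-moment estimate).

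There is, however, a genuine gap in your preliminary observation that $\textrm{Stab}(H,\omega)=H$ for $\mu$-a.e.\ $(H,\omega)$, and the whole construction rests on it. Your union-bound step only yields a null set when each event $\{g\omega=\omega\}$ already has probability $0$; you correctly note this holds when $[G:H]=\infty$ (the $\langle g\rangle$-orbits on $G/H$ all have the same size $\ge 2$, so either each orbit is infinite or there are infinitely many of them), but when $[G:H]<\infty$ you yourself only get probability $<1$, and a countable union of such events need not be null. Concretely, take $G=\Z$, $\eta=\delta_{2\Z}$, $\lambda=\tfrac12\delta_a+\tfrac12\delta_b$: then $G/H$ has two elements, and with probability $\tfrac12$ the two labels coincide, giving stabilizer $\Z$ rather than $2\Z$. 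The first-coordinate marginal of $\Psi_*\mu$ is then $\tfrac12\delta_\Z+\tfrac12\delta_{2\Z}$, whereas any $\sigma$ drawn from a sofic approximation to $\eta=\delta_{2\Z}$ has $\textrm{Stab}_\sigma(q)\cap F=2\Z\cap F$ for essentially every $q$; hence $\Phi_*u_d$ cannot enter a small clopen neighbourhood of $\Psi_*\mu$ no matter how $\phi$ is chosen, and the construction fails outright. The remedy is either to assume $(L,\lambda)$ non-atomic---which makes your a.e.\ stabilizer claim correct and which is in fact the only case used downstream (in Proposition~\ref{prop:is-sofic} one takes $(L,\lambda)=(Z,\zeta)$ non-atomic)---or, if one insists on general nontrivial $\lambda$, to modify $\sigma$ \emph{after} drawing $z$ so that $\textrm{Stab}_\sigma$ tracks the stabilizer distribution of the labelled action rather than $\eta$ itself. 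Note that the paper's own sketch, which also takes $\sigma$ directly from Definition~\ref{defn:IRS-sofic}, shares this issue.
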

\begin{proof}
The proof is similiar to the proof of Theorem \ref{thm:bernoulli}, so we only explain the general idea. Let $K, F \subset G$ be finite, $\tau:F \to L$, $\cQ$ be a finite Borel partition of $L$ and $\cP$ be the finite partition of $\textrm{Sub}_G \otimes L$ defined by $\cP(H_1,\omega_1) = \cP(H_2,\omega_2)$ if $H_1 \cap F = H_2 \cap F$ and $\cQ(\omega_1(fH_1)) = \cQ(\omega_2(fH_2))$ for all $f \in F$. Also, let $\Omega$ an open neighborhood of  $\Psi_*(\eta \otimes \lambda)$ in $M((2^G\times \cP)^K)$ and $\epsilon>0$. Using arguments similar to the proof of Theorem \ref{thm:bernoulli} it can be shown that there exists a finite set $K' \subset G$, $\delta>0$ and an integer $D$ such that if $\sigma:G \to [d]$ with $d>D$ satisfies the conditions of Definition \ref{defn:IRS-sofic}, $\psi:\Delta^0_d \to L$ is chosen at random with law $\lambda^{\Delta^0_d}$ and $\phi:\Delta^0_d \to \cP$ is defined by $\phi(q)=P$ where $P\in \cP$ is the set of all $(H,\omega) \in \textrm{Sub}_G \otimes L$ satisfying $H \cap F = \{f \in F:~\sigma(f)\cdot q = q\}$, $\cQ(\omega(fH)) = \cQ(\psi(\sigma(f)\cdot q)) ~\forall f \in F$ then with positive probability $(\sigma,\phi)$ satisfies the conditions of Definition \ref{defn:sofic-mod-stab}. Moreover, partitions of the form above are dense in the Borel sigma-algebra of $\textrm{Sub}_G \otimes L$ in the sense that for any Borel $A \subset \textrm{Sub}_G \otimes L$, $\epsilon'>0$, there exists a partition $\cP$ of the form above and $A' \in \Sigma(\cP)$ such that $\eta \otimes \lambda(A \vartriangle A')<\epsilon'$. Using this it can be shown that the conditions of Definition \ref{defn:sofic-mod-stab} can be satisfied for {\em any} finite Borel partition $\cP$ (and any $K,\Omega,\epsilon$).
\end{proof}

\begin{lem}\label{lem:is-sofic}
If $G \cc (X,\mu)$ is sofic with stabilizers then the principal groupoid $(\sH,\nu)$ for the action $G \cc (X,\mu)$ is sofic.
\end{lem}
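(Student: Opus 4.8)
The goal is to build, from the data that witnesses ``sofic with stabilizers'' for $G \cc (X,\mu)$, a sofic approximation to the principal groupoid $(\sH,\nu)$ in the sense of Definition \ref{defn:sofic-approximation}. The principal groupoid remembers the orbit equivalence relation together with stabilizer information, so the maps $\sigma:G \to [d]$ appearing in Definition \ref{defn:sofic-mod-stab} do not by themselves give group-like almost-actions on $\sH$ --- we must promote them to maps $\tilde\sigma:\lb\sH\rb \to \lb d\rb$. The first step is therefore a reduction: by Lemma \ref{lem:FM} there is a countable subgroup $H_0 < [\sH]$ such that every $g \in \sH$ lies in some $h \in H_0$, and since $(\sH,\nu)$ comes from $G \cc (X,\mu)$, one can take $H_0$ to be (the image of) $G$ itself together with countably many ``local'' corrections coming from stabilizers, exactly as in the proof of Lemma \ref{lem:FM}. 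Using Lemma \ref{lem:measure-zero1} (or rather the philosophy behind it) and the fact that the $\sigma(f)$ for $f$ in a generating countable group $H_0$ determine $\sigma$ on all of $\lb\sH\rb$ up to measure-zero ambiguities, it suffices to produce, for each finite $F_0 \subset_f H_0$ and each $\delta>0$, a probability measure on $\Map$ concentrated on maps that are $(F_0,\delta)$-multiplicative and $(F_0,\delta)$-trace-preserving, then organize these into a sofic approximation indexed by a directed set of pairs $(F_0,\delta)$.

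\textbf{Key steps.} First, fix a countable partition-generating family for the Borel structure of $X$ and a countable $H_0 < [\sH]$ as above; elements of $H_0$ are, up to null sets, of the form ``apply $g \in G$ on a piece, the identity elsewhere,'' so an element $f \in F_0$ is determined by a finite partition $\cP$ of $X$ refined enough to see all the pieces and a labelling of each piece by an element of $G$. Second, invoke Definition \ref{defn:sofic-mod-stab} with this $\cP$, with $K$ the (finite) set of group elements appearing in $F_0$ together with their products, with $\epsilon$ much smaller than $\delta$, and with $\Omega$ a small neighborhood of $\Psi_*\mu$ in $M((2^G\times\cP)^K)$; this yields $\sigma:G\to[d]$ and $\phi:\Delta^0_d \to \cP$. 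Third, define $\tilde\sigma(f) \in \lb d\rb$ for $f \in F_0$ by transporting the description of $f$ through $\sigma$ and $\phi$: if $f$ acts as $g_i \in G$ on the piece $P_i \in \cP$, set $\tilde\sigma(f) = \bigcup_i \sigma(g_i)\cdot\phi^{-1}(P_i)$, restricted so that source and range are genuine partial permutations (using the near-multiplicativity $|\sigma(g)\sigma(h)\vartriangle\sigma(gh)|_d<\epsilon$ to discard the small overlap set). Fourth, check multiplicativity: for $f_1,f_2 \in F_0$ composable pieces compose, and the error is controlled by (i) the defect in $\sigma$, which is $<\epsilon$ per group element, times the bounded number $|K|$ of group elements involved, and (ii) the discrepancy between ``$\phi$ sees the partition $\cP$ correctly'' and ``the stabilizer data matches'', which is exactly what the condition $\Phi_*u_d \in \Omega$ controls; choosing $\Omega$ small and $\epsilon$ small makes the total $<\delta$. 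Fifth, check trace-preservation: for $f$ a Borel subset of $\sH^0$ (i.e.\ $f$ supported on the diagonal over some $\cP$-measurable set), $\tr_d(\tilde\sigma(f))$ is essentially the $u_d$-measure of $\phi^{-1}$ of that set, which the $\Omega$-condition forces to be within $\delta$ of $\nu(f)=\mu$ of the set. Sixth, handle condition (4) of Definition \ref{defn:sofic-approximation} (agreement on $\nu$-equivalent $f,f'$): arrange that $\tilde\sigma$ depends only on the $\mu$-a.e.\ data of $f$, by first rounding $f$ to its canonical representative relative to the chosen partition --- this is automatic from the construction since we read $f$ off a partition. Finally, assemble: the directed set $J$ is the set of pairs $(F_0,\delta)$ ordered the obvious way, $\P_{(F_0,\delta)} = \delta_{\tilde\sigma}$, and the four axioms follow because $F_0$ exhausts $H_0$ hence (modulo null sets, via Lemma \ref{lem:measure-zero1}) all of $\lb\sH\rb$, and $d \to \infty$ can be forced by also demanding $d > D$ in each application.

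\textbf{Main obstacle.} The technical heart is the third and fourth steps: translating between the ``partition plus stabilizer'' language of Definition \ref{defn:sofic-mod-stab} and the ``partial automorphism'' language of $\lb\sH\rb$, and bounding the composition error uniformly. The subtlety is that an element of $[\sH]$ acts, on a given piece of $X$, via a group element $g$, but \emph{which} group element is not canonical --- it is canonical only modulo the stabilizer, so $g$ and $g'$ with $g^{-1}g' \in \mathrm{Stab}(x)$ act identically near $x$. Consequently $\sigma(g)$ and $\sigma(g')$ must agree near $\phi^{-1}$(that piece) for the construction to be well defined and multiplicative, and this is precisely guaranteed by the requirement that $\Phi_*u_d$ lie near $\Psi_*\mu$ (which records $\mathrm{Stab}_\sigma$ alongside the partition value): it says that on all but an $\Omega$-small fraction of $\Delta^0_d$, the ``local stabilizer seen by $\sigma$'' matches the true stabilizer, so the ambiguity in choosing $g$ is invisible to $\sigma$ up to small error. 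Making this bookkeeping precise --- choosing the neighborhood $\Omega$ (as a finite intersection of sets of the form ``the measure of a cylinder is within $\epsilon'$ of its target'') small enough, in terms of $F_0$, $\cP$, $|K|$ and $\delta$, \emph{before} invoking the hypothesis --- is the only place where real care is required; everything else is a routine propagation of $\varepsilon$'s exactly as in the proof of Lemma \ref{lem:basic-formulas2}.
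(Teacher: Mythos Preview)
Your approach is essentially the paper's: decompose each $f$ into pieces on which it acts by a fixed $g \in G$, define $\tilde\sigma(f)$ by gluing $\sigma(g)$ over the $\phi$-preimages of those pieces (trimming overlaps), and control the stabilizer ambiguity via the choice of $\Omega$. The detour through a countable $H_0 < [\sH]$ via Lemma \ref{lem:FM} is unnecessary and creates extra work: the paper handles an arbitrary $F \subset_f \lb\sH\rb$ directly, choosing for each $f \in F^2$ finitely many pairwise disjoint sets $A(f,g) \subset \{x \in \so(f): f\cdot x = gx\}$ exhausting $\so(f)$ up to measure $\epsilon$, and takes $\cP$ fine enough that each $A(f,g) \in \Sigma(\cP)$. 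This sidesteps the question --- which you wave away with ``the philosophy behind Lemma \ref{lem:measure-zero1}'' --- of why approximate multiplicativity on $H_0$ extends to all of $\lb\sH\rb$.

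Your step 5 has a genuine gap. Trace-preservation must be checked for every $f \in F$, not only for $f \subset \sH^0$. For general $f$ one has $\tr_\sH(f) = \nu(A(f,e))$, the measure of the fixed-point set, and one must show $|\tilde\sigma(f) \cap \Delta^0_d|_d$ is close to this. The nontrivial point is ruling out \emph{accidental} fixed points: $q$ with $\phi(q) \subset A(f,g)$ for some $g \neq e$ yet $\sigma(g)\cdot q = q$, i.e.\ $g \in \mathrm{Stab}_\sigma(q)$. The paper handles this with a dedicated $\Omega$-condition (its inequality forcing $u_d\{q:\phi(q)\subset A(f,g),\ g\in\mathrm{Stab}_\sigma(q)\}$ to be small for $g\neq e$). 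This is exactly the stabilizer-matching mechanism you correctly identify for multiplicativity, so the fix is at hand, but the trace argument is not the triviality you describe in step 5.
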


\begin{proof}
Let $F \subset_f \lb\sH\rb$ and $\delta>0$. For simplicity, we require $\sH^0 \in F$. It suffices to show the existence of a map $\tilde{\sigma}:F \to \lb d \rb$ such that 
\begin{itemize}
\item $| \tilde{\sigma}(f_1) \tilde{\sigma}(f_2) \vartriangle \tilde{\sigma}(f_1  f_2) |_d < \delta$
\item $| \tr_d(\tilde{\sigma}(f)) - \tr_\sH(f) | < \delta$
\end{itemize}
for every $f,f_1,f_2\in F$.


To simplify notation, we identify $X$ with $\sH^0$ and $\mu$ with $\nu$ in the obvious way. The first step of the proof is to choose $\epsilon, K, \cP, \Omega$ and then apply Definition \ref{defn:sofic-mod-stab} to obtain $\sigma$ and $\phi$ out of which we will construct $\tilde{\sigma}$. 

Let $\epsilon>0$ and $F^2 = \{f_1  f_2:~ f_1,f_2 \in F\}$. For $f \in F^2, g\in G$ let
$$A'(f,g)=\{x \in \so(f):~f\cdot x=gx\}.$$
Note that $\cup_{g\in G} A'(f,g) = \so(f)$. So there exist a finite set $K_f \subset G$ and a collection $\{ A(f,k):~k \in K_f\}$ of pairwise disjoint Borel sets such that  
\begin{itemize}
\item $A(f,k) \subset A'(f,k)$ for every $k\in K_f$,
\item $e\in K_f$,
\item $A(f,e) = A'(f,e)=\{x \in \so(f):~ f\cdot x=x\}$.
\item $\nu( \cup\{ A(f,k):~ k\in K_f\}) \ge \nu(\so(f)) -\epsilon$.
\end{itemize}
To simplify notation, we set $A(f,k) = \emptyset$ if $k \notin K_f$.

Let $K = (\bigcup_{f \in F^2} K_f)^{-1}(\bigcup_{f \in F^2} K_f)$. Because $e \in K_f$ (for every $f$), $K \supset \bigcup_{f \in F^2} K_f$. 
Let $\cP$ be a finite Borel partition of $X$ such that $A(f,g) \in \Sigma(\cP)$ for every $f\in F^2, g \in K$.
Abusing notation, we will occasionally find it convenient to identify $A(f,g)$ with the set of all $P \in \cP$ such that $P \subset A(f,g)$. This should cause no confusion.

In order to define $\Omega$, for $g_1,g_2,g_3 \in K$ and every $f_1,f_2 \in F$ let $Z(f_1,f_2,g_1,g_2,g_3)$ be the set of all $\Upsilon \in (2^G \times \cP)^K$ such that
\begin{itemize}
\item  $\Upsilon(e) =(H,P)\textrm{ for some } H \in 2^G \textrm{ with } g^{-1}_3g_2g_1 \in H$ and $P \subset  A(f_2f_1,g_3) \cap A(f_1,g_1)$,
\item $\Upsilon(g_1) \in 2^G \times A(f_2,g_2)$.
\end{itemize}
Also let 
$$Y(f_1,f_2,g_1,g_2)=\{ \Upsilon \in (2^G \times \cP)^K:~\Upsilon(e) \in 2^G \times A(f_1,g_1), \Upsilon(g_1) \in 2^G \times A(f_2,g_2)\}.$$ 
Let $\Psi:X \to (2^G \times \cP)^K$ be as in Definition \ref{defn:sofic-mod-stab}. Let $\Omega$ be the set of all $\omega \in M((2^G \times \cP)^K)$ such that every $f,f_1,f_2 \in F$, $g,g_1,g_2,g_3,h,k \in K$ with $h\ne g$, $k\ne e$ and $g^{-1}h \in K$,
\begin{eqnarray}
\label{eqn:Z} \frac{\epsilon}{|K|^{3}} &>& |\omega(Z(f_1,f_2,g_1,g_2,g_3)) - \Psi_*\mu(Z(f_1,f_2,g_1,g_2,g_3))|\\
\label{eqn:Y} \frac{\epsilon}{|K|^3} &>& |\omega(Y(f_1,f_2,g_1,g_2)) - \Psi_*\mu(Y(f_1,f_2,g_1,g_2))|\\ 
\label{eqn:3} \frac{\epsilon}{|K|^3} &>& | \omega(\{\Upsilon \in(2^G \times \cP)^K:\Upsilon(e)\in 2^G \times A(f,g)\}) - \mu(A(f,g)) | \\ 
\label{eqn:weird} \frac{\epsilon}{|K|^3}&>& \omega(\{ \Upsilon \in (2^G \times \cP)^K: \Upsilon(g^{-1}h) \in 2^G \times A(f,g), ~\Upsilon(e) \in 2^G \times A(f,h)\})\\
\label{eqn:weird2} \frac{\epsilon}{|K|^3}&>& \omega(\{ \Upsilon \in (2^G \times \cP)^K:\Upsilon(e) =(H,P), k\in H, P \subset A(f,k) \}).
\end{eqnarray}
\noindent {\bf Claim 1}. $\Omega$ is an open neighborhood of $\Psi_*\mu$ in $M((2^G \times \cP)^K)$.

\begin{proof}[Proof of Claim 1]
Observe that $\{H \in 2^G:~g_3^{-1}g_2g_1\in H\}$ is a clopen subset of $2^G$. It follows that $Z(f_1,f_2,g_1,g_2,g_3)$ and $Y(f_1,f_2,g_1,g_2)$ are clopen subsets of $(2^G \times \cP)^K$. From this and similar considerations, it is easy to see that $\Omega$ is open.

We need to check that $\Psi_*\mu \in \Omega$. It is immediate that $\omega=\Psi_*\mu$ satisfies (\ref{eqn:Z}, \ref{eqn:Y}). Inequality (\ref{eqn:3}) holds by definition of $\Psi$. To check (\ref{eqn:weird2}), suppose $x\in X$ and $\Psi(x)(e)=(H,P), k\in H, P \subset A(f,k)$. Then $k \in \textrm{Stab}(x)$ and $x\in A(f,k) \Rightarrow fx=kx=x$. But this implies $x \in A(f,e)$. Since $k\ne e$, $A(f,e)$ and $A(f,k)$ are disjoint. This contradiction implies the image of $\Psi$ is disjoint from $\{ \Upsilon \in (2^G \times \cP)^K:~ \Upsilon(e) =(H,P), k\in H, P \subset A(f,k) \}$ which implies (\ref{eqn:weird2}) with $\omega=\Psi_*\mu$. 

To check (\ref{eqn:weird}), let $x \in X$ and, to obtain a contradiction, suppose that $\Psi(x)(g^{-1}h)  \in 2^G \times A(f,g)$ and $\Psi(x)(e) \in 2^G \times A(f,h)$. The first condition implies $\cP(g^{-1}hx) \subset A(f,g)$ and the second $\cP(x) \subset A(f,h)$. Therefore, $f g^{-1}hx =gg^{-1}hx=hx$ and $fx=hx$. Therefore $g^{-1}hx=x$. So we have $\cP(x) \subset A(f,g) \cap A(f,h)$ which contradicts the hypothesis that $g \ne h$ (since $A(f,g), A(f,h)$ are disjoint). This contradiction shows the image of $\Psi$ is disjoint from $\{ \Upsilon \in (2^G \times \cP)^K:~ \Upsilon(g^{-1}h) \in 2^G \times A(f,g), ~\Upsilon(e) \in 2^G \times A(f,h)\}$ which implies (\ref{eqn:weird}) with $\omega=\Psi_*\mu$. 
\end{proof}

By Definition \ref{defn:sofic-mod-stab}, there exist $\sigma:G \to [d]$ and $\phi: \Delta^0_d \to \cP$ (for some $d>0$)  such that
\begin{itemize}
\item $ | \sigma(g)  \sigma(h) \vartriangle \sigma(g h) |_d < \epsilon |K|^{-3}\quad \forall g,h \in K$;
\item if $\textrm{Stab}_\sigma: \Delta^0_d \to 2^G$ is the map $\textrm{Stab}_\sigma(q) = \{g \in G:~\sigma(g)\cdot q=q\}$ and $\Phi: \Delta^0_d \to (2^G \times \cP)^K$ is the map $\Phi(q)(k) = (\textrm{Stab}_\sigma(\sigma(k)\cdot q), \phi(\sigma(k)\cdot q))$ then $\Phi_*u \in \Omega$ where $u$ is the uniform probability measure on $\Delta^0_d$.
\end{itemize}
Without loss of generality, we may assume $\sigma(e)=\Delta^0_d$. For $f \in F^2$ let 
\begin{eqnarray*}
\textrm{BAD}(f) &=& \left\{q \in \Delta^0_d:~ \exists g \ne h \in K_f \textrm{ such that } q \in \sigma(g)\cdot \phi^{-1}(A(f,g)) \cap \sigma(h)\cdot \phi^{-1}(A(f,h)) \right\},\\
\textrm{GOOD}(f) &=& \{q \in \Delta^0_d:~ \exists g \in K_f \textrm{ such that } q \in \phi^{-1}(A(f,g)) \textrm{ and } \sigma(g)\cdot q \notin \textrm{BAD}(f)\}.
\end{eqnarray*}
For $f \in F^2$, we let 
$$\tilde{\sigma}(f)  = \left\{ (\sigma(g)\cdot q,q):q \in \textrm{GOOD}(f) \cap \phi^{-1}A(f,g), g \in K_f\right\} \subset \Delta^0_d.$$
 We claim that $\tilde{\sigma}(f) \in \lb d \rb$. It suffices to check that the range map restricted to $\tilde{\sigma}(f)$ is injective. Suppose that $q_1,q_2 \in \so(\tilde{\sigma}(f))$ and $\tilde{\sigma}(f)\cdot q_1 = \tilde{\sigma}(f)\cdot q_2=z$ for some $z \in \Delta^0_d$. Then there exist $g_1,g_2 \in K_f$ such that $\phi(q_i) \in A(f,g_i)$,  and $\tilde{\sigma}(f)\cdot q_i = \sigma(g_i)\cdot q_i =z \notin \textrm{BAD}(f)$ for $i=1,2$. Because $z \notin \textrm{BAD}(f)$ and $z \in \sigma(g_1)\cdot \phi^{-1}(A(f,g_1)) \cap \sigma(g_2) \cdot \phi^{-1}(A(f,g_2))$ it must be that $g_1=g_2$. Since $\sigma(g_i)\cdot q_i = z$ this implies $q_1=q_2 = \sigma(g_i)^{-1} \cdot z$. Because $q_1,q_2$ are arbitrary, the range map restricted to $\tilde{\sigma}(f)$ is injective as required.


To manage error terms we will use big $O$ notation. The implied constant is allowed to depend on $F$ but not on $\epsilon, \delta, K, \sigma$.

\noindent {\bf Claim 2}. For any $f \in F^2$, $|\textrm{BAD}(f)|_d =O(\epsilon |K|^{-1})$.

\begin{proof}[Proof of Claim 2]
For any $f \in F^2$, and any $g \ne h \in K_f$, 
\begin{eqnarray*}
&&\Phi\Big(  \sigma(g^{-1}h)^{-1} \cdot \phi^{-1}\big(A(f,g)\big) \cap \phi^{-1}\big(A(f,h)\big) \Big)\\
&\subset& \Big\{ \Upsilon \in (2^G \times \cP)^K:~ \Upsilon(g^{-1}h) \in 2^G \times A(f,g), ~\Upsilon(e) \in 2^G \times A(f,h)\Big\}.
\end{eqnarray*}
Because $\Phi_*u_d \in \Omega$, (\ref{eqn:weird}) implies
$$\Phi_*u_d\Big(\big\{ \Upsilon \in (2^G \times \cP)^K:~ \Upsilon(g^{-1}h) \in 2^G \times A(f,g), ~\Upsilon(e) \in 2^G \times A(f,h)\big\}\Big) < \epsilon |K|^{-3}.$$
Therefore,
$$ \big| \sigma(g^{-1}h)^{-1}\cdot \phi^{-1}(A(f,g)) \cap \phi^{-1}(A(f,h)) \big|_d < \epsilon |K|^{-3}.$$
By Lemma \ref{lem:basic-formulas2}, $|\sigma(g^{-1}h)^{-1} \vartriangle \sigma(h)^{-1}\sigma(g)|_d=O(\epsilon |K|^{-3})$. So
$$ \big| \sigma(h)^{-1}\sigma(g)\cdot \phi^{-1}(A(f,g)) \cap \phi^{-1}(A(f,h)) \big|_d =O(\epsilon |K|^{-3}).$$
Recall that $\sigma$ maps $G$ into $[d]$ (not just $\lb d\rb$). So we can multiply by $\sigma(h)$ to obtain
$$\big|\sigma(g)\cdot \phi^{-1}(A(f,g)) \cap \sigma(h)\cdot \phi^{-1}(A(f,h))\big|_d =O(\epsilon |K|^{-3}).$$
So
\begin{eqnarray*}
|\textrm{BAD}(f)|_d &=& \left| \bigcup_{g\ne h \in K_f} \sigma(g) \cdot \phi^{-1}(A(f,g)) \cap \sigma(h)\cdot \phi^{-1}(A(f,h))\right|_d =O(\epsilon |K|^{-1}).
\end{eqnarray*}

 \end{proof}

\noindent {\bf Claim 3}. For any $f \in F$, 
$$| \tr_d(\tilde{\sigma}(f)) - \tr_\sH(f) | =O(\epsilon).$$

\begin{proof}[Proof of Claim 3]
By definition of $A(f,e)$, $\tr_\sH(f) = \nu(A(f,e))$. Now $\tr_d(\tilde{\sigma}(f)) = |\tilde{\sigma}(f) \cap \Delta^0_d|_d$. Note that if $q \in \tilde{\sigma}(f) \cap \Delta^0_d$ then either $q \in \phi^{-1}(A(f,e))$ or $q \in \phi^{-1}(A(f,g))$ for some $g \ne e$ in which case $\Phi(q)(e)=(H,P)$, for some $H \in 2^G, P \in \cP$ with $g\in H, P \subset A(f,g)$. By (\ref{eqn:weird2}) we now have
$$\left| (\tilde{\sigma}(f) \cap \Delta^0_d) \setminus \phi^{-1}(A(f,e)) \right|_d = O(\epsilon).$$
The definition of $\tilde{\sigma}$ implies
$$\phi^{-1}(A(f,e))  \setminus \textrm{BAD}(f) \subset \tilde{\sigma}(f) \cap \Delta^0_d.$$
So Claim 2 implies
$$\left| (\tilde{\sigma}(f) \cap \Delta^0_d )\vartriangle \phi^{-1}(A(f,e)) \right|_d = O(\epsilon).$$
Because $\Phi_*u \in \Omega$, (\ref{eqn:3}) implies
$$| | \phi^{-1}(A(f,e)) |_d - \nu(A(f,e)) | < \epsilon.$$
This implies Claim 3.
\end{proof}

For the rest of the proof, we fix $f_1,f_2 \in F$. Because of Claim 3, it suffices to show that $| \tilde{\sigma}(f_1) \tilde{\sigma}(f_2) \vartriangle \tilde{\sigma}(f_1  f_2) | =O(\epsilon)$. For any $g_1,g_2,g_3 \in K$, let 
\begin{eqnarray*}
P(g_1,g_2,g_3) &:=& A(f_2f_1,g_3) \cap A(f_1,g_1) \cap g_1^{-1} A(f_2,g_2)  = \Psi^{-1}(Z(f_1,f_2,g_1,g_2,g_3))\\
Q(g_1,g_2,g_3) &:=& \phi^{-1}(A(f_2f_1,g_3)) \cap \phi^{-1}(A(f_1,g_1)) \cap \sigma(g_1)^{-1}\cdot\phi^{-1}(A(f_2,g_2))\\
&& \cap \{q \in \Delta^0_d:~ \textrm{Stab}_\sigma(q) \ni g^{-1}_3g_2g_1\}\\
&=& \Phi^{-1}(Z(f_1,f_2,g_1,g_2,g_3)).
\end{eqnarray*}
By (\ref{eqn:Z}) we have
\begin{eqnarray}\label{eqn:PQ}
| \nu(P(g_1,g_2,g_3)) - |Q(g_1,g_2,g_3)|_d| < \epsilon |K|^{-3}.
\end{eqnarray}
Note that $P(g_1,g_2,g_3) = A(f_2f_1,g_3) \cap A(f_1,g_1) \cap f_1^{-1} A(f_2,g_2)$. So  
$$\bigcup_{g_1,g_2,g_3 \in K} P(g_1,g_2,g_3) = \left[ \bigcup_{g_3\in K} A(f_2f_1,g_3)\right] \cap \left[\bigcup_{g_1 \in K} A(f_1,g_1)\right] \cap f_1^{-1} \left[ \bigcup_{g_2 \in K} A(f_2,g_2)\right].$$
By choice of $A(\cdot,\cdot)$, this implies
$$\nu\left(\so(f_2f_1) \vartriangle \bigcup_{g_1,g_2,g_3 \in K} P(g_1,g_2,g_3) \right) = O(\epsilon).$$
Since the families $\{P(g_1,g_2,g_3)\}_{g_1,g_2,g_3}$ and $\{Q(g_1,g_2,g_3)\}_{g_1,g_2,g_3}$ are each pairwise disjoint, (\ref{eqn:PQ}) now implies
\begin{eqnarray}\label{eqn:Q}
\sum_{g_1,g_2,g_3} |Q(g_1,g_2,g_3)|_d = O(\epsilon) + \nu(\so(f_2f_1)).
\end{eqnarray}


\noindent {\bf Claim 4}.
$$\left| \so(\tilde{\sigma}(f_2f_1)) \vartriangle \bigcup_{g_1,g_2,g_3 \in K} Q(g_1,g_2,g_3)\right|_d =O(\epsilon).$$

\begin{proof}[Proof of Claim 4]
By definition, 
$$ \so(\tilde{\sigma}(f_2f_1)) = \bigcup_{g_3 \in K_f} \phi^{-1}(A(f_2f_1,g_3))\setminus \sigma(g_3)^{-1}\cdot\textrm{BAD}({f_2f_1}).$$
So Claim 2 implies
\begin{eqnarray*}
\left| \so(\tilde{\sigma}(f_2f_1)) \vartriangle \bigcup_{g_1,g_2,g_3\in K} Q(g_1,g_2,g_3) \right|_d &\le& O(\epsilon) + \sum_{g_3\in K}  \left|\phi^{-1}(A(f_2f_1,g_3)) \vartriangle \bigcup_{g_1,g_2 \in K} Q(g_1,g_2,g_3)\right|_d.
\end{eqnarray*}
By definition, $Q(g_1,g_2,g_3) \subset \phi^{-1}(A(f_2f_1,g_3))$ for every $g_1,g_2$. Also the sets $Q(g_1,g_2,g_3)$ are pairwise disjoint. So
\begin{eqnarray*}
\left| \so(\tilde{\sigma}(f_2f_1)) \vartriangle \bigcup_{g_1,g_2,g_3} Q(g_1,g_2,g_3)\right|_d &\le& O(\epsilon) + \sum_{g_3 \in K}  |\phi^{-1}(A(f_2f_1,g_3))|_d - \sum_{g_1,g_2,g_3} |Q(g_1,g_2,g_3)|_d\\
&\le&O(\epsilon) - \nu(\so(f_2f_1))+ \sum_{g_3 \in K}  |\phi^{-1}(A(f_2f_1,g_3))|_d\\
&\le&O(\epsilon) - \nu(\so(f_2f_1))+ \sum_{g_3 \in K}  \nu(A(f_2f_1,g_3)) \le O(\epsilon).
\end{eqnarray*}
The second inequality above comes from (\ref{eqn:Q}), the third follows from (\ref{eqn:3}) while the last is implied by the choice of $A(\cdot,\cdot)$. 
\end{proof}

\noindent {\bf Claim 5}.
$$\left| \so(\tilde{\sigma}(f_2)\tilde{\sigma}(f_1)) \vartriangle \bigcup_{g_1,g_2,g_3 \in K} Q(g_1,g_2,g_3)\right|_d =O(\epsilon).$$

\begin{proof}[Proof of Claim 5]
By definition,
$$\so(\tilde{\sigma}(f_2)\tilde{\sigma}(f_1))  = \so(\tilde{\sigma}(f_1)) \cap \tilde{\sigma}(f_1)^{-1} \cdot \so(\tilde{\sigma}(f_2)).$$
By definition of $\tilde{\sigma}$,
\begin{eqnarray*}
\so(\tilde{\sigma}(f_1)) &=& \bigcup_{g_1\in K_{f_1}} \phi^{-1}(A(f_1,g_1)) \setminus \sigma(g_1)^{-1}\cdot\textrm{BAD}({f_1})\\
\tilde{\sigma}(f_1)^{-1}\cdot \so(\tilde{\sigma}(f_2)) &=& \tilde{\sigma}(f_1)^{-1}\cdot \left(\bigcup_{g_2\in K_{f_2}} \phi^{-1}(A(f_2,g_2)) \setminus \sigma(g_2)^{-1}\cdot\textrm{BAD}({f_2})\right).
\end{eqnarray*}
Claim 2 and the disjointness properties of $A(\cdot,\cdot)$ imply
\begin{eqnarray*}
O(\epsilon)&=&\left|\so(\tilde{\sigma}(f_2)\tilde{\sigma}(f_1)) \vartriangle  \left(\bigcup_{g_1\in K} \phi^{-1}(A(f_1,g_1)) \cap \tilde{\sigma}(f_1)^{-1}\cdot\bigcup_{g_2\in K} \phi^{-1}(A(f_2,g_2)) \right)\right|_d\\
                 &=&\left|\so(\tilde{\sigma}(f_2)\tilde{\sigma}(f_1)) \vartriangle \left(\bigcup_{g_1,g_2\in K} \phi^{-1}(A(f_1,g_1)) \cap \tilde{\sigma}(f_1)^{-1}\cdot \phi^{-1}(A(f_2,g_2)) \right)\right|_d.
\end{eqnarray*}
We would like to replace the $\tilde{\sigma}(f_1)^{-1}$ above with $\sigma(g_1)^{-1}$. To see why this is possible, observe that if $q \in \phi^{-1}(A(f_1,g_1)) \setminus \sigma(g_1)^{-1}\cdot \textrm{BAD}(f_1)$ then $\tilde{\sigma}(f_1)\cdot q = \sigma(g_1)\cdot q$. Claim 2 now implies
$$O(\epsilon)=\left|\so(\tilde{\sigma}(f_2)\tilde{\sigma}(f_1)) \vartriangle \left(\bigcup_{g_1,g_2\in K} \phi^{-1}(A(f_1,g_1)) \cap \sigma(g_1)^{-1}\cdot \phi^{-1}(A(f_2,g_2)) \right)\right|_d.$$

Observe that $Q(g_1,g_2,g_3) \subset \phi^{-1}(A(f_1,g_1)) \cap \sigma(g_1)^{-1}\cdot\phi^{-1}(A(f_2,g_2))$. Moreover the sets $Q(g_1,g_2,g_3)$ are pairwise disjoint. Therefore,
\begin{eqnarray*}
&&\left| \so(\tilde{\sigma}(f_2)\tilde{\sigma}(f_1)) \vartriangle \bigcup_{g_1,g_2,g_3 \in K} Q(g_1,g_2,g_3)\right|_d\\
 &=& O(\epsilon) + \sum_{g_1,g_2 \in K} \left| \left( \phi^{-1}(A(f_1,g_1)) \cap \sigma(g_1)^{-1}\cdot \phi^{-1}(A(f_2,g_2)\right) \setminus \bigcup_{g_3 \in K} Q(g_1,g_2,g_3)\right|_d\\
&=& O(\epsilon) + \sum_{g_1,g_2 \in K}\left| \phi^{-1}(A(f_1,g_1)) \cap \sigma(g_1)^{-1}\cdot \phi^{-1}(A(f_2,g_2))\right| - \sum_{g_1,g_2,g_3 \in K}  |Q(g_1,g_2,g_3)|_d\\
&=& O(\epsilon) - \nu(\so(f_1f_2)) + \sum_{g_1,g_2,g_3\in K} \left| \phi^{-1}(A(f_1,g_1)) \cap \sigma(g_1)^{-1}\cdot \phi^{-1}(A(f_2,g_2))\right|_d.
\end{eqnarray*}
The last equality uses (\ref{eqn:Q}). Observe that 
$$\phi^{-1}(A(f_1,g_1)) \cap \sigma(g_1)^{-1}\cdot\phi^{-1}(A(f_2,g_2)) = \Phi^{-1}\big( Y(f_1,f_2,g_1,g_2)\big).$$
So (\ref{eqn:Y}) implies
$$| \phi^{-1}(A(f_1,g_1)) \cap \sigma(g_1)^{-1}\cdot\phi^{-1}(A(f_2,g_2))|_d = O(\epsilon |K|^{-3}) + \nu(A(f_1,g_1) \cap g_1^{-1} A(f_2,g_2)).$$
By choice of $A(\cdot, \cdot)$, this implies
$$\sum_{g_1,g_2 \in K}| \phi^{-1}(A(f_1,g_1)) \cap \sigma(g_1)^{-1}\cdot\phi^{-1}(A(f_2,g_2))|_d = O(\epsilon) + \nu(\so(f_2f_1))$$
which implies the claim.
\end{proof}

\noindent {\bf Claim 6}. 
 $$| \tilde{\sigma}(f_2) \tilde{\sigma}(f_1) \vartriangle \tilde{\sigma}(f_2 f_1) | =O(\epsilon).$$

\begin{proof}[Proof of Claim 6]
Let $Q'(g_1,g_2,g_3)$ be the set of all $q \in Q(g_1,g_2,g_3)$ such that $\sigma(g_3)\cdot q=\sigma(g_2)\sigma(g_1)\cdot q$. Since $q \in Q(g_1,g_2,g_3)$ implies $\sigma(g_3^{-1}g_2g_1)\cdot q=q$, it follows from Lemma \ref{lem:basic-formulas2} that 
$$|Q'(g_1,g_2,g_3)\vartriangle Q(g_1,g_2,g_3)|_d =O(\epsilon |K|^{-3}).$$
By Claims 1, 3 and 4, it suffices to show that if 
$$q \in \left(\bigcup_{g_1,g_2,g_3 \in K} Q'(g_1,g_2,g_3)\right) \cap \so(\tilde{\sigma}(f_2 f_1)) \cap \so( \tilde{\sigma}(f_2) \tilde{\sigma}(f_1) )$$
then
$$\tilde{\sigma}(f_2) \tilde{\sigma}(f_1)\cdot q = \tilde{\sigma}(f_2f_1)\cdot q.$$
By definition, if $q \in Q'(g_1,g_2,g_3)\cap \so(\tilde{\sigma}(f_2f_1))$ then $\tilde{\sigma}(f_2f_1)\cdot q = \sigma(g_3)\cdot q$. If also $q \in \so( \tilde{\sigma}(f_2) \tilde{\sigma}(f_1) )$ then $\tilde{\sigma}(f_2) \tilde{\sigma}(f_1)\cdot q = \sigma(g_2)\sigma(g_1)\cdot q$. If $q \in Q'(g_1,g_2,g_3)$ then $\sigma(g_2)\sigma(g_1)\cdot q = \sigma(g_3)\cdot q$. So $\tilde{\sigma}(f_2) \tilde{\sigma}(f_1)\cdot q = \tilde{\sigma}(f_2  f_1)\cdot q$ as required.
\end{proof}
Because $\epsilon$ is arbitrary, Claims 3 and 6 imply the Lemma.

\end{proof}

\begin{proof}[Proof of Proposition \ref{prop:is-sofic}]
This is immediate from Lemmas \ref{lem:Bernoulli-sofic} and \ref{lem:is-sofic}.
\end{proof}

\begin{proof}[Proof of Theorem \ref{thm:nonfree}]
Suppose $G \cc (\textrm{Sub}_G \otimes K, \eta \otimes \kappa)$ is measurably conjugate to $G \cc (\textrm{Sub}_G \otimes L, \eta \otimes \lambda)$ relative to the common factor $G \cc (\textrm{Sub}_G, \eta)$. Let $(Z,\zeta)$ be a non-atomic standard probability space. Let $(\sH,\nu)$ be the principal groupoid for the action $G \cc (\textrm{Sub}_G\otimes Z, \eta \otimes \zeta)$, $(\sG_K,\mu_K)$ be the principal groupoid  for the action $G \cc (\textrm{Sub}_G\otimes (Z \times K), \eta \otimes (\zeta \times \kappa))$ and $(\sG_L,\mu_L)$ be the principal groupoid  for the action $G \cc (\textrm{Sub}_G\otimes (Z \times L), \eta \otimes (\zeta \times \lambda))$. Let $\pi^K_Z: Z \times K \to Z$ and $\pi^L_Z: Z \times L \to Z$ be the projection maps. Define $\pi_K:\sG_K \to \sH$ by $\pi_K( (H_1, \omega_1), (H_2,\omega_2)) = ((H_1, \pi^K_Z\omega_1), (H_2,\pi^K_Z\omega_2))$. This is an almost-everywhere-class-bijective measure-preserving extension. Define $\pi_L:\sG_L \to \sH$ similarly.

Because $G \cc (\textrm{Sub}_G \otimes K, \eta \otimes \kappa)$ is measurably conjugate to $G \cc (\textrm{Sub}_G \otimes L, \eta \otimes \lambda)$ relative to the common factor $G \cc (\textrm{Sub}_G, \eta)$, it follows that $G \cc  (\textrm{Sub}_G\otimes (Z \times K), \eta \otimes (\zeta \times \kappa))$ is measurably conjugate to $G \cc (\textrm{Sub}_G\otimes (Z \times L), \eta \otimes (\zeta \times \lambda))$ relative to the common factor $G \cc (\textrm{Sub}_G \otimes Z, \eta \otimes \zeta)$ from which it follows that $\pi_K:(\sG_K,\mu_K) \to (\sH,\nu)$ and  $\pi_L:(\sG_L,\mu_L) \to (\sH,\nu)$ are isomorphic. 


By Proposition \ref{prop:is-sofic}, there exists a sofic approximation $\P$ to $(\sH,\nu)$. Theorem \ref{thm:helper} implies that 
$$h_{\P,\mu_K}(\pi_K) = H(K,\kappa),\quad h_{\P,\mu_L}(\pi_L) = H(L,\lambda).$$
Because $\pi_K$ and $\pi_L$ are isomorphic, $h_{\P,\mu_K}(\pi_K) = h_{\P,\mu_L}(\pi_L)$ which implies the Theorem.


\end{proof}


\linespread{1.4}

{\small

}

\end{document}